\theoremstyle{plain}
\newtheorem{theorem}{Theorem}[section]
\newtheorem{proposition}[theorem]{Proposition}
\newtheorem{lemma}[theorem]{Lemma}
\newtheorem{corollary}[theorem]{Corollary}
\newtheorem{remark}[theorem]{Remark}
\newtheorem{definition}[theorem]{Definition}
\newtheorem{notation}[theorem]{Notation}
\newtheorem{main theorem}[theorem]{Main Theorem}
\newtheorem{convention}[theorem]{Convention}
\newtheorem{assumption}[theorem]{Assumption}
\newtheorem{claim}[theorem]{Claim}
\newlength\savewidth
\newcommand{\interior}{\operatorname{int}}
\newcommand{\fr}{\operatorname{fr}}
\newcommand{\cl}{\operatorname{cl}}
\newcommand{\NN}{\mathbb{N}}
\newcommand{\ZZ}{\mathbb{Z}}
\newcommand{\QQ}{\mathbb{Q}}
\newcommand{\RR}{\mathbb{R}}
\newcommand{\CC}{\mathbb{C}}
\newcommand{\HH}{\mathbb{H}}
\newcommand{\Di}{\mathbb{D}}
\newcommand{\RP}{\mathbb{RP}}
\newcommand{\EE}{\mathbb{E}}
\newcommand{\Conway}{\mbox{\boldmath$S$}^{2}}
\newcommand{\Conways}
{(\mbox{\boldmath$S$}^{2},\mbox{\boldmath$P$}^0)}
\newcommand{\PP}{\mbox{{\boldmath$P$}}^0}
\newcommand{\PConway}{\check{\mbox{\boldmath$S$}}^2}
\newcommand{\Ball}{\mbox{\boldmath$B$}^3}
\newcommand{\NE}{\mbox{$\mathrm{NE}$}}
\newcommand{\NW}{\mbox{$\mathrm{NW}$}}
\newcommand{\SW}{\mbox{$\mathrm{SW}$}}
\newcommand{\SE}{\mbox{$\mathrm{SE}$}}
\newcommand{\PSL}{\mbox{$\mathrm{PSL}$}}
\newcommand{\SL}{\mbox{$\mathrm{SL}$}}
\newcommand{\svert}{\,|\,}
\newcommand{\llangle}{\langle\langle}
\newcommand{\rrangle}{\rangle\rangle}
\newcommand{\OO}{{\mathcal{O}}}
\newcommand{\orbs}{\mathcal{S}}
\newcommand{\orbh}{\mathcal{H}}
\newcommand{\orbspherical}{\mathcal{S}}
\newcommand{\orbb}{\mbox{\boldmath$B$}}
\newcommand{\orbm}{\mathcal{M}}
\newcommand{\orbp}{(M_0,P)}
\newcommand{\RotG}{{\mathcal{J}}}
\newcommand{\Riley}{{\mathcal{R}}}
\newcommand{\fix}{\operatorname{Fix}}
\newcommand{\Isom}{\operatorname{Isom}}
\newcommand{\Ker}{\operatorname{Ker}}
\newcommand{\lcm}{\operatorname{lcm}}
\newcommand{\quaternion}{\mathcal{H}}
\newcommand{\LM}{{\mathrm{L}}}
\newcommand{\Stab}{\operatorname{Stab}}
\renewcommand\subsection{\@startsection{subsection}{2}{0mm}
    {-10.5dd plus-8pt minus-4pt}{10.5dd}
     {\normalsize\upshape}}
\begin{document}

\title[Non-free Kleiniain groups generated by two parabolic transformations]
{Classification of non-free Kleinian groups generated by two parabolic transformations}

\author{Hirotaka Akiyoshi}
\address{Department of Mathematics\\
Graduate School of Science\\
Osaka City University\\
3-3-138, Sugimoto, Sumiyoshi-ku
Osaka, 558-8585, Japan}
\email{akiyoshi@sci.osaka-cu.ac.jp}

\author{Ken'ichi Ohshika}
\address{Department of Mathematics\\
Faculty of Science\\
Gakushuin University\\
Mejiro 1-5-1, Toshima-ku, 171-8588, Japan}
\email{ohshika@math.gakushuin.ac.jp}

\author{John Parker}
\address{Department of Mathematical Sciences\\ 
Durham University, Science Laboratories\\
South Road, Durham, DH1 3LE, United Kingdom}
\email{j.r.parker@durham.ac.uk}

\author{Makoto Sakuma}
\address{Department of Mathematics\\
Faculty of Science\\
Hiroshima University\\
Higashi-Hiroshima, 739-8526, Japan}
\email{sakuma@hiroshima-u.ac.jp}

\author{Han Yoshida}
%\address{National Institute of Technology Gunma college\\
%580 Toribamachi, Maebashi, Gunma 371-8530 JAPAN}
%\email{han@nat.gunma-ct.ac.jp}
\address{National Institute of Technology, Nara College\\
22 Yata-cho, Yamatokoriyama, Nara, 639-1058, Japan}
\email{han@libe.nara-k.ac.jp}

\subjclass[2010]{Primary 57M50, Secondary 57M25}

%\date{}

\begin{abstract}
We give a full proof to Agol's announcement on the classification
of non-free Kleinian groups generated by two parabolic transformations.
\end{abstract}

\maketitle

\tableofcontents

\section{Introduction}
\label{sec:introduction}

Motivated by knot theory, Riley studied 
Kleinian groups generated by two 
parabolic transformations 
(see \cite{Riley1972, Riley1975, Riley1975b, Riley1992, Riley_computer}).
In particular, the construction of the complete hyperbolic structure on the 
figure-eight knot complement \cite{Riley1975}
inspired Thurston to establish the uniformisation theorem of Haken manifolds.
The space of marked subgroups of $\PSL(2,\CC)$ generated by two non-commuting
parabolic transformations is parametrised by a non-zero complex number. 
There is 
an open set, $\Riley$, called the {\it Riley slice of Schottky space},
of Kleinian groups of this type that are free and discrete, and
for which the quotient of the domain of discontinuity is a four times punctured sphere.
For every group in $\Riley$,
the Klein manifold
(the quotient of union of the hyperbolic space and the domain of discontinuity) 
is homeomorphic to the complement of the 2-strand trivial tangle.
Keen and Series \cite{Keen-Series}
studied the Riley slice by applying their theory of pleating rays,
and it was supplemented by Komori and Series \cite{Komori-Series}.
Motivated by knot theory, Akiyoshi, Sakuma, Wada and Yamashita \cite{ASWY}
studied the combinatorial structures of the Ford domains, 
by extending Jorgensen's work \cite{Jorgensen} on punctured torus groups,
which leads to a natural tessellation of $\Riley$ (see Figure 0.2b in \cite{ASWY}).  
Ohshika and Miyachi \cite{Ohshika-Miyachi} proved
that the closure of $\Riley$ is equal to the space of 
marked Kleinian groups with two parabolic generators 
which are free and discrete.
Building on his joint work 
\cite{Gehring-Martin},
\cite{Hinkkanen-Martin}
and \cite{Martin-Marshall}
with Gehring, Hinkkanen and Marshall, respectively,
Martin \cite{Martin} identified the exterior of $\Riley$
as the Julia set of a certain semigroup of polynomials
and proved a \lq\lq supergroup density theorem'' for groups
in the exterior of $\Riley$.
The problem to detect freeness and non-freeness of 
(not necessarily discrete) groups generated by two non-commuting
parabolic transformations has attracted attention of various researchers 
(see \cite{Lyndon-Ullman, Gilman, Tan-Tan, Kim-Koberda} and references therein).

In this paper, we are interested in Kleinian groups that are in the complement of 
the closure of $\Riley$, namely
the groups that are discrete but not free.
The essential simple loops on the boundary of the complement of the 
2-strand trivial tangle, which are not null homotopic in the ambient space,
are parametrised by a slope $r$ in $\QQ/2\ZZ$. 
The Heckoid groups, introduced by Riley \cite{Riley1992}
and formulated by Lee and Sakuma \cite{Lee-Sakuma_2013} following
Agol \cite{Agol},
are Kleinian groups 
with two parabolic generators in which the element corresponding to the curve $\alpha_r$
of slope $r$ has finite order. The most extreme case is the group $G(r)$ where this element
is the identity, 
in which case, the quotient of hyperbolic space by this group is 
the complement of a 2-bridge knot or link.

In \cite[Theorem~4.3]{Adams1},
Adams proved that a non-free and torsion-free Kleinian group $\Gamma$ is
generated by two parabolic transformations
if and only if the quotient hyperbolic manifold $\HH^3/\Gamma$
is homeomorphic to the complement of a $2$-bridge link $K(r)$
which is not a torus link.
(We regard a knot as a one-component link.)
This refines
the result of Boileau and Zimmermann \cite[Corollary~3.3]{Boileau-Zimmermann}
that a link in $S^3$ is a $2$-bridge link if and only if its link group is generated by two meridians.

In 2002, Agol \cite{Agol}  announced the following classification theorem
of non-free Kleinian groups generated by two parabolic transformations,
which generalises Adams' result.
The main purpose of this paper is to give a full proof to this theorem.

\begin{theorem}
\label{main-theorem}
A non-free Kleinian group $\Gamma$ is generated by two non-commuting  
parabolic elements if and only if one of the following holds.
\begin{enumerate}[\rm (1)]
\item
$\Gamma$ is conjugate to the hyperbolic $2$-bridge link group, $G(r)$,
for some rational number $r=q/p$,
where $p$ and $q$ are coprime integers such that 
$q\not\equiv \pm 1 \pmod{p}$.
\item
$\Gamma$ is conjugate to the Heckoid group, $G(r;n)$,
for some $r\in\QQ$ and some $n\in \frac{1}{2}\NN_{\ge 3}$.
\end{enumerate}
\end{theorem}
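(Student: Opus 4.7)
The plan is to handle the two directions separately. The "if" direction is a verification: each hyperbolic 2-bridge link group $G(r)$ is Kleinian by Thurston's hyperbolisation and is generated by two meridians, which are parabolic; it is non-free because the hyperbolic 2-bridge link complement has a genuine two-generator one-relator presentation. Each Heckoid group $G(r;n)$ is Kleinian by the explicit construction of Lee-Sakuma following Agol, generated by two parabolics by definition, and non-free because the relator corresponding to $\alpha_r^n$ is nontrivial in $F_2$. The substance of the theorem lies in the converse.

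For the "only if" direction, let $\Gamma=\langle X,Y\rangle$ be a non-free discrete Kleinian group with $X,Y$ non-commuting parabolic. I would first split on whether $\Gamma$ has torsion. In the torsion-free case, Adams' Theorem~4.3 cited in the introduction identifies $\HH^3/\Gamma$ with the complement of a non-torus 2-bridge link $K(r)$. Since $X$ and $Y$ are parabolic and generate the whole link group, a short argument using the rank of the peripheral subgroups shows they must be meridians, so the Boileau-Zimmermann theorem puts $\Gamma$ in conjugacy with $G(r)$. The constraint $q\not\equiv\pm 1\pmod p$ records precisely that $K(r)$ is not a torus link, equivalently that it is hyperbolic; this recovers case~(1).

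The torsion case is the main content. Here the strategy is to show: (i) all torsion elements of $\Gamma$ lie on a single $\Gamma$-conjugacy class of elliptic axes of a common order $n$; (ii) the axis projects to a closed geodesic $\gamma$ in the orbifold $\OO=\HH^3/\Gamma$ such that drilling $\gamma$ yields the complement of a 2-bridge link $K(r)$; and (iii) $\gamma$ represents a simple loop of rational slope $r$ on the bridge sphere separating the two parabolic cusps. Together these force $\Gamma\cong G(r;n)$. Step~(ii) would be implemented via an orbifold analogue of Adams' essential 4-punctured sphere argument: produce an embedded essential 2-suborbifold in $\OO$ whose two lifts in $\HH^3$ are stabilised by conjugates of $X$ and $Y$ respectively, giving a generalised Heegaard decomposition of $\OO$ compatible with the two-parabolic-generator structure.

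The principal obstacle is step~(i), controlling the location and order of torsion. A priori $\Gamma$ could contain multiple independent conjugacy classes of elliptic elements, and one must rule this out using (a) the rigidity supplied by Ohshika-Miyachi's identification of $\overline{\Riley}$ with the free-discrete locus, (b) the combinatorial structure of the ASWY tessellation of $\Riley$ and its natural extension across $\partial\Riley$, and (c) the pleating-ray constraints of Keen-Series and Komori-Series on parameter values corresponding to boundary groups. Once torsion has been localised to a single axis of order $n$, a cone-manifold deformation argument in the spirit of Thurston's orbifold theorem, combined with Adams' 4-punctured sphere, should complete the identification of $\Gamma$ with the explicit Heckoid group $G(r;n)$, and I expect this step to absorb the bulk of the paper's technical work.
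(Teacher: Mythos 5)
Your ``if'' direction and your torsion-free branch are fine (the latter is essentially Adams' theorem, which the paper also takes as its starting point). The torsion case carries the real content, and your step~(i) --- localising all torsion to a single conjugacy class of elliptic axes of a common order $n$ --- is exactly where the proposal breaks down. A priori the singular locus of $\HH^3/\Gamma$ need not be a single closed geodesic: it can be a trivalent graph, it can carry several distinct orders, and it can run into the cusps, so that the cusp cross-sections at the fixed points of $\alpha$ and $\beta$ are pillowcases $S^2(2,2,2,2)$ or even rigid cusps $S^2(2,4,4)$, $S^2(2,3,6)$, $S^2(3,3,3)$ --- a possibility your outline never confronts and which occupies two full sections of the paper. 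The tools you invoke to force (i) --- Ohshika--Miyachi's identification of $\closure{\Riley}$, the ASWY tessellation, and the Keen--Series pleating rays --- all live on the parameter space of the \emph{free} discrete locus and say nothing about the internal structure of a single non-free discrete group; moreover the combinatorial structure of that tessellation outside $\Riley$ is only announced, not proved, in the literature, so it cannot carry the weight. You also never address the possibility that $\Gamma$ is geometrically infinite, which is why the paper must first prove a relative tameness theorem for orbifolds (Theorems \ref{thm:tameness} and \ref{thm:geom-infinite}) before any topology can be done.

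The paper's actual mechanism is quite different and does not pass through drilling or a Heegaard decomposition. After compactifying the non-cuspidal part to a pared orbifold, one performs an \emph{orbifold surgery}: reset the indices of the edges meeting the parabolic loci to $2$, so that the images of $\alpha$ and $\beta$ become involutions and the fundamental group of the surgered orbifold is dihedral. Primitivity of $\alpha$ and $\beta$ (Lemma \ref{lem:primitive}, from the Adams--Brenner horoball estimate $1\le|\alpha|<2$ of Lemma \ref{Lem:Brenner}) guarantees they are represented by simple loops in the flexible-cusp case, and the same length estimate pins down the conjugacy class of $\alpha$ in the rigid-cusp case. The classification of compact good orbifolds with dihedral fundamental group (Theorem \ref{thm:dihedral-orbifold}, resting on the orbifold theorem and geometrisation) then identifies the surgered orbifold with a spherical orbifold $\OO(r;d_+,d_-)$, and the original pared orbifold is recovered by inverting the surgery, with $\ZZ_2$-homology and the isometry group of $\OO(r;d_+,d_-)$ eliminating everything except the hyperbolic $2$-bridge exteriors and the Heckoid orbifolds. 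Nothing in your outline produces the dihedral group or its classification, and without that (or a genuine substitute) your steps (i)--(iii) remain unsupported.
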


\begin{figure}
\includegraphics[width=0.9\hsize, bb=0 0 3044 898]{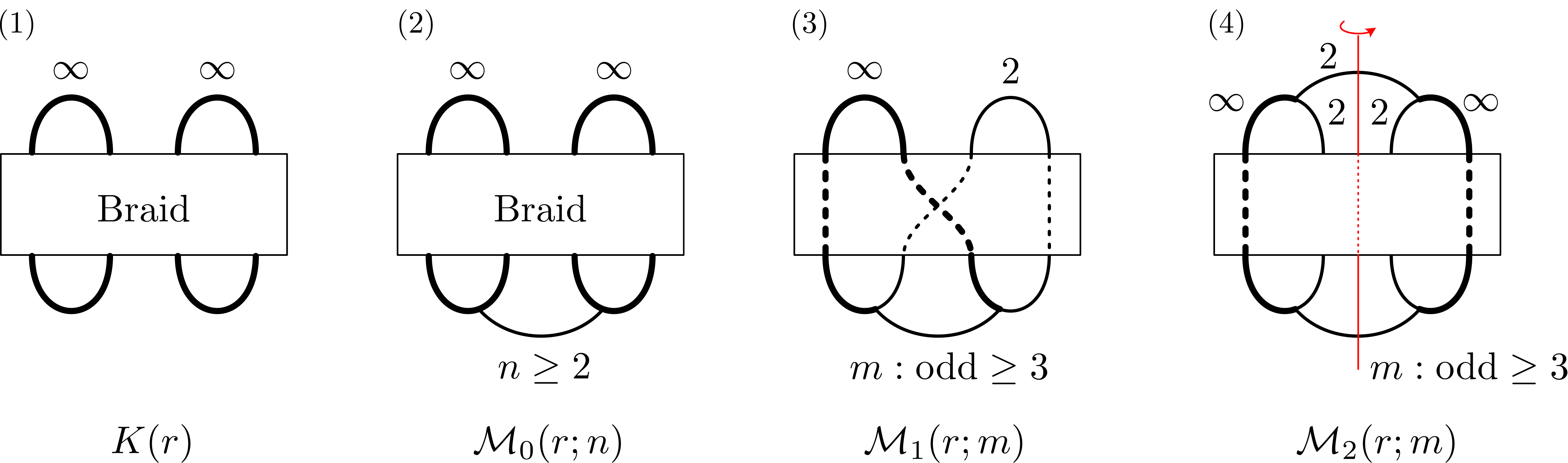}
\caption{Weighted graphs representing $2$-bridge links and Heckoid orbifolds,
where the thick edges with weight $\infty$ correspond to parabolic loci
and thin edges with integral weights represent the singular set.
See Definition \ref{def:Heckoid-orbifold} for the precise 
description of the weighted graphs.
}
\label{fig.Heckoid-orbifold}
\end{figure}

In the remainder of the introduction, 
we explain the meaning of the theorem more precisely.

Recall that a {\it $2$-bridge link} is a knot or a two-component link
which is represented by a diagram in the $x$-$y$ plane
that has two maximal points and two minimal points
with respect to the height function determined by the $y$-coordinate.
We may assume that the two maximal points 
and the two minimal points, respectively, have the same $y$-coordinates.
Such a diagram gives a plait (or plat) representation of the $2$-bridge link
consisting of two upper bridges, two lower bridges, and a $4$-strand braid
connecting the upper and lower bridges (see Figure \ref{fig.Heckoid-orbifold}(1)).
The $2$-bridge links are parametrized by the set $\QQ\cup\{\infty\}$, and
the $2$-bridge link corresponding to $r\in\QQ\cup\{\infty\}$
is denoted by $K(r)$ and is called the {\it $2$-bridge link of slope $r$}
(see Section 2 for the precise definition).
If $r=\infty$ then $K(r)$ is the $2$-component trivial link,
and if $r\in\ZZ$ then $K(r)$ is the trivial knot.
If $r=q/p\in \QQ$, where $p$ and $q$ are coprime integers,
then $K(q/p)$ is hyperbolic, i.e., $S^3-K(r)$ admits a complete hyperbolic structure
of finite volume,
if and only if $q\not\equiv \pm 1 \pmod{p}$.
In this case, there is a torsion-free Kleinian group $\Gamma$, 
unique up to conjugation,
such that $\HH^3/\Gamma$ is homeomorphic to
the link complement $S^3-K(r)$ as oriented manifold.
We denote the Kleinian group $\Gamma$, by $G(r)$,
and call
it the {\it hyperbolic $2$-bridge link group of slope $r$.}

The Heckoid groups were first introduced by Riley~\cite{Riley1992}
as an analogy of the classical Hecke groups 
considered by Hecke \cite{Hecke}.
The topological structure of their quotient orbifolds was worked out
by Lee and Sakuma \cite{Lee-Sakuma_2013},
following the description by Agol \cite{Agol}.
Specifically, they
showed that the Heckoid groups are the orbifold fundamental groups 
of the Heckoid orbifolds
illustrated in Figure \ref{fig.Heckoid-orbifold}(2)-(4).
(See \cite{BBMBP, Boileau-Porti, CHK} for basic terminologies and facts
concerning orbifolds.)
These figures illustrate weighted graphs $(S^3,\Sigma, w)$
whose explicit descriptions are given by Definition \ref{def:Heckoid-orbifold}.
For each weighted graph $(S^3,\Sigma, w)$ in the figure,
let $(M_0,P)$ be the pair of a compact $3$-orbifold $M_0$
and a compact $2$-suborbifold $P$ of $\partial M_0$
determined by the rules described below.
Let $\Sigma_{\infty}$ 
be the subgraph of $\Sigma$ consisting of the edges 
with weight $\infty$, 
and let $\Sigma_s$ be the subgraph of $\Sigma$ consisting of the edges 
with integral weight.
\begin{enumerate}
\item
The underlying space $|M_0|$ of the orbifold $M_0$ is the complement of an open regular neighbourhood of 
the subgraph $\Sigma_{\infty}$.
\item
The singular set of $M_0$ is 
$\Sigma_0:=\Sigma_s\cap |M_0|$, 
where the index of each edge of the singular set is given by the weight $w(e)$
of the corresponding edge $e$ of $\Sigma_s$.
\item
For an edge $e$ of $\Sigma_\infty$,
let $P$ be the 
$2$-suborbifold of $\partial M_0$ defined as follows.
\begin{enumerate}
\item
In Figure \ref{fig.Heckoid-orbifold}(2), $P$ consists of 
two annuli in $\partial M_0$
whose cores, respectively, are meridians of the two edges of $\Sigma_{\infty}$.
\item
In Figure \ref{fig.Heckoid-orbifold}(3), $P$ consists of an annulus in $\partial M_0$
whose core is a meridian of the single edge of $\Sigma_{\infty}$.
\item
In Figure \ref{fig.Heckoid-orbifold}(4), $P$ consists of two copies of the
annular orbifold 
$D^2(2,2)$ (the $2$-orbifold with underlying space the disc
and with two cone points of index $2$)
in $\partial M_0$
each of which is 
bounded by a meridian of an edge of $\Sigma_{\infty}$.
\end{enumerate}
\end{enumerate}

By \cite[Lemmas 6.3 and 6.6]{Lee-Sakuma_2013},
the orbifold pair $(M_0,P)$ is a Haken pared orbifold
(see Definition \ref{def:pared-orbifold} or \cite[Definition 8.3.7]{Boileau-Porti}) 
and admits a unique complete hyperbolic structure,
which is geometrically finite 
(see Section 3 or \cite[Proposition 6.7]{Lee-Sakuma_2013}).
Namely there is a geometrically finite Kleinian group $\Gamma$,
unique up to conjugation,
such that $M:=\HH^3/\Gamma$ is isomorphic to the 
interior of the compact orbifold $M_0$,
such that $P$ represents 
the parabolic locus. 
The pair $(M_0,P)$ is also regarded as a relative compactification
of the pair consisting of a non-cuspidal part of $M$ and its boundary
(see Section \ref{sec:Heckoid}). 

We denote the pared orbifold $\orbm:=(M_0,P)$ by 
$\orbm_{0}(r;n)$, $\orbm_{1}(r;m)$, or $\orbm_{2}(r;m)$
according as it is described by the weighted graph in 
Figure \ref{fig.Heckoid-orbifold}(2), (3), or (4).
We also denote the Kleinian group $\Gamma$ by $\pi_1(\orbm)$.

Then the assertion (2) of the main Theorem \ref{main-theorem} is equivalent to the
following assertion (2')

\begin{enumerate}
\item[(2')]
{\it
$\Gamma$ is conjugate to the Kleinian group $\pi_1(\orbm)$
for some pared orbifold $\orbm=\orbm_{0}(r;n)$, $\orbm_{1}(r;m)$, or $\orbm_{2}(r;m)$
in Definition \ref{def:Heckoid-orbifold}.
}
\end{enumerate}

\medskip

Agol \cite{Agol} also announced the following classification of parabolic generating pairs 
of the groups in Theorem \ref{main-theorem},
which refines and extends Adams' results that
every hyperbolic $2$-bridge link group has only finitely many 
parabolic generating pairs \cite[Corollary 4.1]{Adams1} and that
the figure-eight knot group has precisely two 
parabolic generating pairs up to equivalence
\cite[Corollary 4.6]{Adams1}.

\begin{theorem}
\label{main-theorem2}
(1) If $\Gamma$ is a hyperbolic $2$-bridge link group, 
then it has precisely two parabolic generating pairs, up to equivalence.

(2) If $\Gamma$ is a Heckoid group,
then it has a unique parabolic generating pair, up to equivalence.
\end{theorem}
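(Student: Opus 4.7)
The plan is to prove both assertions by first exhibiting the claimed parabolic generating pairs explicitly and then showing that no others exist up to equivalence. Each group in Theorem \ref{main-theorem} carries a natural $2$-bridge structure. For a hyperbolic $2$-bridge link group $G(r)$, the bridge presentation illustrated in Figure \ref{fig.Heckoid-orbifold}(1) displays the link as the plait closure of a $4$-strand braid, and the meridians of the two upper bridges form one parabolic generating pair while those of the two lower bridges form another; these two pairs are generically inequivalent, yielding (1). For a Heckoid orbifold, an analogous meridian pair arising from the bridge structure in Figure \ref{fig.Heckoid-orbifold}(2)--(4) generates $\pi_1(\orbm)$ and furnishes the pair in (2).

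For uniqueness, the plan is to convert the algebraic counting problem into a geometric one via rigidity. By Mostow--Prasad rigidity in the finite volume case and its geometrically finite analog for Haken pared hyperbolic orbifolds with prescribed parabolic locus (used already in establishing Theorem \ref{main-theorem}), $\Out(\Gamma)$ is identified with the isometry group of the pared hyperbolic orbifold $\HH^3/\Gamma$. Consequently, equivalence classes of parabolic generating pairs correspond bijectively to orbits of this isometry group on the set of unordered pairs of primitive parabolic conjugacy classes of $\Gamma$ whose representatives generate.

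The central step will be to show that any parabolic generating pair arises, up to conjugation in $\Gamma$, as a meridian pair of a $2$-bridge decomposition. First, each generator must be conjugate to a primitive meridian rather than a proper power, because otherwise the abelianization of the subgroup generated cannot match $H_1(\Gamma)$---this reprises Adams' argument from \cite{Adams1}. Then, using the uniqueness of the $2$-bridge sphere for $2$-bridge links (Schubert) and its orbifold analog for Haken pared Heckoid orbifolds established in \cite{Lee-Sakuma_2013}, every meridian pair is accounted for by the two sides of such a sphere. Enumerating isometry orbits then yields two classes for $2$-bridge link groups, since the upper and lower pairs remain distinct, and a single class for Heckoid groups, since the upper and lower pairs are identified by the extra symmetry imposed by the torsion relation defining the Heckoid orbifold.

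The hardest step will be ruling out exotic parabolic generating pairs not visibly coming from a bridge sphere. Here I would invoke the explicit Ford/canonical polyhedral decomposition of \cite{ASWY}, together with the geometric cusp analysis developed earlier in this paper to prove Theorem \ref{main-theorem}, to pin down all candidate meridian pairs and verify the resulting orbit count under the isometry group of the pared orbifold.
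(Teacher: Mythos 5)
A preliminary point: this paper does not contain a proof of Theorem \ref{main-theorem2}. The theorem is stated as announced by Agol, and the text explicitly defers its proof to the companion paper \cite{ALSS}; the only ingredient supplied here is the computation of $\Isom^+(\OO(r;d_+,d_-))$ in the appendix, which is flagged as being used in \cite{ALSS}. So there is no in-paper proof to compare yours against, and your proposal must be judged on its own merits.

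On those merits there are genuine gaps. First, your reduction of the counting problem is incorrect as stated: equivalence of parabolic generating pairs requires \emph{simultaneous} conjugacy of the pair $\{\alpha,\beta\}$ (up to inverting each entry), so equivalence classes do not correspond to orbits of the isometry group on unordered pairs of parabolic conjugacy classes. The pairs $\{\alpha,\beta\}$ and $\{\alpha,\gamma\beta\gamma^{-1}\}$ determine the same pair of conjugacy classes but need not be equivalent, and controlling exactly this ambiguity is a substantial part of the problem. Second, the central step --- that every parabolic generating pair is, up to equivalence, a meridian pair arising from a bridge sphere --- is essentially the whole content of the theorem, and you do not supply an argument for it. Schubert's uniqueness of the $2$-bridge sphere classifies bridge spheres up to isotopy; it says nothing about an abstractly given pair of parabolic elements that happen to generate, and the appeal to the Ford domains of \cite{ASWY} for the ``hardest step'' is a pointer to where one might look rather than an argument. (Adams' homological argument does show each generator is a primitive meridian, hence lies in the known conjugacy class; the difficulty is entirely in the pairing.)

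Finally, the endpoint counts are not secured. For (1) you say the upper and lower meridian pairs are ``generically inequivalent,'' but the theorem asserts they are \emph{always} inequivalent, and the delicate case is exactly the non-generic one: when $q^2\equiv 1\pmod p$ the link admits an orientation-preserving symmetry exchanging the two tunnels (Proposition \ref{prop:classification-2-bridgelinks}(1b)), and one must show that this symmetry still fails to carry one pair to the other up to simultaneous conjugacy and inversion --- this is what makes even Adams' count of two for the figure-eight knot nontrivial. For (2) the proposed mechanism is not right: in a Heckoid orbifold the lower tunnel $\tau_-$ carries finite weight, its meridians are elliptic rather than parabolic, and there is no symmetry of the pared orbifold exchanging the parabolic locus with the index-$n$ axis; uniqueness of the pair has to be argued by a different route, as is done in \cite{ALSS}.
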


Here, by a {\it parabolic generating pair} of a Kleinian group $\Gamma$,
we mean an unordered pair $\{\alpha,\beta\}$ 
of parabolic transformations $\alpha$ and $\beta$
that generate $\Gamma$.
Two parabolic generating pairs $\{\alpha,\beta\}$
and $\{\alpha',\beta'\}$ are said to be {\it equivalent}
if $\{\alpha',\beta'\}$ is equal to $\{\alpha^{\epsilon_1},\beta^{\epsilon_2} \}$
for some $\epsilon_1, \epsilon_2 \in \{\pm1\}$
up to simultaneous conjugacy.
In the companion \cite{ALSS} of this paper
by Shunsuke Aimi, Donghi Lee, Shunsuke Sakai and the fourth author,
an alternative proof of the theorem is given.

Theorems \ref{main-theorem} and \ref{main-theorem2}
are beautifully illustrated by a figure produced by
Yasushi Yamashita upon
request of Caroline Series, 
which is to be included in her article \cite{Series}
in preparation.
The figure is produced by using the results announced in 
\cite[Section 3 of Preface]{ASWY}.
(See also Figure 0.2b in \cite{ASWY}, which was also produced
by Yamashita.)
For further properties of Heckoid groups,
please see the article \cite{APS} in preparation. 

\medskip
This paper is organised as follows.
In Section \ref{sec:2-bridge},
we recall basic facts concerning $2$-bridge links.
In Section \ref{sec:Heckoid},
we give the 
precise definitions of
the Heckoid orbifolds and Heckoid groups.
In Section \ref{sec:classification-dihedral-orbifolds},
we give the classification of dihedral orbifolds,
i.e., good orbifolds with dihedral orbifold fundamental groups
(Theorem \ref{thm:dihedral-orbifold}),
which holds a key to the proof of the main theorem.
In Section \ref{sec:tameness},
we prove the relative tameness theorem for hyperbolic orbifolds
(Theorem \ref{thm:tameness}),
following Bowditch's proof of the tameness theorem for hyperbolic orbifolds (\cite{Bowditch}).
This theorem is used in the treatment of geometrically infinite 
two parabolic generator 
non-free Kleinian groups.
In fact, it turns out there is no such groups.
In Section \ref{sec:orbifold-surgery},
we introduce a convenient method for describing pared orbifolds
(Convention \ref{conv:pared-orbifold}) and
the concept of an orbifold surgery (Definition \ref{def:orbifold-surgery}),
and then prove a simple but useful lemma for orbifold surgeries 
(Lemma \ref{lem:relabeled-orbifold-generic}).
In Section \ref{Sec:canonical-horoball-pair},
we follow Adams \cite{Adams1}, and
recall basic facts concerning two parabolic generator Kleinian groups,
in particular an estimate of the length of parabolic generators
with respect to the maximal cusp (Lemma \ref{Lem:Brenner}).
In Section \ref{sec:outline}, we give an outline of the proof of the main theorem.
Sections \ref{sec:flexible-cusp-case}, \ref{sec:rigid-cusp-case}, and
\ref{sec:exceptional-flexible} are devoted to the proof of the main theorem.
In the appendix, which consists of Sections \ref{sec:dihedral-orbifold} 
and \ref{sec:dihedral-orbifold2},
we give the classification of geometric dihedral orbifolds
that is necessary for the proof Theorem \ref{thm:dihedral-orbifold}. 

\medskip
Throughout this paper, we use the following notation.
\begin{notation}
\label{notation}
{\rm
(1) For an orbifold $\OO$, the symbol
$\pi_1(\OO)$ denotes the orbifold fundamental group of $\OO$,
$H_1(\OO)$ denotes the abelianisation of $\pi_1(\OO)$,
and $H_1(\OO;\ZZ_2)$ denotes $H_1(\OO)\otimes\ZZ_2$.

(2) For a natural number $n$, $\ZZ_n$ denotes the cyclic group 
(or the ring) $\ZZ/n\ZZ$ of order $n$,
and $(\ZZ_n)^{\times}$ denotes the unit group of the ring $\ZZ/n\ZZ$.

(3) By a {\it dihedral group}, we mean a group
generated by two elements of order $2$.
Thus it is isomorphic to the group 
$D_n:=\langle a, b \svert a^2, b^2, (ab)^n\rangle$ for some $n\in\NN\cup\{\infty\}$.
Note that 
$D_n$ has order $2n$ or $\infty$ according to whether $n\in\NN$ or $n=\infty$.
Note also that the order $2$ cyclic group $D_1$ is also regarded as a dihedral group.
}
\end{notation}

\medskip
{\bf Acknowledgement.}
M.S. would like to thank Ian Agol 
for sending the slide of his talk \cite{Agol},
encouraging him (and any of his collaborators) to write up the proof,
and describing key ideas of the proof.
He would also like to thank Michel Boileau
for enlightening conversation in an early time.
His sincere thanks also go to all the other authors 
for joining the project to give a proof to Agol's announcement.
J.P. would like to thank Sadayoshi Kojima for supporting his trip to Japan.
H.A. was supported by JSPS Grants-in-Aid 19K03497.
K.O. was supported by JSPS Grants-in-Aid 17H02843
and 18KK0071.
M.S. was supported by JSPS Grants-in-Aid 15H03620.

\section{Basic facts concerning $2$-bridge links}
\label{sec:2-bridge}

In this section, we recall basic facts concerning 
$2$-bridge links,
which we use in the definitions
of the Heckoid orbifolds and the Heckoid groups.
The description of $2$-bridge links given in this section
is a mixture of 
those in \cite{Bonahon-Siebenmann, Sakuma-Weeks}.

Let $\RotG$ be the group of isometries of the Euclidean plane $\RR^2$ generated by the $\pi$-rotations around the points in $\ZZ^2$. 
Set $\Conways=(\RR^2,\ZZ^2)/\RotG$ and call it the {\it Conway sphere}. 
Then $\PP$ consists of four points in the $2$-sphere $\Conway$.
Let $\PConway:=\Conway-\PP$ be the complementary $4$-times punctured sphere. For each $s\in \QQ\cup\{\infty\}$, 
let $\alpha_s$ as be the simple loop in $\PConway$ obtained as the projection of a line in $\RR^2-\ZZ^2$ of slope $s$. 
Then $\alpha_s$ is {\it essential} in $\PConway$, i.e., it does not bound a disc nor a once-punctured disc in $\PConway$. 
Conversely, any essential simple loop in $\PConway$ 
is isotopic to $\alpha_s$ for a unique $s\in \QQ\cup\{\infty\}$:
we call $s$ the {\it slope} of the essential loop.
For each $s\in \QQ\cup\{\infty\}$, let $\delta_s$ be the pair of
mutually disjoint arcs in $\Conway$ with $\partial\delta_s=\PP$,
obtained as the image of the union of the lines in $\RR^2$ 
which intersect $\ZZ^2$.
Note that the union $\delta_{0/1}\cup\delta_{1/0}$
is a circle in $\Conway$ containing $\PP$,
which divides $\Conway$ into two discs
$\Conway_+:=pr([0,1]\times[0,1])$ and 
$\Conway_-:=pr([1,2]\times[0,1])$,
where $pr:\RR^2\to \Conway$ is the projection.

\begin{figure}
\includegraphics[width=0.75\hsize, bb=0 0 2515 840]{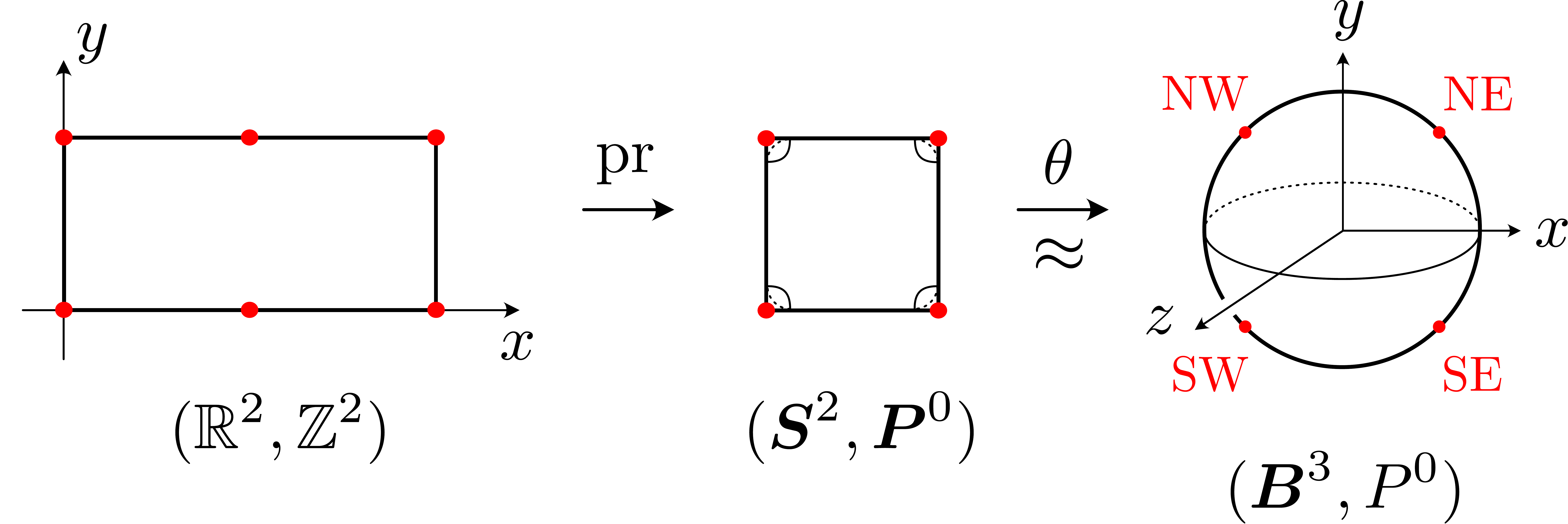}
\caption{Conway sphere $\Conways=(\RR^2,\ZZ^2)/\RotG$
and the homeomorphism $\theta:\Conways \to (\partial\Ball, \mbox{\boldmath$P$}^0)$
}
\label{fig.Ball}
\end{figure}

Let $\Ball:=\{(x,y,z)\in \RR^3 \svert x^2+y^2+z^2\le2\}$ be the round $3$-ball
in $\RR^3\subset\RR^3\cup\{\infty\}\cong S^3$,
whose boundary contains the set $P^0$ consisting of 
the four marked points
\[
\SW:=(-1,-1,0),\quad \SE:=(1,-1,0), \quad
\NE:=(1,1,0), \quad \NW:=(-1,1,0).
\]
Fix a homeomorphism 
$\theta:\Conways \to (\partial\Ball, P^0)$ satisfying the following conditions
(see Figure \ref{fig.Ball}).
\begin{enumerate}
\item
$\theta$ maps the quadruple 
$(pr(0,0), pr(1,0), pr(1,1), pr(0,1))$
to the quadruple
$(\SW,\SE,\NE,\NW)$.
\item
$\theta$ maps the circle $\delta_{0/1}\cup\delta_{1/0}$
to the equatorial circle $\partial\Ball \cap (\RR^2\times\{0\})$,
and maps the hemispheres $\Conway_+$ and $\Conway_-$
onto the hemispheres $\partial\Ball \cap (\RR^2\times\RR_{\ge 0})$
to $\partial\Ball \cap (\RR^2\times\RR_{\le 0})$, respectively.
\item
$\theta$ is equivariant with respect to the natural 
$(\ZZ_2)^2$-actions on $\Conways$ 
and $(\partial\Ball, P^0)$.
Here the natural $(\ZZ_2)^2$-action on $\Conways$ 
is that which lifts to the group
of isometries of the Euclidean plane $\RR^2$ generated by the $\pi$-rotations around the points in $(\frac{1}{2}\ZZ)^2$,
and the natural $(\ZZ_2)^2$-action on $(\partial\Ball, P^0)$
is that generated by the $\pi$-rotations about 
the coordinate axes of $\RR^3$.
\end{enumerate}
We identify $(\partial\Ball, P^0)$ with $\Conways$
through the homeomorphism $\theta$.
Thus for $s\in \QQ\cup\{\infty\}$,
$\alpha_s$ is regarded as an essential simple loop in 
$\partial \Ball-P^0$,
and $\delta_s$ is regarded as a 
union of two disjoint arcs in 
$\partial \Ball$ such that $\partial \delta_s=P^0$.
Moreover, we can choose $\alpha_s$ and $\delta_s$ 
so that they are $(\ZZ_2)^2$-invariant.

For a rational number $r=q/p\in\QQ\cup\{\infty\}$,
let $t(r)$ be a pair of arcs properly embedded in $\Ball$
such that $t(r)\cap\partial\Ball=\partial t(r)= P^0$,
which is obtained from $\delta_r$ by pushing its interior into $\interior\Ball$.
The pair $(\Ball, t(r))$ is called the 
{\it rational tangle of slope $r$}. 
We may assume $t(r)$ is invariant by the natural $(\ZZ_2)^2$-action
on $\Ball$.
In particular, the $x$-axis intersects $t(r)$ transversely in two points:
Let  $\tau_r$ be
the subarc of the $x$-axis they bound, and
call it the {\it core tunnel} of $(\Ball, t(r))$
(see Figure \ref{fig.trivial-tangle}).
Two meridional circles of $t(r)$ near $\partial\tau_r$
together with a subarc of $\tau_r$ forms a graph in $\Ball-t(r)$
homeomorphic to a pair of eyeglasses.
This determines a {\it canonical generating meridian pair} of 
the rank $2$ free group
$\pi_1(\Ball-t(r))\cong \pi_1(\PConway)/\llangle \alpha_r\rrangle$.

By gluing the boundaries of the rational tangles
$(\Ball, t(\infty))$ and $(\Ball, t(r))$ by the identity map,
we obtain a link in the $3$-sphere:
we denote it by $(S^3, K(r))$,
and call it the {\it $2$-bridge link of slope $r=q/p$}.
The number of components, $|K(r)|$, of $K(r)$ is one or two
(i.e., $K(r)$ is a knot or a two-component link)
according to
whether the denominator $p$ is odd or even.
The images of the core tunnels $\tau_{\infty}$ and $\tau_r$
in $(S^3, K(r))$ are called the
{\it upper tunnel} and the {\it lower tunnel} of $K(r)$, respectively.
We denote them by $\tau_+$ and $\tau_-$, respectively.
The canonical generating meridian pairs 
of $\pi_1(\Ball-t(\infty))$ and $\pi_1(\Ball-t(r))$
descend to generating meridian pairs
of the link group 
$\pi_1(S^3-K(r))\cong 
\pi_1(\PConway)/\llangle \alpha_{\infty},\alpha_r\rrangle$:
we call them the {\it upper meridian pair} and 
the {\it lower meridian pair}, respectively.

When we need to care about the orientation
of the ambient $3$-sphere $S^3$,
we regard $(S^3, K(r))$ as being obtained from
$(-\Ball,t(\infty))$ and $(\Ball, t(r))$,
where $\Ball$ inherits the standard orientation of $\RR^3$.
In other words, we identify the ambient $3$-sphere
$S^3$ with the one-point compactification $\RR^3\cup\{\infty\}$
of $\RR^3$,
in such a way that the $\Ball$ containing $t(r)$ is identified 
with the original round ball $\Ball$ via the identity map,
whereas the $\Ball$ containing $t(\infty)$ is identified with 
$\cl(\RR^3\cup\{\infty\}-\Ball)$ via the inversion
$\iota$ in $\partial \Ball$.
Thus $K(r)= t(r)\cup \iota(t(\infty))\subset \RR^3\cup\{\infty\}=S^3$.
Under this orientation convention,
a regular projection is read from the
continued fraction expansion 
\begin{center}\begin{picture}(230,70)
\put(0,48){$\displaystyle{
r=[a_1,a_2, \cdots,a_{n}] =
\cfrac{1}{a_1+
\cfrac{1}{ \raisebox{-5pt}[0pt][0pt]{$a_2 \, + \, $}
\raisebox{-10pt}[0pt][0pt]{$\, \ddots \ $}
\raisebox{-12pt}[0pt][0pt]{$+ \, \cfrac{1}{a_{n}}$}
}} \ ,}$}
\end{picture}\end{center}
in such a way that $a_i$ corresponds to 
the $a_i$ right-hand or left-hand half-twists 
according to whether $i$ is odd or even
(see Figure \ref{fig.2-bridge link diagram}).

\begin{figure}
\includegraphics[width=0.6\hsize, bb=0 0 1913 1570]{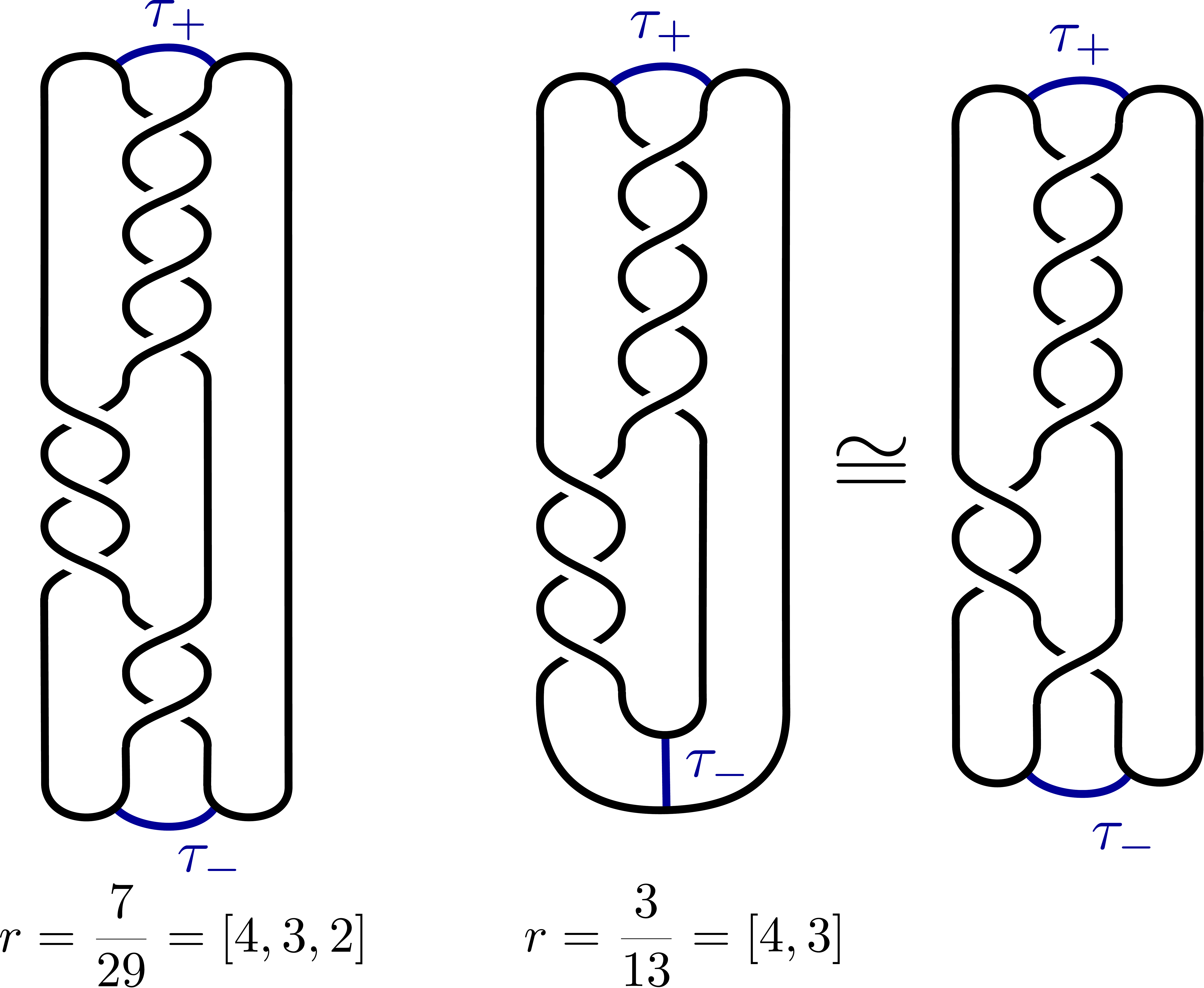}
\caption{$2$-bridge link diagram}
\label{fig.2-bridge link diagram}
\end{figure}

The natural $(\ZZ_2)^2$-actions on
$(\Ball,t(\infty))$ and $(\Ball, t(r))$
can be glued to produce a $(\ZZ_2)^2$-action on $(S^3,K(r))$.
Let $f$ and $h$ be the generators of the action
whose restrictions to $(\Ball,t(\infty)))$ are
the $\pi$-rotations about the $y$-axis and $x$-axis, respectively
(see Figure \ref{fig.trivial-tangle}).
We call $f$, $h$, and $fh$, respectively,
the {\it vertical involution}, the {\it horizontal involution}, and
the {\it planar involution} of $K(r)$.
They are characterized by the following properties.
\begin{enumerate}
\item
$\fix(h)$ contains $\tau_+$, whereas 
each of $\fix(f)$ and $\fix(fh)$ intersects $\tau_+$ transversely in a single point.
\item
The horizontal simple loop $\alpha_0$ in $\partial (\Ball-t(\infty))$
is mapped by $f$ to itself preserving orientation,
and it is mapped by $fh$ to itself reversing orientation.
\end{enumerate}

\begin{figure}
\includegraphics[width=0.4\hsize, bb=0 0 715 721]{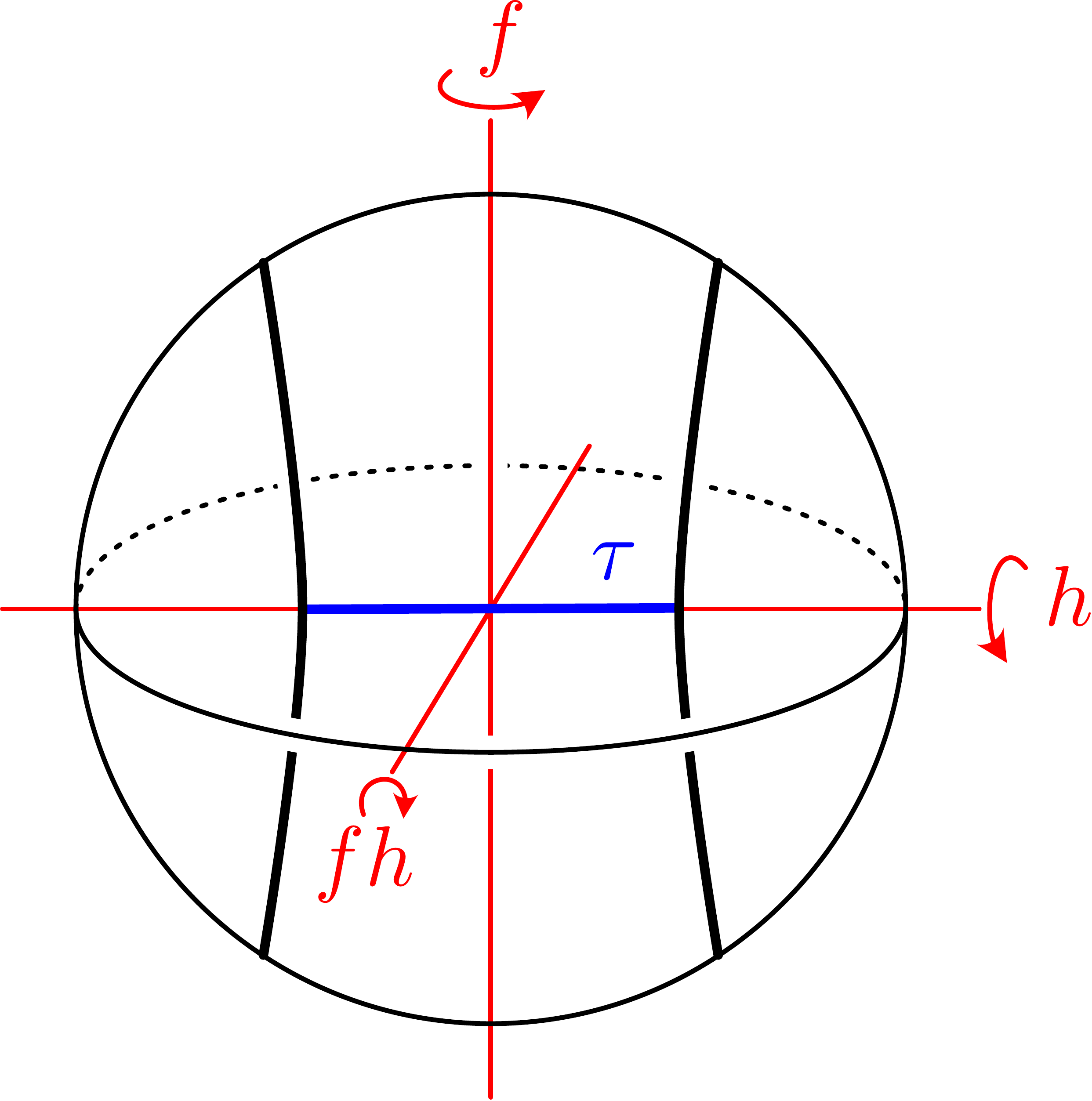}
\caption{Natural $(\ZZ_2)^2$-actions on
$(\Ball,t(\infty))$ consisting of the vertical involution $f$,
the horizontal involution $h$,
and the planar involution $fh$
}
\label{fig.trivial-tangle}
\end{figure}

If the rational number $r=q/p$ satisfies the congruence
$q^2\equiv 1 \pmod p$, then $K(r)$ admits an additional
orientation-preserving symmetry which interchanges 
$(\Ball, t(\infty))$ and $(\Ball, t(r))$.
For a description of such symmetries, see e.g. 
\cite[Sections 4 and 6]{ALSS}, \cite[Section 3]{Sakuma1}.

We finally recall the classification theorem for $2$-bridge links due to 
Schubert \cite{Schubert}
(cf. \cite[Chapter 12]{BZH}).

\begin{proposition}
\label{prop:classification-2-bridgelinks}
For two rational numbers $r=q/p$ and $r'=q'/p'$, with $p$ and $p'$ positive,
the following holds.

(1) There is an orientation-preserving auto-homeomorphism $\varphi$ of $S^3$
which maps $K(r)$ to $K(r')$ if and only if $p=p'$ and either
$q\equiv q' \pmod{p}$ or $qq'\equiv 1 \pmod{p}$.
Moreover the following hold.

\begin{enumerate}
\item[(a)]
If $p=p'$ and $q\equiv q' \pmod{p}$, then
there there is an orientation-preserving auto-homeomorphism $\varphi$ of $S^3$
which maps $(K(r),\tau_+,\tau_-)$ to $(K(r'),\tau_+,\tau_-)$
and 
respects the $(\ZZ_2)^2$-action.
Moreover, the conjugate of the vertical involution of $K(r)$ by $\varphi$
is either the vertical or planar involution of $K(r')$,
according to whether $q'\equiv q \pmod{2p}$ or $q'\equiv q+p \pmod{2p}$.
\item[(b)]
If $p=p'$ and $qq'\equiv 1 \pmod{p}$, then
there there is an orientation-preserving auto-homeomorphism of $S^3$
which maps $(K(r),\tau_+,\tau_-)$ to $(K(r'),\tau_-,\tau_+)$
which respects the $(\ZZ_2)^2$-action.
\end{enumerate}

(2) There is an orientation-reversing auto-homeomorphism $\varphi$ of $S^3$
which maps $K(r)$ to $K(r')$ if and only if $p=p'$ and either
$q\equiv -q' \pmod{p}$ or $qq'\equiv -1 \pmod{p}$.
\end{proposition}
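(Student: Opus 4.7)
The plan is to reduce the classification to the classical homeomorphism classification of lens spaces via the two-fold branched covering construction. First I would observe that the two-fold branched cover of each rational tangle $(\Ball, t(s))$ along $t(s)$ is a solid torus on whose boundary torus the loop covering $\alpha_s$ bounds a meridian disc. Gluing the two-fold branched covers of $(\Ball, t(\infty))$ and $(\Ball, t(q/p))$ over the common Conway sphere therefore produces the lens space $L(p,q)$ as the two-fold branched cover of $(S^3, K(q/p))$, with orientation inherited from that of $\Ball$.

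For the \emph{necessity} direction of (1) and (2), any homeomorphism $\varphi:(S^3,K(r)) \to (S^3,K(r'))$ lifts to a homeomorphism $\tilde\varphi: L(p,q) \to L(p',q')$ of the same orientation type. Reidemeister's classification of lens spaces then forces $p=p'$, together with one of $q' \equiv q \pmod{p}$ or $qq' \equiv 1 \pmod{p}$ when $\varphi$ is orientation-preserving, and one of $q' \equiv -q \pmod{p}$ or $qq' \equiv -1 \pmod{p}$ when $\varphi$ is orientation-reversing.

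For the \emph{sufficiency} direction, I would construct explicit homeomorphisms. The substitution $q \mapsto q+p$ corresponds to inserting a full twist along a Conway sphere parallel to $\partial\Ball$ inside $(\Ball, t(\infty))$, which extends to an orientation-preserving auto-homeomorphism of $S^3$ sending $K(q/p)$ to $K((q+p)/p)$, preserving the tunnels $\tau_\pm$ and equivariant with respect to the $(\ZZ_2)^2$-actions; iterating gives (1a) modulo the mod $2p$ refinement. The case $qq' \equiv 1 \pmod{p}$ is handled by interchanging the roles of the two tangle balls, giving an orientation-preserving homeomorphism that swaps $\tau_+$ and $\tau_-$; this is (1b). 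Finally, composing with a reflection of $\RR^3$ in a coordinate plane produces an orientation-reversing homeomorphism $K(r) \to K(-r)$, which combined with the previous cases yields (2).

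The main obstacle is the mod $2p$ refinement in (1a), which distinguishes whether the constructed homeomorphism conjugates the vertical involution of $K(r)$ to the vertical or to the planar involution of $K(r')$. A single full twist on the Conway sphere interchanges the vertical and planar involutions, while a double full twist preserves each of them; since one full twist shifts $q$ by $p$, the parity of the shift $q'-q$ in units of $p$ determines the type. Thus $q' \equiv q \pmod{2p}$ corresponds to the vertical-to-vertical case, and $q' \equiv q+p \pmod{2p}$ to the vertical-to-planar case. Carrying this out rigorously requires careful tracking of the $(\ZZ_2)^2$-action through the tangle gluing, for which the explicit model depicted in Figure \ref{fig.trivial-tangle} is essential.
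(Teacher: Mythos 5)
Your proposal is correct and follows the standard route (double branched cover $=L(p,q)$, Reidemeister's lens space classification for necessity, explicit twist/flip homeomorphisms for sufficiency and for the refinements about $\tau_\pm$ and the involutions); this is essentially the classical argument of Schubert as presented in the references the paper cites. The paper itself gives no proof of this proposition — it simply recalls it from Schubert (cf.\ \cite{BZH}) — so there is nothing in the text to diverge from.
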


\section{Heckoid orbifolds and Heckoid groups}
\label{sec:Heckoid}

In this section, we recall the definition of Heckoid orbifolds and Heckoid groups
given by \cite[Section 3]{Lee-Sakuma_2013}.

Consider the quotient orbifold
$(\Ball-t(\infty))/(\ZZ_2)^2$,
where $(\ZZ_2)^2$ is the natural action 
illustrated in Figure \ref{fig.trivial-tangle}.
Note that its boundary is identified with 
$\PConway/(\ZZ_2)^2\cong S^2(2,2,2,\infty)$,
which is the quotient of
$\RR^2-\ZZ^2$ by the group generated by the
$\pi$-rotations around the points in $(\frac{1}{2}\ZZ)^2$.
Note that $\pi_1(\PConway)$ 
is identified with a normal subgroup of $\pi_1(\PConway/(\ZZ_2)^2)$
of index $4$.
For each $s\in \QQ\cup\{\infty\}$, 
let $\beta_s$ be the simple loop in $\PConway/(\ZZ_2)^2$ 
obtained as the projection of a line in $\RR^2-(\frac{1}{2}\ZZ)^2$ of slope $s$. 
The simple loop $\alpha_s$ in $\PConway$ doubly covers $\beta_s$,
and so we have $\alpha_s=\beta_s^2$ as conjugacy classes in $\pi_1(\PConway/(\ZZ_2)^2)$.
 
For $r\in\QQ$ and $m \in \NN_{\ge 3}$,
consider the $3$-orbifold
$\orbb(\infty;2):=\cl(\Ball-N(t_{\infty}))/(\ZZ_2)^2$,
attach a 2-handle orbifold $D^2(m)\times I$ to it along the simple loop $\beta_r$.
Since $\beta_r$ divides $\PConway/(\ZZ_2)^2\cong S^2(2,2,2,\infty)$
into $D^2(2,2)$ and $D^2(2,\infty)$,
the resulting $3$-orbifold has a spherical boundary $S^2(2,2,m)\cong S^2/D_m$,
where $D_m$ is the dihedral group of order $2m$ (cf. Notation \ref{notation}(3)).
Cap this spherical boundary with the $3$-handle orbifold
$B^3(2,2,m)\cong B^3/D_m$,
and denote the resulting $3$-orbifold by $\orbh(r;m)$.
(Though this orbifold was denoted by $\OO(r;m)$ 
in \cite{Lee-Sakuma_2013}, 
we employ this symbol,
because we use the symbol $\OO$ to mainly denote spherical dihedral orbifolds.)
Then we have
\[
\pi_1(\orbh(r;m))\cong \pi_1(S^2(2,2,2,\infty))/\llangle \beta_{\infty}^2,\beta_{r}^m\rrangle.
\]
Let $P$ be the annular orbifold $\fr N(t_{\infty})/(\ZZ_2)^2 \cong D^2(2,2)$
on $\partial \orbh(r;m)$,
and continue to denote the orbifold pair $(\orbh(r;m),P)$ by 
the symbol $\orbh(r;m)$.

In \cite[Section 6]{Lee-Sakuma_2013}, it is proved that
the orbifold pair $\orbh(r;m)$
is a pared $3$-orbifold
(see \cite[Definition 8.3.7]{Boileau-Porti}).

\begin{definition}
\label{def:pared-orbifold}
{\rm
An orbifold pair $(M_0,P)$ is a {\it pared $3$-orbifold}
if it satisfies the following conditions
\begin{enumerate}
\item
$M_0$ is a compact, orientable, irreducible $3$-orbifold which is very good
(i.e., $M_0$ has a finite manifold cover).
\item
$P\subset \partial M_0$ is a disjoint union of 
incompressible toric and annular $2$-suborbifolds.
\item
Every rank $2$ free abelian subgroup of $\pi_1(M_0)$ is conjugate to a
subgroup of some $\pi_1(P_i)$, where $P_i\subset P$ is a connected component.
\item
Any properly embedded annular $2$-suborbifold $(A,\partial A)$ of $\orbp$
whose boundary rests on essential loops in $P$ is parallel to $P$.
\end{enumerate}
}
\end{definition}

It is also observed in \cite[Section 6]{Lee-Sakuma_2013} that
$\orbh(r;m)=(\orbh(r;m),P)$ is a
Haken pared orbifold (see \cite[Definitions 8.0.1 and 8.3.7]{Boileau-Porti}).
Hence,  
by the hyperbolization theorem 
of Haken pared orbifolds \cite[Theorem 8.3.9]{Boileau-Porti},
the pared orbifold $\orbh(r;m)$
admits a geometrically finite complete hyperbolic structure,
namely, 
the interior of the orbifold $\orbh(r;m)$ admits a geometrically finite complete hyperbolic structure
such that $P$ represents the parabolic locus
(see Section \ref{sec:tameness} for definitions).

Moreover, such a hyperbolic structure is unique,
because the ends of the non-cuspidal part of $\orbh(r;m)$
are isomorphic to $(\mathrm{a\ turnover})\times [0,\infty)$, which are quasi-isometrically rigid, and every orbifold homeomorphism between two geometrically finite structures preserving the parabolicity in both directions is isotopic to a quasi-isometry, as can be seen by the same argument as Marden's theorem \cite{Marden}.
We denote the unique (up to conjugation) Kleinian group that uniformises the pared orbifold $\orbh(r;m)$ by the symbol $\pi_1(\orbh(r;m))$.

Now the Heckoid groups and the Heckoid orbifolds are defined as follows
\cite[p.242 and Definition 3.2]{Lee-Sakuma_2013}.

\begin{definition}
\label{Def:Heckoid1}
{\rm
For $r\in\QQ$ and $n=\frac{m}{2}\in\frac{1}{2}\NN_{\ge 3}$,
the {\it Heckoid group $G(r;n)$ of slope $r$ and index $n$} 
is the Kleinian group that is obtained as the image of
the natural homomorphism 
\[
\psi: \pi_1(\cl(\Ball-N(t_{\infty})))
\to 
\pi_1(\cl(\Ball-N(t_{\infty}))/(\ZZ_2)^2)
\to
\pi_1(\orbh(r;m))<\PSL(2,\CC).
\]
The {\it Heckoid orbifold $\orbs(r;n)$ of slope $r$ and index $n$}
is the pared orbifold, that is obtained as the covering of 
the pared orbifold $\orbh(r;m)$
associated with the subgroup $G(r;n)<\pi_1(\orbh(r;m))$.
We also denote the Kleinian group $G(r;n)$ by $\pi_1(\orbs(r;n))$.
}
\end{definition}

Then we have the following proposition.
(The main Theorem \ref{main-theorem} implies that 
the converse to the first assertion of the proposition holds.)

\begin{proposition}
\label{prop:two-generator-Heckoid}
For any $r\in\QQ$ and $n=\frac{m}{2}\in\frac{1}{2}\NN_{\ge 3}$,
the Heckoid group is 
a (non-free) Kleinian group with nontrivial torsion which is
generated by two non-commuting parabolic transformations.
Moreover, the image of the conjugacy class of 
the simple loop $\alpha_r$ in $G(r;n)$
is an elliptic transformation of rotation angle $\frac{2\pi}{n}=\frac{4\pi}{m}$.
\end{proposition}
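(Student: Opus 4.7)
The plan is to read off all four claimed properties directly from the defining homomorphism
\[
\psi: \pi_1(\cl(\Ball - N(t_\infty))) \longrightarrow \pi_1(\orbh(r;m)) < \PSL(2,\CC),
\]
exploiting the fact that $\pi_1(\orbh(r;m))$ is a Kleinian group by the hyperbolization recalled just before Definition \ref{Def:Heckoid1}. Since $G(r;n)$ is by definition a subgroup of this Kleinian group, it is itself a Kleinian group.

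For the two parabolic generators, I use that $\pi_1(\cl(\Ball-N(t_\infty)))$ is the rank-$2$ free group freely generated by the canonical meridian pair $\{\mu_1,\mu_2\}$ of $t_\infty$, so $G(r;n) = \langle \psi(\mu_1), \psi(\mu_2)\rangle$. To see that each $\psi(\mu_i)$ is parabolic, I would note that $\mu_i$ lies on the frontier annulus $A_i \subset \fr N(t_\infty)$ of the $i$-th strand of $t_\infty$. The $(\ZZ_2)^2$-action interchanges $A_1$ and $A_2$ via $f$ while $h$ acts on each $A_i$ as a $\pi$-rotation with two fixed points, so $\fr N(t_\infty)/(\ZZ_2)^2 \cong D^2(2,2)$, which is precisely the annular parabolic locus $P$ of $\orbh(r;m)$. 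The corresponding cusp subgroup of $\pi_1(\orbh(r;m))$ is $\pi_1(D^2(2,2)) = D_\infty = \langle s_1, s_2 \svert s_1^2, s_2^2\rangle$, whose unique index-$2$ translation subgroup $\langle s_1 s_2\rangle$ is parabolic. Identifying $A_i \to D^2(2,2)$ as the orbifold double cover resolving both cone points, the loop $\mu_i$ corresponds to the generator $s_1 s_2$, so $\psi(\mu_i)$ is parabolic.

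For the elliptic element I use the identity $\alpha_r = \beta_r^2$ in $\pi_1(S^2(2,2,2,\infty))$ recalled at the beginning of Section \ref{sec:Heckoid}. In $\pi_1(\orbh(r;m))$ the loop $\beta_r$ is the core loop of the attached $2$-handle orbifold $D^2(m)\times I$ and hence is elliptic of rotation angle $2\pi/m$, so $\psi(\alpha_r) = \beta_r^2$ is elliptic of rotation angle $4\pi/m = 2\pi/n$ and of order $m/\gcd(2,m) \ge 2$ whenever $m\ge 3$. Since $\alpha_r$ represents a non-trivial conjugacy class in $\pi_1(\Ball - t(\infty)) = \pi_1(\PConway)/\llangle \alpha_\infty\rrangle$, it has a well-defined image in $G(r;n)$, giving a nontrivial torsion element and, in particular, non-freeness of $G(r;n)$ (free groups being torsion-free). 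Non-commutativity of the two parabolic generators then follows cleanly: if $\psi(\mu_1)$ and $\psi(\mu_2)$ commuted, they would share their unique fixed point on $\partial\HH^3$ and generate a torsion-free abelian subgroup of the parabolic stabiliser of that point, forcing $G(r;n)$ itself to be torsion-free and contradicting the presence of the elliptic $\psi(\alpha_r)$.

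The main obstacle is the parabolicity step: one must carefully track a meridian of $t_\infty$ through the $(\ZZ_2)^2$-quotient and verify that its image is the translation generator $s_1 s_2$ rather than one of the involutions $s_1, s_2$. This hinges on identifying $A_i \to D^2(2,2)$ as the orbifold double cover unfolding the two order-$2$ cone points, together with a consistency check that the natural orientation of $\mu_i$ corresponds to the orientation-preserving index-$2$ subgroup of $D_\infty$; everything else in the proposition is a direct consequence of this and the definition of $\psi$.
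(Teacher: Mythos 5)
Your proposal is correct and follows essentially the same route as the paper's proof: read off the generators as $\psi$ of the canonical meridian pair, get parabolicity from the fact that $\fr N(t_\infty)/(\ZZ_2)^2\cong D^2(2,2)$ is the parabolic locus of the uniformised pared orbifold $\orbh(r;m)$, and get the elliptic element from $\alpha_r=\beta_r^2$ with $\beta_r$ a meridian of the index-$m$ singular arc. The paper states the parabolicity and non-commutativity in one sentence as a consequence of the uniformisation, whereas you supply the (correct) details — the identification of $\mu_i$ with the translation $s_1s_2$ in the cusp group $D_\infty$, and the observation that commuting parabolic generators would force $G(r;n)$ to be torsion-free.
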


\begin{proof}
Let $\{x,y\}$ be the canonical generating meridian pair
of the rank $2$ free group $\pi_1(\cl(\Ball-N(t_{\infty})))$
(see Section \ref{sec:2-bridge}).
Then $G(r;n)$ is generated by the image 
$\{\psi(x),\psi(y)\}\subset\pi_1(\orbh(r;m))$.
Since $\pi_1(\orbh(r;m))$ is the Kleinian group
which uniformises the pared orbifold $\orbh(r;m)$,
the generating pair of $G(r;n)$ consists of 
non-commuting parabolic transformations.
Since $\alpha_r=\beta_r^2$ and since $\beta_r$ is a meridian of the singular set
of $\orbh(r;n)$ of index $2n=m$,
it follows that $\psi(\alpha_r)$ is an elliptic transformation of rotation angle $\frac{2\pi}{n}=\frac{4\pi}{m}$.
\end{proof}

Next, we recall the topological description of the Heckoid orbifolds.
In Definition \ref{Def:Heckoid1}, 
the Heckoid orbifold $\orbs(r;n)$ is defined as a covering of 
the pared orbifold $\orbh(r;m)$.
Their explicit topological description is
given by \cite[Propositions 5.2 and 5.3]{Lee-Sakuma_2013},
which says that 
the Heckoid orbifold $\orbs(r;n)$ 
is isomorphic to one of the orbifold pairs depicted in Figure \ref{fig.Heckoid-orbifold},
that is specified by the following formula.

\begin{align*}
\orbs(r;n) \cong 
\begin{cases}
\orbm_{0}(r;n)
& \text{if $n\in \NN_{\ge 2}$,}\\
\orbm_{1}(\hat r;m)
& \text{if $n=m/2$ for some odd $m> 2$ and if $p$ is odd,}\\
\orbm_{2}(\hat r;m)
& \text{if $n=m/2$ for some odd $m> 2$ and if $p$ is even,}\\
\end{cases}
\end{align*}
where $\hat r$ is defined from $r=q/p$ by the following rule.
\[
\hat r
=
\begin{cases}
\frac{q/2}{p}
& \text{if $p$ is odd and $q$ is even,}\\
\frac{(p+q)/2}{p}
& \text{if $p$ is odd and $q$ is odd,}\\
\frac{q}{p/2}
& \text{if $p$ is even.}
\end{cases}
\]

\noindent
Thus the following precise definition of the orbifold pairs
in Figure \ref{fig.Heckoid-orbifold} gives an explicit topological picture of
the Heckoid orbifold $\orbs(r;n)$.

\begin{definition}
\label{def:Heckoid-orbifold}
{\rm
(1)
For $r\in\QQ$ and for a positive integer $n\ge 2$,
$\orbm_{0}(r;n)$ denotes the orbifold pair determined by the weighted graph
$(S^3,K(r)\cup\tau_-,w_0)$, where $w_0$ is given by
\[
w_0(K(r))=\infty, \quad w_0(\tau_-)=n. 
\]

(2)
For $r=q/p\in\QQ$ with $p$ odd and an odd integer $m\ge 3$,
$\orbm_{1}(r;m)$ denotes the orbifold pair determined by the weighted graph
$(S^3,K(r)\cup\tau_-,w_1)$, where $w_1$ is given by the following rule.
Let $J_1$ and $J_2$ be the edges of the graph $K(r)\cup \tau_-$  distinct from $\tau_-$.
Then
\[
w_1(J_1)=\infty, \quad w_1(J_2)=2, \quad w_1(\tau_-)=m.
\]

(3)
For $r=q/p\in\QQ$ and an odd integer $m\ge 3$,
$\orbm_{2}(r;m)$ denotes the orbifold pair determined by the weighted graph
$(S^3,K(r)\cup\tau_+\cup\tau_-,w_2)$, where $w_2$ is given by the following rule.
Let $J_1$ and $J_2$ be unions of two mutually disjoint edges of the graph
$K(r)\cup\tau_+\cup\tau_-$ distinct from $\tau_{\pm}$.
Moreover, if $p$ is even, then 
both $J_1$ and $J_2$ are preserved by the vertical involution $f$ of $K(r)$.
(Thus $f$ interchanges the two components of each of $J_1$ and $J_2$.)
Then 
\[
w_2(J_1)=\infty, \quad w_2(J_2)=2, \quad w_2(\tau_+)=2, \quad w_2(\tau_-)=m.
\]
}
\end{definition}

In Defunition \ref{def:Heckoid-orbifold}(3),
the \lq identity' $w_2(J_1)=\infty$ means that
$w_2(e)=\infty$ for each edge $e$ contained in $J_1$.
Similarly, $w_2(J_2)=2$ means that $w_2(e)=2$ for each edge $e$ contained in $J_2$.
We employ this kind of convention throughout the paper.

\begin{remark}
\label{rem.well-defined-orbifold}
{\rm
(1) Because of the $(\ZZ_2)^2$-symmetry of $2$-bridge links,
the choice of the edges $J_1$ and $J_2$ in (2) and (3) 
does
not affect the isomorphism class of the resulting orbifolds
(see \cite[Remark 5.4]{Lee-Sakuma_2013}).

(2) Suppose $p$ is odd.
Then, in the definition of $\orbm_{2}(r;m)$,
the disjointness condition of $J_1$ and $J_2$ 
determines the pair $(J_1,J_2)$ up to the horizontal involution $h$
of $(S^3,K(r)\cup\tau_+\cup\tau_-)$. 
Moreover, according to whether $q$ is odd or even,
both $J_1$ and $J_2$ are preserved by $f$ or $fh$, respectively
(see Figure \ref{fig.Heckoid-orbifold2}(1),(2)).

(3) Suppose $p$ is even.
Then, in the definition of $\orbm_{2}(r;m)$,
the condition that both $J_1$ and $J_2$ are preserved by $f$ is not essential
in the following sense.
Let $J_{i,1}$ and $J_{i,2}$ be the components of $J_{i}$ for $i=1,2$,
such that $J_{1,1}\cap J_{2,1}=\emptyset$ and $J_{1,2}\cap J_{2,2}=\emptyset$.
Set $J_1'=J_{1,1}\cup J_{2,1}$ and $J_2'=J_{1,2}\cup J_{2,2}$.
Then $J_1'$ and $J_2'$ 
are unions of two mutually disjoint edges of the graph
$K(r)\cup\tau_+\cup\tau_-$ distinct from $\tau_{\pm}$,
such that
both $J_1'$ and $J_2'$ are preserved by 
the planar involution $fh$, instead of the vertical involution $f$
(see Figure \ref{fig.Heckoid-orbifold2}(3),(4)).
Let $w_2'$ be the weight function on the graph $K(r)\cup\tau_+\cup\tau_-$ defined by
\[
w_2'(J_1')=\infty, \quad w_2'(J_2')=2, \quad w_2'(\tau_+)=2, \quad w_2'(\tau_-)=m.
\]
Then $(S^3,K(r)\cup\tau_+\cup\tau_-,w_2')$
represents the orbifold $\orbm_{2}(r';m)$,
where $r'=(p+q)/p$ for $r=q/p$.
This follows from the fact that
there is a homeomorphism from $(S^3,K(r)\cup\tau_+\cup\tau_-)$ to
$(S^3,K(r')\cup\tau_+\cup\tau_-)$
sending $(\tau_{\pm}, J_1, J_2)$
to $(\tau_{\pm}, J_1', J_2')$
(see Proposition \ref{prop:classification-2-bridgelinks}(1a)).
}
\end{remark}

\begin{figure}
\includegraphics[width=0.9\hsize, bb=0 0 2915 784]{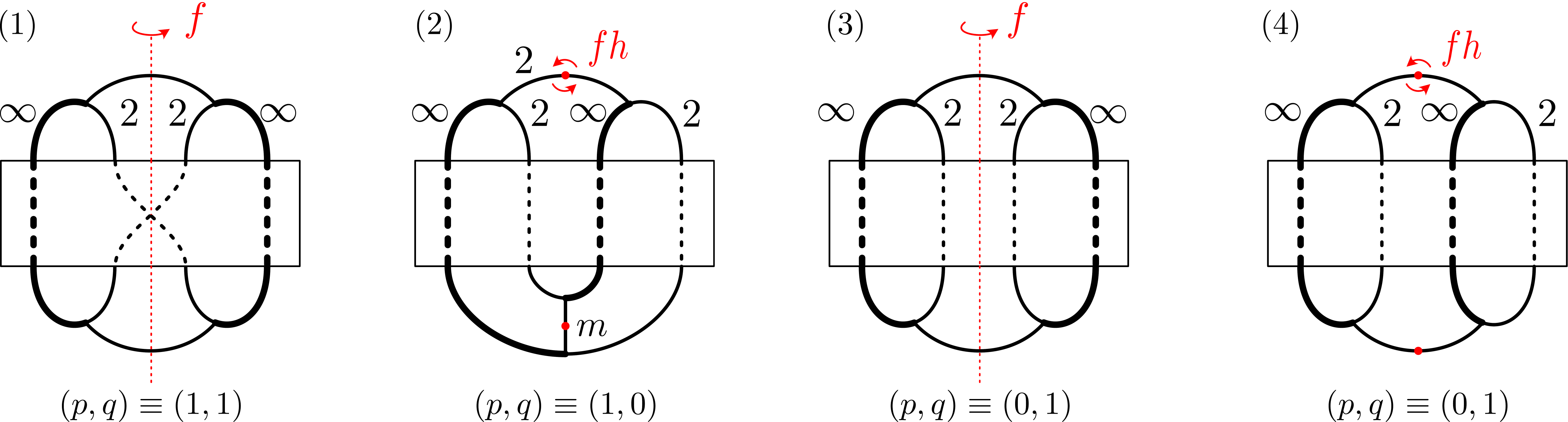}
\caption{
The first two figures (1) and (2) illustrate
Remark \ref{rem.well-defined-orbifold}(2),
and the last two figures (3) and (4) illustrate
Remark \ref{rem.well-defined-orbifold}(3).
See also \cite[Figures in Section 7]{ALSS}.
}
\label{fig.Heckoid-orbifold2}
\end{figure}

\section{Classification of dihedral orbifolds}
\label{sec:classification-dihedral-orbifolds}
In this section, we give a classification of the dihedral orbifolds,
which plays a key role in the proof of the main theorem.
We refer to \cite{BLP, BMP, CHK} 
for standard terminologies for orbifolds.

By using the the orbifold theorem, 
the geometrisation theorem of compact 
orientable $3$-manifolds, and 
the classification of geometric dihedral orbifolds (see Appendix),
we obtain the following classification of good orbifolds
with dihedral orbifold fundamental groups.

\begin{theorem}
\label{thm:dihedral-orbifold}
Let $\OO$ be a compact orientable $3$-orbifold
with nonempty singular set
satisfying the following conditions.
\begin{enumerate}
\item[{\rm(i)}]
$\OO$ does not contain a bad $2$-suborbifold.
\item[{\rm(ii)}]
Any component of $\partial\OO$ is not spherical.
\item[{\rm(iii)}]
$\pi_1(\OO)$ is a dihedral group.
\end{enumerate}
Then $\OO$ is isomorphic to one of the following orbifolds.

\begin{enumerate}
\item
The spherical dihedral orbifold $\OO(r;d_+,d_-)$
represented by the weighted graph $(S^3,K(r)\cup\tau_+\cup\tau_-,w)$
for some $r\in\QQ$ and coprime positive integers $d_+$ and $d_-$,
where $w$ is given by the following rule
(see Figure \ref{fig.Dihedral-orbifold}).
\[
w(K(r))=2, \quad w(\tau_+)=d_+, \quad w(\tau_-)=d_-.
\]
\item
The $S^2\times\RR$ orbifold $\OO(\infty)$ 
represented by the weighted graph $(S^3,K(\infty),w)$,
where $w$ takes the value $2$ at 
each component of
the $2$-bridge link $K(\infty)$ of slope $\infty$, i.e.
the  $2$-component trivial link.
\item
The $S^2\times\RR$ orbifold $\OO(\RP^3,O)$ 
represented by the weighted graph $(\RP^3, O,w)$,
where $O$ is the trivial knot in the projective $3$-space $\RP^3$ with $w(O)=2$.
\item
The orbifold $D^2(2,2)\times I$.
\end{enumerate}
\end{theorem}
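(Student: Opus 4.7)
The strategy is to reduce $\OO$ to a geometric orbifold via the orbifold prime decomposition and the orbifold theorem, and then invoke the classification of geometric dihedral orbifolds proved in the appendix. Hypothesis (i) allows us to apply Dunbar's theorem, so $\OO$ is good and its universal cover is a manifold; this legitimises the standard covering-space and geometric techniques throughout.

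First I would analyse reducibility. The orbifold version of Kneser's decomposition yields $\OO = \OO_1 \# \cdots \# \OO_k$ with non-trivial prime summands and induces $\pi_1(\OO) \cong \pi_1(\OO_1) * \cdots * \pi_1(\OO_k)$. Since a finite dihedral group $D_n$ admits no non-trivial free product decomposition, a finite $\pi_1(\OO)$ forces $\OO$ to be prime; whereas the only non-trivial free product decomposition of $D_\infty = \ZZ_2 * \ZZ_2$ forces $k = 2$ with each $\pi_1(\OO_i) \cong \ZZ_2$.

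For the prime finite-$\pi_1$ case, the universal cover $\tilde\OO$ is a compact simply-connected $3$-manifold, hence $S^3$ or $B^3$ by the Poincar\'e conjecture. The $B^3$ option would yield a spherical boundary orbifold $S^2/D_n$, violating (ii); so $\OO \cong S^3/D_n$ is closed and spherical, and the appendix identifies it as case (1). For the reducible $D_\infty$ case, the same finite-$\pi_1$ argument applied to each prime summand shows $\OO_i$ is either $\RP^3$ or $S^3$ carrying a weight-$2$ unknot; the non-empty singular set excludes both summands being $\RP^3$, and the two remaining combinations give precisely cases (2) and (3). For the prime $D_\infty$ case, atoroidality of $\OO$ follows from the absence of any rank-$2$ abelian subgroup in $D_\infty$, so every essential toric $2$-suborbifold would $\pi_1$-inject and be impossible; the orbifold theorem then endows $\OO$ with a geometric structure, and since $D_\infty$ is virtually cyclic the geometry must be $S^2 \times \RR$ (or its interval-bundle avatar in the bounded case). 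The appendix then isolates $D^2(2,2) \times I$ as the only such geometric orbifold compatible with (ii), giving case (4).

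\emph{Main obstacle.} The delicate points are twofold: first, verifying the precise hypotheses of the orbifold theorem in the prime $D_\infty$ case --- in particular, boundary incompressibility given primeness and (ii); and second, ruling out closed prime orientable $3$-orbifolds with $\pi_1 = D_\infty$ and non-empty singular set, i.e., showing every such candidate is already reducible and hence covered by cases (2) or (3). Both points rest on the appendix's exhaustive classification of geometric dihedral orbifolds, which is doing the real combinatorial work once the geometric structure is in hand.
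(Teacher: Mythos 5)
Your overall architecture coincides with the paper's: split into irreducible/reducible via the orbifold prime decomposition, use that the only nontrivial free product decomposition of a dihedral group is $D_\infty\cong\ZZ_2*\ZZ_2$ to dispose of the reducible case (yielding (2) and (3)), and in the irreducible case invoke the orbifold theorem plus the appendix's geometry-by-geometry classification. Your reducible analysis matches the paper's almost verbatim; the paper additionally observes that finiteness of $H_1(\OO)$ rules out non-separating spherical $2$-suborbifolds, which is needed to sidestep Petronio's caveats about spherical splittings of orbifolds, and uses geometrisation to guarantee that every prime factor has nontrivial fundamental group --- both points you gloss over but both are easily added.

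There is, however, a genuine error in your prime $D_\infty$ case. You assert that, $D_\infty$ being virtually cyclic, the geometry must be $S^2\times\RR$, and that the appendix then isolates $D^2(2,2)\times I$. Both halves fail: the appendix's $S^2\times\RR$ classification (Proposition \ref{prop:dihedral-orbifold2}) yields exactly $\OO(\infty)$ and $\OO(\RP^3,O)$, both reducible, so if the geometry really were forced to be $S^2\times\RR$ you would wrongly conclude that the prime $D_\infty$ case is empty and lose case (4) altogether. In fact $D^2(2,2)\times I$ is irreducible with $\pi_1\cong D_\infty$, and its interior carries complete structures modelled on $\EE^3$, $\HH^3$, $\HH^2\times\RR$, etc.\ --- not on $S^2\times\RR$; the ``two-ended group implies $S^2\times\RR$'' heuristic is valid only for closed orbifolds, and Proposition \ref{prop:dihedral-orbifold3} exists precisely to sweep up the six remaining geometries in the bounded/noncompact situation. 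A second, smaller gap: in the finite-$\pi_1$ case, identifying the universal cover as $S^3$ via the Poincar\'e conjecture does not by itself make the $D_n$-action isometric, so you cannot yet invoke Proposition \ref{prop:dihedral-orbifold1}; you still need the orbifold theorem (applicable since irreducibility holds and atoroidality is automatic for finite $\pi_1$) to conclude that $\OO$ is genuinely spherical, which is how the paper proceeds for all geometries uniformly.
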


\begin{remark}
\label{rem:pi-orbifold}
{\rm
For the orbifold $\OO(r;d_+,d_-)$,
if $d_+=1$ (resp. $d_-=1$), then $\tau_+$ (resp. $\tau_-$)
does not belong to the singular set (cf. Convention \ref{conv:pared-orbifold2}(1)).
In particular, $\OO(r):=\OO(r;1,1)$ is the $\pi$-orbifold associated with the $2$-bridge link $K(r)$
in the sense of \cite{Boileau-Zimmermann},
i.e. the orbifold with underlying space $S^3$ and with singular set $K(r)$,
whose index is $2$. 
In Adam's classification of torsion-free Kleinian groups generated by two parabolic 
transformations 
\cite[Theorem 4.3]{Adams1}, the $\pi$-orbifolds $\OO(r)$ played a key role, whereas
the orbifolds $\OO(r;d_+,d_-)$ play the corresponding key role in this paper. 
}
\end{remark}

\begin{figure}
\includegraphics[width=0.25\hsize, bb=0 0 559 672]{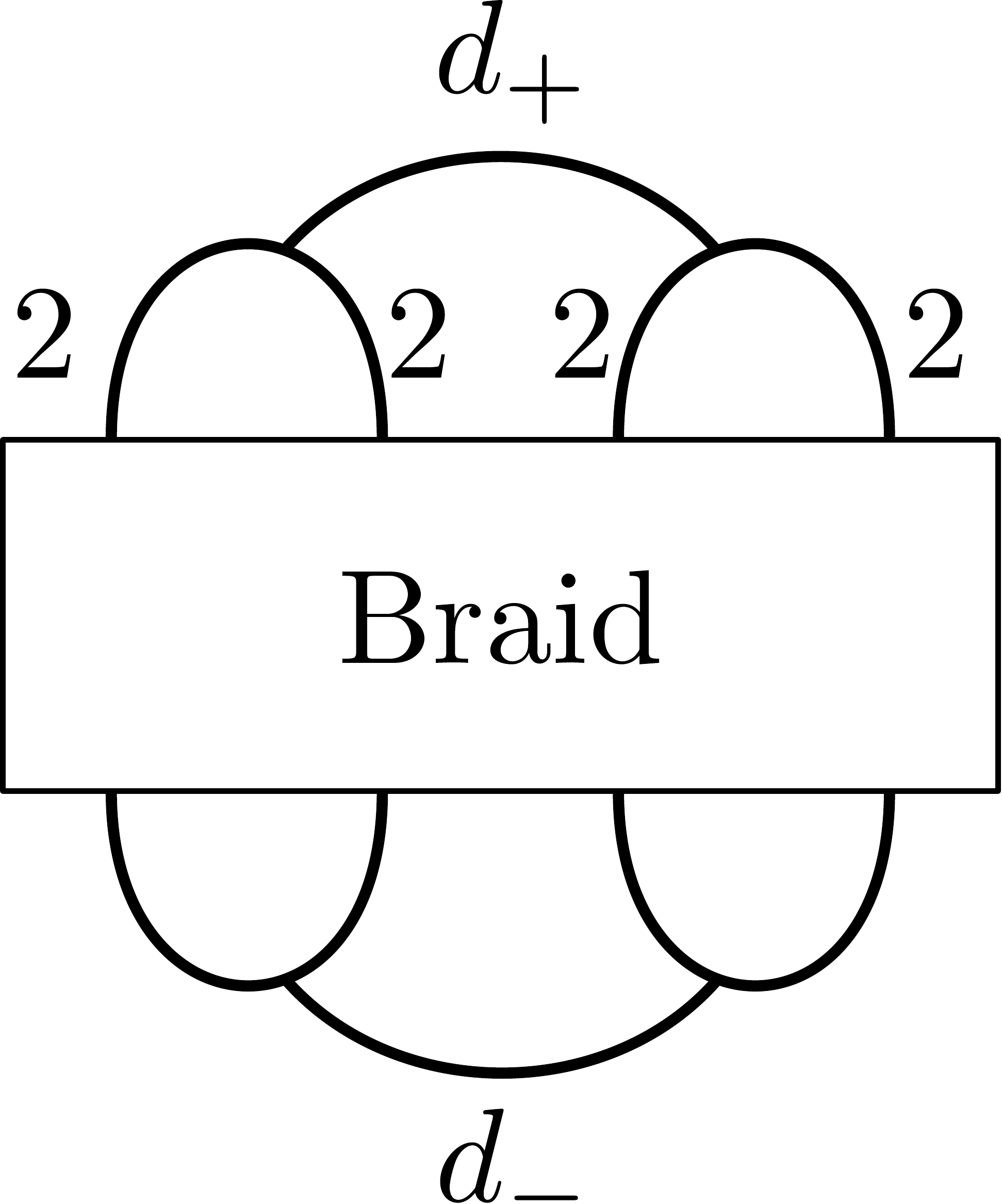}
\caption{The spherical dihedral orbifold $\OO(r;d_+,d_-)$}
\label{fig.Dihedral-orbifold}
\end{figure}

\begin{proof}
Let $\OO$ be a $3$-orbifold satisfying the three conditions.
We first treat the case where 
$\OO$ is irreducible, i.e.,
any spherical $2$-suborbifold of $\OO$ bounds a {\em discal $3$-suborbifold}
(a quotient of a $3$-ball by a finite orthogonal group).
We can observe that $\OO$ is topologically atoroidal as follows.
Suppose on the contrary that $\OO$
contains an essential toric suborbifold $F$.
Then the inclusion map induces an injective homomorphism
from $\pi_1(F)$ into $\pi_1(\OO)$, as explained below.
Since $\OO$ does not contain a bad $2$-suborbifold by the condition (i),
$\OO$ is very good,  
by \cite[Corollary 1.3]{BLP}. 
Thus by applying the equivariant loop theorem to the group action,
$\pi_1(F)$ embeds into $\pi_1(\OO)$ (see \cite[Corollary 3.20]{BMP}). 
This contradicts the fact that
the dihedral group $\pi_1(\OO)$ 
does not contain $\ZZ^2$.

Hence, by the orbifold theorem \cite[Corollary 1.2]{BLP},
$\OO$ is geometric,
i.e., either $\interior \OO$ admits one of Thurston's geometry
or $\OO$ is a discal $3$-orbifold.
The latter possibility does not happen by the assumption (ii),
and so $\interior \OO$ admits one of Thurston's geometry.
If the geometry is $S^3$, then by Proposition \ref{prop:dihedral-orbifold1}, 
$\OO$ is isomorphic to the orbifold $\OO(r;d_+,d_-)$ in (1).
If the geometry is $S^2\times \RR$,
then by Proposition \ref{prop:dihedral-orbifold2},
$\OO$ is isomorphic to the orbifold $\OO(\infty)$ in (2)
or the orbifold $\OO(\RP^3,O)$ in (3).
(But this does not happen,
because these orbifolds are reducible
whereas we currently assume that $\OO$ is irreducible.)
If the geometry is one of the remaining 6 geometries,
then by  Proposition \ref{prop:dihedral-orbifold3},
$\OO$ is isomorphic to the orbifold $D^2(2,2)\times I$ in (4).

Next, we treat the case when $\OO$ is reducible.
Note that 
$\OO$ does not contain a non-separating spherical $2$-suborbifold,
because $H_1(\OO)$ is finite.
Thus we do not need to worry about the paradoxical problems 
concerning spherical splitting of $3$-orbifolds
pointed out by Petronio \cite{Petronio}.
By \cite[Theorems 0.1]{Petronio},
there is a finite system of spherical $2$-suborbifolds $\orbspherical$
such that (a) no component of $\OO-\orbspherical$ is {\em punctured discal}
(a discal $3$-orbifold minus regular neighbourhoods of a finite set)
and 
(b) all {\em prime factors} of $\OO$
(the orbifolds obtained from the components of $\OO-\orbspherical$
by capping the boundary components with discal orbifolds) are irreducible.
It should be noted that some prime component may be a manifold,
i.e., its branching locus is empty.
By the geometrisation theorem of compact orientable $3$-manifolds (see e.g. \cite{BBMBP}) 
and the the geometrisation theorem of compact orientable $3$-orbifolds
(see e.g. \cite[Theorem 3.27]{BMP}),
each prime component of $\OO$ admits a canonical decomposition
into geometric pieces by a family of essential toric $2$-orbifolds.
In particular, each prime factor has a nontrivial orbifold fundamental group.
%Thus $\pi_1(\OO)$ cannot be trivial and so it must be a dihedral group.
Since 
the only nontrivial free product decomposition of a dihedral group
is the decomposition of the infinite dihedral group $D_{\infty}$ into
the free product $\ZZ_2 * \ZZ_2$,
$\OO$ is the connected sum (along a $2$-sphere with empty branching set)
of two irreducible $3$-orbifolds $\OO_1$ and $\OO_2$,
such that $\pi_1(\OO_i)\cong \ZZ_2$.
Since $\OO_i$ is geometric, $\OO_i$ is isomorphic to
(a) the discal $3$-orbifold $B^3/\ZZ_2$,
(b) the orbifold $(S^3,O,w)$, where $O$ is a trivial knot and $w(O)=2$, 
or (c) $\RP^3$.
By condition (ii), $\OO_i$ cannot be a discal orbifold.
Since $\OO=\OO_1\#\OO_2$ has nonempty ramification locus,
at least one of $\OO_i$ is not isomorphic to $\RP^3$.
Hence, $\OO$ is isomorphic to 
the orbifold $(S^3,O,w) \# (S^3,O,w)\cong \OO(\infty)$ in (2)
or the orbifold $(S^3,O,w) \# \RP^3\cong \OO(\RP^3,O)$ in (3).
\end{proof}

\begin{remark}
\label{remark:upside-down}
{\rm
By considering the image of $\OO(r;d_+,d_-)$
by a $\pi$-rotation about a horizontal axis in Figure \ref{fig.Dihedral-orbifold},
we can interchange the role of $d_+$ and $d_-$.
To be precise, we can see from Proposition \ref{prop:classification-2-bridgelinks}(1b)
that $\OO(q/p;d_+,d_-)\cong \OO(q'/p;d_-,d_+)$
if $qq'\equiv 1 \pmod p$.
}
\end{remark}

\section{Relative tameness theorem for hyperbolic orbifolds}
\label{sec:tameness}
We first recall basic terminology for hyperbolic orbifolds,
following \cite[Chapter 6]{BMP}.
Let $\Gamma$ be a finitely generated Kleinian group
and $M=\HH^3/\Gamma$ the quotient hyperbolic orbifold.
For a real number $\epsilon>0$,
the $\epsilon$-thin part $M_{(0,\epsilon]}$ of $M$
is the set of all points $x\in M$
such that $d(\tilde x, \gamma\tilde x)\leq \epsilon$
for some lift $\tilde x$ of $x$ to $\HH^3$ and some $\gamma\in\Gamma$
of order $>1/\epsilon$ (including $\infty$). 
By the Margulis Lemma, there is a constant $\mu>0$, such that
for any real number $\epsilon\in (0,\mu]$,
each component $X$ of $M_{(0,\epsilon]}$ is either a 
Margulis tube or a cuspidal end.
Here a {\it Margulis tube} is a compact quotient of the $r$-neighbourhood
of a geodesic in $\HH^3$ by an elementary subgroup of $\Gamma$
which preserves the geodesic,
and a {\it cuspidal end} is the quotient of a horoball in $\HH^3$
by an elementary parabolic subgroup of $\Gamma$
which preserves the horoball.

Topologically, a cuspidal end is a product $F\times [0,+\infty)$,
where $F$ is a Euclidean $2$-orbifold.
Thus we have the following possibilities for $F$.
\begin{enumerate}
\item
$F$ is the open annulus $S^1\times \RR$
or $S^2(2,2,\infty)$, the quotient of $S^1\times \RR$ by an involution.
\item
$F$ is the torus $T^2$
or $S^2(2,2,2,2)$, the quotient of $T^2$ by an involution.
\item
$F$ is 
$S^2(2,3,6)$, $S^2(2,4,4)$ or $S^2(3,3,3)$,
the quotient of $T^2$ by a finite cyclic group action
of order $6$, $4$ or $3$, respectively.
\end{enumerate}
A cusp $F\times [0,+\infty)$ is said to be {\it rigid}
if $F\cong S^2(2,3,6)$, $S^2(2,4,4)$ or $S^2(3,3,3)$.
Otherwise it is said to be {\it flexible}.
It is well-known that a cusp $F\times [0,+\infty)$ is rigid
if and only if the holonomy representation of 
the orbifold fundamental group $\pi_1(F\times [0,+\infty)$
admits no nontrivial deformation 
(see \cite[Proposition 1]{Matsuzaki}).

Let $M^{\mathrm{cusp}}_{(0,\epsilon]}$ be the union of 
the cuspidal ends of $M_{(0,\epsilon]}$,
and let $M_0:=M-\interior M^{\mathrm{cusp}}_{(0,\epsilon]}$
be the non-cuspidal part of $M$.
Then $P:=\partial M_0$ is a disjoint union of
euclidean $2$-orbifolds,
and is called the {\it parabolic locus} of $M_0$.
Note that $M\cong \interior M_0$ and that $P$ consists of ({\it closed}) toric orbifolds
(closed $2$-orbifolds obtained as quotients of the $2$-dimensional torus)
and {\it open} annular orbifolds
(open $2$-orbifolds obtained as quotients of 
the open annulus $S^1\times \RR$).

The following theorem is an orbifold version
of (the relative version of) the tameness theorem
established by Agol \cite{Agol} 
and Calegari-Gabai \cite{Calegari-Gabai}
(see also Soma \cite{Soma} and Bowditch \cite{Bowditch}).

\begin{theorem}
\label{thm:tameness}
Let $M=\HH^3/\Gamma$ be a hyperbolic $3$-orbifold
with finitely generated orbifold fundamental group $\Gamma$.
Then there is a compact $3$-orbifold $\bar M_0$
and a compact suborbifold $\bar P$ of $\partial \bar M_0$,
such that (i) $\interior \bar M_0=\interior M_0 \cong M$
and 
(ii) the interior of $\bar P$ in $\partial \bar M_0$
is equal to $P=\partial M_0$.
\end{theorem}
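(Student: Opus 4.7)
The plan is to combine Selberg's Lemma with the relative tameness theorem for hyperbolic $3$-manifolds and then descend the resulting compactification to the orbifold quotient. Since $\Gamma$ is a finitely generated subgroup of $\PSL(2,\CC)$, Selberg's Lemma furnishes a torsion-free, finite index, normal subgroup $\Gamma_0 \triangleleft \Gamma$. Set $M' := \HH^3/\Gamma_0$ and $G := \Gamma/\Gamma_0$. Then $M'$ is a hyperbolic $3$-manifold with finitely generated fundamental group $\Gamma_0$ (finite index in the finitely generated $\Gamma$), and $G$ acts isometrically on $M'$ with quotient $M$. The non-cuspidal part $M_0'\subset M'$ is $G$-invariant, and $P' := \partial M_0'$ is a disjoint union of tori and open annuli, sitting as the boundary of $\epsilon$-thin cuspidal neighbourhoods.

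Applying the tameness theorem for hyperbolic $3$-manifolds with finitely generated fundamental group, due to Agol and Calegari--Gabai (cf.\ Bowditch \cite{Bowditch}), in its relative formulation, yields a compact $3$-manifold $\bar N'$ together with a compact submanifold $\bar Q \subset \partial \bar N'$ such that $\interior \bar N' \cong M'$ and the interior of $\bar Q$ in $\partial \bar N'$ equals $P'$. Equivalently, each end of $M_0'$ has a product neighbourhood, with the cuspidal ends contributing annular or toric cross-sections. The key step is to produce $(\bar N', \bar Q)$ in a $G$-equivariant fashion; if this is achieved, setting $\bar M_0 := \bar N'/G$ and $\bar P := \bar Q/G$ immediately gives the desired compact orbifold and its boundary suborbifold. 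We propose to follow Bowditch's argument \cite{Bowditch} equivariantly: pick a $G$-invariant exhaustion of $M'$ by compact submanifolds, apply the shrinkwrapping construction to obtain a $G$-equivariant family of bounding surfaces, and then use these to construct equivariant product structures on each end of $M_0'$. Because each step of Bowditch's proof is natural under isometries, once its input data is specified equivariantly, the resulting compactification can be arranged to be $G$-equivariant.

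The main obstacle is precisely this equivariance: one must verify that the averaging, smoothing, and isotopy arguments employed in the shrinkwrapping and compact-core construction commute with the finite isometry group $G$, and that the resulting product structures on the ends of $M_0'$ are $G$-compatible (in particular, that $G$ permutes the ends, preserving their product splittings). Once this is in place, the conclusion follows immediately, since the quotient of a compact $G$-manifold by a finite group action is a compact orbifold, the $G$-action on $\bar N'$ preserves $\bar Q$, and passing to the quotient identifies the interior of $\bar Q / G$ in $\partial \bar M_0$ with $P = P'/G$, which is the parabolic locus of $M$.
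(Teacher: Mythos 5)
Your setup (Selberg's Lemma, passage to the finite regular manifold cover $M'$ with deck group $G$, and the relative tameness theorem of Agol and Calegari--Gabai applied to $M'$) agrees with the paper's proof. But the heart of the matter --- making the compactification compatible with the $G$-action --- is exactly the point you leave open. You propose to rerun Bowditch's shrinkwrapping argument equivariantly and then assert that ``once its input data is specified equivariantly, the resulting compactification can be arranged to be $G$-equivariant.'' That is not a proof: the shrinkwrapping and product-structure constructions involve choices (sweepouts, minimal-surface replacements, isotopies, compact cores) whose equivariance would have to be verified step by step, and you explicitly flag this verification as ``the main obstacle'' without carrying it out. As written, the argument has a genuine gap at its only nontrivial step.

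The paper avoids the need for an equivariant tameness theorem altogether. After applying the (non-equivariant) relative tameness theorem to get a compact pair $(\bar N_0,\bar Q)$ with $\interior\bar N_0=\interior N_0$, it forms the double $D(N_0)$ of the non-cuspidal part along its horospherical boundary, which is an open $3$-manifold equal to the interior of the compact manifold $D(\bar N_0)$ (the double of $\bar N_0$ along $\bar Q$). The $G$-action extends to $D(N_0)$, and the theorem of Meeks and Scott \cite[Theorem 8.5]{Meeks-Scott} on finite group actions on $3$-manifolds is then invoked to extend this action to the compactification $D(\bar N_0)$. Taking the closure of one copy of $M_0$ in $D(\bar N_0)/G$, together with the image of $\bar Q$, gives $(\bar M_0,\bar P)$. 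This doubling-plus-extension trick is the missing ingredient in your proposal: it converts the equivariance problem you could not resolve into a known extension theorem for finite group actions. If you want to salvage your route, you should either cite an equivariant tameness theorem or replace your final paragraph with this doubling argument.
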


\begin{proof}
We give a proof following the arguments of 
Bowditch \cite[Section 6.6]{Bowditch}
(cf. \cite[Lemma 14.3]{Agol}).
By Selberg's lemma, 
$M$ admits a finite regular 
manifold cover,
namely there is a complete hyperbolic manifold $N$
and a finite group $G$ of orientation-preserving isometries
of $N$ such that $N/G \cong M$.
The inverse image, $N_0$, of $M_0$ in $N$
forms a $G$-invariant non-cuspidal part of $N$,
and we have $N_0/G\cong M_0$.
By the relative version of the tameness theorem
\cite[Theorem 7.3]{Calegari-Gabai} 
(cf. \cite[Section 6]{Bowditch}),
there is a compact $3$-manifold $\bar N_0$
and a compact submanifold $\bar Q$ of $\partial \bar N_0$,
such that (i) $\interior \bar N_0=\interior N_0$ and
(ii) the interior of $\bar Q$ in $\partial \bar N_0$
is equal to $\partial N_0$.
Let $D(N_0)$ and be the double of $N_0$ along $\partial N_0$.
Then the action of $G$ on $N_0$ extends to an action on $D(N_0)$,
and $D(N_0)/G$ is isomorphic to 
the double, $D(M_0)$, of $M_0$ along $\partial M_0$.
Consider the double, $D(\bar N_0)$, of $\bar N_0$ along $\bar Q$.
Then $D(\bar N_0)$ is a compact manifold
with interior $D(N_0)$.
By \cite[Theorem 8.5]{Meeks-Scott},
the action of $G$ on $D(N_0)$
extends to an action on $D(\bar N_0)$,
and $\interior(D(\bar N_0)/G)=D(N_0)/G$ is identified with $D(M_0)$.
Let $\bar M_0$ be the closure in $D(\bar N_0)/G$
of one of the two copies of $M_0$ in $D(M_0)\subset D(\bar N_0)/G$,
and let $\bar P$ be the image of $\bar Q\subset D(\bar N_0)$ in
$D(\bar N_0)/G$.
Then the pair $(\bar M_0,\bar P)$ satisfies the desired conditions.
\end{proof}

The above theorem together with the following theorem
enables us to reduce the treatment of geometrically infinite case
to that of geometrically finite case.

\begin{theorem}
\label{thm:geom-infinite}
Under the setting of Theorem \ref{thm:tameness},
$(\bar M_0, \bar P)$ is a pared orbifold.
Moreover, the pared orbifold $(\bar M_0, \bar P)$ 
admits a geometrically finite
complete hyperbolic structure.
Namely, there is a geometrically finite Kleinian group $\Gamma'$ 
such that 
(i) the orbifold $\HH^3/\Gamma'$
is isomorphic to the orbifold $\interior \bar M_0\cong M$
and 
(ii) $P$ is the parabolic locus of $\Gamma'$.
\end{theorem}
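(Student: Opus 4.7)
The plan is to verify that $(\bar M_0, \bar P)$ satisfies the four axioms of Definition \ref{def:pared-orbifold} and then to invoke Thurston's hyperbolization theorem for Haken pared orbifolds, \cite[Theorem 8.3.9]{Boileau-Porti}, to produce a geometrically finite uniformisation. Compactness of $\bar M_0$ and $\bar P$ is part of the conclusion of Theorem \ref{thm:tameness}, and orientability is inherited from $M=\HH^3/\Gamma$. The orbifold $\bar M_0$ is very good: Selberg's lemma provides a finite manifold cover $N\to M$, and applying the relative tameness theorem for manifolds to $N$ (as in the proof of Theorem \ref{thm:tameness}) yields a compact manifold $\bar N_0$ that is a finite manifold cover of $\bar M_0$. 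Irreducibility of $\bar N_0$ follows from $\pi_2(N_0)=0$---since $N_0$ is covered by $\HH^3$ minus an equivariant family of open horoballs, which is aspherical---combined with the sphere theorem; equivariance then descends irreducibility to $\bar M_0$.

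Each component of $\bar P$ is the compactification of a Margulis cusp cross-section, which by the classification of Euclidean $2$-orbifolds is either a toric orbifold (closed case) or an annular $2$-orbifold (open case), and each is incompressible because its fundamental group is identified with a maximal parabolic subgroup of $\Gamma$. The rank two free abelian subgroup axiom is the standard fact about Kleinian groups that every $\ZZ^2<\Gamma$ is purely parabolic with a common fixed point on $\partial\HH^3$, hence is contained in the maximal parabolic subgroup associated with one of the toric components of $\bar P$. The fourth axiom---that every essential properly embedded annular $2$-suborbifold of $(\bar M_0,\bar P)$ with boundary on essential loops in $\bar P$ is parallel to $\bar P$---is the main technical point. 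I would argue this by contradiction: such an annular suborbifold $A$ would produce an infinite-order element $\gamma\in\pi_1(\bar M_0)$ whose image in $\Gamma$ is parabolic at both ends of $A$, hence parabolic throughout. If the two boundary circles of $A$ lie on distinct components of $\bar P$, the two corresponding maximal parabolic subgroups would meet nontrivially, contradicting the fact that distinct cusps have distinct fixed points on $\partial\HH^3$; if they lie on the same component, one passes to the manifold cover, applies the annulus theorem together with the geometry of hyperbolic cusps to conclude that $A$ is $\partial$-parallel, and then descends via the equivariant annulus theorem.

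Once the pared orbifold axioms are verified, Hakenness of $(\bar M_0,\bar P)$ follows from the presence of the nonempty incompressible suborbifold $\bar P \subset \partial\bar M_0$; in the degenerate case $\bar P=\emptyset$, the orbifold $M$ is necessarily closed, $\Gamma$ is cocompact (hence automatically geometrically finite), and we may simply take $\Gamma'=\Gamma$. In all other cases, \cite[Theorem 8.3.9]{Boileau-Porti} provides a geometrically finite Kleinian group $\Gamma'$ such that $\HH^3/\Gamma'\cong\interior\bar M_0\cong M$ as orbifolds, with parabolic locus represented by $P$, as required. The hardest step in this plan is the fourth pared orbifold axiom, where one must rule out accidental parabolic annuli by combining the tameness result with the geometry of hyperbolic cusps and the equivariant annulus theorem.
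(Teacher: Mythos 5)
Your verification of the pared--orbifold axioms is fine in outline (the paper simply cites \cite[Corollary 6.10 in Chapter V]{Morgan-Bass} for this, so your sketch is a reasonable expansion), but the final step of your plan has a genuine gap: you deduce Hakenness of $(\bar M_0,\bar P)$ from the mere presence of the nonempty incompressible suborbifold $\bar P\subset\partial \bar M_0$. In the orbifold category this implication is false. Hakenness in the sense of \cite[Definition 8.0.1]{Boileau-Porti} requires, in addition to the existence of an essential $2$-suborbifold, that the orbifold contain \emph{no essential turnover}, because a hierarchy cannot be continued through a turnover; this is why the paper says a non-Haken orbifold ``either contains no essential $2$-suborbifold or contains an essential turnover.'' Consequently \cite[Theorem 8.3.9]{Boileau-Porti} cannot be invoked directly, and the bulk of the paper's proof is devoted to precisely the two cases your argument skips: (i) if $\bar M_0$ contains no essential $2$-suborbifold, then $\partial \bar M_0$ consists of turnovers by \cite[Proposition 9.4]{BMP}, every end of $M$ is a turnover end, and since a hyperbolic turnover is realised by a totally geodesic surface these ends are rigid and geometrically finite, so the (unique) complete structure is geometrically finite; (ii) if $\bar M_0$ contains an essential turnover, one applies the turnover splitting theorem \cite[Theorem 4.8]{BMP} to decompose $\bar M_0$ along hyperbolic turnovers into Haken and small pieces, uniformises each piece geometrically finitely, and then glues the resulting structures along the totally geodesic turnovers. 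Without this analysis your proof does not cover, for example, a two-parabolic-generator group whose quotient orbifold compactifies to a small orbifold or one split by a turnover.

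A secondary error: your ``degenerate case'' claim that $\bar P=\emptyset$ forces $M$ to be closed and $\Gamma$ cocompact is wrong. The absence of parabolics only means there are no cusps; $\bar M_0$ can perfectly well be compact with nonempty boundary and $\bar P=\emptyset$ (e.g.\ $\Gamma$ convex cocompact, or geometrically infinite without parabolics), and such orbifolds still require the Haken/non-Haken case analysis above rather than being dismissed as automatically geometrically finite.
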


\begin{proof}
The first assertion that $(\bar M_0,\bar P)$ is a pared orbifold 
can be proved as in the proof of 
\cite[Corollary 6.10 in Chapter V]{Morgan-Bass}.
So we prove the second assertion that 
the pared orbifold $(\bar M_0, \bar P)$ 
admits a geometrically finite hyperbolic structure.
If the orbifold $\bar M_0$ is Haken in the sense of \cite[Definition 8.0.1]{Boileau-Porti}
then it follows from 
\cite[Theorem 8.3.9]{Boileau-Porti}
that the pared orbifold $(\bar M_0,\bar P)$ admits a geometrically finite hyperbolic structure, as desired.
So we may assume the orbifold $\bar M_0$ is non-Haken,
i.e., either it contains no essential
$2$-suborbifold or it contains an essential turnover.
In the first case, $\partial M_0$ consists only of turnovers
by \cite[Proposition 9.4]{BMP}.
This implies that every end of $M\cong \interior \bar M_0$ has a neighbourhood
isomorphic to the product of $(\mbox{a turnover})\times [0,\infty)$.
Since a hyperbolic turnover is always realised by a totally geodesic surface, each end has a neighbourhood containing no closed geodesics.
Thus every end of the hyperbolic orbifold $M$ is geometrically finite and rigid.
Thus $M$ admits a unique complete hyperbolic structure, 
and it is geometrically finite.
In the latter case, by the turnover splitting theorem \cite[Theorem 4.8]{BMP},
$\bar M_0$ admits a decomposition by a finite
disjoint family of essential hyperbolic turnovers 
into Haken orbifolds and small orbifolds.
By the orbifold theorem,
each piece admits a geometrically finite hyperbolic structure,
respecting the parabolic locus.
By gluing these hyperbolic structures along the totally geodesic
hyperbolic turnovers, we obtain a geometrically finite 
hyperbolic structure on $(\bar M_0,\bar P)$.
\end{proof}

\begin{remark}
{\rm
In \cite{Agol}, Agol suggested to prove the last assertion of 
Theorem \ref{thm:geom-infinite}
by using a relative version
of the work of Feighn and Mess \cite[Theorem 2]{Feighn-Mess}
which proves the existence of a compact core
of an orbifold $M=\HH^3/\Gamma$
with a finitely generated orbifold fundamental group $\Gamma$.
Such a relative version is proved by
Matsuzaki \cite[Lemma 2]{Matsuzaki}
under the assumption that   
$\Gamma$ is indecomposable 
(over finite cyclic groups and with respect to the parabolic subgroups) 
in the sense of \cite[Definition in p.26]{Matsuzaki}.
But we are not sure if non-free two-parabolic generator Kleinian groups 
satisfy this property. 
Though Theorem \ref{thm:tameness}, 
which is proved by using the deep tameness theorem,
of course, guarantees the existence of a relative core
of complete hyperbolic orbifolds with finitely generated fundamental groups,
we are not sure if more \lq elementary' proof is possible.
}
\end{remark}

\section{Orbifold surgery}
\label{sec:orbifold-surgery}

In this section, we introduce a convenient method 
for representing 
pared orbifolds by weighted graphs,
generalising the convention in the introduction
(Convention \ref{conv:pared-orbifold}).
Then we introduce the concept of an orbifold surgery
(Definition \ref{def:orbifold-surgery}),
which is a
key ingredient of the proof of the main theorem,
and prove a basic Lemma \ref{lem:relabeled-orbifold-generic} 
for the orbifold surgery.
At the end of this section, we also state
another basic Lemma \ref{lem:homology}
concerning the $\ZZ_2$-homology of an orbifold,
which is repeatedly used in the proof of the main theorem.

\begin{convention}
\label{conv:pared-orbifold}
{\rm
Consider a triple $(W,\Sigma, w)$,
where $W$ is a compact oriented $3$-manifold, 
$\Sigma$ is a finite trivalent graph properly embedded in $W$,
and $w$ is a function on the edge set of $\Sigma$ which takes value in
$\NN_{\ge 2}\cup\{\infty\}$.
Here, a loop component of $\Sigma$ is regarded as a single edge, 
$\Sigma\cap\partial M$ is the set of degree $1$ vertices of $\Sigma$,
and all other vertices have degree $3$.
For each edge $e$ of $\Sigma$, its value $w(e)$ by $w$ is called 
the {\it weight} of the edge.
We call the triple $(W,\Sigma, w)$ a {\it weighted graph} and
call $w$ the {\it weight function} of the weighted graph.
Let $\Sigma_{\infty}$ 
be the subgraph of $\Sigma$ consisting of the edges 
with weight $\infty$, 
and let $\Sigma_s$ be the subgraph of $\Sigma$ consisting of the edges 
with integral weight.

We regard each component, $F$, of $\partial W$ as a $2$-orbifold as follows:
the underling space is the complement of an open regular neighbourhood of 
$F\cap \Sigma_{\infty}$ in $F$,
and the singular set is $F\cap \Sigma_s$, where the index of a singular point 
is given by the weight of the corresponding edge of $\Sigma_s$.
We assume that the following condition (SC) is satisfied.

\begin{itemize}
\item[(SC)]
For any sphere component $S$ of $\partial W$,
the corresponding $2$-orbifold
is not a bad orbifold, a spherical orbifold, a discal orbifold, nor an annulus.
Namely, 
(i) $|S\cap W|\ge 3$ 
and (ii) if $|S\cap W|= 3$ then $\sum_{i=1}^3\frac{1}{w(e_i)}\le 1$,
where 
$e_i$ ($i=1,2,3$) are the (germs of) edges of $\Sigma$ 
which have an endpoint in $F$.
\end{itemize}

A trivalent vertex $v$ of $\Sigma$ is said to be 
{\it spherical}, {\it euclidean} or {\it hyperbolic}
according to whether
$\sum_{i=1}^3\frac{1}{w(e_i)}$ is bigger than, equal to, or smaller than $1$,
where 
$e_i$ ($i=1,2,3$) are the (germs of) edges incident on $v$.
Let $V_E$ (resp. $V_H$) be the set of the euclidean (resp. hyperbolic) vertices. 

Let $M_0$ be the complement of an open regular neighbourhood of 
$\Sigma_{\infty}\cup V_E\cup V_H$ in $M$.
Then $M_0$ has the structure of an orbifold,
with singular set $\Sigma_0:=M_0\cap \Sigma_s$,
where the indices of the edge of $\Sigma_0$ are given by $w$. 

For each edge $e$ of $\Sigma_{\infty}$, let $m_e\subset \partial M_0$
be a meridian loop of $e$,
let $P_{\infty}$ be the disjoint union
of the regular neighbourhoods in $\partial M_0$ of 
$m_e$, where $e$ runs over the edges of $\Sigma_{\infty}$.
The condition (SC)
implies that  
each component of $\cl(\partial M_0-P_{\infty})$ is
either a euclidean or hyperbolic $2$-orbifold.
Let $P$ be the union of $P_{\infty}$
and the euclidean components of $\cl(\partial M_0-P_{\infty})$.
Then $P$ is a disjoint union of euclidean $2$-orbifolds.

We call $\orbp$ the {\it orbifold pair determined by the weighted graph $(M,\Sigma,w)$}.
}
\end{convention}

\begin{convention}
\label{conv:pared-orbifold2}
{\rm
It is sometimes convenient to employ the following slight extension of
Convention \ref{conv:pared-orbifold}.

(1) 
We allow $w$ to have an edge $e$ with $w(e)=1$.
In this case, we consider the weighted graph $(W,\Sigma',w')$,
where $\Sigma'$ is the subgraph of $\Sigma$ consisting of 
those edges with $w(e)\ne 1$
and $w'$ is the restriction of $w$ to $\Sigma'$.
If $\Sigma'$ is also trivalent graph properly embedded in $W$
and the condition (SC) is satisfied,
then we define the orbifold pair determined by $(W,\Sigma,w)$
to be that determined by $(W,\Sigma',w')$.

(2)
We allow a quadrivalent vertex, $v$,  
such that the four edge germs incident on it have index $2$.
In this case, $v$ represents a parabolic locus, $P(v)$,
isomorphic to $S^2(2,2,2,2)$.
}
\end{convention}

A key ingredient of the proof of the main theorem 
is an orbifold surgery.

\begin{definition}
\label{def:orbifold-surgery}
{\rm
Let $\orbp$ be a pared orbifold, 
represented by a weighted graph $(W,\Sigma,w)$
satisfying the condition (SC).
By replacing the weight function $w$ with another weight function $w'$
(which also takes value in $\NN_{\ge 2}\cup\{\infty\}$),
we obtain another weighted graph $(W,\Sigma,w')$.
This fails to satisfy the condition (SC)
only when some sphere component $S$ of the topological boundary $\partial W$
determines a spherical $2$-orbifold with three singular points.
In this case, we cap all such sphere boundaries of $W$
with a cone over $(S,S\cap\Sigma)$ to obtain a new 
weighted graph,
which we call the {\it augmentation} of $(W,\Sigma,w')$.
It satisfies the condition (SC),
and determines an orbifold pair $(N_0,Q)$.
We call the $3$-orbifold $\OO:=N_0$
the orbifold obtained from $(M_0,P)$ by the {\it orbifold surgery}
determined by the replacement of the weight function $w$ with 
the new weight function $w'$.
}
\end{definition}

The following simple lemma is used repeatedly in the proof of the main theorem.

\begin{lemma}
\label{lem:relabeled-orbifold-generic}
Let $\orbp$ be a pared orbifold,
and let $\OO$ be the orbifold obtained from $\orbp$
by an orbifold surgery.
Then $\OO$
does not contain a bad $2$-suborbifold
and $\partial\OO$ does not contain a spherical component.
In particular, $\OO$ is very good.
\end{lemma}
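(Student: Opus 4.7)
The very-good conclusion is a formal consequence of the absence of a bad $2$-suborbifold by \cite[Corollary 1.3]{BLP}, so I would treat the two remaining assertions separately.

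Before either argument, I would unpack the augmentation. Starting from $(W,\Sigma,w)$ which satisfies (SC), clause~(i), namely $|S\cap\Sigma|\ge 3$ for sphere components $S$ of $\partial W$, depends only on $\Sigma$ and is preserved under the relabeling $w\mapsto w'$. Therefore clause~(ii) is the only one that can be violated by $w'$, and it fails only on a sphere component of $\partial W$ with $|S\cap\Sigma|=3$ and $\sum_{i=1}^3 1/w'(e_i)>1$, i.e.\ exactly the components capped off in the augmentation. The resulting augmented triple $(W',\Sigma',w'')$ therefore satisfies (SC), so it determines the orbifold pair $(N_0,Q)=(\OO,Q)$ via Convention \ref{conv:pared-orbifold}.

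For the absence of a spherical boundary component, I would classify the components of $\partial\OO=\partial N_0$ as: (a) pieces of components of $\partial W'$, with small open discs removed at each endpoint of $\Sigma'_\infty\cap\partial W'$; (b) link $2$-orbifolds $S^2(p,q,r)$ of removed interior trivalent vertices in $V_E\cup V_H$; and (c) frontiers of removed tubular neighbourhoods of edges of $\Sigma'_\infty\cap\interior W'$. Type~(b) produces a Euclidean (if the vertex is in $V_E$, in which case the component is part of $Q$) or hyperbolic (if in $V_H$) triangle orbifold, which is in either case not spherical. Type~(c) produces toric or annular orbifolds, hence Euclidean. For type~(a), non-sphere components of $\partial W'$ have non-positive Euler characteristic and so cannot give a spherical $2$-orbifold, while sphere components of $\partial W'$ are controlled by clause~(ii) of (SC) for $(W',\Sigma',w'')$, which forces the associated $2$-orbifold to be non-spherical. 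Together these exhaust the components of $\partial\OO$.

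The main work goes into showing that $\OO$ contains no bad $2$-suborbifold. The key structural input is that the singular set $\Sigma'_0:=N_0\cap\Sigma'_s$ of $\OO$ is a properly embedded trivalent graph all of whose interior trivalent vertices are spherical, simply because $V_E\cup V_H$ has been removed from $W'$. Consequently every point of $\OO$ has a neighbourhood modeled on $\RR^3/G$ (or a half-space quotient) for a finite subgroup $G<\SO(3)$, so $\OO$ is locally good. The remaining task is to exclude embedded $2$-spheres in $N_0$ meeting $\Sigma'_0$ in either one point on an edge of weight $n$ (yielding an $S^2(n)$ suborbifold) or in two points on distinct edges with distinct weights $p\ne q$ (yielding $S^2(p,q)$). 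I expect the $S^2(p,q)$ case to be the principal obstacle: it should be ruled out by combining a parity/intersection analysis of the sphere against the edges of $\Sigma'_0$, the weight compatibility imposed at each spherical trivalent vertex, and the orientability of $N_0$, in order to derive a contradiction with clause~(ii) of (SC) for the augmented graph.
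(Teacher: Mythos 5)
Your framing of the augmentation, your classification of the boundary components of $\OO$, and the deduction of very-goodness from \cite[Corollary 1.3]{BLP} all agree with the paper (which treats the boundary assertion as immediate from the fact that $\OO$ is represented by the augmented weighted graph, which satisfies (SC)). But the heart of the lemma --- the non-existence of a bad $2$-suborbifold --- is not proved in your proposal: your final paragraph only records an expectation (``I expect the $S^2(p,q)$ case \dots should be ruled out by \dots''), and the route you sketch cannot be completed. A parity/intersection analysis against $\Sigma'_0$, weight compatibility at the spherical trivalent vertices, and orientability are all local or combinatorial data, whereas a teardrop or spindle can be chosen disjoint from every vertex of the graph, so that data sees nothing. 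More decisively, the statement is simply false without the hypothesis that $\orbp$ is a \emph{pared} orbifold, a hypothesis your argument never invokes: if $M_0$ contained an essential spherical suborbifold $S^2(k,k)$ (failure of irreducibility), relabeling its two edges with distinct finite weights $m\ne n$ would create a genuine spindle $S^2(m,n)$ in $\OO$; if $(M_0,P)$ contained an essential annulus joining meridians of two distinct $\infty$-edges and not parallel into $P$ (failure of condition (4) of Definition \ref{def:pared-orbifold}), relabeling those two edges with distinct finite weights would cap that annulus into a spindle; and a compressing disc for $P$ would become a teardrop. Hence no argument aiming only at a contradiction with clause (ii) of (SC) for the augmented graph can succeed; the global structure of $(M_0,P)$ must enter.

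This is exactly what the paper's proof does. Given a hypothetical bad suborbifold $S\subset\OO$ (a teardrop $S^2(n)$ or spindle $S^2(m,n)$, $m>n\ge 2$), one uses that $|S|$ misses the vertices to view it as a sphere in $W$ transverse to $\Sigma$, determining a suborbifold $S^*$ of $M_0$ with $|S^*|=|S|\cap|M_0|$ whose singular data are given by the \emph{original} weights $w$. The case analysis then runs over whether the edges met by $S$ carry finite or infinite $w$-weight, and each case contradicts one pared property: a teardrop meeting a $\Sigma_s$-edge yields a teardrop in $M_0$ (contradicting goodness), while meeting a $\Sigma_\infty$-edge it yields a disc with essential boundary on $P$ (contradicting incompressibility of $P$); for a spindle, two $\Sigma_s$-edges give $S^*\cong S^2(m^*,n^*)$, forced to be spherical by goodness and hence, by irreducibility, to bound a discal $3$-orbifold, whence $e_1=e_2$ via (SC), contradicting $w'(e_1)\ne w'(e_2)$; one edge of each type gives a discal orbifold compressing $P$; and two $\Sigma_\infty$-edges give an annulus with boundary on $P$, which is parallel into $P$ by Definition \ref{def:pared-orbifold}(4), again forcing $e_1=e_2$. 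Without this transfer back to $(M_0,P)$ and the attendant use of irreducibility, incompressibility, and the annulus condition, the central claim of Lemma \ref{lem:relabeled-orbifold-generic} remains unproven in your proposal.
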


\begin{proof}
Let $(W,\Sigma,w)$ be a weighted graph representing the pared orbifold $\orbp$,
and let $w'$ be the weight function on $\Sigma$ 
that gives the orbifold $\OO=N_0$,
where $(N_0,Q)$ is the orbifold pair
that is represented by 
the augmentation of $(W,\Sigma,w')$.
Assume to the contrary that
$N_0$ contains a bad $2$-suborbifold,
$S$, which is either a teardrop $S^2(n)$ or a spindle $S^2(m,n)$
for some integers $m>n\ge 2$.
Since the underlying space $|S|$ is disjoint from
the vertex set of 
the singular set, $\Sigma(N_0)$, of $N_0$,
we may assume $|S|$ is a submanifold of $W$
transversal to $\Sigma$.
Then it determines a suborbifold, $S^*$, of $M_0$,
such that $|S^*|=|S|\cap |M_0|$.
The singular set of $S^*$ is 
equal to $|S^*|\cap\Sigma_s$, 
where $\Sigma_s$ is the subgraph of $\Sigma$
consisting of the edges of integral $w$-weight,
and the index of each singular point 
is given by the $w$-weight of the corresponding edge of $\Sigma_s$.

First, suppose that $S\cong S^2(n)$ for $n\ge 2$.
Let $e$ be the edge of $\Sigma$ such that $|S|\cap e$
is the singular point of $S$.
If $e$ is an edge of $\Sigma_s$, 
then $S^*$ is 
isomorphic to the teardrop $S^2(w(e))$,
which contradicts the fact that $M_0$ is good.
If $e$ is an edge of $\Sigma_{\infty}$, 
then $S^*$ is a disc whose boundary is
an essential simple loop on $P$.
This contradicts the fact that $P$ is incompressible in $M_0$.

Next, suppose that 
$S\cong S^2(m,n)$ for $m>n\ge 2$.
Let $e_1$ and $e_2$ be the edges of $\Sigma$
corresponding to the singular point of $S$
of index $m$ and $n$, respectively.
Then $w'(e_1)=m\ne n =w'(e_2)$, and so $e_1$ and $e_2$ are distinct.
If both $e_1$ and $e_2$ are contained in $\Sigma_s$,
then $S^*\cong S^2(m^*,n^*)$ for some $m^*,n^*\ge 2$.
Since $M_0$ does not contain a bad $2$-suborbifold,
$m^*$ and $n^*$ must be equal,
and hence $S^*$ is an spherical suborbifold of $M_0$.
Since $M_0$ is irreducible, 
$S^*$ bounds a discal $3$-orbifold.
This implies $e_1$ and $e_2$ determine the same edge of $\Sigma(N_0)$.
By the condition (SC),
this in turn implies $e_1=e_2$, a contradiction.
If exactly one of $e_1$ and $e_2$ is contained in $\Sigma_s$,
then $S^*$ is a discal orbifold 
whose boundary is
an essential simple loop on $P$.
This contradicts the assumption that
$P$ is incompressible in $M_0$.
If none of $e_1$ and $e_2$ is contained in $\Sigma_s$,
then $S^*$ is an annulus whose boundary
consists of a pair of essential simple loops on $P$.
Thus $S^*$ is parallel to $P$ by Definition \ref{def:pared-orbifold}(4),
and so $e_1=e_2$, a contradiction.

Thus we have proved that $\OO=N_0$ does not contain a bad $2$-suborbifold.
The assertion that $\partial\OO$ does not contain a spherical orbifold
follows from the fact that $\OO=N_0$ is represented by the augmentation of $(W,\Sigma,w')$.
The assertion that $\OO$ is very good follows from \cite[Corollary 1.3]{BLP},
which is a consequence of the orbifold theorem.
\end{proof}

Another key tool for the proof of the main theorem is the homology with $\ZZ_2$ coefficient.
Under Notation \ref{notation}, we have
the following lemma, which can be easily deduced from the definition of 
$H_1(\OO;\ZZ_2)$ and the Alexander duality.

\begin{lemma}
\label{lem:homology}
Suppose an orbifold $\OO$ is represented by a 
weighted graph $(S^3,\Sigma,w)$ in $S^3$.
Let $\Sigma_{\mathrm{even}}$ 
be the subgraph of $\Sigma$ spanned by the edges of even weight.
Then $H_1(\OO;\ZZ_2)$ is determined by $H_1(\Sigma_{\mathrm{even}};\ZZ_2)$.
To be precise, we have the following natural isomorphisms.
\[
H_1(\OO;\ZZ_2)\cong 
H_1(S^3-\Sigma_{\mathrm{even}};\ZZ_2)\cong
H^1(\Sigma_{\mathrm{even}};\ZZ_2)\cong
\mathrm{Hom}(H_1(\Sigma_{\mathrm{even}};\ZZ_2),\ZZ_2)
\]
In particular, the following hold.
\begin{enumerate}
\item
$H_1(\OO;\ZZ_2)$ is generated by the meridians of 
edges of $\Sigma_{\mathrm{even}}$.
\item
The meridian of an edge of $\Sigma$ of odd degree 
represents the trivial element of $H_1(\OO;\ZZ_2)$.
\item
Let $e_i$ ($i=1,2,3$) be edges of $\Sigma$ incident on a vertex of $\Sigma$,
and suppose that $w(e_1)$ is odd and $w(e_2)$ and $w(e_3)$ are even.
Then the meridians of $e_2$ and $e_3$ represent the same element of $H_1(\OO;\ZZ_2)$.
\end{enumerate}
\end{lemma}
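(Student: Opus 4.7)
The plan is to compute $H_1(\OO;\ZZ_2)$ directly from a standard presentation of $\pi_1(\OO)$ in terms of the weighted graph $(S^3,\Sigma,w)$, and then convert the resulting $\ZZ_2$-abelianisation into the statement about $\Sigma_{\mathrm{even}}$ via Alexander duality. First I would use the presentation
\[
\pi_1(\OO)\cong \pi_1(S^3-\Sigma)\big/\llangle m_e^{w(e)} \svert e\in\Sigma,\ w(e)<\infty\rrangle,
\]
where $m_e$ denotes a meridian of $e$; this uses that $|\OO|-\Sigma(\OO)$ deformation retracts onto $S^3-\Sigma$, since $|\OO|$ differs from $S^3$ only by removing regular neighbourhoods of $\Sigma_{\infty}\cup V_E\cup V_H$. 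Abelianising and tensoring with $\ZZ_2$, the relation contributed by an edge $e$ becomes $w(e)m_e=0\pmod 2$, which is vacuous when $w(e)$ is even and forces $m_e=0$ when $w(e)$ is odd, while edges of weight $\infty$ contribute no relation at all. Hence
\[
H_1(\OO;\ZZ_2)\cong H_1(S^3-\Sigma;\ZZ_2)\big/\langle m_e \svert w(e)\text{ odd and finite}\rangle.
\]

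Next I would identify this quotient with $H_1(S^3-\Sigma_{\mathrm{even}};\ZZ_2)$, reading $\Sigma_{\mathrm{even}}$ as the subgraph spanned by edges of weight even or $\infty$ (so that $\infty$-weight meridians, which survive in $H_1(\OO;\ZZ_2)$, are accounted for on the other side as well). Topologically, passing from $S^3-\Sigma$ to $S^3-\Sigma_{\mathrm{even}}$ amounts to filling back in the odd-weight edges; by van Kampen, $\pi_1(S^3-\Sigma_{\mathrm{even}})$ is the quotient of $\pi_1(S^3-\Sigma)$ by the normal closure of the corresponding meridians, and the $\ZZ_2$-abelianisation of this quotient is exactly the displayed one. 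Alexander duality for the compact polyhedron $\Sigma_{\mathrm{even}}\subset S^3$ then gives
\[
H_1(S^3-\Sigma_{\mathrm{even}};\ZZ_2)\cong \tilde H^1(\Sigma_{\mathrm{even}};\ZZ_2)=H^1(\Sigma_{\mathrm{even}};\ZZ_2),
\]
and the universal coefficient theorem (with trivial $\mathrm{Ext}$ term, since $\Sigma_{\mathrm{even}}$ is one-dimensional) identifies this with $\mathrm{Hom}(H_1(\Sigma_{\mathrm{even}};\ZZ_2),\ZZ_2)$, completing the chain of natural isomorphisms.

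The three enumerated consequences fall out immediately from this computation. Part (1) is the standard fact that meridians of edges generate the $H_1$ of a graph complement in $S^3$, applied to $\Sigma_{\mathrm{even}}$. Part (2) is built into the defining quotient above: the meridian of any odd-weight edge is killed by its own order relation mod $2$. Part (3) combines the mod-$2$ vertex relation $m_{e_1}+m_{e_2}+m_{e_3}=0$, coming from the link of the given trivalent vertex in $S^3-\Sigma$, with $m_{e_1}=0$ from Part (2) to obtain $m_{e_2}=m_{e_3}$. The main technical point is the bookkeeping in the first step, namely verifying that the homotopy equivalence $|\OO|-\Sigma(\OO)\simeq S^3-\Sigma$ matches meridian generators on the two sides (and is compatible with the quadrivalent-vertex convention of Convention \ref{conv:pared-orbifold2}(2)); once this is in place the rest is routine.
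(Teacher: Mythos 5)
Your proof is correct and follows essentially the same route the paper intends: the paper gives no detailed argument, stating only that the lemma is ``easily deduced from the definition of $H_1(\OO;\ZZ_2)$ and the Alexander duality,'' which is exactly your computation (mod-$2$ abelianisation of the Wirtinger-type presentation of $\pi_1(\OO)$, identification with $H_1(S^3-\Sigma_{\mathrm{even}};\ZZ_2)$, then Alexander duality and universal coefficients). Your explicit convention that edges of weight $\infty$ count as ``even'' -- so their meridians survive with no relation -- correctly fills in a point the paper leaves implicit, and it is the reading required for the paper's later applications of the lemma to pared orbifolds with parabolic edges.
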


\section{Canonical horoball pairs for Kleinian groups generated by two parabolic transformations}
\label{Sec:canonical-horoball-pair}
{\it Throughout Sections \ref{Sec:canonical-horoball-pair} $\sim$ 
\ref{sec:exceptional-flexible},
$\Gamma =\langle \alpha, \beta\rangle$ denotes a non-elementary Kleinian group 
generated by two parabolic transformations $\alpha$ and $\beta$,
and $M=\HH^3/\Gamma$ denotes the quotient hyperbolic $3$-orbifold.}
Let $\eta$ be the geodesic 
joining the parabolic fixed points of $\alpha$ and $\beta$,
and let $h$ be the $\pi$-rotation around $\eta$.
Then we have
\[
(h\alpha h^{-1},h\beta h^{-1})=(\alpha^{-1},\beta^{-1}).
\]
We call $h$ the {\it inverting elliptic element} for 
the parabolic generating pair $\{\alpha,\beta\}$ of 
the Kleinian group $\Gamma$.
As shown in \cite[Section~5.4]{Thurston},
we can find a geodesic intersecting $\eta$ orthogonally,
such that the $\pi$-rotation, $f$, around it satisfies the following identity.
\[
(f\alpha f^{-1},f\beta f^{-1})=(\beta,\alpha).
\]
We call $f$ the {\it exchanging elliptic element} for 
the parabolic generating pair 
$\{\alpha,\beta\}$ 
of the Kleinian group $\Gamma$.
It should be noted that $fh$ is
the exchanging elliptic element for 
the parabolic generating pair $\{\alpha,\beta^{-1}\}$ of $\Gamma$. 

By abuse of notation,
we denote the isometries of $M$ induced by $f$ and $h$
by the same symbols $f$ and $h$, respectively.
Each of them is either the identity map or a (nontrivial)
involution of $M$,
i.e., its order is $1$ or $2$.
We call the isometries $f$ and $h$,
the {\it exchanging involution} and the {\it inverting involution} of $M$
associated with the parabolic generating pair $\{\alpha,\beta\}$.
It should be noted that if $\Gamma$ is isomorphic to 
a hyperbolic $2$-bridge link group $G(K(r))$
and $\{\alpha,\beta\}$ is the upper-meridian pair,
then the involutions $f$ and $h$ on $M\cong S^3-K(r)$
are the restrictions of the vertical and horizontal involutions
of $K(r)$ (see Figure \ref{fig.trivial-tangle}).
This is the reason why we use the symbols $f$ and $h$ 
with two different meanings.

Let $\hat\Gamma:=\langle \Gamma, f\rangle$
be the group generated by $\Gamma$ and the exchanging elliptic element $f$
associated with the parabolic generating pair $\{\alpha,\beta\}$ of $\Gamma$.
Then $\hat \Gamma$ is a Kleinian group 
which is either equal to $\Gamma$ or a $\ZZ_2$-extension of $\Gamma$
according to whether $f$ belongs to $\Gamma$ or not.
Let $\hat M:=\HH^3/\hat \Gamma$ be the quotient hyperbolic orbifold,
and let $\hat C_{\alpha,\beta}$ be the maximal cusp of $\hat M$
corresponding to the conjugacy class of $\hat\Gamma$
containing both $\alpha$ and $\beta=f\alpha f^{-1}$.
Then the inverse image 
$p^{-1}(\hat C_{\alpha,\beta})$ 
of $\hat C_{\alpha,\beta}$ 
by the projection $p:\HH^3\to \hat M$
is a union of horoballs with disjoint interiors 
but whose boundaries have nonempty tangential intersections.
We call it the {\it canonical horoball system} 
associated with the parabolic generating pair $\{\alpha,\beta\}$ of $\Gamma$.
If a parabolic element $\gamma$ of $\Gamma$ 
stabilises a member of the canonical horoball system,
we denote the horoball by $H_{\gamma}$.
We denote the translation length of $\gamma$ on the horosphere $\partial H_{\gamma}$
by the symbol $|\gamma|=|\gamma|_{\partial H_{\gamma}}$,
and call it the {\it length of $\gamma$ in the canonical horosphere}.
We call the pair $(H_{\alpha},H_{\beta})$ the {\it canonical horoball pair}
for the parabolic generating pair $\{\alpha,\beta\}$ of 
the Kleinian group $\Gamma$.

Note that the definition of $|\gamma|$ depends on 
the parabolic generating pair $\{\alpha,\beta\}$,
because the exchanging elliptic element $f$ is involved in the definition.
However, it actually depends only on the pair $\{\fix(\alpha),\fix(\beta)\}$,
because any orientation-preserving isometry,
which exchanges
$\fix(\alpha)$ and $\fix(\beta)$,
also exchanges the members $H_{\alpha}$ and $H_{\beta}$
of the canonical horoball pair associated with $\{\alpha,\beta\}$.
(Otherwise, the product of $f$ and an unexpected involution,
which exchanges 
$\fix(\alpha)$ and $\fix(\beta)$
but does not exchange $H_{\alpha}$ and $H_{\beta}$,
gives a loxodromic transformation which fixes the parabolic 
fixed points $\fix(\alpha)$ and $\fix(\beta)$.
This contradicts the assumption that $\Gamma$ is discrete.)

The following lemmas are proved by Adams \cite[Lemma 3.1, Theorem 3.2, and p.197]{Adams1}
(see also Brenner \cite{Brenner}).
Since they holds a key to the proof of the main theorem
and since we described the setting in a slightly different way,
we include the proof.

\begin{lemma}
\label{Lem:Brenner}
Under the above setting, the following hold.
\begin{enumerate}
\item
For any parabolic element $\gamma\in\Gamma$ 
which stabilises a member of the canonical horoball system,
we have $|\gamma|\ge 1$.
\item
$1\le |\alpha|=|\beta|$.
\item
If $\Gamma$ is non-free then $|\alpha|=|\beta|<2$.
\end{enumerate}
\end{lemma}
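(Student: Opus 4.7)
The plan is to work in the upper half-space model of $\HH^3$ and to exploit the geometry of the maximal cusp $\hat C_{\alpha,\beta}$ of $\hat\Gamma$, with Shimizu's inequality as the main tool. First I will normalise coordinates so that $\alpha$ has parabolic fixed point $\infty$ and $\beta$ has parabolic fixed point $0$, with $H_\alpha = \{(z,y) : y \ge 1\}$ and $H_\beta$ the Euclidean ball of diameter $1$ tangent to $\CC$ at $0$, so that $H_\alpha$ and $H_\beta$ touch at the single point $(0,1)$. Such a normalisation is available because $f \in \hat\Gamma$ is an orientation-preserving isometry that swaps the two parabolic fixed points and, as a member of $\hat\Gamma$, must permute the horoballs of the canonical system; hence $f$ exchanges $H_\alpha$ and $H_\beta$. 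In these coordinates $\alpha$ acts as translation by some $\tau \in \CC$, and $|\alpha| = |\tau|$ in the flat Euclidean metric on $\partial H_\alpha = \{y = 1\}$.

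For (1), every horoball in the canonical system is a $\hat\Gamma$-translate of $H_\alpha$, and translation length on the boundary horosphere is preserved under such conjugation, so after replacing $\gamma$ by a suitable $\hat\Gamma$-conjugate we may assume $\gamma$ stabilises $H_\alpha$. Then $\gamma$ is a Euclidean translation by some $s \in \CC$ with $|\gamma| = |s|$. By maximality of $\hat C_{\alpha,\beta}$ there exists $\hat\gamma = \bigl(\begin{smallmatrix} a & b \\ c & d \end{smallmatrix}\bigr) \in \hat\Gamma$ with $c \ne 0$ such that $\hat\gamma(H_\alpha)$ is tangent to $H_\alpha$; since the image $\hat\gamma(H_\alpha)$ is a Euclidean ball of diameter $1/|c|^2$, tangency with $\{y \ge 1\}$ forces $|c| = 1$. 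Shimizu's inequality applied to the discrete group $\hat\Gamma$ then gives $|cs| \ge 1$, whence $|\gamma| = |s| \ge 1$. Assertion (2) is immediate from (1): applying (1) to $\alpha$ gives $|\alpha| \ge 1$, and since $f \in \hat\Gamma$ is an isometry conjugating $\alpha$ to $\beta$ and mapping $H_\alpha$ onto $H_\beta$, it preserves translation length on the boundary horospheres, so $|\alpha| = |\beta|$.

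Assertion (3) is the main obstacle and requires a different style of argument. The plan is to prove the contrapositive: $|\alpha| = |\beta| \ge 2$ implies $\Gamma$ is free. Under the normalisation above $\alpha$ is translation by $\tau$ with $|\tau| \ge 2$, and writing $\beta = \bigl(\begin{smallmatrix} 1 & 0 \\ \sigma & 1 \end{smallmatrix}\bigr)$ one finds $|\sigma| \ge 2$ (the isometric hemispheres of $\beta^{\pm 1}$ have Euclidean radius $1/|\sigma|$, and $|\beta| = |\sigma|$ on $\partial H_\beta$). Consider the convex polyhedron $D \subset \HH^3$ bounded by the two vertical half-planes perpendicular to the direction of $\tau$ at Euclidean distance $|\tau|/2$ from $\eta$, together with the two isometric hemispheres of $\beta^{\pm 1}$. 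A direct Euclidean computation shows that when $|\tau|, |\sigma| \ge 2$ these hemispheres lie within the vertical $\alpha$-strip (since the maximal Euclidean excursion of each hemisphere from the origin is $2/|\sigma| \le 1 \le |\tau|/2$) and are mutually disjoint except for tangency at the parabolic fixed point $0$. Hence $D$ has exactly four faces, paired by $\alpha$ (the two vertical faces) and by $\beta$ (the two hemispheres), and the cycle conditions are trivial. By the Poincar\'e polyhedron theorem---equivalently, by the Klein--Maskit combination theorem applied to the parabolic cyclic subgroups $\langle\alpha\rangle$ and $\langle\beta\rangle$---we conclude $\Gamma = \langle\alpha\rangle * \langle\beta\rangle$ is free of rank $2$, contradicting the hypothesis of non-freeness. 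The crux is thus the verification of the combinatorial hypotheses of the combination theorem, which reduces to the explicit Euclidean inequalities above.
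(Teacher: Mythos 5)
Your proposal is correct in substance, and for the heart of the lemma it takes a genuinely different route from the paper. For (1), the paper argues purely with horoballs: some member $H_g$ of the canonical system is tangent to $\partial H_{\gamma}=\CC\times\{1\}$, hence has Euclidean diameter $1$, and since $\gamma(H_g)=H_{\gamma g\gamma^{-1}}$ is again a member with interior disjoint from that of $H_g$, the translation length is at least $1$. Your version via Shimizu's inequality (with the tangent translate $\hat\gamma(H_\alpha)$ forcing $|c|=1$) is the same fact in analytic guise; both are fine, though the paper's is more elementary in that it needs no trace inequality. For (3), the paper gives no argument at all --- it simply cites Adams' Theorem 3.2 and Brenner's freeness criterion ($|\tau\sigma|\ge 4$ implies freeness) --- whereas you prove the contrapositive from scratch by building the four-faced strip-plus-isometric-hemispheres polyhedron and invoking the Poincar\'e/Klein--Maskit machinery. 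This is essentially a self-contained re-proof of the Sanov--Brenner criterion in the special case $|\tau|,|\sigma|\ge 2$, which is exactly what the lemma needs; what you lose in brevity you gain in making the paper's citation-dependence explicit and verifiable.

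Two caveats, both repairable. First, your normalisation asserts that $H_\alpha$ and $H_\beta$ touch at $(0,1)$. Maximality of $\hat C_{\alpha,\beta}$ only guarantees that \emph{some} $\hat\Gamma$-translate of $H_\alpha$ is tangent to $H_\alpha$, not that this translate is $H_\beta$; in general $H_\beta$ is a ball of Euclidean diameter $d\le 1$ (disjoint interiors), with equality not assured. Fortunately the slip is in the favourable direction: one computes $|\beta|=|\sigma|d$, so $|\sigma|=|\beta|/d\ge|\beta|\ge 2$, and all of your Euclidean estimates ($2/|\sigma|\le 1\le|\tau|/2$, tangency of the two hemispheres only at $0$) go through verbatim. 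Second, in the extremal configuration the hemispheres can be tangent to the vertical walls; a short computation shows this happens exactly when $\sigma\tau=4$, in which case $\trace(\beta^{-1}\alpha)=2-\sigma\tau=-2$, so the extra ideal tangency points carry parabolic cycle transformations and the version of the Poincar\'e theorem with parabolic cusp cycles (or, more simply, ping-pong on $\CC$ with the closed strip-complement and the two closed disks) still yields $\Gamma\cong\langle\alpha\rangle*\langle\beta\rangle$. You should say this explicitly rather than calling the cycle conditions ``trivial''.
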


\begin{proof}
(1) We may assume $\partial H_{\gamma}$ is the horosphere $\CC\times\{1\}$
in the upper half space model $\HH^3=\CC\times \RR_+$. 
Then some other member, $H_g$, of the canonical horoball system touches $\partial H_{\gamma}$
and hence has Euclidean diameter $1$.
Since $\gamma(H_g)=H_{\gamma g \gamma^{-1}}$ is also a member of the canonical horoball system,
$H_g$ and $\gamma(H_g)$ have disjoint interiors.
Hence we have $|\gamma|\ge 1$.

(2) Since $\alpha$ and $\beta$ are conjugate in $\hat\Gamma$, $|\alpha|$ and $|\beta|$
are equal.
Moreover, $|\alpha|=|\beta|$ is $\ge 1$ by (1).

(3) We refer the proof to \cite[Theorem 3.2]{Adams1} and Brenner \cite{Brenner}.
\end{proof}

\begin{lemma}
\label{lem:primitive}
Both $\alpha$ and $\beta$ are primitive in $\Gamma$.
\end{lemma}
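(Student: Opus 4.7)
The plan is to argue by contradiction: suppose $\alpha=\gamma^n$ for some $\gamma\in\Gamma$ and some integer $n\ge 2$; the statement for $\beta$ will follow by the identical argument (or, equivalently, by applying the same reasoning after conjugating by the exchanging element $f\in\hat\Gamma$).

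First I would identify $\gamma$ as a parabolic transformation sharing the fixed point of $\alpha$. Since $\gamma$ commutes with $\alpha=\gamma^n$, it lies in the centraliser of $\alpha$ in $\PSL(2,\CC)$, and a direct matrix computation (working in a conjugate where $\alpha(z)=z+1$) shows that this centraliser consists of the identity together with the parabolic transformations fixing $\fix(\alpha)$. Since $\gamma^n=\alpha\neq 1$, the element $\gamma$ is a nontrivial parabolic with $\fix(\gamma)=\fix(\alpha)$, so $\gamma$ preserves the canonical horoball $H_\alpha$ and in particular stabilises a member of the canonical horoball system. On $\partial H_\alpha=\partial H_\gamma$, both $\alpha$ and $\gamma$ act as Euclidean translations in the same direction, and $\alpha=\gamma^n$ yields the key identity
\[
|\alpha|=n\,|\gamma|.
\]

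Now I would split into cases. In the non-free case, Lemma~\ref{Lem:Brenner}(1) applied to $\gamma$ gives $|\gamma|\ge 1$, hence $|\alpha|\ge n\ge 2$, directly contradicting the upper bound $|\alpha|<2$ supplied by Lemma~\ref{Lem:Brenner}(3). In the free case, $\{\alpha,\beta\}$ is a free basis of $\Gamma$, so the image of $\alpha$ in the abelianisation $\Gamma^{\mathrm{ab}}\cong\ZZ^2$ is a basis vector; the relation $\alpha=\gamma^n$ forces $n$ to divide $1$ in $\ZZ$, giving $n=1$, again a contradiction.

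The only real subtlety lies in excluding the a priori possibility that $\gamma$ might be elliptic of finite order, but this is handled cleanly by the centraliser computation above. After that, the argument reduces to the length estimates of Lemma~\ref{Lem:Brenner} in the non-free case and a one-line abelianisation in the free case, so I do not expect further obstacles.
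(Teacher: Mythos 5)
Your proof is correct and follows essentially the same route as the paper: the paper also splits into the free case (members of a free generating system of a free group are primitive) and the non-free case, where it deduces $|\alpha|=n|\alpha_0|\ge n\ge 2$ from Lemma \ref{Lem:Brenner}(1) and contradicts the bound $|\alpha|<2$ of Lemma \ref{Lem:Brenner}(3). Your centraliser computation merely makes explicit the step, left implicit in the paper, that a root of $\alpha$ is parabolic with the same fixed point and hence stabilises the canonical horoball $H_\alpha$.
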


\begin{proof}
If $\Gamma$ is a free, then the assertion follows from the fact that
any member of a free-generating system of a free group is primitive.
So, we may assume $\Gamma$ is non-free.
Suppose on the contrary that one of the two elements, say $\alpha$, is imprimitive,
namely there is an element $\alpha_0\in\Gamma$
and an integer $n\ge 2$ such that $\alpha=\alpha_0^n$.
Then $|\alpha|=n|\alpha_0|\ge n\ge 2$ by Lemma \ref{Lem:Brenner}(1).
But, this contradicts Lemma \ref{Lem:Brenner}(3).
\end{proof}

\section{Outline of the proof of Theorem \ref{main-theorem}}
\label{sec:outline}
We now state an outline of the proof of Theorem \ref{main-theorem}.
Since the if part is clear (cf. Proposition \ref{prop:two-generator-Heckoid}),
we prove the only if part. 
To this end, we summarise
the setting of Theorem \ref{main-theorem}.

\begin{assumption}
\label{asmpt:setting-main-theorem}
{\rm
Let $\Gamma =\langle \alpha, \beta\rangle$  
be a non-free Kleinian group generated by 
two non-commuting parabolic transformations $\alpha$ and $\beta$, and 
let $M=\HH^3/\Gamma$ be the quotient hyperbolic orbifold.
Let $M_0$ be the non-cuspidal part of $M$,
and $P=\partial M_0$ the parabolic locus.
By Theorem \ref{thm:tameness},
$(M_0,P)$ admits a relative compactification $(\bar M_0,\bar P)$,
which is a pared orbifold by Theorem \ref{thm:geom-infinite}.
The pared orbifold $(\bar M_0,\bar P)$ can be represented by
a weighted graph $(W,\Sigma,w)$,
where $W$ is a compact $3$-manifold, $\Sigma$ is a trivalent graph
properly embedded in $W$, and $w$ is a weight function on the edge set of $\Sigma$ (see Convention \ref{conv:pared-orbifold}).
{\it We abuse notation to denote the (compact) pared orbifold 
$(\bar M_0,\bar P)$ by $\orbp$.}
We denote the components of $\bar P$,
which is now denoted by $P$,
corresponding to the cusps 
$C_{\alpha}$ and $C_{\beta}$
by $P_{\alpha}$ and $P_{\beta}$, respectively.
}
\end{assumption}

\begin{proof}[Outline of the proof of Theorem \ref{main-theorem}]
Under Assumption \ref{asmpt:setting-main-theorem},
the proof is divided into the following two cases.

\begin{enumerate}
\item[Case 1.]
$P_{\alpha}\cong P_{\beta}$ is a flexible cusp
(Section \ref{sec:flexible-cusp-case} for generic case 
and Section \ref{sec:exceptional-flexible} for exceptional case).
\item[Case 2.]
$P_{\alpha}\cong P_{\beta}$ is a rigid cusp
(Section \ref{sec:rigid-cusp-case}).
\end{enumerate}

In both cases, the first task is to find an orbifold surgery
that yields an orbifold $\OO$ with dihedral orbifold fundamental group.

In Case 1, 
this can be generically done by using Lemma \ref{lem:primitive}.
In fact, if $P_{\alpha}\cong P_{\beta}$ is a flexible cusp,
then Lemma \ref{lem:primitive} implies that 
each of the parabolic elements $\alpha$ and $\beta$
can be represented by simple loops of $P_{\alpha}$ and $P_{\beta}$, respectively.
Generically, these simple loops are disjoint, 
and such an surgery obviously exists.
This generic case is treated in Section \ref{sec:flexible-cusp-case}.

However, there is an exceptional case where $P_{\alpha}=P_{\beta}\cong S^2(2,2,2,2)$
and the simple loops representing $\alpha$ and $\beta$ 
intersect nontrivially (Lemma \ref{lem:simple}).
In this case, the exchanging elliptic element $f$ does not belong to $\Gamma$,
and we need to consider the $\ZZ_2$-extension 
$\hat\Gamma:=\langle \Gamma, f\rangle$
of $\Gamma$ and consider the corresponding pared orbifold
$(\hat M_0,\hat P):=(M_0,P)/f$,
where $\hat P_{\alpha\beta}$ is isomorphic to the rigid cusp $S^2(2,4,4)$.
The treatment of this case is deferred to Section \ref{sec:exceptional-flexible},
after the treatment of the rigid cusp Case 2 in Section \ref{sec:rigid-cusp-case},
described below.

In Case 2, if $P_{\alpha}\cong P_{\beta}$ is isomorphic to
either $S^2(2,4,4)$ or $S^2(2,3,6)$,
the dihedral surgery can be found by using
an estimate of the shortest, second shortest, and third shortest 
lengths of parabolic elements on the maximal rigid cusp,
which in turn is based on Lemma \ref{Lem:Brenner}. 
If $P_{\alpha}\cong P_{\beta}$ is isomorphic to $S^2(3,3,3)$,
the inverting parabolic element $h$ does not belong to $\Gamma$,
and we consider the $\ZZ_2$-extension 
$\Gamma_h:=\langle\Gamma, h\rangle$
and the corresponding pared orbifold 
$(M_{h,0},P_{h}):=(M_0,P)/h$.
The images of $P_{\alpha}$ and $P_{\beta}$ in this quotient
is isomorphic to $S^2(2,3,6)$,
and this case can be treated by using arguments in the case where
$P_{\alpha}\cong P_{\beta}\cong S^2(2,3,6)$.

After finding an orbifold surgery
that yields an orbifold $\OO$ with dihedral orbifold fundamental group,
we can appeal to the classification Theorem \ref{thm:dihedral-orbifold}
of the dihedral orbifolds,
because Lemma \ref{lem:relabeled-orbifold-generic}
guarantees that the orbifold $\OO$ satisfies the
three conditions in Theorem \ref{thm:dihedral-orbifold}.
So, $\OO$ belongs to the list in the theorem.
The original pared orbifold $(M_0,P)$ is obtained from the dihedral orbifold $\OO$
by inverse surgery operations.
Through case-by-case arguments,
by using the homology with $\ZZ_2$-coefficients,
a result concerning the symmetries of the spherical dihedral orbifold
(Corollary \ref{cor:dihedral-orbifold-isometry2}),
and a \lq surgery trick' (the last paragraph in Case 1 in Section \ref{sec:rigid-cusp-case}
and Case 1 in Section \ref{sec:exceptional-flexible}),
we prove the following.
\begin{enumerate}
\item
If $P_{\alpha}\cong P_{\beta}$ is a flexible cusp,
then, in the generic case,
the pared orbifold $(M_0,P)$ is isomorphic to 
either a hyperbolic $2$-bridge link exterior or a Heckoid orbifold
(Section \ref{sec:flexible-cusp-case}):
in the exceptional case, we encounter a contradiction
(Section \ref{sec:exceptional-flexible}).
\item
If $P_{\alpha}\cong P_{\beta}$ is a rigid cusp,
then we encounter a contradiction (Section \ref{sec:rigid-cusp-case}).
\end{enumerate}
This ends an outline of the proof of the main Theorem \ref{main-theorem}.
\end{proof}

\section{Proof of Theorem \ref{main-theorem} - flexible cusp: generic case -}
\label{sec:flexible-cusp-case}

Under Assumption \ref{asmpt:setting-main-theorem},
suppose that $P_{\alpha}\cong P_{\beta}$
is a flexible cusp.
Then the $2$-orbifold $P_{\alpha}\cong P_{\beta}$ is isomorphic to the torus $T^2$, 
the pillowcase $S^2(2,2,2,2)$,
the annulus $A^2$, or $D^2(2,2)$.
The following fact is the starting point of this section.

\begin{lemma}
\label{lem:simple}
Under the above setting, 
$\alpha$ and $\beta$ are represented by simple loops
on $P_{\alpha}$ and $P_{\beta}$, respectively.
Moreover, if $P_{\alpha}=P_{\beta}$,
then one of the following holds.
\begin{enumerate}
\item
The parabolic elements $\alpha$ and $\beta$ are represented by
the same (possibly oppositely oriented) simple loop.
\item
$P_{\alpha}=P_{\beta}\cong S^2(2,2,2,2)$,
$f\notin\Gamma$,
and $P_{\alpha}/ f=P_{\beta}/f \cong S^2(2,4,4)$,
where the first $f$ is the exchanging elliptic element associated with $\{\alpha,\beta\}$
and the last two $f$'s
denote the involution on $(M_0,P)$ induced by 
the exchanging elliptic element $f$
(see Figure \ref{fig.involution-f}).
\end{enumerate}
\end{lemma}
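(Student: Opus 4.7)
My plan is to treat the two assertions separately. For the first, I would use Lemma~\ref{lem:primitive}, which guarantees that $\alpha$ and $\beta$ are primitive in $\Gamma$, together with a direct inspection of each flexible cusp type. For each of $T^2$, $A^2$, $D^2(2,2)$, $S^2(2,2,2,2)$, the torsion-free (parabolic) subgroup of $\pi_1(P_\alpha)$ is $\mathbb{Z}$ or $\mathbb{Z}^2$; a primitive element is a generator up to sign, or respectively a coprime pair $(p,q)$, and lifts to a straight Euclidean geodesic in the universal cover which descends to a simple loop on $P_\alpha$ (after a small perturbation off any cone points in the $S^2(2,2,2,2)$ case).

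For the second assertion, the cases $P_\alpha\cong A^2$ or $D^2(2,2)$ produce Case~(1) immediately, since the translation subgroup of $\pi_1(P_\alpha)$ is $\mathbb{Z}$ and hence has a unique primitive element up to sign. The content lies in $P_\alpha\in\{T^2,S^2(2,2,2,2)\}$. Choose $\gamma\in\Gamma$ with $\gamma\cdot\fix(\beta)=\fix(\alpha)$, so that $\beta':=\gamma\beta\gamma^{-1}$ lies, together with $\alpha$, in $\Pi_\alpha:=\Stab_\Gamma(\fix(\alpha))=\pi_1(P_\alpha)$. The identity $f\alpha f^{-1}=\beta$ gives $(\gamma f)\alpha(\gamma f)^{-1}=\beta'$, and $\gamma f$ fixes $\fix(\alpha)$, hence lies in the maximal parabolic subgroup $\hat\Pi_\alpha$ of $\hat\Gamma:=\langle\Gamma,f\rangle$ at that point, with $[\hat\Pi_\alpha:\Pi_\alpha]\le 2$ and equality iff $f\notin\Gamma$. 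The slopes of $\alpha$ and $\beta$ on $P_\alpha$ coincide precisely when conjugation by $\gamma f$ acts on the $\mathbb{Z}^2$ translation subgroup of $\Pi_\alpha$ as $\pm\mathrm{Id}$.

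Identifying $\hat\Pi_\alpha$ with $\pi_1(\hat P)$, where $\hat P$ is the cusp cross-section of $\hat M:=\mathbb{H}^3/\hat\Gamma$ corresponding to the image of $P_\alpha$, the task reduces to enumerating the orientation-preserving Euclidean $2$-orbifolds $\hat P$ with $[\pi_1(\hat P):\pi_1(P_\alpha)]\le 2$. For $P_\alpha=T^2$, the only candidates are $\hat P\in\{T^2,S^2(2,2,2,2)\}$, and in both cases the induced action on $\mathbb{Z}^2$ is $\pm\mathrm{Id}$, producing $\beta'=\pm\alpha$ and Case~(1). For $P_\alpha=S^2(2,2,2,2)$, the orientation-preserving index-$\le 2$ crystallographic overgroups of the $(2,2,2,2)$-group ($p2$) are $p2$ itself (with a refined lattice) and $p4$, so $\hat P\in\{S^2(2,2,2,2),S^2(2,4,4)\}$; the former again yields $\pm\mathrm{Id}$ and Case~(1), while the latter forces the translation lattice to be square and induces a $90^\circ$ rotation on $\mathbb{Z}^2$, giving $\beta'\ne\pm\alpha$ and, since $\hat\Pi_\alpha\supsetneq\Pi_\alpha$, also $f\notin\Gamma$. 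This is exactly Case~(2). The main obstacle is this crystallographic classification in the $S^2(2,2,2,2)$ case: one must verify that $p4$ is the only orientation-preserving index-$2$ Euclidean overgroup of $p2$ that introduces a genuinely new rotation (ruling out $p6$, for which $[p6:p2]=3$), and that its conjugation action on the lattice is exactly a $90^\circ$ rotation, which forces the square-lattice condition.
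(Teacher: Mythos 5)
Your proposal is correct and in substance follows the same route as the paper: both arguments use primitivity (Lemma~\ref{lem:primitive}) for the first assertion, and for the second both reduce, via the exchanging element $f$, to classifying how the induced symmetry of the common flexible cusp acts on slopes, with the dichotomy ``$\pm\mathrm{Id}$ on the translation lattice'' (giving case~(1)) versus ``order-$4$ rotational part,'' which forces $P_{\alpha}\cong S^2(2,2,2,2)$ with quotient $S^2(2,4,4)$ and $f\notin\Gamma$ (case~(2)). The only real difference is one of packaging: where the paper invokes the classification of orientation-preserving involutions on flexible cusps, you prove that classification yourself by enumerating the orientation-preserving crystallographic overgroups of index at most two of $p1$ and $p2$ (only $p2$ and $p4$ can occur, $p6$ being excluded by index $3$), so your write-up supplies a self-contained justification of exactly the fact the paper cites.
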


\begin{proof}
The first assertion directly follows from Lemma \ref{lem:primitive},
because any primitive parabolic element 
in the orbifold fundamental group of
the $2$-dimensional orbifold 
$T^2$, $S^2(2,2,2,2)$, $A^2$, or $D^2(2,2)$
is represented by a simple loop on the $2$-orbifold.
For the proof of the second assertion, 
suppose that $P_{\alpha}=P_{\beta}$.
If the exchanging elliptic element $f$ belongs to $\Gamma$, 
then $\beta$ is conjugate to $\alpha$ in $\Gamma$,
and so they are represented by the same simple loop.
Thus we may suppose $f\notin \Gamma$.
Then $f$ descends to a nontrivial orientation-preserving involution on $M$,
which we continue to denote by $f$, on 
the flexible cusp $P_{\alpha}$.
By the classification of orientation-preserving involutions on flexible cusps,
we can observe that either
(a) the involution $f$ on $M_0$ preserves or reverses the homotopy class  
of each essential simple loop on $P_{\alpha}$, or 
(b) 
$P_{\alpha}\cong S^2(2,2,2,2)$ and 
$P_{\alpha}/ f\cong S^2(2,4,4)$.
In the first case, $\alpha$ and $\beta^{\pm 1}$ are  represented
by the same simple loop, and so
we obtain the desired conclusion.
\end{proof}

In this section, we treat the case 
where either $P_{\alpha}\ne P_{\beta}$ or 
$P_{\alpha}=P_{\beta}$ and the conclusion (1) in Lemma \ref{lem:simple} holds.
Thus we assume the following condition in the remainder of this section.
The other case is treated in Section \ref{sec:exceptional-flexible}.

\begin{assumption}
\label{assumption:flexible1}
{\rm
Under Assumption \ref{asmpt:setting-main-theorem},
we further assume that (a) $P_{\alpha}\cong P_{\beta}$
is a flexible cusp and that
(b) either $P_{\alpha}\ne P_{\beta}$ or 
$P_{\alpha}=P_{\beta}$ and the conclusion (1) in Lemma \ref{lem:simple} holds.
It should be noted that 
either $\alpha$ and $\beta$ are represented by disjoint simple loops 
or they are represented by
the same (possibly oppositely oriented) simple loop.
}
\end{assumption}

Under this assumption, we can apply an orbifold surgery on $\orbp$
to the pared orbifold $\orbp$ to obtain a dihedral orbifold, $\OO$, as follows.
Note that Assumption \ref{assumption:flexible1} implies that
the pared orbifold $\orbp$ is represented by
a weighted graph $(W,\Sigma,\tilde w)$,
such that
there are (possibly identical) edges 
$e_{\alpha}$ and $e_{\beta}$ of $\Sigma$
whose meridians represent $\alpha$ and $\beta$, respectively.
Let $w$ be a weight function on $\Sigma$ 
which is identical with $\tilde w$, except that
$w(e_{\alpha})=w(e_{\beta})=2$.
Then the orbifold $\OO$  
represented by the augmentation of the weighted graph $(W,\Sigma,w)$
is a result of an \lq\lq order $2$'' orbifold surgery on $\orbp$,
and $\pi_1(\OO)$ is dihedral, as shown below.

Note that there is a natural epimorphism from $\Gamma=\pi_1(M_0)$
to $\pi_1(\OO)$, and the images of $\alpha$ and $\beta$ in $\pi_1(\OO)$
have order $\le 2$.
Moreover, the images of $\alpha$ and $\beta$ have the same order,
because (a) if $f\in\Gamma$ then $\alpha$ and $\beta$ are conjugate in 
$\Gamma$ and so in $\pi_1(\OO)$, and 
(b) if $f\notin\Gamma$ then $f$ descends to an involution on $\OO$
which interchanges the images of $\alpha$ and $\beta$.
So $\pi_1(\OO)$ is either the trivial group or a dihedral group.
Since $\OO$ is very good by Lemma \ref{lem:relabeled-orbifold-generic}
and since $\OO$ has nonempty singular set,
$\pi_1(\OO)$ is nontrivial and so isomorphic to a dihedral group.  

Thus $\OO$ satisfies the three conditions in Theorem \ref{thm:dihedral-orbifold}
and so $\OO$ belongs to the list in the theorem.
We have the following lemma.

\begin{lemma}
\label{lem:relabeled-orbifold2}
The orbifold $\OO$ is isomorphic to the 
spherical dihedral orbifold $\OO(r;d_+,d_-)$
for some $r\in\QQ$ and coprime positive integers $d_+$ and $d_-$.
\end{lemma}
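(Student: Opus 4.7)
The text preceding the statement already shows, via Lemma~\ref{lem:relabeled-orbifold-generic} and the verification that $\pi_1(\OO)$ is dihedral, that $\OO$ is isomorphic to one of the four orbifolds $\OO(r;d_+,d_-)$, $\OO(\infty)$, $\OO(\RP^3,O)$, or $D^2(2,2)\times I$ listed in Theorem~\ref{thm:dihedral-orbifold}. The plan is to rule out the last three possibilities by inverting the orbifold surgery that produced $\OO$.

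The key structural observation is that in each of those three cases the singular graph $\Sigma$ has no trivalent interior vertex, so the augmentation step in Definition~\ref{def:orbifold-surgery} (which can only introduce tripods) does nothing. Hence the post-surgery weighted graph $(W,\Sigma,w)$ representing $\OO$ literally equals the one appearing in Theorem~\ref{thm:dihedral-orbifold}, and reversing the two weight changes $w(e_\alpha)=w(e_\beta)=2\mapsto\tilde w(e_\alpha)=\tilde w(e_\beta)=\infty$ recovers $\orbp$ entirely, up to the binary choice $e_\alpha\ne e_\beta$ versus $e_\alpha=e_\beta$.

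When $e_\alpha\ne e_\beta$ in case $\OO(\infty)$ or $D^2(2,2)\times I$, the recovered $M_0$ is a genus-two handlebody (respectively the $2$-component unlink exterior in $S^3$ and the $2$-strand trivial tangle exterior in $B^3$), so $\Gamma\cong F_2$ is free, contradicting the standing non-freeness hypothesis. In all remaining sub-cases --- the whole of $\OO(\RP^3,O)$ (where $\Sigma$ consists of a single edge, forcing $e_\alpha=e_\beta$) together with the $e_\alpha=e_\beta$ sub-cases of the other two --- a short Seifert--van Kampen calculation gives $\pi_1(M_0)\cong\ZZ*\ZZ_2$, and both $\alpha$ and $\beta$ are conjugate to the meridian of the single weight-$\infty$ edge, which represents the $\ZZ$-generator. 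The image of each in $H_1(M_0;\ZZ)\cong\ZZ\oplus\ZZ_2$ is therefore $(1,0)$, so the abelianization of $\langle\alpha,\beta\rangle$ lies inside the proper subgroup $\ZZ\oplus 0$, contradicting $\Gamma=\langle\alpha,\beta\rangle$.

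The step I expect to be most delicate is carrying out the $\pi_1(M_0)\cong\ZZ*\ZZ_2$ computation uniformly across the three $e_\alpha=e_\beta$ sub-cases; for $\OO(\RP^3,O)$ this may be bypassed by observing that the longitude of the trivial knot $O$ bounds a disc in $\RP^3-N(O)$, so the boundary torus of $M_0$ is compressible and the incompressibility clause of Definition~\ref{def:pared-orbifold} fails, giving an independent obstruction to that case.
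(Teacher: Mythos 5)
Your proposal is correct, and its skeleton coincides with the paper's: rule out possibilities (2), (3), (4) of Theorem~\ref{thm:dihedral-orbifold} by inverting the order-$2$ surgery and enumerating the sub-cases $e_\alpha\ne e_\beta$ versus $e_\alpha=e_\beta$. Where you genuinely diverge is in the obstructions used for $\OO(\infty)$ and $\OO(\RP^3,O)$: the paper disposes of both at one stroke by observing that every recovered candidate for $\orbp$ is \emph{reducible}, contradicting condition (1) of Definition~\ref{def:pared-orbifold}, and reserves the algebraic arguments (freeness of $F_2$, and the failure of two conjugate meridians to generate $H_1\cong\ZZ\oplus\ZZ_2$) for case (4) only. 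You instead run the algebraic arguments uniformly across all sub-cases; this is valid --- I checked that $\pi_1(M_0)\cong\ZZ*\ZZ_2$ with $\alpha,\beta$ mapping to $\pm(1,0)$ in $H_1(M_0;\ZZ)\cong\ZZ\oplus\ZZ_2$ holds in each of your three $e_\alpha=e_\beta$ sub-cases --- and it buys a proof resting only on the standing hypotheses that $\Gamma=\langle\alpha,\beta\rangle$ is non-free, at the cost of three separate van Kampen computations where the paper's reducibility observation is immediate. Your preliminary remark that the augmentation step is vacuous because capping a three-punctured sphere would create a trivalent vertex, absent from all three candidate graphs, is a point the paper leaves implicit and is worth making explicit. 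Two small blemishes: the $2$-component unlink exterior is \emph{not} a genus-two handlebody (it is reducible, with $\pi_2\ne 0$), though only the correct fact $\pi_1\cong F_2$ enters your argument; and your fallback for $\OO(\RP^3,O)$ via compressibility of $\partial M_0$ (the longitude of $O$ bounds a disc) is sound, appealing to condition (2) of Definition~\ref{def:pared-orbifold} --- in effect a cousin of the paper's reducibility route.
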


\begin{proof}
We show that the possibilities (2), (3) and (4) in Theorem \ref{thm:dihedral-orbifold} 
cannot happen. 
Suppose (2) happens. Then we can see that
one of the following holds,
by recalling the fact that
$\OO$ is obtained from the pared orbifold $\orbp$
an order $2$ orbifold surgery.
\begin{enumerate}[(i)]
\item
$M_0$ is the exterior of the two-component trivial link, $P=\partial M_0$,
and the singular set of $M_0$ is empty.
\item
The underlying space of $M_0$ is the solid torus (the exterior of a trivial knot), 
$P=\partial M_0$,
and the singular set is a trivial knot in the solid torus with index $2$.
\end{enumerate}
In each case, $\orbp$ is reducible, a contradiction.

By the same reasoning, we can see that (3) cannot happen.

If (4) happens,
then as in the above, we can see that one of the following holds,
where $(B^3, t_1\cup t_2)$ is a two-strand trivial tangle.
\begin{enumerate}[(i)]
\item
$\orbp\cong(\cl(B^3-N(t_1\cup t_2)),\fr N(t_1\cup t_2))$
and the singular set of $M_0$ is empty.
\item
$\orbp\cong(\cl(B^3-N(t_1)),\fr N(t_1))$ and the singular set of $M_0$ is $t_2$
 with index $2$.
\end{enumerate}
In the first case, $\Gamma=\pi_1(M_0)$ is a rank $2$ free group,
which contradicts the assumption that $\Gamma$ is non-free.
In the second case, note that $H_1(M_0)$,
the abelianization 
of the orbifold fundamental group $\pi_1(M_0)$, 
is $\ZZ\oplus\ZZ_2$.
On the other hand,
both $\alpha$ and $\beta$
are represented by the core loop of the annulus $P=\fr N(t_1)$,
and the pair $\{\alpha,\beta\}$ cannot generate $H_1(M_0)$, a contradiction.
\end{proof}

By Lemma \ref{lem:relabeled-orbifold2},
the original orbifold $\orbp$ is recovered from $\OO=\OO(r;d_+,d_-)$
by applying the inverse orbifold surgery operation.
This leads us to the following proposition.

\begin{proposition}
\label{prop:flexible-cusp-case}
Under the notation in Lemma \ref{lem:relabeled-orbifold2},
the following hold, if necessary by replacing $r=q/p$ with $q'/p$
where $q'=q+p$ or $qq'\equiv 1 \pmod{p}$.

\begin{enumerate}
\item
If $|K(r)|=1$,
then one of the following holds.
\begin{enumerate}[\rm(i)]
\item
$d_+=d_-=1$ and $\orbp\cong (E(K(r)),\partial E(K(r)))$,
where $q\not\equiv \pm 1\pmod p$. 
\item
$d_+=1$,  $d_-\ge 2$, and $\orbp\cong \orbm_0(r;d_-)$.
\item
$d_+=1$, $d_-$ is an odd integer $\ge 3$, and $\orbp\cong \orbm_1(r;d_-)$.
\item
$d_+=2$, $d_-$ is an odd integer $\ge 3$, and $\orbp\cong\orbm_2(r;d_-)$.
\end{enumerate}
\item
If $|K(r)|=2$,
then one of the following holds.
\begin{enumerate}[\rm(i)]
\item
$d_+=d_-=1$ and $\orbp\cong (E(K(r)),\partial E(K(r)))$,
where $q\not\equiv \pm 1\pmod p$. 
\item
$d_+=1$, $d_-\ge 2$, and $\orbp\cong\orbm_0(r;d_-)$.
\item
$d_+=2$, $d_-$ is an odd integer $\ge 3$, and $\orbp\cong\orbm_2(r;d_-)$.
\end{enumerate}
\end{enumerate}
\end{proposition}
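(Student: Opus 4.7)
By Lemma~\ref{lem:relabeled-orbifold2}, the surgered orbifold $\OO$ is isomorphic to the spherical dihedral orbifold $\OO(r;d_+,d_-)$, represented by the weighted graph $(S^3, K(r)\cup\tau_+\cup\tau_-, w)$ with $w(K(r))=2$ and $w(\tau_\pm)=d_\pm$. Since $\OO$ was produced from $\orbp$ by replacing the original $\tilde w$-values of the edges $e_\alpha,e_\beta$ (which were $\infty$) by $2$, and since the edges of the graph for $\OO$ of weight $2$ are precisely the arcs of $K(r)$ (with $\tau_\pm$ deleted whenever $d_\pm=1$, per Convention~\ref{conv:pared-orbifold2}), the edges $e_\alpha,e_\beta$ must lie among these $K(r)$-arcs. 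The pared orbifold $\orbp$ is then recovered by taking the graph $(S^3, K(r)\cup\tau_+\cup\tau_-, w)$ and changing $w(e_\alpha), w(e_\beta)$ back to $\infty$. The task is to identify the unordered pair $\{e_\alpha,e_\beta\}$ and the admissible values of $d_+,d_-$.

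The first reduction is a symmetry argument. Because $f$ conjugates $\alpha$ to $\beta$ in $\hat\Gamma$, the involution induced by $f$ on $\OO$ permutes $e_\alpha$ and $e_\beta$, so the pair $\{e_\alpha,e_\beta\}$ is invariant under some orientation-preserving isometry of $\OO(r;d_+,d_-)$. Combined with the $(\ZZ_2)^2$-symmetry of $(S^3, K(r)\cup\tau_+\cup\tau_-)$ together with Proposition~\ref{prop:classification-2-bridgelinks} and Remark~\ref{remark:upside-down}, this restricts $\{e_\alpha,e_\beta\}$ to a short list of geometrically distinguished configurations: (a) a single $K(r)$-arc with $e_\alpha=e_\beta$; (b) two $K(r)$-arcs preserved as a set by the vertical or planar involution; or (c) the whole knot/link $K(r)$ regarded as a single edge in the collapsed graph when $d_+=d_-=1$.

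I then carry out a case analysis organised by $|K(r)|$ and by the configuration of $\{e_\alpha,e_\beta\}$. In each case the candidate for $\orbp$ is obtained by re-weighting the graph as above, and must pass two further checks: first, $\orbp$ must be a pared orbifold with $P_\alpha,P_\beta$ flexible cusps, which constrains the shape of $\Sigma_\infty$ and forces $e_\alpha\cup e_\beta$ to form either a whole component of $K(r)$ or the $J_1$-subgraph described in Definition~\ref{def:Heckoid-orbifold}; second, using Lemma~\ref{lem:homology} I verify that the meridians of $e_\alpha,e_\beta$ generate $H_1(\orbp;\ZZ_2)$, as required by $\Gamma=\langle\alpha,\beta\rangle$. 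Configurations surviving both checks match directly with the weighted graphs for $\orbm_0(r;d_-)$, $\orbm_1(r;d_-)$, $\orbm_2(r;d_-)$, or the exterior $(E(K(r)),\partial E(K(r)))$. For $|K(r)|=1$ all four outputs (i)--(iv) are realised; for $|K(r)|=2$ the configuration that would produce $\orbm_1$ is excluded by the parity condition on $p$ in Definition~\ref{def:Heckoid-orbifold}(2), leaving only (i)--(iii).

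Finally, the clause \emph{if necessary by replacing $r=q/p$ with $q'/p$} is implemented by invoking Proposition~\ref{prop:classification-2-bridgelinks}: the normalisation $q'\equiv q+p\pmod{2p}$ (interchanging the vertical and planar involutions, needed to place $J_1,J_2$ in the position prescribed by Definition~\ref{def:Heckoid-orbifold}) and the normalisation $qq'\equiv 1\pmod p$ (interchanging $\tau_+$ and $\tau_-$, hence $d_+$ and $d_-$, cf.\ Remark~\ref{remark:upside-down}) together absorb the freedom in how the arcs and tunnels are labelled with respect to the standard picture. The main obstacle I anticipate is the bookkeeping inside the case analysis: showing that each surviving configuration matches \emph{exactly} one of the listed models after a suitable normalisation of $r$, without missing a case or producing an orbifold outside the list. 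Separating $\orbm_1$ from $\orbm_2$ in particular requires tracking which arcs are stabilised by which $(\ZZ_2)^2$-involution and correlating this with the parities of $p$ and $q$ as in Remark~\ref{rem.well-defined-orbifold}.
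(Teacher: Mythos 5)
Your overall strategy is the paper's: recover $\orbp$ from $\OO(r;d_+,d_-)$ by restoring the weight $\infty$ on $e_{\alpha},e_{\beta}$, run a case analysis over $|K(r)|$ and the edge configuration, constrain $\{e_{\alpha},e_{\beta}\}$ with Lemma \ref{lem:homology} (since $[\alpha],[\beta]$ must generate $H_1(\OO;\ZZ_2)$), and absorb labelling freedom via Proposition \ref{prop:classification-2-bridgelinks} and Remark \ref{remark:upside-down}. However, your two admissibility checks --- ``$P_{\alpha},P_{\beta}$ are flexible cusps'' and ``the meridians of $e_{\alpha},e_{\beta}$ generate $H_1$'' --- are genuinely insufficient to pin down the list, and this is where the proposal has a gap. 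Take $|K(r)|=1$ with $d_+,d_-\ge 3$ both odd and $e_{\alpha},e_{\beta}$ two opposite arcs of $K(r)$: each cusp is then an annulus, which \emph{is} flexible, and the two meridians do generate $H_1(\OO;\ZZ_2)\cong\ZZ_2$, so this configuration passes both of your checks; yet it corresponds to no orbifold in the statement (all models have $d_+\in\{1,2\}$) and must be \emph{excluded}, not matched. The same failure occurs for $d_+\ge 3$ odd with $d_-$ even (e.g.\ $d_+=3$, $d_-=4$, $e_{\alpha}=e_1$, $e_{\beta}=e_3$ generate $H_1\cong(\ZZ_2)^2$). The paper kills these cases with a tool absent from your proposal: the \emph{inverting} elliptic element $h$ together with the classification of orientation-preserving involutions of $\OO(r;d_+,d_-)$ (Corollary \ref{cor:dihedral-orbifold-isometry2}, resting on the isometry-group computation of Appendix 1). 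Packaged as Lemma \ref{lem:repeat}, the argument runs: when $H_1(\OO;\ZZ_2)\cong(\ZZ_2)^2$ one has $e_{\alpha}\ne e_{\beta}$, and if $h\notin\Gamma$ it would descend to an involution of $\OO$ preserving each of two singular edges other than $\tau_{\pm}$, which Corollary \ref{cor:dihedral-orbifold-isometry2}(2) forbids when $d_+,d_-\ge 2$; hence $h\in\Gamma$, which upgrades ``flexible'' to $P_{\alpha}\cong D^2(2,2)$ or $S^2(2,2,2,2)$ and, since $e_{\alpha}$ joins $\tau_+$ to $\tau_-$, forces $d_+=2$. In the both-odd case the annular cusp instead forces $h$ to descend to an edge-preserving, orientation-reversing involution that cannot exist, a contradiction. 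Your ``first reduction'' invokes only the exchanging element $f$ and a vague appeal to symmetry, which does not deliver any of this.

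Two smaller inaccuracies in the same direction: your opening claim that the weight-$2$ edges of $\OO$ are precisely the $K(r)$-arcs is false when $d_+=2$ (then $\tau_+$ also has weight $2$ and the possibility $e_{\alpha}=\tau_{\pm}$ must be ruled out, generically by the homology relation $x_{\pm}=0$, and in the exceptional trivial $\theta$-orbifold case $p=1$, $\{d_+,d_-\}=\{1,2\}$ only by exploiting its extra $3$-fold symmetry, where in fact $x_-\ne 0$ and a configuration such as $\{e_{\alpha},e_{\beta}\}=\{J_1,\tau_-\}$ survives the homology test); and the claim that the pared condition ``forces $e_{\alpha}\cup e_{\beta}$ to form a whole component of $K(r)$ or the $J_1$-subgraph'' is precisely what fails without the $h$-argument above. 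So the proposal is on the right road but, as written, the case analysis would terminate with spurious configurations (arbitrary odd $d_+\ge 3$) that it has no mechanism to eliminate; the missing ingredient is Corollary \ref{cor:dihedral-orbifold-isometry2} and the $f/h$ dichotomy of Lemma \ref{lem:repeat}.
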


\begin{figure}
\includegraphics[width=0.7\hsize, bb=0 0 1354 1387]{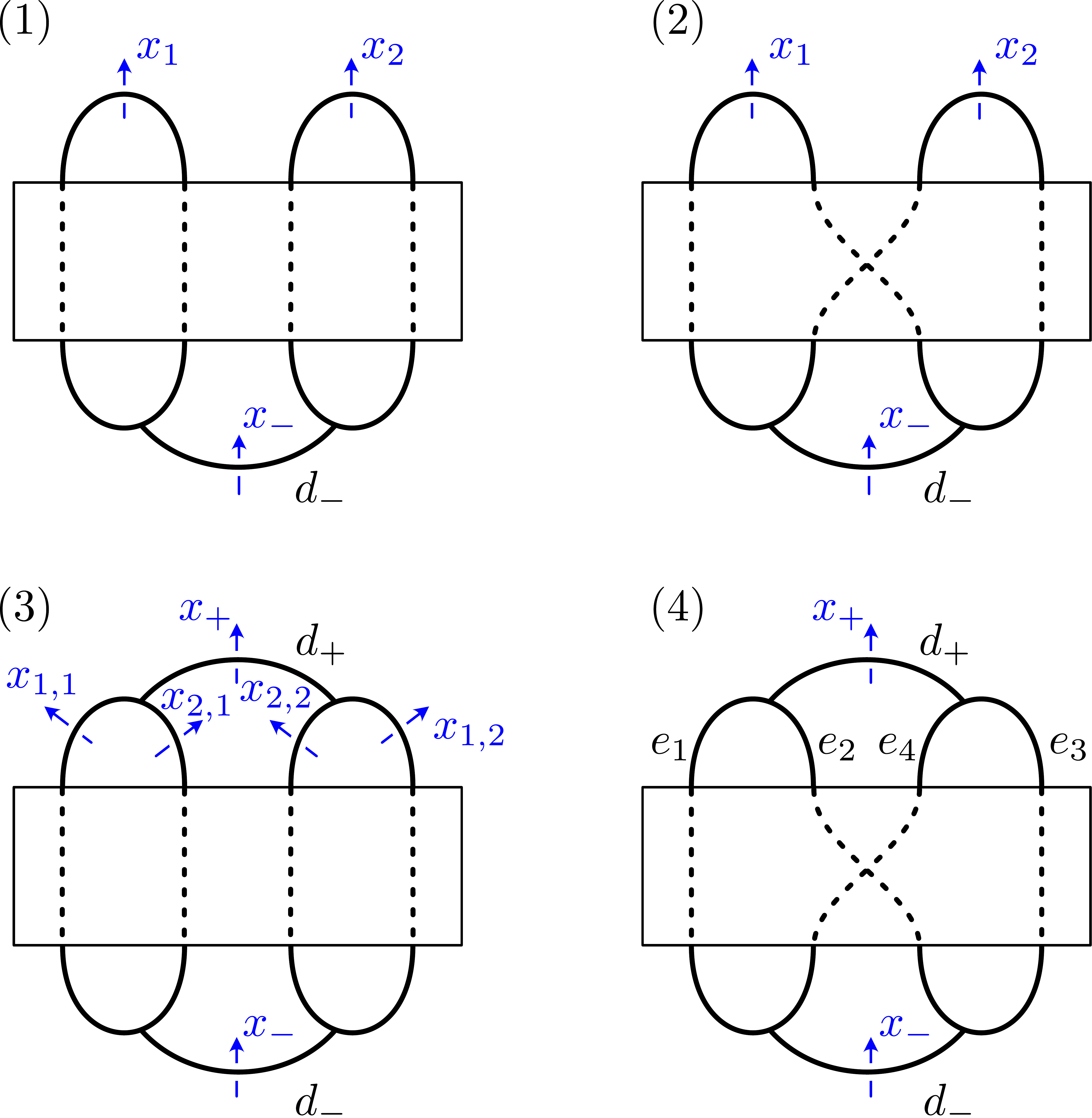}
\caption{$H_1(\OO;\ZZ_2)\cong H_1(S^3-\Sigma_{\mathrm{even}};\ZZ_2)$,
where $\Sigma_{\mathrm{even}}$ is the subgraph of
$\Sigma=K(r)\cup\tau_+\cup\tau_-$ spanned by the edges of even weight. 
}
\label{fig.homology1}
\end{figure}

\begin{proof}
Recall that
$\OO=\OO(r;d_+,d_-)$ is represented by the weighted graph
$(S^3,K(r)\cup\tau_+\cup\tau_-,w)$
for some $r\in\QQ$ and for some coprime positive integers $d_+$ and $d_-$,
and $w$ is given by the following rule (see Figure \ref{fig.Dihedral-orbifold}):
\[
w(K(r))=2, \quad w(\tau_+)=d_+, \quad w(\tau_-)=d_-
\]
Then $\orbp$ is represented by the weighted graph
$(S^3,K(r)\cup\tau_+\cup\tau_-,\tilde w)$,
where $\tilde w$ is obtained from $w$ 
by replacing the label $2$ of the edges $e_{\alpha}$ and $e_{\beta}$,
which correspond to $P_{\alpha}$ and $P_{\beta}$ respectively,
with the label $\infty$.
By Remark \ref{remark:upside-down},
we may assume $1\le d_+\le d_-$,
if necessary by replacing $r=q/p$ with $q'/p$
where $qq'\equiv 1 \pmod{p}$.

Case 1. $d_+=d_-=1$.
Then $\Sigma(\OO)$ is the $2$-bridge link $K(r)$.
Thus $\orbp\cong (S^3,K(r),\tilde w)$,
where either (a) $\tilde w(K(r))=\infty$ or
(b) $K(r)$ is a $2$-component link $K_1\cup K_2$
and $(\tilde w(K_1),\tilde w(K_2))=(\infty,2)$.
In the first case, 
$\orbp\cong (E(K(r)),\partial E(K(r)))$,
and so $\HH^3/\Gamma$ is the hyperbolic $2$-bridge link complement,
$S^3-K(r)$: in particular, $q\not\equiv \pm 1\pmod p$.
In the second case, both $\alpha$ and $\beta$ are meridians 
of the component $K_1$,
which contradicts the fact that $H_1(\OO;\ZZ_2)\cong (\ZZ_2)^2$ .

Case 2. $d_+=1 < d_-$.

Subcase 2.1. $|K(r)|=2$ (see Figure \ref{fig.homology1}(1)).
Then the edge set of $\Sigma(\OO)$ consists of $\tau_-$ and the two components $K_1$, $K_2$
of $K(r)$.
Let $x_-$, $x_1$ and $x_2$ be the meridians of  
$\tau_-$, $K_1$ and $K_2$, respectively.
By Lemma \ref{lem:homology}, 
$H_1(\OO;\ZZ_2)\cong (\ZZ_2)^2$ is freely
generated by $\{x_1,x_2\}$, and moreover we have $x_-=0$. 
Since $H_1(\OO;\ZZ_2)$ is generated by (the images of) $\alpha$ and $\beta$,
we may assume $e_{\alpha}=K_1$ and $e_{\beta}=K_2$.
Thus $\orbp$ is represented by $(S^3,K(r)\cup\tau_-,\tilde w)$,
where $\tilde w(K_1)=\tilde w(K_2)=\infty$ and $\tilde w(\tau_-)=d_-$.
Hence $\orbp\cong \orbm_0(r;d_-)$.

Subcase 2.2. $|K(r)|=1$ (see Figure \ref{fig.homology1}(2)).
Then the edge set of $\Sigma(\OO)$ consists of $\tau_-$ and 
the two subarcs $J_1$ and $J_2$ of $K(r)$ bounded by $K(r)\cap \tau_-$.
Let $x_-$, $x_1$ and $x_2$ be the meridians of  
$\tau_-$, $J_1$ and $J_2$, respectively.

Suppose first that $d_-$ is odd.
Then we see by Lemma \ref{lem:homology} that 
$x_-=0$ in
$H_1(\OO;\ZZ_2)\cong \ZZ_2$ and that
$H_1(\OO;\ZZ_2)\cong \ZZ_2$ is generated by $x_1=x_2$. 
Hence one of the following holds.
\begin{enumerate}
\item
$\{e_{\alpha},e_{\beta}\}=\{J_1,J_2\}$ and so
$\orbp$ is represented by $(S^3,K(r)\cup\tau_-,\tilde w)$,
where $\tilde w(J_1)=\tilde w(J_2)=\infty$ and $\tilde w(\tau_-)=d_-$.
Hence $\orbp\cong \orbm_0(r;d_-)$.
\item
$e_{\alpha}=e_{\beta}=J_i$ for $i=1$ or $2$.
By the symmetry of $\OO$, we may assume $i=1$ and so
$\orbp$ is represented by $(S^3,K(r)\cup\tau_-,\tilde w)$,
where $\tilde w(J_1)=\infty$, $\tilde w(J_2)=2$ and $\tilde w(\tau_-)=d_-$.
Hence $\orbp\cong \orbm_1(r;d_-)$.
\end{enumerate}

Suppose next that $d_-$ is even.
Then $x_1+x_2+x_-=0$ in $H_1(\OO;\ZZ_2)\cong (\ZZ_2)^2$.
Since $H_1(\OO;\ZZ_2)$ is generated by $\alpha$ and $\beta$,
we have $e_{\alpha}\ne e_{\beta}$.
This implies that the exchanging elliptic element $f$ for $\{\alpha,\beta\}$
does not belong to $\Gamma$, and 
$f$ descends to an involution on $\OO$
interchanging  
$e_{\alpha}$ with $e_{\beta}$.
We now use Corollary \ref{cor:dihedral-orbifold-isometry2}
on the symmetry of the orbifold $\OO(r;d_+,d_-)$.
We first consider 
the generic case where $p\ne 1$ (i.e., $K(r)$ is a nontrivial knot) or $d_-> 2$.
(Recall the current assumption $d_+=1$.)
Then, by Corollary \ref{cor:dihedral-orbifold-isometry2}(1),
any orientation-preserving involution of $\OO$ preserves $\tau_-$.
So, $e_{\alpha}$ and $e_{\beta}$ are different from $\tau_{\pm}$, and therefore
$\{e_{\alpha},e_{\beta}\}=\{J_1,J_2\}$.
Hence, as in the previous case, we can conclude $\orbp\cong \orbm_0(r;d_-)$.
In the exceptional case where $p=1$ and $d_-= 2$,
The orbifold $\OO\cong\OO(0/1;1,2)$ has the $3$-fold cyclic symmetry as illustrated in  
Figure \ref{symmetry-dihedral-orbifold-except}.
Thus, if necessary after applying this symmetry,
we may assume $\{e_{\alpha},e_{\beta}\}=\{J_1,J_2\}$.
Hence we have $\orbp\cong \orbm_0(r;d_-)\cong \orbm_0(0/1;2)$.

Since we repeatedly use the above argument
in the remainder of the proof of Proposition \ref{prop:flexible-cusp-case},
we state an expanded version of the argument as a lemma.

\begin{lemma}
\label{lem:repeat}
Under the setting of Proposition \ref{prop:flexible-cusp-case},
suppose $(d_+,d_-)\ne(1,1)$ and $H_1(\OO;\ZZ_2)\cong (\ZZ_2)^2$.
Then $e_{\alpha}\ne e_{\beta}$, and the exchanging elliptic element
$f$ does not belong to $\Gamma$ and it
descends to an orientation-preserving involution of $\OO=\OO(r;d_+,d_-)$
interchanging $e_{\alpha}$ and $e_{\beta}$.
Moreover, the following hold.
\begin{enumerate}
\item
Except when $p=1$ and $\{d_+,d_-\}=\{1,2\}$, 
$e_{\alpha}$ and $e_{\beta}$ are different from $\tau_{\pm}$.
\item
If $d_+, d_-\ge 2$, then the inverting elliptic element $h$ belongs to $\Gamma$.
\end{enumerate}
\end{lemma}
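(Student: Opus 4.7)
The plan is to establish the four assertions in sequence, leveraging the $\ZZ_2$-homology of $\OO$ for the first three (the statements about $e_\alpha\ne e_\beta$, $f\notin\Gamma$, and the descent of $f$) and the symmetry classification of the spherical dihedral orbifold $\OO(r;d_+,d_-)$ from Corollary \ref{cor:dihedral-orbifold-isometry2} for parts (1) and (2).

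First, note that the orbifold surgery induces a surjection $\Gamma\twoheadrightarrow\pi_1(\OO)$ sending $\alpha$ and $\beta$ to (conjugates of) the meridian classes of $e_\alpha$ and $e_\beta$, respectively. Since inversion and conjugation act trivially on $\ZZ_2$-homology, if $e_\alpha=e_\beta$ then the images of $\alpha$ and $\beta$ in $H_1(\OO;\ZZ_2)$ would coincide and span at most a $\ZZ_2$-subgroup, contradicting $H_1(\OO;\ZZ_2)\cong(\ZZ_2)^2$. Hence $e_\alpha\ne e_\beta$. The same reasoning applied to $\beta=f\alpha f^{-1}$ rules out $f\in\Gamma$: otherwise $[\alpha]=[\beta]$ in $H_1(\Gamma;\ZZ_2)$ and hence in the quotient $H_1(\OO;\ZZ_2)$. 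Since $f\notin\Gamma$, it descends to a non-trivial orientation-preserving involution of $M$ swapping $P_\alpha$ and $P_\beta$; because the surgery assigns the common weight $2$ to both $e_\alpha$ and $e_\beta$ it is $f$-equivariant, so $f$ descends further to an orientation-preserving involution of $\OO$ interchanging $e_\alpha$ and $e_\beta$.

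For (1), outside the exceptional case $p=1$, $\{d_+,d_-\}=\{1,2\}$, Corollary \ref{cor:dihedral-orbifold-isometry2}(1) forces every orientation-preserving involution of $\OO(r;d_+,d_-)$ to fix each of $\tau_+$ and $\tau_-$ setwise. Hence if $e_\alpha=\tau_+$ then $e_\beta=f(e_\alpha)=f(\tau_+)=\tau_+=e_\alpha$, contradicting $e_\alpha\ne e_\beta$; the remaining cases $e_\alpha=\tau_-$, $e_\beta=\tau_+$, $e_\beta=\tau_-$ are analogous.

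For (2), assume $d_+,d_-\ge 2$ and suppose for contradiction that $h\notin\Gamma$. Because $h$ inverts both $\alpha$ and $\beta$, it preserves the surgery kernel $\llangle\alpha^2,\beta^2\rrangle$, and hence descends to a non-trivial orientation-preserving involution $\bar h$ of $\OO$. The axis of $h$ in $\HH^3$ projects in $\OO$ to the arc $\tau_+\cup\tau_-$, which is therefore pointwise fixed by $\bar h$. Together with $\bar f$, which non-trivially permutes $\{e_\alpha,e_\beta\}$ while $\bar h$ preserves each of them, this produces two distinct non-trivial orientation-preserving involutions of $\OO$. Comparing against the list of elements of $\Isom^+(\OO(r;d_+,d_-))$ given by Corollary \ref{cor:dihedral-orbifold-isometry2} in the regime $d_+,d_-\ge 2$ yields the contradiction, as no such non-trivial involution may pointwise fix $\tau_+\cup\tau_-$. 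The principal obstacle of the proof is this last step: one must extract from the symmetry classification in the appendix the precise statement that, once both indices are at least $2$, an orientation-preserving involution of $\OO$ pointwise fixing $\tau_+\cup\tau_-$ is necessarily trivial, so that $\bar h$ collapses and $h$ is forced back into $\Gamma$.
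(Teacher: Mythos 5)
Your treatment of the first three assertions and of part (1) follows the paper's route: you use that $[\alpha]$ and $[\beta]$ must generate $H_1(\OO;\ZZ_2)\cong(\ZZ_2)^2$ to conclude $e_{\alpha}\ne e_{\beta}$ and $f\notin\Gamma$, and you combine the descended involution $\bar f$ (which interchanges $e_{\alpha}$ and $e_{\beta}$) with Corollary \ref{cor:dihedral-orbifold-isometry2}(1) to rule out $e_{\alpha},e_{\beta}\in\{\tau_{\pm}\}$. Up to that point the argument is correct and essentially identical to the paper's.

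Part (2), however, has a genuine gap, and the step you yourself flag as the \lq\lq principal obstacle'' rests on a false statement. You propose to derive the contradiction from the claim that a nontrivial orientation-preserving involution of $\OO(r;d_+,d_-)$ pointwise fixing $\tau_+\cup\tau_-$ must be trivial. Such an involution exists: by Proposition \ref{prop:dihedral-orbifold-isometry2} we have $\Isom^+(\OO(r;d_+,d_-))\cong(\ZZ_2)^2$, and one of its three nontrivial elements is the $\pi$-rotation of $|\OO|=S^3$ about the round circle containing $\tau_+\cup\tau_-$, i.e.\ the descendant of the horizontal involution of $K(r)$ (see Figure \ref{symmetry-dihedral-orbifold}; the paper uses exactly this symmetry in Subcase 3.2 to permute the four edges of $K(r)$). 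It fixes $\tau_+\cup\tau_-$ pointwise, so no contradiction can be extracted from that property. Moreover, your preliminary claim that the axis of $h$ \lq\lq projects in $\OO$ to $\tau_+\cup\tau_-$'' is unjustified: $\OO$ is a spherical orbifold obtained by surgery, $\bar h$ is only defined topologically via equivariance, and a priori its fixed set has nothing to do with $\tau_{\pm}$ (note also that the image of a single geodesic could not be the disjoint union of the two arcs $\tau_+$ and $\tau_-$). The repair is short and is exactly the paper's one-line proof: you already observed that $\bar h$ preserves each of $e_{\alpha}$ and $e_{\beta}$ (since $h\alpha h^{-1}=\alpha^{-1}$ forces $h$ to stabilise $\fix(\alpha)$, hence the cusp $P_{\alpha}$, hence after surgery the edge $e_{\alpha}$). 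As $d_+,d_-\ge 2$ excludes the exceptional case, part (1) gives $e_{\alpha}\ne\tau_{\pm}$, while Corollary \ref{cor:dihedral-orbifold-isometry2}(2) states that for $d_+,d_-\ge 2$ no orientation-preserving involution of $\OO(r;d_+,d_-)$ stabilises any singular edge other than $\tau_{\pm}$ (when $d_+,d_-\ge 2$ the knot $K(r)$ consists of four edges, all of which every nontrivial symmetry moves). This contradiction already forces $h\in\Gamma$; no statement about pointwise fixed sets is needed.
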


\begin{proof}
We have only to prove (2).
If $h$ does not belong to $\Gamma$, then it descends to an  
orientation-preserving involution of $\OO(r;d_+,d_-)$
which preserves both $e_{\alpha}$ and $e_{\beta}$.
However, if $d_+, d_-\ge 2$, then by Corollary \ref{cor:dihedral-orbifold-isometry2}(2),
no orientation-preserving involution of 
$\OO(r;d_+,d_-)$ preserves an edge of the singular set different from $\tau_{\pm}$.
This contradicts the assertion (1).
\end{proof}

Case 3. $2\le d_+ \le d_-$.
Since $d_+$ and $d_-$ are coprime, we see
$2\le d_+ < d_-$ and 
one of $d_+$ and $d_-$ is odd.

Subcase 3.1. $|K(r)|=2$ (see Figure \ref{fig.homology1}(3)).
Let $K_1$ and $K_2$ be the components of $K(r)$,
and let $J_{i,j}$ ($1\le i,j\le 2$) be the edges of $\Sigma(\OO)$
such that 
$K_j=J_{1,j}\cup J_{2,j}$ for $j=1,2$
and that the vertical involution of $K(r)$ interchanges 
$J_{i,1}$ and $J_{i,2}$ for $i=1,2$.
Let $x_{\pm}$ and $x_{i,j}$ be the meridians of $\tau_{\pm}$
and $J_{i,j}$, respectively.
Then by using Lemma \ref{lem:homology}
and the fact that 
one of $d_+$ and $d_-$ is odd,
we see that $H_1(\OO;\ZZ_2)\cong (\ZZ_2)^2$ is freely
generated by 
$x_1:=x_{1,1}=x_{2,1}$ and $x_2:=x_{1,2}=x_{2,2}$:
moreover we have $x_{\pm}=0$.
Hence, 
we may assume $e_{\alpha}\subset K_1$
and $e_{\beta}\subset K_2$.
Since the horizontal involution of $K(r)$ interchanges $J_{1,j}$ and $J_{2,j}$ ($j=1,2$),
we may assume $e_{\alpha}=J_{1,1}\subset K_1$ 
and $e_{\beta}=J_{i,2}\subset K_2$ for some $i=1$ or $2$.
By Lemma \ref{lem:repeat}(2), 
we have $h\in\Gamma$, and so
$P_{\alpha}\cong P_{\beta}$
is homeomorphic to $D^2(2,2)$ or $S^2(2,2,2,2)$.
Since $2\le d_+<d_-$, we must have $d_+=2$.
If $i=1$, i.e. $e_{\beta}=J_{1,2}$, then $\tilde w$ is given by 
\[
\tilde w(J_{1,1})=\tilde w(J_{1,2})=\infty, \quad 
\tilde w(J_{1,2})=\tilde w(J_{2,2})=2, \quad \tilde w(\tau_+)=2, \quad \tilde w(\tau_-)=d_-.
\]
Since the vertical involution of $K(r)$ preserves $J_1:=J_{1,1}\cup J_{1,2}$,
we see that
$\orbp$ is isomorphic to $\orbm_2(r;d_-)$.
If $i=2$, i.e. $e_{\beta}=J_{2,2}$,
then the planar involution of $K(r)$ preserves $J_1:=J_{1,1}\cup J_{2,2}$.
Hence, we see by Remark \ref{rem.well-defined-orbifold} that $\orbp$
is isomorphic to $\orbm_2(r';d_-)$,
where $r'=(p+q)/p$.

Subcase 3.2. $|K(r)|=1$ (see Figure \ref{fig.homology1}(4)).
Suppose first that one of $d_+$ and $d_-$ is even.
Then $H_1(\OO;\ZZ_2)\cong (\ZZ_2)^2$ by Lemma \ref{lem:homology}.
Hence, by Lemma \ref{lem:repeat}(2), 
both $e_{\alpha}$ and $e_{\beta}$ are contained in $K(r)$,
and $h\in\Gamma$.
In particular, $P_{\alpha}\cong P_{\beta}\cong D^2(2,2)$ or $S^2(2,2,2,2)$.
Let $e_i$ ($1\le i \le 4$) be the edges of the singular set of $\OO$ 
contained in the knot $K(r)$ in this cyclic order.
We also assume that $\partial\tau_+=(e_1\cap e_2)\cup (e_3\cap e_4)$
and $\partial\tau_-=(e_2\cap e_3)\cup (e_4\cap e_1)$. 
Since the $(\ZZ_2)^2$-symmetry of $\OO(r;d_+,d_-)$ acts transitively 
on the edge set $\{e_i\}_{1\le i \le 4}$ 
(see Figure \ref{symmetry-dihedral-orbifold}),
we may assume $e_1=e_{\alpha}$ and so $\tilde w(e_1)=\infty$.
Since $e_{\alpha}$ joins 
$\tau_+$ with $\tau_-$ 
and since $d_{\pm}$ are coprime integers such that $2\le d_+\le d_-$,
the condition that 
$P_{\alpha}\cong D^2(2,2)$ or $S^2(2,2,2,2)$ implies that
$d_+=2$ and $d_-\ge 3$.
This in turn implies that $P_{\alpha}\cong P_{\beta}\cong D^2(2,2)$.
Since $\partial D^2(2,2)$ is isotopic to the simple loop $\alpha$ in $\partial M_0$,
we must have $\tilde w(e_2)=2$.
Thus $e_{\beta}$ is equal to $e_3$ or $e_4$.
However, if $e_{\beta}=e_4$ then $e_{\alpha}$, $e_{\beta}$, 
and the odd index edge $\tau_-$ 
share a vertex, it follows from Lemma \ref{lem:homology}(3) that
the meridian $\alpha$ of $e_1$ and the meridian $\beta$ of $e_4$
represent the same element of $H_1(\OO;\ZZ_2)\cong (\ZZ_2)^2$, a contradiction.
Hence $e_{\beta}=e_3$.
Set $J_1=e_1\cup e_3$ and $J_2=e_2\cup e_4$.
Then $J_1$ and $J_2$ are disjoint, $K(r)=J_1\cup J_2$ and
the following hold.
\[
\tilde w(J_1)=\infty,\ \tilde w(J_2)=2, \ \tilde w(\tau_+)=2, \
\tilde w(\tau_-)=d_-
\]
Hence we have $\orbp\cong\orbm_2(r;d_-)$
(cf. Remark \ref{rem.well-defined-orbifold}(2)).

Suppose finally that both $d_+$ and $d_-$ are odd.
Then, by Lemma \ref{lem:homology}, the meridians $x_{\pm}$ of $\tau_{\pm}$
represent the trivial element of $H_1(\OO;\ZZ_2)\cong \ZZ_2$,
and hence both 
$e_{\alpha}$ and $e_{\beta}=f(e_{\alpha})$ are contained in $K(r)$.
On the other hand, since $d_{\pm}>2$,
we have $P_{\alpha}\cong P_{\beta}$ is homeomorphic to an annulus,
and hence 
the inverting elliptic element $h$ descends to an involution of $\OO$ 
which preserves each of the two mutually different edges $e_{\alpha}$ and $e_{\beta}$
and restricts to an orientation-reversing involution on each of the edges.
But, such an involution does not exist  
by Corollary \ref{cor:dihedral-orbifold-isometry2}(2), a contradiction.

This completes the proof of Proposition \ref{prop:flexible-cusp-case}.
\end{proof}

\section{Proof of Theorem \ref{main-theorem} - rigid cusp case -}
\label{sec:rigid-cusp-case}

Under Assumption \ref{asmpt:setting-main-theorem},
suppose that $P_{\alpha}\cong P_{\beta}$
is a rigid cusp.
Thus the $2$-orbifold $P_{\alpha}\cong P_{\beta}$ is 
isomorphic to $S^2(p,q,r)$ where 
$(p,q,r)=(2,4,4)$, $(2,3,6)$, or $(3,3,3)$.

Let $G<\Gamma$ be the orbifold fundamental group $\pi_1(P_{\alpha})$,
and let 
$\Lambda$ be the subgroup of $G$ consisting of parabolic transformations.
We may assume that 
(a) $G$ stabilises the ideal point $\infty$
of the upper-half space model of $\HH^3$, and
(b) the boundary $\partial H_{\alpha}$ of the canonical horoball $H_{\alpha}$ is identified with 
the horosphere $\CC\times \{1\}\subset \HH^3$.
For each element $g\in\Lambda$,
let $|g|$ be the length of $g$ in the canonical horosphere 
(see Section \ref{Sec:canonical-horoball-pair}),
namely $|g|=|g|_{\partial H_{\alpha}}$, 
the translation length of $g$
in $\partial H_{\alpha}$, and simply call it the {\it length} of $g$.
Let $L_1(\Lambda)>0$ be the minimum of the lengths of 
nontrivial elements of $\Lambda$.
More generally, for each $n\in\NN$,
let $L_n(\Lambda)$ be the $n$-th shortest length
of nontrivial elements of $\Lambda$.

\medskip

{\bf Case 1.}
$P_{\alpha}\cong S^2(2,4,4)$. 
Then $G\cong \langle a,b,c \ | \ a^2, \ b^4,\ c^4, abc\rangle$,
and $\Lambda$ is the rank $2$ free abelian group with free basis
$\{b^2a, c^2a\}$.
We may assume the action of $G$ on the horosphere $\partial H_{\alpha}=\CC\times 1\cong\CC$
is given by the following rule.
There is a positive real $\ell$ such that
$a$ is the $\pi$ rotation about $0$,
and $b$ and $c$ are the $\pi/2$ rotations about $\ell$ and $\ell i$, respectively.
We can easily observe the following.
\begin{enumerate}
\item[(i)]
The shortest length $L_1(\Lambda)$ is equal to $2\ell$,
and it is attained precisely by the conjugates of $b^2a$ in $G$.
(Note that $c^2a=(b^{-1}a^{-1})^2a=b^{-1}a^{-1}b^{-1}=b^3 a b^{-1} 
=b(b^2 a) b^{-1}$ is conjugate to $b^2a$.)
\item[(ii)]
The second shortest length $L_2(\Lambda)$ is equal to $2\sqrt{2}\ell$,
and it is attained precisely by the conjugates of $b^2ac^2a$ in $G$.
\item[(iii)]
The third shortest length $L_3(\Lambda)$ is equal to $4\ell$,
and it is attained precisely by the conjugates of $(b^2a)^2$ in $G$.
\end{enumerate}

By Lemma \ref{Lem:Brenner}(1),
$2\ell=L_1(\Lambda)\ge 1$, and so $\ell\ge \frac{1}{2}$.
Since $\Gamma$ is non-free,
Lemma \ref{Lem:Brenner}(3) implies that
the length $|\alpha|$ of the parabolic element $\alpha\in \Lambda$
is less than $2$.
Since $L_3(\Lambda)=4\ell\ge 2$,
$|\alpha|$ is equal to either $L_1(\Lambda)$ or $L_2(\Lambda)$.
By using this fact, we obtain the following lemma.

\begin{lemma}
\label{lem:candidate-alpha(2,4,4)}
The parabolic element $\alpha\in \Lambda$
is conjugate to $b^2a$ or $b^2ac^2a$ in $G$.
Moreover the following hold.
\begin{enumerate}
\item
If $\alpha$ is conjugate to $b^2a$,
then the images of $\alpha$ by the natural epimorphisms
from $G\cong \pi_1(S^2(2,4,4))$ to $\pi_1(S^2(2,2,2))$, $\pi_1(S^2(2,2,4))$, and
$\pi_1(S^2(2,4,2))$ have order $2$.
\item
If $\alpha$ is conjugate to $b^2ac^2a$,
then the images of $\alpha$ by the natural epimorphisms
from $G\cong \pi_1(S^2(2,4,4))$ to $\pi_1(S^2(2,2,2))$, $\pi_1(S^2(2,2,4))$, and
$\pi_1(S^2(2,4,2))$ have order $1$, $2$ and $2$, respectively.
Moreover, the $\ZZ_2$-homology class of $\alpha$ vanishes.
\end{enumerate}
\end{lemma}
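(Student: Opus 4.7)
The plan is to combine a length/primitivity analysis with direct algebra in the cusp presentation. First, I would invoke Lemma \ref{lem:primitive} to see that $\alpha$ is primitive in $\Gamma$; since the stabilizer in $\Gamma$ of the parabolic fixed point of $\alpha$ is exactly $G$, this forces $\alpha$ to be primitive in the rank-two free abelian translation lattice $\Lambda$. By Lemma \ref{Lem:Brenner}(3) one has $|\alpha|<2$, while Lemma \ref{Lem:Brenner}(1) gives $\ell\ge 1/2$ and hence $L_3(\Lambda)=4\ell\ge 2$; therefore $|\alpha|\in\{L_1(\Lambda),L_2(\Lambda)\}$. Reading off the lattice on $\partial H_\alpha\cong\CC$ (generated by the vectors $2\ell$ and $2\ell i$), the primitive translations of length $L_1=2\ell$ are precisely $\pm b^2a$ and $\pm c^2a$, and those of length $L_2=2\sqrt{2}\ell$ are precisely $\pm b^2ac^2a$ and $\pm b^2a(c^2a)^{-1}$. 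Since conjugation by $a$ inverts every element of $\Lambda$ and conjugation by $b$ acts on $\Lambda$ as multiplication by $i$, every length-$L_1$ primitive element is $G$-conjugate to $b^2a$ and every length-$L_2$ primitive element is $G$-conjugate to $b^2ac^2a$, which establishes the first assertion.

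For the images in the three quotients I would use $c=(ab)^{-1}$, coming from $abc=1$. In $\pi_1(S^2(2,2,2))$, where one imposes $b^2=c^2=1$, both $b^2a$ and $c^2a$ collapse to $a$, so $b^2a\mapsto a$ has order $2$ and $b^2ac^2a\mapsto a^2=1$. In $\pi_1(S^2(2,2,4))$, where only $b^2=1$ is imposed, writing $c=ba$ gives $ac^2a=(ab)^2=c^{-2}$, so $b^2a\mapsto a$ has order $2$ and $b^2ac^2a\mapsto c^{-2}$ has order $2$. In $\pi_1(S^2(2,4,2))$, where only $c^2=1$ is imposed, the relation $(b^{-1}a)^2=1$ rewrites as $ab^2a=b^{-2}$, so $(b^2a)^2=b^2(ab^2a)=1$ with $b^2a\ne 1$ (otherwise $a=b^2$ would collapse the dihedral group $D_4=\pi_1(S^2(2,4,2))$), giving order $2$, and $b^2ac^2a=b^2a^2=b^2$ is again of order $2$.

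Finally, the $\ZZ_2$-homology statement is a one-line abelianisation: the class of $b^2ac^2a$ in $G^{\mathrm{ab}}$ equals $2(a+b+c)$, which vanishes modulo $2$, so $\alpha$ is trivial in $H_1(P_\alpha;\ZZ_2)$ and a fortiori in the $\ZZ_2$-homology of $M_0$ (and of any orbifold derived from it by surgery, where Lemma \ref{lem:homology} will eventually be applied). The most delicate point to watch is the $G$-conjugacy among the four primitive length-$L_2$ translations: the vectors $2\ell(1+i)$ and $2\ell(1-i)$ are interchanged up to sign by $b$-conjugation (multiplication by $i$), and the remaining sign is absorbed by $a$-conjugation, so $b^2a(c^2a)^{-1}$ is indeed $G$-conjugate to $b^2ac^2a$. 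Everything else is a routine unwinding of the presentations.
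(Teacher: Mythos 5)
Your proposal is correct and follows essentially the same route as the paper: the paper's proof also derives the first assertion from the length-spectrum observations ($|\alpha|\in\{L_1(\Lambda),L_2(\Lambda)\}$ via Lemma \ref{Lem:Brenner}, with the conjugacy classification of the short lattice vectors under the rotations induced by $a$ and $b$) and then checks (1) and (2) by direct computation in the quotient triangle groups using $abc=1$ and the conjugacy of $b^2a$ with $c^2a$. Your appeal to Lemma \ref{lem:primitive} is redundant (every lattice vector of length $L_1$ or $L_2$ in the square lattice is automatically primitive, so the length bound alone suffices), but this does no harm, and your explicit verification of the $G$-conjugacy among the four length-$L_2$ translations correctly fills in what the paper leaves implicit in its observation (ii).
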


\begin{proof}
The assertion in the first line
follows from the observations preceding the lemma.
The assertions (1) and (2)
can be checked easily, by using the fact that
$b^2a$ is conjugate to $c^2a$ in $G$.
\end{proof}

\begin{figure}
\includegraphics[width=0.6\hsize, bb=0 0 1422 672]{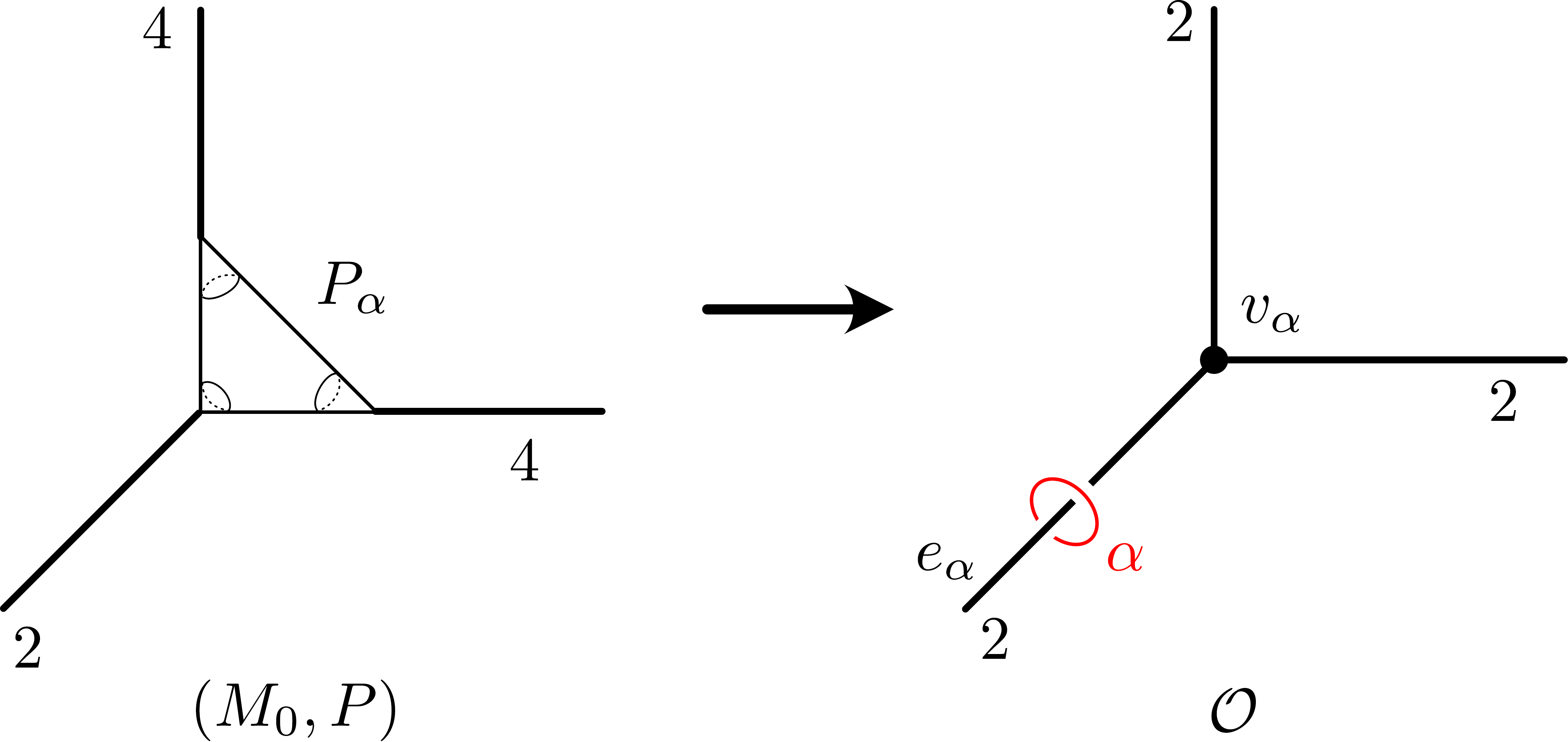}
\caption{Orbifold surgery on the rigid cusp $S^2(2,4,4)$:
The parabolic locus $P_{\alpha}$ of the pared orbifold $(M_0,P)$ 
shrinks into the vertex $v_{\alpha}$ of the singular set of the orbifold $\OO$.
By Lemma \ref{lem:geometric-orb},
the homology class $[\alpha]\in H_1(\OO;\ZZ_2)$ determined by
the parabolic element $\alpha\in\Gamma$
is represented by the meridian of the edge $e_{\alpha}$
of the singular set $\Sigma(\OO)$
incident on $v_{\alpha}$ whose index in the original orbifold $M_0$ is $2$.
}
\label{fig.orb-surgery2}
\end{figure}

Now let $\OO$ be the orbifold obtained from the pared orbifold $\orbp$ by the orbifold surgery
as illustrated in Figure \ref{fig.orb-surgery2}.
Namely, for each index $4$ edge of the singular set which has an endpoint in $P_{\alpha}$
or $P_{\beta}$, we replace the index $4$ with the index $2$,
and then cap all resulting spherical boundary components with discal $3$-orbifolds.
Then each of $P_{\alpha}$ and $P_{\beta}$ shrinks into a vertex of $\OO$
with link $S^2(2,2,2)$,
which we denote by $v_{\alpha}$ and $v_{\beta}$, respectively.
We denote by $e_{\alpha}$ (resp. $e_{\beta}$) the edge of the singular set $\Sigma(\OO)$
incident on $v_{\alpha}$ (resp. $v_{\beta}$) whose index in the original orbifold $M_0$ is $2$. 

\begin{lemma}
\label{lem:geometric-orb}
The orbifold $\OO$ is isomorphic to a spherical dihedral orbifold 
$\OO(r;d_+,d_-)$
for some $r\in\QQ$ and coprime positive integers $d_+$ and $d_-$.
Moreover, $\alpha\in \Lambda$ is conjugate to $b^2a$ in $G$,
and the homology class $[\alpha]\in H_1(\OO;\ZZ_2)$ determined by $\alpha$ is
equal to the meridian of the edge $e_{\alpha}$. 
Similarly, the homology class $[\beta]\in H_1(\OO;\ZZ_2)$ is
equal to the meridian of the edge $e_{\beta}$. 
\end{lemma}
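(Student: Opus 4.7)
The plan is to first show $\pi_1(\OO)$ is dihedral, then apply Theorem~\ref{thm:dihedral-orbifold} to reduce $\OO$ to the spherical dihedral form $\OO(r;d_+,d_-)$, and finally pin down the conjugacy type of $\alpha$ (and hence $[\alpha]$) via a length-based contradiction argument.

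The key starting observation is that the surgery caps off $P_\alpha\cong S^2(2,4,4)$ with a discal $3$-orbifold whose singular graph consists of three index-$2$ arcs meeting at the new vertex $v_\alpha$ (whose link is $S^2(2,2,2)$), and symmetrically at $P_\beta$. Consequently the composition $\pi_1(P_\alpha)\hookrightarrow\Gamma\twoheadrightarrow\pi_1(\OO)$ factors through the natural surjection $\pi_1(S^2(2,4,4))\twoheadrightarrow\pi_1(S^2(2,2,2))\cong(\ZZ_2)^2$ that reduces the orders of the generators $b,c$ (meridians of the index-$4$ edges) from $4$ to $2$. By Lemma~\ref{lem:candidate-alpha(2,4,4)}, the image of $\alpha$ in $\pi_1(\OO)$ then has order at most $2$, and the same holds for $\beta$; so $\pi_1(\OO)$ is a quotient of $\ZZ_2 * \ZZ_2$, hence trivial or dihedral. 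By Lemma~\ref{lem:relabeled-orbifold-generic}, $\OO$ is very good, and since it contains the singular vertex $v_\alpha$, $\pi_1(\OO)$ is nontrivial, hence dihedral. Theorem~\ref{thm:dihedral-orbifold} (whose remaining hypotheses are supplied by Lemma~\ref{lem:relabeled-orbifold-generic}) now applies, and possibilities (2), (3), (4) are excluded at once because none of them has a trivalent vertex with link $S^2(2,2,2)$ in its singular set, whereas $\OO$ contains $v_\alpha$. Therefore $\OO\cong\OO(r;d_+,d_-)$.

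For the final step, suppose by contradiction that $\alpha$ is conjugate in $G$ to $b^2ac^2a$; by Lemma~\ref{lem:candidate-alpha(2,4,4)}(2) its image in $\pi_1(\OO)$ then vanishes. Since $|\alpha|=|\beta|$ on the canonical horospheres (Lemma~\ref{Lem:Brenner}(2)) and the two candidate types have distinct translation lengths $2\ell$ and $2\sqrt{2}\ell$, $\beta$ must also be of type $b^2ac^2a$, so its image in $\pi_1(\OO)$ likewise vanishes, forcing $\pi_1(\OO)=1$ and contradicting the preceding step. Hence $\alpha$ is conjugate to $b^2a$ in $G$, and similarly for $\beta$ in $\pi_1(P_\beta)$. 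Under the factorization above, $b^2a$ maps to $a$, namely the meridian of the index-$2$ edge $e_\alpha$ at $v_\alpha$; passing to $\ZZ_2$-homology yields $[\alpha]$ equal to the meridian of $e_\alpha$, with the analogous identity for $\beta$ and $e_\beta$. The main technical point I expect to address carefully is the precise factorization of $\pi_1(P_\alpha)\to\pi_1(\OO)$ through the local group at $v_\alpha$ and the concomitant identification of the generators $a,b,c$ with the correct meridians of edges of $\OO$.
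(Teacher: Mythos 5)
Your proposal is correct and follows the paper's overall skeleton: the factorization of $\pi_1(P_\alpha)\to\pi_1(\OO)$ through $\pi_1(S^2(2,4,4))\twoheadrightarrow\pi_1(S^2(2,2,2))$, Lemma \ref{lem:candidate-alpha(2,4,4)} to bound the orders of the images, Lemma \ref{lem:relabeled-orbifold-generic} plus Theorem \ref{thm:dihedral-orbifold} with the vertex $v_\alpha$ ruling out cases (2)--(4), and the identification $[\alpha]=[a]=$ meridian of $e_\alpha$ in $\ZZ_2$-homology. Where you genuinely diverge is in how information is transferred from $\alpha$ to $\beta$ and in the mechanism excluding the type $b^2ac^2a$. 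The paper does both via the exchanging elliptic element $f$: if $f\in\Gamma$ the images of $\alpha$ and $\beta$ in $\pi_1(\OO)$ are conjugate, and if $f\notin\Gamma$ it asserts that $f$ descends to an involution of $\OO$ interchanging them; it then gets the contradiction from the fact that $\pi_1(\OO)$ is a \emph{noncyclic} dihedral group (the local group $(\ZZ_2)^2$ at $v_\alpha$ injects since $\OO$ is very good), because triviality of the image of $\alpha$ would make $\pi_1(\OO)$ cyclic. You instead use the length rigidity $|\alpha|=|\beta|$ of Lemma \ref{Lem:Brenner}(2) together with $L_1=2\ell\neq 2\sqrt{2}\,\ell=L_2$, so that $\alpha$ of type $b^2ac^2a$ forces $\beta$ to be of the same type, killing both generators and contradicting mere nontriviality; this frees you from the claim (asserted without detail in the paper) that $f$ descends to the surgered orbifold, and you never need noncyclicity. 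One implicit point you should make explicit: to read off $\beta$'s conjugacy type from $|\beta|$ you need the cusp cross-sections at $P_\alpha$ and $P_\beta$ to be isometric with the \emph{same} parameter $\ell$ --- otherwise an equality such as $2\ell=2\sqrt{2}\,\ell'$ could conflate the two types when $P_\alpha\neq P_\beta$. This holds because the exchanging elliptic element conjugates $\Stab_{\Gamma}(\fix(\alpha))$ to $\Stab_{\Gamma}(\fix(\beta))$ and maps $H_\alpha$ to $H_\beta$ (Section \ref{Sec:canonical-horoball-pair}), which is also what gives the $\beta$-analogue of Lemma \ref{lem:candidate-alpha(2,4,4)} that your phrase ``the same holds for $\beta$'' presupposes; so your route still quietly relies on $f$, but only as an isometry of $\HH^3$, not as a symmetry of $\OO$.
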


\begin{proof}
By Lemma \ref{lem:candidate-alpha(2,4,4)},
$\alpha$ is conjugate to $b^2a$ or $b^2ac^2a$ in $G=\pi_1(S^2(2,4,4))$,
its image in $\pi_1(S^2(2,2,2))$ has order $2$ or $1$ accordingly.
Hence the image of $\alpha$ in $\pi_1(\OO)$ 
has order $\le 2$.
Moreover, the images of $\alpha$ and $\beta$ have the same order,
because (a) if the exchanging involution $f$ belongs to $\Gamma$ then $\alpha$ and $\beta$ are conjugate in 
$\Gamma$ and so in $\pi_1(\OO)$, and 
(b) if $f\notin\Gamma$ then $f$ descends to an involution on $\OO$
which interchanges the images of $\alpha$ and $\beta$.
Hence $\pi_1(\OO)$ is either the trivial group or a dihedral group.
Since $\OO$ is very good by Lemma \ref{lem:relabeled-orbifold-generic}
and since it has a singular point with link $S^2(2,2,2)$,
$\pi_1(\OO)$ is a noncyclic dihedral group.
Hence, by Theorem \ref{thm:dihedral-orbifold},
$\OO$ is isomorphic to a spherical dihedral orbifold $\OO(r;d_+,d_-)$.

We prove the remaining assertions.
If $\alpha\in\Gamma$ is conjugate to $b^2ac^2a$,
then it
descends to the trivial element of $\pi_1(S^2(2,2,2))$,
and so it represents the trivial element of $\pi_1(\OO)$.
This contradicts the fact that $\pi_1(\OO)$ is a dihedral group
generated by the images of $\alpha$ and $\beta$.
Hence $\alpha$ is conjugate to $b^2a$.
This implies that the $\ZZ_2$-homology class $[\alpha]\in H_1(\OO;\ZZ_2)$
is equal to that represented by the element $a$, and so
it is the meridian of the edge $e_{\alpha}$.  
The existence of the exchanging elliptic element $f$
implies the corresponding assertion for $[\beta]$.
\end{proof}

\begin{lemma}
\label{lem_problematic-weight}
The pared orbifold $\orbp$ is represented by the weighted graph
$(S^3,K(r)\cup\tau_+\cup\tau_-,\tilde w)$
for some $r\in\QQ$,
where
$\tilde w$ is determined by the following rule
(see Figure \ref{fig.problematic-orbifold}):
\[
\tilde w(J_1)=2, \quad \tilde w(J_2)=4, \quad \tilde w(\tau_+)=4, \quad \tilde w(\tau_-)=m, 
\]
for some odd integer $m\ge 3$,
where $J_1$ and $J_2$ are unions of two mutually disjoint edges of the graph
$K(r)\cup\tau_+\cup\tau_-$ distinct from $\tau_{\pm}$,
such that $K(r)=J_1\cup J_2$.
Moreover, $P_{\alpha}$ and $P_{\beta}$ correspond to
distinct endpoints of $\partial \tau_+$
(in the sense of Convention \ref{conv:pared-orbifold}(3)).
\end{lemma}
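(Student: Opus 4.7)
The plan is to reconstruct the weighted graph representing $\orbp$ by inverting the orbifold surgery of Figure \ref{fig.orb-surgery2} on the dihedral orbifold $\OO\cong\OO(r;d_+,d_-)$ supplied by Lemma \ref{lem:geometric-orb}, and then to identify the distinguished edges $e_\alpha$ and $e_\beta$ via the characterisation $[\alpha]=[e_\alpha]$, $[\beta]=[e_\beta]$ in $H_1(\OO;\ZZ_2)$. First I would pin down $(d_+,d_-)$: the surgery construction forces $v_\alpha$ and $v_\beta$ to have link $S^2(2,2,2)$ in $\OO$, while in $\OO(r;d_+,d_-)$ the four trivalent vertices of the singular graph $K(r)\cup\tau_+\cup\tau_-$ carry link $S^2(2,2,d_+)$ (on $\tau_+$) or $S^2(2,2,d_-)$ (on $\tau_-$). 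Since $d_+$ and $d_-$ are coprime they cannot both equal $2$, so we obtain $d_+=2$ (applying Remark \ref{remark:upside-down} to swap $\tau_+\leftrightarrow\tau_-$ if necessary), whence $v_\alpha$ and $v_\beta$ are the two endpoints of $\tau_+$; this already yields the final assertion of the lemma. Setting $m:=d_-$, coprimality forces $m$ odd.

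Next I would carry out the inverse surgery at $v_\alpha$ and $v_\beta$: it promotes two of the three index-$2$ edge germs at each vertex back to index $4$, reinstating the cusp $S^2(2,4,4)$, while the remaining germ, namely the one defining $e_\alpha$ or $e_\beta$, stays at index $2$. To decide which germ it is, I would compute $H_1(\OO;\ZZ_2)$ via Lemma \ref{lem:homology} applied to $\Sigma_{\mathrm{even}}=K(r)\cup\tau_+$. The trivalent vertex relations, combined with the vanishing of the meridian of $\tau_-$ (whose weight $m$ is odd), give $H_1(\OO;\ZZ_2)\cong(\ZZ_2)^2$ together with explicit expressions for every edge meridian. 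Since $\{\alpha,\beta\}$ generates $\Gamma$, and hence $H_1(\OO;\ZZ_2)$, the classes $[\alpha]$ and $[\beta]$ must form a basis; a case analysis on the three edge germs at $v_\alpha$, combined with the symmetry furnished by the exchanging elliptic element, then excludes $e_\alpha=\tau_+$ and forces $e_\alpha$ and $e_\beta$ to be two disjoint arcs of $K(r)$. Declaring $J_1:=e_\alpha\cup e_\beta$ and $J_2$ to be the union of the remaining arcs of $K(r)$, the resulting weights read $\tilde w(J_1)=2$, $\tilde w(J_2)=4$, $\tilde w(\tau_+)=4$, and $\tilde w(\tau_-)=m$.

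Finally, for $m\ge 3$: if instead $m=1$, then $\tau_-$ would not be singular and by Convention \ref{conv:pared-orbifold2}(1) the two arcs of $K(r)$ of weights $2$ (from $J_1$) and $4$ (from $J_2$) meeting at each endpoint of $\tau_-$ would form a $2$-valent singular vertex whose link is the spindle $S^2(2,4)$, a bad orbifold; this contradicts Lemma \ref{lem:relabeled-orbifold-generic}, so $m\ge 3$. The main obstacle is the case analysis in the second step: ruling out the alternative placements of $e_\alpha,e_\beta$ (in particular $e_\alpha=\tau_+$, or $e_\alpha$ and $e_\beta$ sharing a vertex at an endpoint of $\tau_-$), and handling the parallel situation when $|K(r)|=2$ so that $K(r)$ contributes self-loops to $\Sigma_{\mathrm{even}}$, will require combining the generator condition on $H_1(\OO;\ZZ_2)$ with the restricted isometry structure of $\OO(r;2,m)$ furnished by Corollary \ref{cor:dihedral-orbifold-isometry2}.
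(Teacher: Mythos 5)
Your proposal is correct and takes essentially the same route as the paper's proof: invert the surgery of Lemma \ref{lem:geometric-orb}, use the $S^2(2,2,2)$ links of $v_\alpha,v_\beta$ to force $d_+=2$ (hence $m=d_-$ odd by coprimality), and use the fact that $[\alpha],[\beta]$ are the meridians of $e_\alpha,e_\beta$ and form a basis of $H_1(\OO;\ZZ_2)\cong(\ZZ_2)^2$, together with Lemma \ref{lem:homology}(3), to conclude $e_\alpha\ne e_\beta$ and that they are disjoint arcs of $K(r)$. The only cosmetic differences are that the paper disposes of $d_-=1$ by a cusp-link/weight contradiction at $\partial\tau_+$ rather than your bad-spindle argument at $\partial\tau_-$ (both work), and it needs neither Corollary \ref{cor:dihedral-orbifold-isometry2} nor the exchanging symmetry: the uniqueness of the index-$2$ germ at each $S^2(2,4,4)$ cusp vertex, combined with $e_\alpha\ne e_\beta$, already rules out $e_\alpha=\tau_+$ and $v_\alpha=v_\beta$.
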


\begin{proof}
By Lemma \ref{lem:geometric-orb},
$\OO$ is represented by the weighted graph 
$(S^3,K(r)\cup\tau_+\cup\tau_-,w)$
for some $r\in\QQ$,
where $w$ is given by the rule
\[
w(K(r))=2, \quad w(\tau_+)=d_+, \quad w(\tau_-)=d_-
\]
for some coprime positive integers $d_+$ and $d_-$.
Since $\OO$ is obtained from $\orbp$ by an orbifold surgery,
there is a weight function $\tilde w$
on the graph $K(r)\cup\tau_+\cup\tau_-$
such that the pared orbifold $\orbp$ is represented by the 
weighted graph $(S^3,K(r)\cup\tau_+\cup\tau_-,\tilde w)$.
By Remark \ref{remark:upside-down}, we may assume $d_-$ is odd,
if necessary by replacing $r=q/p$ with $q'/p$
where $qq'\equiv 1 \pmod{p}$.
Hence, we see $H_1(\OO;\ZZ_2)\cong (\ZZ_2)^2$ by Lemma \ref{lem:homology}.
Since $H_1(\OO;\ZZ_2)$ is generated by $[\alpha]$ and $[\beta]$,
which are the meridians
of the edges $e_{\alpha}$ and $e_{\beta}$, respectively 
(see Lemma \ref{lem:geometric-orb}), 
we have $e_{\alpha}\ne e_{\beta}$.

Since the links of $v_{\alpha}$ and $v_{\beta}$ are isomorphic to $S^2(2,2,2)$,
we see $w(\tau_+)=2$ and $\{v_{\alpha}, v_{\beta}\}\subset \partial \tau_+$.
Since $e_{\alpha}$ (resp. $e_{\beta}$) is the unique edge of the trivalent graph
$K(r)\cup\tau_+\cup\tau_-$ 
incident on the vertex $v_{\alpha}$ (resp. $v_{\beta}$)
with $\tilde w$-weight $2$,
and since $e_{\alpha}\ne e_{\beta}$,
we see that  $v_{\alpha}$ and $v_{\beta}$ are distinct
endpoints of $\tau_+$.
(If $v_{\alpha}=v_{\beta}$, 
then its \lq link' in $M_0$ is of the form $S^2(2,2,*)\not\cong S^2(2,4,4)$.)
Hence $P_{\alpha}$ and $P_{\beta}$ correspond to
distinct endpoints of $\partial \tau_+$.

We observe that $e_{\alpha}$ and $e_{\beta}$ are not equal to $\tau_+$.
If, say $e_{\alpha}$ was equal to $\tau_+$,
then it is incident on $v_{\beta}\in\partial\tau_+$.
Since $\tilde w(e_{\alpha})=2$, this implies
we have $e_{\alpha}= e_{\beta}$, a contradiction.
This observation implies that
both $e_{\alpha}$ and $e_{\beta}$ are contained in $K(r)$.

We next observe that $d_-\ge 3$.
If $d_-=1$, then the endpoints of $e_{\alpha}$ and $e_{\beta}$ 
are all contained in $\partial\tau_+=\{v_{\alpha},v_{\beta}\}$.
This together with the previous observation
implies that the \lq links' of $v_{\alpha}$ and $v_{\beta}$ in $M_0$
are isomorphic to $S^2(2,2,4)$, a contradiction.

We now show that $e_{\alpha}$ and $e_{\beta}$ are disjoint.
If they are not disjoint, then they share an endpoint of $\tau_-$,
which has odd weight $d_-$.
This implies that the meridians of $e_{\alpha}$ and $e_{\beta}$
represent an identical element of $H_1(\OO;\ZZ_2)$ (see Lemma \ref{lem:homology}(3)),
and so $[\alpha]=[\beta]$, a contradiction. 

Set $J_1:=e_{\alpha}\cup e_{\beta}$
and let $J_2:=\cl(K(r)-J_1)$.
Then $J_1$ and $J_2$ satisfy the desired conclusion with $m=d_-$.
\end{proof}

\begin{figure}
\includegraphics[width=0.6\hsize, bb=0 0 1619 893]{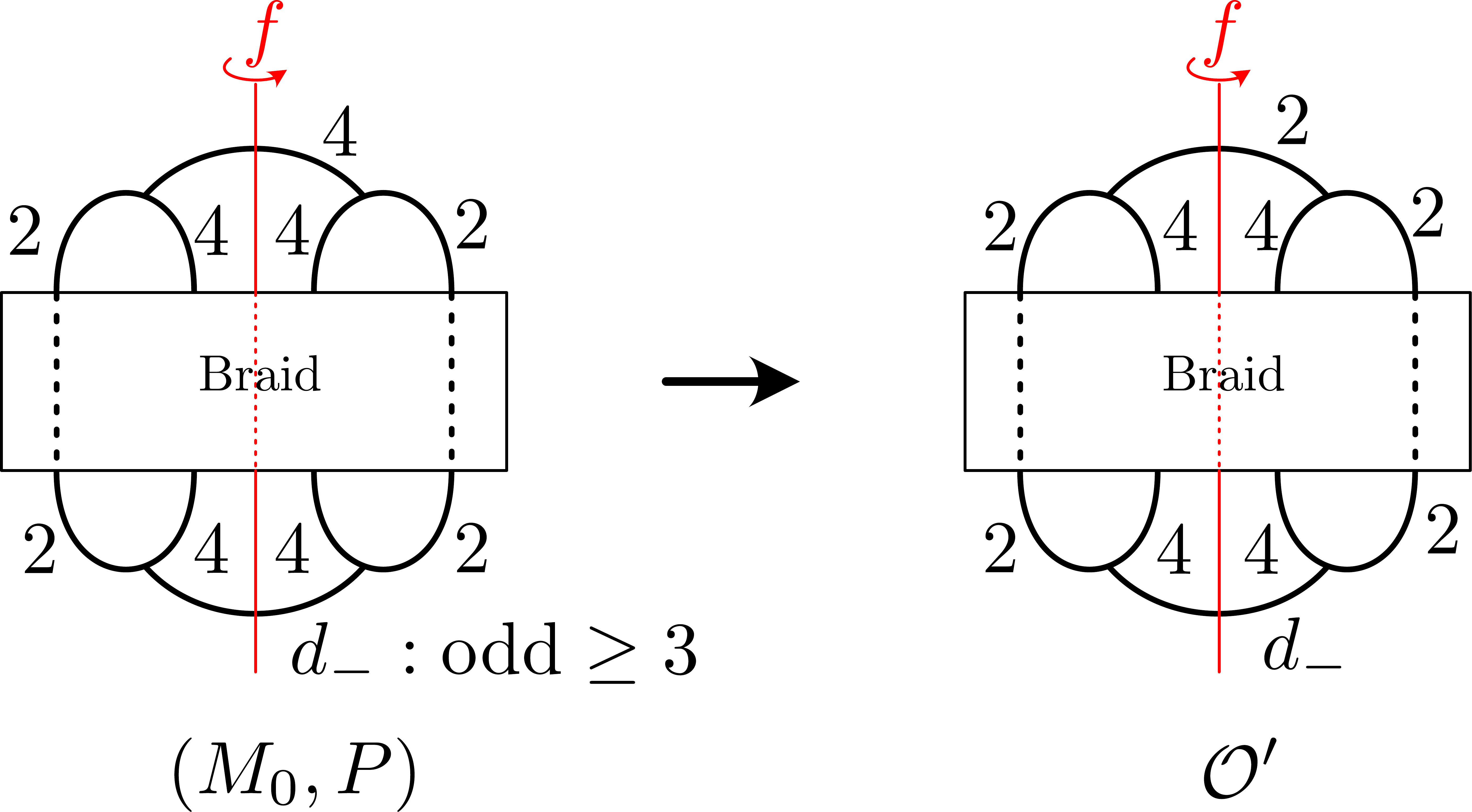}
\caption{The possible pared orbifold $\orbp$ in Lemma \ref{lem_problematic-weight}
and the orbifold $\OO'$ obtained by the orbifold surgery.
In this figure, we apply further normalisation so that $J_1$ and $J_2$ are
invariant by the vertical involution $f$
(cf. Remark \ref{rem.well-defined-orbifold}(3)).  
}
\label{fig.problematic-orbifold}
\end{figure}

We show that the situation described in Lemma \ref{lem_problematic-weight}
cannot happen.
To this end, 
we perform another orbifold surgery on $(M_0,P)$
which replaces the weight $4$ of $\tau_+$ with $2$.
To be precise, we consider the orbifold $\OO'$ represented by
the weighted graph
$(S^3,K(r)\cup\tau_+\cup\tau_-,\tilde w')$
for some $r\in\QQ$,
where $\tilde w'$ is given by the following rule.
\[
\tilde w'(J_1)=2, \quad \tilde w'(J_2)=4, \quad \tilde w'(\tau_+)=2, \quad \tilde w'(\tau_-)=m
\]
Note that $P_{\alpha}\cong S^2(2,4,4)$ shrinks into a singular point of $\OO'$
with link $S^2(2,2,4)$ or $S^2(2,4,2)$.
Since $\alpha$ is conjugate to $b^2a$ in $\pi_1(S^2(2,4,4))< \pi_1(M_0)=\Gamma$,
we see by Lemma \ref{lem:candidate-alpha(2,4,4)}(1) that
the image of $\alpha$ in $\pi_1(\OO')$ has order $\le 2$.
The same argument can be applied to $\beta$ and we see that
the image of $\beta$ in $\pi_1(\OO')$ also has order $\le 2$.
Since $\OO'$ is very good by Lemma \ref{lem:relabeled-orbifold-generic}
and since the singular set of $\OO'$ contains a trivalent vertex,
$\pi_1(\OO')$ is a noncyclic dihedral group.
Since the singular set of $\OO'$ contains four trivalent vertices,
Theorem \ref{thm:dihedral-orbifold} implies that 
$\OO'$ must be isomorphic to a 
spherical dihedral orbifold $\OO(r';d_+',d_-')$
with $d_+',d_-'\ge 2$.
In particular, the singular set $\Sigma(\OO')$ of $\OO'$ must contain precisely
four or five edges with index $2$.
This contradicts the fact that 
$\Sigma(\OO')$ contains precisely three edges
of index $2$ (see Figure \ref{fig.problematic-orbifold}).

\medskip

{\bf Case 2.}
$P_{\alpha}\cong S^2(2,3,6)$. 
Then $G\cong \langle a,b,c \ | \ a^2,\ b^3,\ c^6,\ abc\rangle$,
and $\Lambda$ is the rank $2$ free abelian group with free basis
$\{ac^3, c(ac^3)c^{-1}=cac^2\}$.
We may assume the action of $G$ on the horosphere $\partial H_{\alpha}=\CC\times 1\cong\CC$
is given by the following rule.
There is a positive real $\ell$ such that
$a$ is the $\pi$ rotation about $\sqrt{3}\ell$,
$b$ is the $2\pi/3$ rotation about $2\ell e^{\pi i/6}=\sqrt{3}\ell+\ell i$, and
$c$ is the $\pi/3$ rotations about $0$.
The action of the generators of $\Lambda$ is given by 
\[
ac^3(z)=z+2\sqrt{3}\ell, \quad cac^2(z)=z+2\sqrt{3}\ell e^{\pi i/3}.
\]
We can easily observe the following.
\begin{enumerate}
\item[(i)]
$L_1(\Lambda)=2\sqrt{3}\ell$,
and it is attained precisely by the
conjugates of $ac^3$ in $G$.
\item[(ii)]
$L_2(\Lambda)=6\ell$,
and it is attained precisely by the
conjugates of $(ac^3)(cac^2)=ac^4ac^2$ in $G$.
\item[(iii)]
$L_3(\Lambda)=4\sqrt{3}\ell$, 
and it is attained precisely by the
conjugates of $(ac^3)^2$ in $G$.
\end{enumerate}
By Lemma \ref{Lem:Brenner}(1),
$2\sqrt{3}\ell=L_1(\Lambda)\ge 1$, and so $\ell\ge \frac{1}{2\sqrt{3}}$.
Since $\Gamma$ is non-free,
Lemma \ref{Lem:Brenner}(3) implies that
the length $|\alpha|$ of the parabolic element $\alpha\in \Lambda$
is less than $2$.
Hence we obtain the following.

\begin{lemma}
\label{lem:candidate-alpha(2,3,6)}
The parabolic element $\alpha\in \Lambda$
is conjugate to $ac^3$ or $ac^4ac^2$ in $G$.
\end{lemma}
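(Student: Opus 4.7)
The plan is to imitate the proof of Lemma~\ref{lem:candidate-alpha(2,4,4)} in the preceding $(2,4,4)$ case. I would first extract the two relevant numerical bounds: from Lemma~\ref{Lem:Brenner}(1) applied to any element of $\Lambda$ realising $L_1(\Lambda)$, we have $2\sqrt{3}\,\ell = L_1(\Lambda)\ge 1$, so $\ell\ge 1/(2\sqrt{3})$; and since $\Gamma$ is non-free, Lemma~\ref{Lem:Brenner}(3) gives $|\alpha|<2$.

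Next I would compare these bounds against the third shortest length computed in (iii). Substituting the lower bound on $\ell$ into $L_3(\Lambda)=4\sqrt{3}\,\ell$ yields
$$
L_3(\Lambda)\ \ge\ 4\sqrt{3}\cdot\frac{1}{2\sqrt{3}}\ =\ 2,
$$
hence $|\alpha|<2\le L_3(\Lambda)$. Because $L_n(\Lambda)$ is defined to be the $n$-th shortest length of a nontrivial element of $\Lambda$, this forces $|\alpha|\in\{L_1(\Lambda),L_2(\Lambda)\}$, and the explicit identifications in (i) and (ii) then show that $\alpha$ is $G$-conjugate to $ac^3$ or to $ac^4ac^2$, as desired.

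The only part of this argument that is not entirely automatic is the verification of items (i)--(iii) themselves, but this is a routine computation on the hexagonal (Eisenstein) lattice $\Lambda$: under the free $\ZZ$-basis $\{ac^3,\,cac^2\}$, the element $(ac^3)^m(cac^2)^n$ acts on $\partial H_\alpha=\CC$ as translation by $2\sqrt{3}\,\ell(m+n e^{\pi i/3})$, so its length equals $2\sqrt{3}\,\ell\sqrt{m^2+mn+n^2}$; the smallest values attained by the binary quadratic form $m^2+mn+n^2$ are $1,3,4,\dots$, and the $\pi/3$-rotation action of $\langle c\rangle$ on $\Lambda$ permutes each level set transitively, which produces the stated conjugacy descriptions. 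I do not anticipate any genuine obstacle in the proof of this lemma itself; the substantive work in the rigid $(2,3,6)$ case will come afterwards, when we use Lemma~\ref{lem:candidate-alpha(2,3,6)} together with an orbifold surgery and the dihedral classification (Theorem~\ref{thm:dihedral-orbifold}) to derive the eventual contradiction, exactly parallel to what is carried out for $S^2(2,4,4)$ in Case~1.
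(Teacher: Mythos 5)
Your proposal is correct and follows essentially the same route as the paper: the authors likewise combine Lemma \ref{Lem:Brenner}(1) (giving $\ell\ge 1/(2\sqrt{3})$) with Lemma \ref{Lem:Brenner}(3) (giving $|\alpha|<2\le L_3(\Lambda)=4\sqrt{3}\,\ell$) to force $|\alpha|\in\{L_1(\Lambda),L_2(\Lambda)\}$ and then invoke the identifications (i)--(iii). Your Eisenstein-lattice computation (lengths $2\sqrt{3}\,\ell\sqrt{m^2+mn+n^2}$, with the $\pi/3$-rotation by $c$ acting transitively on each level set) is a correct fleshing-out of what the paper dismisses as easily observed, and in particular correctly accounts for $cac^2=c(ac^3)c^{-1}$ and $(ac^3)(cac^2)=ac^4ac^2$.
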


\begin{figure}
\includegraphics[width=0.6\hsize, bb=0 0 1375 668]{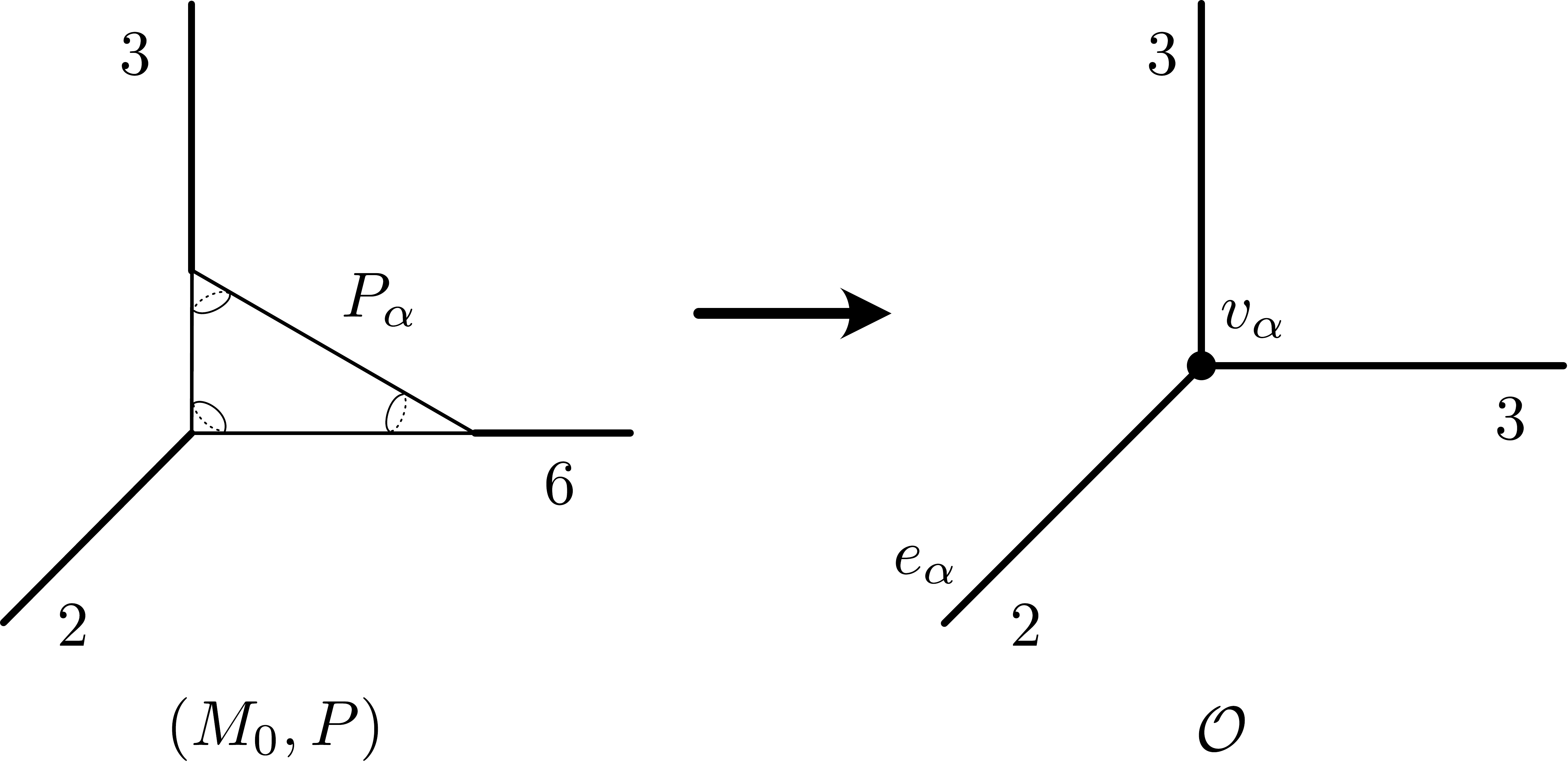}
\caption{Orbifold surgery on the rigid cusp $S^2(2,3,6)$}
\label{fig.orb-surgery3}
\end{figure}

Now let $\OO$ be the orbifold obtained from the pared orbifold
$\orbp$ by the orbifold surgery
as illustrated in Figure \ref{fig.orb-surgery3}.
Namely, for each edge of the singular set which has the index $6$ cone point of $P_{\alpha}$
or $P_{\beta}$ as an endpoint, we replace the weight $6$ with the new weight $3$,
and then cap all  resulting spherical boundary components with discal $3$-orbifolds.
Then $P_{\alpha}$ and $P_{\beta}$ shrink into 
singular points, $v_{\alpha}$ and $v_{\beta}$, 
of $\OO$
with link $S^2(2,3,3)$.

\begin{lemma}
\label{lem:(2,3,3)}
The image of $\alpha$ by the natural epimorphism 
from $\pi_1(S^2(2,3,6))$ to $\pi_1(S^2(2,3,3))$ 
has order $2$.
\end{lemma}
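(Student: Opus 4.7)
The plan is to apply Lemma \ref{lem:candidate-alpha(2,3,6)}, which restricts $\alpha$, up to conjugacy in $G = \pi_1(S^2(2,3,6))$, to one of the two explicit elements $ac^3$ or $ac^4ac^2$. The natural epimorphism $\pi_1(S^2(2,3,6)) \to \pi_1(S^2(2,3,3))$ arises from the orbifold surgery that replaces the index-$6$ cone points with index-$3$ cone points; on the level of presentations it simply sends each generator to itself and imposes the additional relation $c^3 = 1$. So it suffices to compute the order of the image of each of the two candidate elements in $\pi_1(S^2(2,3,3)) \cong A_4$.

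For the first candidate, the image of $ac^3$ is $a$, whose order is $2$ by the defining relation $a^2 = 1$ (and which is visibly nontrivial in $A_4$).

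For the second candidate, the image of $ac^4ac^2$ reduces modulo $c^3 = 1$ to $acac^2$. I would verify order $2$ by working inside the concrete realisation $\pi_1(S^2(2,3,3)) \cong A_4$ with, for example, $a = (1\,2)(3\,4)$, $b = (1\,2\,3)$, and $c = (ab)^{-1} = (2\,3\,4)$. A routine permutation computation then gives $acac^2 = (1\,4)(2\,3)$, which has order $2$.

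The main conceptual point, rather than the computational one, is to confirm that the image is of order exactly $2$ and never $1$ in either case. This is the feature that separates the $(2,3,6)$-cusp situation from the $(2,4,4)$-cusp analysis in Case 1, where one of the two candidate conjugacy classes for $\alpha$ was ruled out precisely because its image had order $1$ under the analogous epimorphism (cf.\ Lemma \ref{lem:candidate-alpha(2,4,4)}(2) and Lemma \ref{lem:geometric-orb}). Here both candidates survive, yielding the uniform conclusion needed to feed the dihedral-surgery argument that follows.
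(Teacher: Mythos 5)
Your proposal is correct and follows essentially the same route as the paper: invoke Lemma \ref{lem:candidate-alpha(2,3,6)} to reduce $\alpha$ to the two conjugacy classes $ac^3$ and $ac^4ac^2$, then check that each maps to an order-$2$ element of $\pi_1(S^2(2,3,3))$ under the quotient map imposing $c^3=1$. The only (harmless) difference is in how the second candidate is verified: the paper manipulates the abstract presentation, using $b^3=(ac^{-1})^3=1$ to show $acac^{-1}=(ca)^{-1}a(ca)$ is conjugate to $a$, whereas you compute directly in the concrete realisation $\pi_1(S^2(2,3,3))\cong A_4$, obtaining $acac^2=(1\,4)(2\,3)$ --- both computations are valid.
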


\begin{proof}
By Lemma \ref{lem:candidate-alpha(2,3,6)},
$\alpha$ is conjugate to either $ac^3$ or $ac^4ac^2$ in $G=\pi_1(S^2(2,3,6))$.
Moreover, the images
of $ac^3$ and $ac^4ac^2$ in 
$\pi_1(S^2(2,3,3))\cong \langle a,b,c \ | \ a^2,\ b^3,\ c^3,\ abc \rangle$
have order $2$.
This is obvious for $ac^3$, and 
the assertion for $ac^4ac^2$
is verified as follows.
In $\pi_1(S^2(2,3,3))$, we have
$1=b^3=(ac^{-1})^3$ and so $ac^{-1}=(ac^{-1})^{-2}=(ca)^2$.
Hence the image of $ac^4ac^2$ in $\pi_1(S^2(2,3,3))$ is equal to
$acac^{-1}=ac(ca)^{2}=ac^2aca=(ca)^{-1}a(ca)$.
Thus it is conjugate to $a$,
and so has order $2$, as desired.
\end{proof}

By the above lemma, the image of $\alpha$ in $\pi_1(\OO)$
has order $\le 2$.
The existence of the exchanging elliptic element $f$
implies that the images of $\alpha$ and $\beta$ in $\pi_1(\OO)$ have the same order.
Thus $\pi_1(\OO)$ is either a dihedral group or the trivial group.
Since $\OO$ is very good by Lemma \ref{lem:relabeled-orbifold-generic}
and since the singular set of $\OO$ contains a trivalent vertex,
$\pi_1(\OO)$ is a noncyclic dihedral group.
Hence,
Theorem \ref{thm:dihedral-orbifold} implies that 
$\OO$ must be isomorphic to a spherical dihedral orbifold $\OO(r;d_+,d_-)$
with $(d_+,d_-)\ne (1,1)$.
However, the orbifold $\OO(r;d_+,d_-)$ does not
contain a singular point with link $S^2(2,3,3)$, a contradiction.

\medskip

{\bf Case 3.}
$P_{\alpha}\cong S^2(3,3,3)$.
Then the inverting elliptic element $h$ does not belong to $\Gamma$,
and the group, $\Gamma_h$, obtained from $\Gamma$ by adding $h$
is a $\ZZ_2$-extension of $\Gamma$.
Consider the hyperbolic orbifold $M_h: =\HH^3/ \Gamma_h$.
Then $ M_h$ is the quotient of $M=\HH^3/\Gamma$ by the isometric involution
induced by $h$, which we continue to denote by $h$.
Set $( M_{h,0}, P_h):=(M_0/h, P/h)$,
$P_{h,\alpha}:=P_{\alpha}/h$ and $P_{h,\beta}:=P_{\beta}/h$.
Then $P_{h,\alpha}\cong  P_{h,\beta}$
is isomorphic to $S^2(2,3,6)$.
Thus $G_h:=\pi_1(P_{h,\alpha})\cong
\langle a,b,c \ | \ a^2,\ b^3,\ c^6,\ abc \rangle$.
Since the subgroup $\Gamma$ of $\Gamma_h$ generated by $\alpha$ and $\beta$
is non-free, we see
by the arguments in Case 2
that $\alpha$ is conjugate to $ac^3$ or 
$ac^4ac^2$ in $ G_h$.

Let $\OO_h$ be the orbifold
obtained from the pared orbifold $( M_{h,0}, P_h)$ 
by the orbifold surgery 
as illustrated in Figure \ref{fig.orb-surgery3}
at both $P_{h,\alpha}$ and $P_{h,\beta}$.
Then $P_{h,\alpha}$
and $ P_{h,\beta}$ shrink into singular points, $v_{h,\alpha}$ and $v_{h,\beta}$,
of $\OO_h$ with link $S^2(2,3,3)$.
The images of $\alpha$ and $\beta$ in $\pi_1(\OO_h)$ have the same order $\le 2$,
and so the subgroup of $\pi_1(\OO_h)$ they generate
is either a dihedral group or the trivial group.
This subgroup has index $\le 2$ in $\pi_1(\OO_h)$, because
$\Gamma=\langle \alpha, \beta\rangle$ has index $2$ in $\Gamma_h$.
Hence the group $\pi_1(\OO_h)$ is a trivial group, a dihedral group,
$\ZZ_2$ (the $\ZZ_2$-extension of the trivial group) or 
a $\ZZ_2$-extension of a dihedral group.

Since $\OO_h$ is very good by Lemma \ref{lem:relabeled-orbifold-generic}
and $\OO_h$ contains a singular point
with link $S^2(2,3,3)$,
$\pi_1(\OO)$ is either a noncyclic dihedral group 
or a $\ZZ_2$-extension of a noncyclic dihedral group.
Hence Theorem \ref{thm:dihedral-orbifold} implies that 
$\OO_h$ is isomorphic to 
(a) a spherical dihedral orbifold $\OO(r;d_+,d_-)$
or 
(b) the quotient of $\OO(r;d_+,d_-)$ by an isometric involution,
where $(d_+,d_-)\ne (1,1)$.
Since $\OO(r;d_+,d_-)$ does not have a singular point with link $S^2(2,3,3)$,
(a) cannot happen,
and so we may assume (b) holds.
Since $\pi_1(S^2(2,3,3))$ does not have an index $2$ subgroup,
the link of an inverse image of the singular point $v_{h,\alpha}$
in the double cover $\OO(r;d_+,d_-)$ of $\OO_h$
is also isomorphic to $S^2(2,3,3)$.
But, this is impossible.
Hence $P_{\alpha}$ cannot be isomorphic to $S^2(3,3,3)$.

\medskip

Thus we have proved that $P_{\alpha}\cong P_{\beta}$
cannot be a rigid cusp.

\section{Proof of Theorem \ref{main-theorem} - flexible cusp: exceptional case -}
\label{sec:exceptional-flexible}

In this section, we treat the case where the following assumption is satisfied,
and prove that this assumption is never satisfied.

\begin{assumption}
\label{assumption:flexible2}
{\rm
Under Assumption \ref{asmpt:setting-main-theorem},
we further assume that $P_{\alpha}=P_{\beta}$ and it
is a flexible cusp $S^2(2,2,2,2)$ and that the conclusion (2) in Lemma \ref{lem:simple} holds. 
Namely, $f\notin\Gamma$,
and $P_{\alpha}/ f=P_{\beta}/f \cong S^2(2,4,4)$
(see Figure \ref{fig.involution-f}).
}
\end{assumption}

Let $\hat\Gamma:=\langle \Gamma, f\rangle$
be the group generated by $\Gamma$ and $f$.
Let $\hat M:=\HH^3/\hat \Gamma$ be the quotient hyperbolic orbifold.
Let $\hat M_0$ be the non-cuspidal part of $\hat M$,
and $\hat P=\partial \hat M_0$ the parabolic locus.
By abuse of notation, we denote the pared orbifold
obtained as the relative compactification of $(\hat M_0,\hat P)$
by the same symbol $(\hat M_0,\hat P)$.
We denote the component of the compact euclidean $2$-orbifold $\hat P$
corresponding to the conjugacy class containing $\alpha$ and $\beta=f\alpha f^{-1}$
by $\hat P_{\alpha\beta}$.
Thus $\hat P_{\alpha\beta}\cong P_{\alpha}/ f=P_{\beta}/f \cong S^2(2,4,4)$
and $(M_0,P)/f\cong(\hat M_0,\hat P)$,
where $f$ denotes the involution on the pared orbifold $(M_0,P)$
induced by the exchanging involution $f$.
In particular,
$M_0$ is the double orbifold covering of
$\hat M_0$, 
associated with the homomorphism 
$\xi:\pi_1(\hat M_0)=\Gamma \to \ZZ_2$
such that $\xi(\alpha)=\xi(\beta)=0$ and $\xi(f)=1$.
We denote the homomorphism $H_1(\hat M_0;\ZZ_2) \to \ZZ_2$
induced by $\xi$ by the same symbol.

Note that
$\pi_1(\hat P_{\alpha\beta})\cong\pi_1(S^2(2,4,4))\cong
\langle a,b,c \ | \ a^2, \ b^4,\ c^4, abc\rangle$.
As in Case 1 in Section \ref{sec:rigid-cusp-case},
we identify $\pi_1(\hat P_{\alpha\beta})$ with 
the stabiliser $\Stab_{\hat\Gamma}(\fix(\alpha))$.
Then the proof of Lemma \ref{lem:candidate-alpha(2,4,4)}
also works in this setting,
because $\{\alpha,\beta\}$ generates the non-free subgroup $\Gamma$
of the Kleinian group $\hat\Gamma$,
and we have the following lemma. 

\begin{lemma}
\label{lem:candidate-alpha(2,4,4)-2}
The parabolic element $\alpha$
is conjugate to $b^2a$ or $b^2ac^2a$ in 
$\Stab_{\hat\Gamma}(\fix(\alpha))$ $\cong$ $\pi_1(S^2(2,4,4))$,
and of course,
the assertions (1) and (2) in Lemma \ref{lem:candidate-alpha(2,4,4)}
also hold.
\end{lemma}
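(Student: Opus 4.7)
The plan is to transcribe the proof of Lemma \ref{lem:candidate-alpha(2,4,4)} almost verbatim, with the only substantive change being a slight extension of Lemma \ref{Lem:Brenner}(1) from $\Gamma$ to the larger group $\hat\Gamma$. Under Assumption \ref{assumption:flexible2}, I would identify $\Stab_{\hat\Gamma}(\fix(\alpha))$ with $G=\langle a,b,c \svert a^2, b^4, c^4, abc\rangle$ and set up horospherical coordinates on $\partial H_\alpha$ exactly as in Case~1 of Section \ref{sec:rigid-cusp-case}: for some $\ell>0$, $a$ acts as the $\pi$-rotation about $0$, and $b$, $c$ as the $\pi/2$-rotations about $\ell$ and $\ell i$ respectively. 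The rank-$2$ free abelian parabolic subgroup $\Lambda$ is generated by $b^2a$ (translation by $2\ell$) and $c^2a$ (translation by $2\ell i$), and the three shortest lengths $L_1(\Lambda)=2\ell$, $L_2(\Lambda)=2\sqrt{2}\ell$, $L_3(\Lambda)=4\ell$, together with the lists of elements realising them, are identical to Case~1. Since $\alpha\in\Lambda$, to identify its conjugacy class it suffices to locate $|\alpha|$ among these values.

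The lower bound $L_1(\Lambda)\ge 1$, and hence $L_3(\Lambda)=4\ell\ge 2$, requires that Lemma \ref{Lem:Brenner}(1) be applicable to every nontrivial element of $\Lambda$. Since $\Lambda$ lies in $\hat\Gamma$ rather than $\Gamma$, this is the one nontrivial point: the canonical horoball system $p^{-1}(\hat C_{\alpha,\beta})$ is, by its very definition, invariant under the full group $\hat\Gamma$, so the proof of Lemma \ref{Lem:Brenner}(1) (which only uses that the element in question permutes the horoballs in the canonical system) goes through verbatim for any parabolic element of $\hat\Gamma$ stabilising a canonical horoball. I would record this extension as a preliminary remark and then conclude $2\ell=L_1(\Lambda)\ge 1$.

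For the upper bound, Lemma \ref{Lem:Brenner}(3) applies directly to the original group $\Gamma=\langle\alpha,\beta\rangle$ itself, which is a non-free Kleinian group generated by two non-commuting parabolic transformations, giving $|\alpha|<2\le L_3(\Lambda)$. Thus $|\alpha|\in\{L_1(\Lambda),L_2(\Lambda)\}$, and the length analysis forces $\alpha$ to be conjugate in $G$ to $b^2a$ or to $b^2ac^2a$. The remaining assertions, namely items (1) and (2) in Lemma \ref{lem:candidate-alpha(2,4,4)}, are purely algebraic statements about the images of $b^2a$ and $b^2ac^2a$ under the natural epimorphisms from $\pi_1(S^2(2,4,4))$ onto $\pi_1(S^2(2,2,2))$, $\pi_1(S^2(2,2,4))$ and $\pi_1(S^2(2,4,2))$; they depend only on the abstract group and the given forms of the elements, so they transfer unchanged from the earlier proof.

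The only real obstacle is the step concerning Lemma \ref{Lem:Brenner}(1). It matters because the index-$2$ translation sublattice $\Lambda\cap\Gamma$ of $\Lambda$ could conceivably have a strictly larger minimal displacement than $L_1(\Lambda)$ (indeed, the ``diagonal'' index-$2$ sublattice generated by $b^2ac^2a$ and $(b^2a)^2$ has minimal length $2\sqrt{2}\ell$), which would give only $\ell\ge 1/(2\sqrt{2})$ and therefore $L_3(\Lambda)=4\ell<2$, insufficient to exclude the possibility $|\alpha|=L_3(\Lambda)$. The $\hat\Gamma$-invariance of the canonical horoball system is precisely what circumvents this difficulty.
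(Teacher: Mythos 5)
Your proposal is correct and takes essentially the same route as the paper, whose proof consists precisely of the observation that the argument of Lemma \ref{lem:candidate-alpha(2,4,4)} carries over verbatim because $\{\alpha,\beta\}$ generates the non-free subgroup $\Gamma$ of the Kleinian group $\hat\Gamma$; your explicit extension of Lemma \ref{Lem:Brenner}(1) to parabolic elements of $\hat\Gamma$ is a valid (and welcome) clarification, since the canonical horoball system is by its very construction $\hat\Gamma$-invariant. In fact your worry in the last paragraph is moot: the translation lattice $\Lambda$ of $\Stab_{\hat\Gamma}(\fix(\alpha))\cong\pi_1(S^2(2,4,4))\cong\ZZ^2\rtimes\ZZ_4$ is already contained in the index-two subgroup $\ZZ^2\rtimes\ZZ_2\cong\pi_1(S^2(2,2,2,2))=\Stab_{\Gamma}(\fix(\alpha))$ (the generators $b^2a$ and $c^2a$ lie in the kernel of the homomorphism to $\ZZ_2$ defining the double cover), so $\Lambda\subset\Gamma$ and Lemma \ref{Lem:Brenner}(1) applies even as literally stated.
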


\begin{figure}
\includegraphics[width=0.8\hsize, bb=0 0 1060 532]{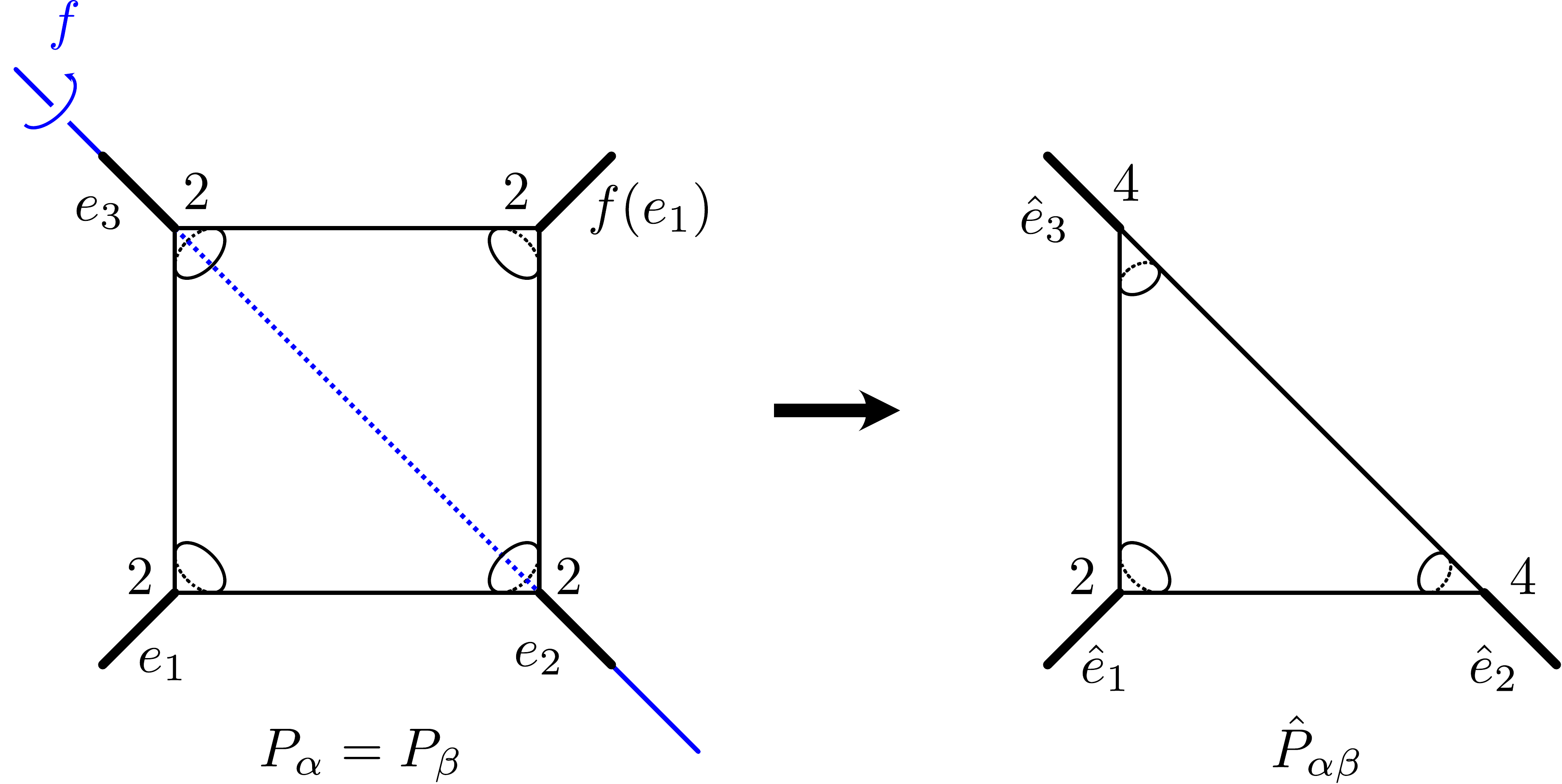}
\caption{
Assumption \ref{assumption:flexible2} assumes that $f\notin\Gamma$
descends to an involution, $f$, on $P_{\alpha}=P_{\beta}\cong S^2(2,2,2,2)$
such that $P_{\alpha}/ f=P_{\beta}/f \cong S^2(2,4,4)$.
}
\label{fig.involution-f}
\end{figure}

Let $e_i$ and $\hat e_i$ ($i=1,2,3$) be the edges of the singular sets
$\Sigma(M_0)$ and $\Sigma(\hat M_0)$ as illustrated in Figure \ref{fig.involution-f}.
Thus $e_2$ and $e_3$ are contained in the fixed point set of the
involution $f$ on $M_0$,
and $\hat e_i$ is the image of $e_i$ by the covering projection 
$M_0\to\hat M_0$.
(Note that it can happen that some of them are identical,
though their germs near the parabolic locus are different.)
Then the following holds.

\begin{lemma}
\label{lem:monodormy}
The homomorphism $\xi:H_1(\hat M_0;\ZZ_2) \to \ZZ_2$,
that determines the double orbifold covering $M_0\to \hat M_0$,
satisfies
\[
\xi(m_{1})=0, \quad \xi(m_{2})=\xi(m_{3})=1,
\]
where $m_{i}$ denotes the meridian of the edge $\hat e_i$.
Moreover, the homology class $[f]\in H_1(\hat M_0;\ZZ_2)$ 
determined by $f\in\hat\Gamma$
is equal to either $m_{2}$ or $m_{3}$.
\end{lemma}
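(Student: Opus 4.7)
The plan is to prove the two assertions separately. For the values $\xi(m_i)$ I would analyse the branched covering $M_0 \to \hat M_0$ near each edge $\hat e_i$ meeting the rigid cusp $\hat P_{\alpha\beta}\cong S^2(2,4,4)$. The edge $\hat e_1$, of weight $2$, corresponds to the single order-$2$ cone point of $S^2(2,4,4)$ arising from the two order-$2$ cone points of $S^2(2,2,2,2)$ interchanged by $f$; its preimage in $M_0$ is a pair of weight-$2$ edges swapped by $f$, the cover is unramified along $\hat e_1$, and $m_1$ lifts to a loop in $M_0$, so $\xi(m_1) = 0$. The edges $\hat e_2, \hat e_3$, of weight $4$, come from the two order-$2$ cone points of $S^2(2,2,2,2)$ pointwise fixed by $f$; the preimage of $\hat e_i$ is the single weight-$2$ edge $e_i$ with the cover ramified of index $2$, so the order-$4$ meridian $m_i$ fails to lift (only $m_i^2$ does, mapping to the meridian of $e_i$), yielding $\xi(m_2) = \xi(m_3) = 1$.

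For the class $[f] \in H_1(\hat M_0;\ZZ_2)$, the identification $P_\alpha = P_\beta$ forces $\fix(\alpha)$ and $\fix(\beta)$ to lie in a common $\Gamma$-orbit, so I would pick $\gamma_0 \in \Gamma$ with $\gamma_0 \fix(\alpha) = \fix(\beta)$ and set $\tilde f := \gamma_0^{-1} f$. Then $\tilde f$ fixes $\fix(\alpha)$, so it lies in the cusp stabiliser $\pi_1(\hat P_{\alpha\beta}) \cong \langle a, b, c \mid a^2, b^4, c^4, abc\rangle$, and $\xi(\tilde f) = \xi(f) = 1$. In this wallpaper group of type $p4$, the kernel of $\xi$ is $\pi_1(P_\alpha) \cong p_2$, which contains every translation and every order-$2$ rotation; hence $\tilde f$ is necessarily an order-$4$ rotation, conjugate in $\pi_1(\hat P_{\alpha\beta})$ to $b$ or to $c$. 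By conjugation-invariance of $H_1$, this gives $[\tilde f] = [m_2]$ or $[\tilde f] = [m_3]$.

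Since $[f] = [\gamma_0] + [\tilde f]$ with $\gamma_0 \in \Gamma$ (so $\xi([\gamma_0]) = 0$), the cusp relation $[m_1] + [m_2] + [m_3] = 0$ coming from $abc = 1$ shows that it suffices to place $[\gamma_0]$ in the subgroup $\{0, [m_1]\}$ of $H_1(\hat M_0;\ZZ_2)$, after which the two possible values of $[\tilde f]$ are cycled into $\{[m_2], [m_3]\}$. I expect the main obstacle to be precisely this control on $[\gamma_0]$, since $\gamma_0$ is only defined up to left multiplication by the parabolic stabiliser of $\fix(\beta)$ in $\Gamma$. The expected route is to first compute that the peripheral image $H_1(P_\alpha;\ZZ_2) \to H_1(\hat M_0;\ZZ_2)$ equals $\{0, [m_1]\}$: the two generators of $\pi_1(S^2(2,2,2,2))$ pointwise fixed by $f$ map to $b^2$ and $c^2$, which are trivial mod~$2$, while the two generators swapped by $f$ map to conjugates of $a$, whose common class is $[m_1]$. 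One then uses the tangency of the canonical horoballs $H_\alpha, H_\beta$ from Section~\ref{Sec:canonical-horoball-pair} to select $\gamma_0$ so that it can be written as a product of a peripheral element and an element of the normaliser of the cusp, forcing $[\gamma_0] \in \{0, [m_1]\}$ and completing the identification $[f] \in \{m_2, m_3\}$.
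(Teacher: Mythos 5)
Your computation of $\xi$ on the meridians is correct and is essentially the paper's argument (the paper compresses it to one sentence: $e_2,e_3\subset\fix(f)$ project to $\hat e_2,\hat e_3$, so the cover is branched exactly over those edges). Your local analysis of $\tilde f=\gamma_0^{-1}f$ is also sound: $\tilde f$ does lie in $\Stab_{\hat\Gamma}(\fix(\alpha))$, and in the wallpaper group of $S^2(2,4,4)$ the complement of the index-two subgroup $\pi_1(P_\alpha)$ consists precisely of the order-$4$ rotations, so $[\tilde f]\in\{m_2,m_3\}$ in $H_1(\hat M_0;\ZZ_2)$.

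The gap is in the final step, and you have located it yourself: the control of $[\gamma_0]$. Your proposed mechanism does not work. The element $\gamma_0$ sends $\fix(\alpha)$ to $\fix(\beta)\ne\fix(\alpha)$, so it is \emph{not} peripheral; and since $\hat\Gamma$ is discrete, the normaliser of the cusp subgroup $\Stab_{\hat\Gamma}(\fix(\alpha))$ is that stabiliser itself (a larger normaliser would contain a loxodromic or parabolic element moving $\fix(\alpha)$ while normalising its stabiliser, contradicting discreteness), so a factorisation of $\gamma_0$ as ``peripheral times normaliser element'' would force $\gamma_0$ to be peripheral — false. Tangency of the canonical horoballs constrains geometry, not the $\ZZ_2$-homology class of $\gamma_0$. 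The correct input, which your proposal never uses, is global: since $\beta=f\alpha f^{-1}$, we have $\hat\Gamma=\langle f,\alpha\rangle$, so $H_1(\hat M_0;\ZZ_2)$ is generated by $[f]$ and $[\alpha]$. If $H_1(\hat M_0;\ZZ_2)\cong\ZZ_2$ there is nothing to prove. If $H_1(\hat M_0;\ZZ_2)\cong(\ZZ_2)^2$, then $[\alpha]\ne 0$, which by Lemma \ref{lem:candidate-alpha(2,4,4)-2} rules out $\alpha$ being conjugate to $b^2ac^2a$ (whose $\ZZ_2$-class vanishes by Lemma \ref{lem:candidate-alpha(2,4,4)}(2)); hence $\alpha$ is conjugate to $b^2a$, so $[\alpha]=[a]=m_1$ generates $\Ker\bigl(\xi:H_1(\hat M_0;\ZZ_2)\to\ZZ_2\bigr)$. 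Then the coset $\xi^{-1}(1)$ equals $\{m_2,\,m_2+m_1\}=\{m_2,m_3\}$ by the cusp relation $m_1+m_2+m_3=0$, and $[f]\in\{m_2,m_3\}$ follows from $\xi([f])=1$ alone — which also shows that once this kernel computation is in hand, your entire $\gamma_0$/$\tilde f$ detour is superfluous.
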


\begin{proof}
The formula for $\xi$ follows from the fact that 
the fixed point set of the involution $f$ on $M_0$
contains $e_2$ and $e_3$, which project to $\hat e_2$ and $\hat e_3$,
respectively.
It is also obvious that $\xi([f])=1$.
So, if $H_1(\hat M_0;\ZZ_2)\cong \ZZ_2$,
we have $[f]=m_{2}=m_{3}$.
Suppose that $H_1(\hat M_0;\ZZ_2)\cong (\ZZ_2)^2$.
Then, since $H_1(\hat M_0;\ZZ_2)$ is generated by $[f]$ and $[\alpha]$,
we see $[\alpha]\ne 0$.
So, $\alpha$ is conjugate to $b^2a$ by Lemma \ref{lem:candidate-alpha(2,4,4)-2}.
(Otherwise $\alpha$ is conjugate to $b^2ac^2a$ and so $[\alpha]=0$.)
Therefore 
$[\alpha]=[a]=m_{1}$ is contained in $\Ker(\xi)\cong\ZZ_2$.
Thus $\Ker(\xi)$ is generated by $m_{1}$.
Since $\xi([f])=\xi(m_{2})$, 
it follows that $[f]$ is equal to either $m_{2}$ or
$m_{1}+m_{2}=m_{3}$.
\end{proof}

Let $\hat\OO$ be the orbifold obtained from 
the pared orbifold $(\hat M_0,\hat P_{\alpha\beta})$ 
by the orbifold surgery that replaces the index $4$ of the edges $\hat e_2$ and $\hat e_3$
with the index $2$.
Then $\hat P_{\alpha\beta}$ shrinks into a singular point, 
$v_{\alpha\beta}$, with link $S^2(2,2,2)$, and
the image of $\alpha$ in $\pi_1(\hat\OO)$ has order $\le 2$
by Lemma \ref{lem:candidate-alpha(2,4,4)-2}.
Since $\hat\Gamma$ is generated by $f$ and $\alpha$,
$\pi_1(\hat\OO)$ is either trivial, $\ZZ_2=D_1$ or a noncyclic dihedral group.
By using Lemma \ref{lem:relabeled-orbifold-generic},  
Theorem \ref{thm:dihedral-orbifold} and the fact that
$\hat\OO$ has a singular point with link $S^2(2,2,2)$,
we see that $\hat\OO$ is isomorphic to a spherical dihedral orbifold
$\OO(r;d_+,d_-)$ with noncyclic dihedral orbifold fundamental group.
Moreover, we may assume that
$d_+=2$ and that $v_{\alpha\beta}$ is an endpoint of $\tau_+$.
By Lemma \ref{lem:homology}, we have $H_1(\hat\OO;\ZZ_2)\cong (\ZZ_2)^2$.

\begin{lemma}
\label{lem:distinct-edge}
Under the above setting, 
$|K(r)|=1$ and so
the edges $\hat e_i$ ($i=1,2,3$) are all distinct.
\end{lemma}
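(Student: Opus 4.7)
The plan is to prove $|K(r)|=1$ by contradiction; the distinctness of the $\hat e_i$ then follows at once from inspecting the graph $K(r)\cup\tau_+\cup\tau_-$, since for a $2$-bridge knot the two $K(r)$-arcs incident to $v_{\alpha\beta}$ run to the two distinct endpoints of $\tau_-$ and, together with $\tau_+$, form three distinct edges.

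Suppose for contradiction that $|K(r)|=2$, and write $K(r)=K_1\cup K_2$ with $v_{\alpha\beta}$ lying on $K_1$. Since $d_+=2$ and $d_-$ is coprime to $d_+$, the integer $d_-$ is odd; hence the three edge-germs of $\Sigma(\hat\OO)$ at $v_{\alpha\beta}$ are $\tau_+$ together with the two $K_1$-arcs $K_1^{(1)},K_1^{(2)}$ meeting at $v_{\alpha\beta}$, so $\{\hat e_1,\hat e_2,\hat e_3\}=\{\tau_+,K_1^{(1)},K_1^{(2)}\}$ as a multiset. I would first identify $[\alpha]_{\hat\OO}$ with $m_{\hat e_1}$: repeating the argument of Lemma \ref{lem:geometric-orb} in the present context, using Lemma \ref{lem:candidate-alpha(2,4,4)-2} together with the fact that $\pi_1(\hat\OO)$ is a noncyclic dihedral group generated by the images of $\alpha$ and $f$, $\alpha$ must be conjugate in $\Stab_{\hat\Gamma}(\fix(\alpha))\cong\pi_1(S^2(2,4,4))$ to $b^2a$ rather than to $b^2ac^2a$ (whose image in $\pi_1(\hat\OO)$ is trivial, forcing $\pi_1(\hat\OO)$ to be cyclic, a contradiction). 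Hence $[\alpha]_{\hat\OO}=[a]=m_{\hat e_1}$. By Lemma \ref{lem:monodormy}, $[f]_{\hat\OO}\in\{m_{\hat e_2},m_{\hat e_3}\}$, and since $\hat\Gamma=\langle\alpha,f\rangle$ the pair $\{[\alpha]_{\hat\OO},[f]_{\hat\OO}\}$ must generate $H_1(\hat\OO;\ZZ_2)\cong(\ZZ_2)^2$.

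The final step, and the main subtlety, is the meridian calculation in $H_1(\hat\OO;\ZZ_2)$. Applying Lemma \ref{lem:homology} to the even-weight subgraph $\Sigma_{\mathrm{even}}=K_1\cup K_2\cup\tau_+$, which after smoothing the bivalent vertices created by removing $\tau_-$ is a handcuffs graph (two loop edges $K_1,K_2$ joined by the bridge $\tau_+$), one obtains via the identification $H_1(\hat\OO;\ZZ_2)\cong\mathrm{Hom}(H_1(\Sigma_{\mathrm{even}};\ZZ_2),\ZZ_2)$ that $m_{K_1}$ and $m_{K_2}$ form a free basis, that $m_{\tau_+}=0$ (as $\tau_+$ is a tree edge of $\Sigma_{\mathrm{even}}$), and that $m_{K_1^{(1)}}=m_{K_1^{(2)}}=m_{K_1}$ (as both arcs are contained in $K_1$). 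Consequently each $m_{\hat e_i}$ lies in $\{0,m_{K_1}\}$, so $[\alpha]_{\hat\OO}$ and $[f]_{\hat\OO}$ together span a subgroup of $\langle m_{K_1}\rangle\cong\ZZ_2$ rather than the full $(\ZZ_2)^2$, a contradiction. This rules out $|K(r)|=2$ and completes the proof.
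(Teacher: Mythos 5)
Your proof is correct and follows essentially the same route as the paper's: you first rule out the conjugacy class $b^2ac^2a$ (since it would force $\pi_1(\hat\OO)$ to be cyclic), conclude $[\alpha]=m_{\hat e_1}$ and $[f]\in\{m_{\hat e_2},m_{\hat e_3}\}$ via Lemma \ref{lem:monodormy}, so that the three meridians at $v_{\alpha\beta}$ generate $H_1(\hat\OO;\ZZ_2)\cong(\ZZ_2)^2$, and then derive the contradiction for $|K(r)|=2$ from Lemma \ref{lem:homology}. Your explicit handcuffs-graph computation ($m_{\tau_+}=0$, $m_{K_1^{(1)}}=m_{K_1^{(2)}}$) just spells out the step the paper compresses into ``by using Lemma \ref{lem:homology}''.
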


\begin{proof}
We first observe that $\alpha$ cannot be conjugate to $b^2ac^2a$.
In fact, if $\alpha$ was conjugate to $b^2ac^2a$,
then its image in $\pi_1(\hat\OO)$ is trivial
by Lemma \ref{lem:candidate-alpha(2,4,4)-2} 
(cf. Lemma \ref{lem:candidate-alpha(2,4,4)}(2)),
and so $\pi_1(\hat\OO)$ is generated by the image of $f$.
This contradicts the fact that $\pi_1(\hat\OO)$ is a noncyclic dihedral group.
This observation together with Lemma \ref{lem:candidate-alpha(2,4,4)-2} 
implies that 
$\alpha$ is conjugate to $b^2a$ and so 
$[\alpha]=[a]=m_{1}\in \Ker(\xi)$.
Moreover, $[f]=m_2$ or $m_3$ by Lemma \ref{lem:monodormy}.
Hence $H_1(\hat\OO;\ZZ_2)\cong (\ZZ_2)^2$ is generated by the meridians of 
the three edges $\hat e_i$ ($i=1,2,3$) incident on
the vertex $v_{\alpha\beta}\in\partial\tau_+$.

Now suppose on the contrary that $|K(r)|=2$.
Then we see, by using Lemma \ref{lem:homology}, 
that the meridian of $\tau_+$ represents the trivial element of $H_1(\hat\OO;\ZZ_2)$
and the meridians of the remaining two edges incident on 
$v_{\alpha\beta}\in\partial\tau_+$ represent the identical element of
$H_1(\hat\OO;\ZZ_2)$.
This contradicts the fact that $H_1(\hat\OO;\ZZ_2)\cong (\ZZ_2)^2$.
Hence $|K(r)|=1$.
This implies that the the edges $\hat e_i$ ($i=1,2,3$) incident on 
$v_{\alpha\beta}\in\partial\tau_+$ are all distinct,
as desired.
\end{proof}

Recall that the weights of the edges $\hat e_1$, $\hat e_2$, $\hat e_3$ of 
$\Sigma(\hat M_0)$ are $2,4,4$.
Since $\hat e_2\ne \hat e_3$ by Lemma \ref{lem:distinct-edge},
we can apply the orbifold surgery on $(\hat M_0, \hat P)$
of \lq\lq type $(2,4,4)\to (2,2,4)$'',
namely we can replace the index $4$ of the edge $\hat e_2$ of
the singular set $\Sigma(\hat M_0)$ with the index $2$,
and leave the other indices, including the index $4$ of $\hat e_3$, unchanged.
We denote the resulting orbifold by $\hat\OO_{(2,2,4)}$.
By Lemma \ref{lem:candidate-alpha(2,4,4)-2},
$\alpha$ has order at most $2$ in $\pi_1(\hat\OO_{(2,2,4)})$.
Hence, by using Lemma \ref{lem:relabeled-orbifold-generic}, 
Theorem \ref{thm:dihedral-orbifold},
and the fact that $\hat\OO_{(2,2,4)}$ has a singular point with link $S^2(2,2,4)$,
we see that
$\hat\OO_{(2,2,4)}$ is isomorphic to a spherical dihedral orbifold
$\OO(r;d_+,d_-)$ with noncyclic dihedral orbifold fundamental group.
Moreover, we may assume $d_+=4$
and that the parabolic locus $P_{\alpha\beta}$
degenerates into a singular point, $v_{\alpha\beta}$, 
which is an endpoint of $\tau_+$.
It should be noted that the edge $\hat e_3$ of $\Sigma(\hat M_0)$
corresponds to $\tau_+$.
(Here, we reset the notation, and the symbols $\OO(r;d_+,d_-)$ and $v_{\alpha\beta}$
now represent objects different from those they had represented
in the paragraph preceding Lemma \ref{lem:distinct-edge}.)

\smallskip

Case 1. $d_-\ge 3$. 
We apply the orbifold surgery on $(\hat M_0, \hat P)$
of \lq\lq type $(2,4,4)\to (2,4,2)$'',
namely we replace the index $4$ of the edge $\hat e_3=\tau_+$ of
the singular set $\Sigma(\hat M_0)$ with the index $2$,
and leave the other indices, including the index $4$ of $\hat e_2$, unchanged.
(This is possible by Lemma \ref{lem:distinct-edge}.) 
We denote the resulting orbifold by $\hat\OO_{(2,4,2)}$.
By Lemma \ref{lem:candidate-alpha(2,4,4)-2},
$\alpha$ has order at most $2$ in $\pi_1(\hat\OO_{(2,4,2)})$.
Hence, again by using Lemma \ref{lem:relabeled-orbifold-generic}, 
Theorem \ref{thm:dihedral-orbifold},
and the fact that $\hat\OO_{(2,4,2)}$
has a singular point with link $S^2(2,4,2)$,
we see that
$\hat\OO_{(2,4,2)}$ is isomorphic to a spherical dihedral orbifold
with noncyclic dihedral orbifold fundamental group.
Note that the edges $\hat e_2$ and $\tau_-$ of $\Sigma(\hat\OO_{(2,4,2)})$,
which have indices $4$ and $d_-\ge 3$, respectively, 
share a common endpoint (see Figure \ref{Bad-edge}).
But this cannot happen in any 
spherical dihedral orbifold, a contradiction.

\begin{figure}
\includegraphics[width=0.6\hsize, bb=0 0 1212 795]{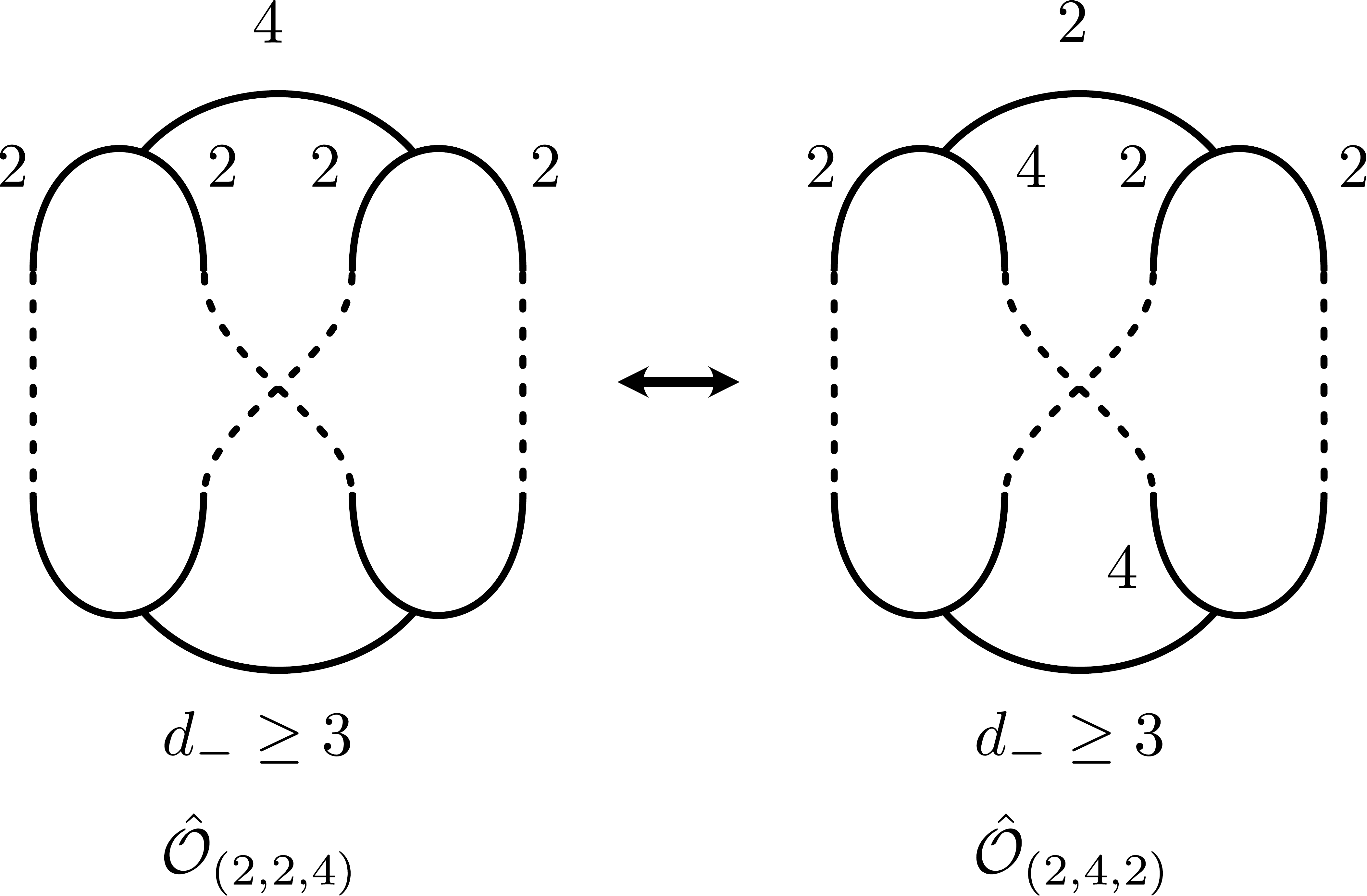}
\caption{Since $\hat\OO_{(2,2,4)}\cong\OO(r;d_+,d_-)$ with $\hat e_3=\tau_+$
is as in the left figure, $\hat\OO_{(2,4,2)}$ is as in the right figure.
The latter orbifold has a singular point with link $S^2(2,4, d_-)$ with $d_-\ge 3$,
and so it cannot be a spherical dihedral orbifold.
}
\label{Bad-edge}
\end{figure}

\smallskip

Case 2. $d_-=1$.
Then $(\hat M_0,\hat P)$ is represented by
a weighted graph 
$(S^3,K(r)\cup\tau_+,\hat w)$,
such that 
$K(r)=\hat e_1\cup \hat e_2$ is a knot, $\hat e_3=\tau_+$, and
\[
\hat w(\tau_+)=4, \quad \hat w(\hat e_1)=2, \quad \hat w(\hat e_2)=4.
\]
Recall that the subset $\hat e_2\cup\hat e_3=\hat e_2\cup \hat \tau_+$
of $\Sigma(\hat M_0)$ are the images of the fixed point set  
of the involution $f$ on $M_0$.
This implies that the map $|M_0|\to |\hat M_0|$ induced 
by the orbifold covering $M_0\to \hat M_0$
is the double branched covering branched over $\hat e_2\cup\hat e_3$.
Hence $(M_0,P)$ is represented by the weighted graph 
illustrated in Figure \ref{Covering-graph}.
Here, we assume the extended Convention \ref{conv:pared-orbifold2},
and the two $4$-valent vertices represent parabolic loci
isomorphic to $S^2(2,2,2,2)$.
Hence we see by Lemma \ref{lem:relabeled-orbifold-generic}
that $H_1(M_0;\ZZ)\cong(\ZZ_2)^3$, a contradiction.

\medskip

Thus we have proved that the situation in 
Assumption \ref{assumption:flexible2} cannot occur.
This completes the proof of the main Theorem \ref{main-theorem}.

\begin{figure}
\includegraphics[width=0.6\hsize, bb=0 0 1480 771]{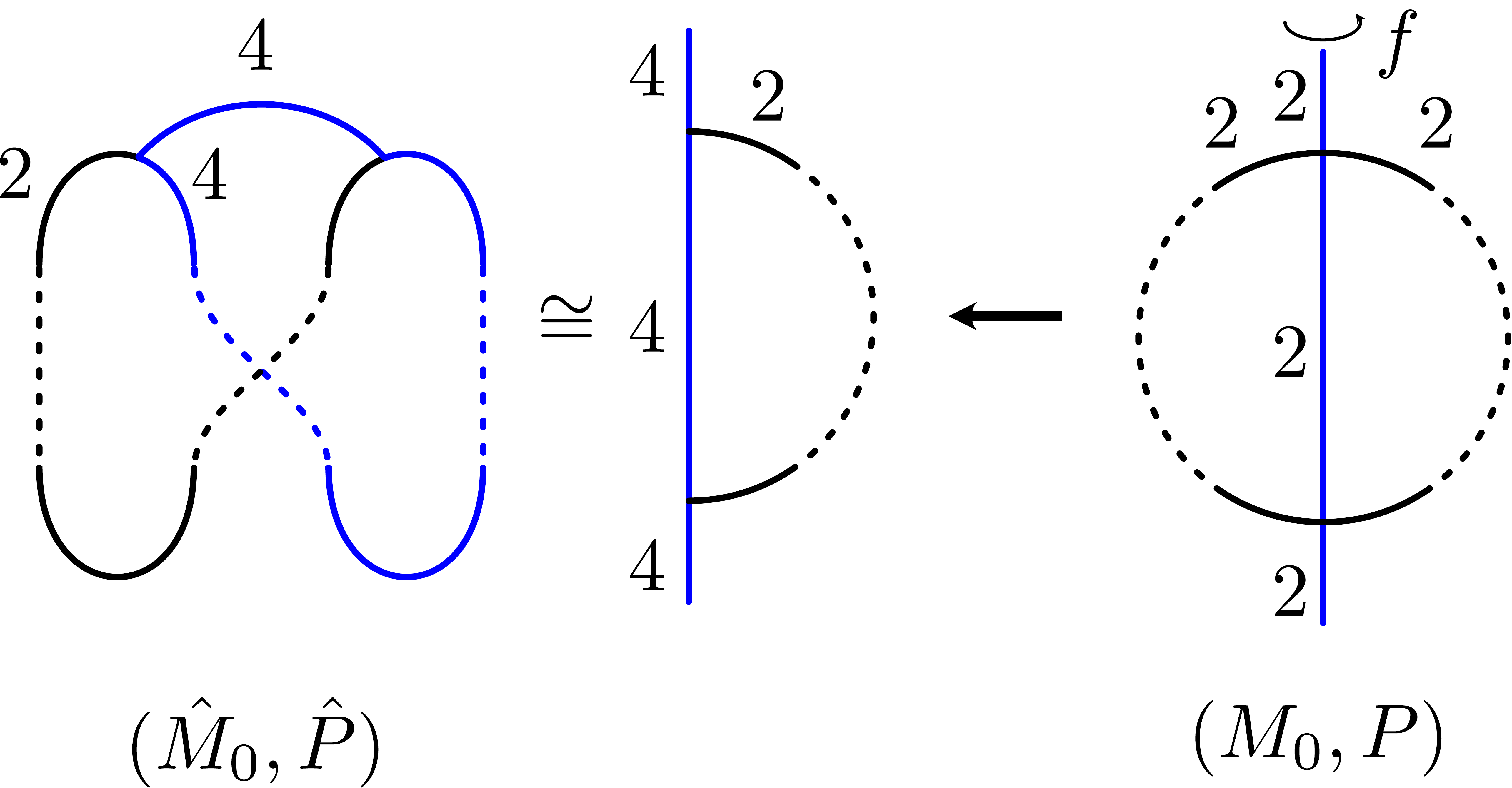}
\caption{Since the orbifold covering $M_0\to \hat M_0$
induces the double branched covering $|M_0|\to |\hat M_0|$
branched over $\hat e_2\cup\hat e_3=\hat e_2\cup \hat \tau_+$,
the orbifold $(M_0,P)$ is as illustrated in the right figure.
}
\label{Covering-graph}
\end{figure}

\section{Appendix 1: Spherical orbifolds with dihedral orbifold fundamental groups}
\label{sec:dihedral-orbifold}

In this appendix, we classify the orientable spherical $3$-orbifolds
with dihedral orbifold fundamental groups (Proposition \ref{prop:dihedral-orbifold1}),
and determine the (orientation-preserving) isometry groups of these orbifolds
(Propositions \ref{prop:dihedral-orbifold-isometry1} and \ref{prop:dihedral-orbifold-isometry2}).
Proposition \ref{prop:dihedral-orbifold1} is used in the proof of Theorem \ref{thm:dihedral-orbifold}, and Corollary \ref{cor:dihedral-orbifold-isometry2}
is used in Section \ref{sec:flexible-cusp-case}.
Propositions \ref{prop:dihedral-orbifold-isometry1} and \ref{prop:dihedral-orbifold-isometry2}
are used in the companion \cite{ALSS} of this paper.
The classification of the spherical dihedral orbifolds
is implicitly contained in Dunber's work \cite{Dunbar2},
which classifies the Seifert fibered orbifolds.
The isometry groups of the dihedral spherical orbifolds
obtained as the $\pi$-orbifolds associated with $2$-bridge links are calculated by
\cite{Sakuma2, Jeevanjee}.
Moreover, in the recent papers \cite{Mecchia-Seppi, Mecchia-Seppi2},
Mecchia and Seppi classified the Seifert fibered spherical $3$-orbifolds
and calculated the isometry groups of such orbifolds.
Since every spherical dihedral orbifold is Seifert fibered,
the results in this section are implicitly contained 
in \cite{Mecchia-Seppi, Mecchia-Seppi2}.
However, we give a self-contained proof,
because it is not a simple task to translate their results into the form 
we need.

We first recall basic facts concerning the $3$-dimensional spherical geometry
following \cite{Scott, Sakuma2}.
Let $\quaternion$ be the quaternion skew field.
We use the symbol $q$ to denote a generic quaternion 
\[
q=a+bi+cj+dk \in \quaternion \quad (a,b,c,d\in\RR).
\]
(We believe this does not cause any confusion,
even though $q$ is also used to denote the numerator of a rational number $r=q/p$.)
For each $q \in \quaternion$, 
$\bar q=a-bi-cj-dk$ denotes its conjugate,
$\Re(q)=a$ denotes its real part,
and $|q|$ denotes its norm $\sqrt{q\bar q}=\sqrt{a^2+b^2+c^2+d^2}$.
We identify $S^n$ ($n=1,2,3$) with the following subspaces of $\quaternion$.
\begin{align*}
S^3 &:=\{q\in \quaternion \svert |q|=1\}\\
S^2 &:=\{q\in \quaternion \svert |q|=1,\quad \Re(q)=0\}\\
S^1 &:=\{z\in\CC\subset \quaternion \svert |z|=1\}
\end{align*}
The norm $|\cdot|$ induces a Euclidean metric on $\quaternion$, and
$S^n$ ($n=1,2,3$) are endowed with the induced metrics.
The subspaces $S^3$ and $S^1$ form a Lie group
with respect to the restriction of the product in $\quaternion$.
The group $S^3$ acts on itself by conjugation leaving $S^2$ invariant.
This gives an epimorphism $\psi:S^3\to \Isom^+(S^2)$, 
with $\ker \psi =\langle -1\rangle$, defined by
\[
\psi(q)(x)=qx\bar q \quad (q\in S^3,\ x\in S^2).
\]
If $q=\cos\theta + q_0 \sin\theta $ with $q_0\in S^2$,
then $\psi(q)$ is the rotation of $S^2$, by angle $2\theta$, with fixed points $\pm q_0$.

For a positive integer $n$,
any cyclic subgroup of order $n$ 
(resp. any dihedral subgroup of order $2n$)
of $\Isom^+(S^2)$
is conjugate to the subgroup $\ZZ_n:=\psi(\ZZ_n^*)$
(resp. $\Di_n:=\psi(\Di_n^*)$),
where $\ZZ_n^*:=\langle \omega\rangle$ and 
$\Di_n^* :=\langle \omega,\ j \rangle$
with $\omega =\exp(\pi i/n)$. 
Note that these groups are contained in the subgroup 
$\Di_S:=\langle S^1, j\rangle=S^1\sqcup S^1j$ of $S^3$.
Then the following hold (see, e.g. \cite[Proposition 2.6]{Sakuma2}).

\begin{lemma}
\label{lem.2-dim-normaliser}
{\rm (1)} 
If $n\ge 2$, then the normaliser $N(\ZZ_n^*)$ of $\ZZ_n^*$ in $S^3$ is equal to 
$\Di_S$.

{\rm (2)} If $n\ge 3$, then the normaliser $N(\Di_n^*)$ of $\Di_n^*$ in $S^3$ is equal to 
$\Di_{2n}^*$.
If $n=2$, then $N(\Di_n^*)$ is equal to the binary octahedral group $O^*=\psi^{-1}(O)$,
where $O<\Isom^+(S^2)$ is the octahedral group obtained as the 
subgroup of $\Isom^+(S^2)$ preserving the regular octahedron in 
the $3$-dimensional Euclidean subspace $\langle i,j,k\rangle$ of $\quaternion$ 
spanned by the $6$ vertices $\{\pm i, \pm j, \pm k\}$.  
\end{lemma}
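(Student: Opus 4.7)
The plan is to reduce both statements to the computation of normalisers of certain rotation subgroups in $\Isom^+(S^2)=\SO(3)$ via the double cover $\psi:S^3\to\Isom^+(S^2)$. Since $\omega^n=\exp(\pi i)=-1$, both $\ZZ_n^*$ and $\Di_n^*$ contain the kernel $\{\pm 1\}$ of $\psi$; for any subgroup $G\subset S^3$ containing $\{\pm1\}$ one has the general identity $N_{S^3}(G)=\psi^{-1}\bigl(N_{\SO(3)}(\psi(G))\bigr)$, because a subgroup containing the (central) kernel is determined by its image. Thus the task reduces to identifying $\psi(\ZZ_n^*)$ and $\psi(\Di_n^*)$ as concrete rotation groups on $S^2$, computing their $\SO(3)$-normalisers, and pulling back.

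For part (1), I first show $\Di_S=S^1\sqcup S^1 j\subseteq N(\ZZ_n^*)$ by direct quaternion computation: every $z\in S^1$ commutes with $\omega\in S^1$, and the standard identity $jz=\bar z j$ yields $j\omega j^{-1}=\bar\omega=\omega^{-1}\in\ZZ_n^*$. Conversely, $\psi(\ZZ_n^*)$ is the cyclic subgroup of $\SO(3)$ of order $n$ generated by the rotation $\psi(\omega)$, which fixes the antipodal pair $\{\pm i\}\subset S^2$; since $n\ge 2$, this pair is the \emph{unique} invariant antipodal pair, so $N_{\SO(3)}(\psi(\ZZ_n^*))$ coincides with the copy of $O(2)$ stabilising $\{\pm i\}$ as a set. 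Its preimage under $\psi$ is exactly $\Di_S$ (the $S^1$-part accounting for rotations about the $i$-axis, the coset $S^1 j$ accounting for the half-turns swapping $\pm i$).

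For part (2) with $n\ge 3$, the containment $\Di_{2n}^*\subseteq N(\Di_n^*)$ is checked on generators: with $\omega'=\exp(\pi i/(2n))$ so $(\omega')^2=\omega$, the identity $jz=\bar z j$ gives $\omega' j(\omega')^{-1}=(\omega')^2 j=\omega j\in\Di_n^*$, while $\omega'$ commutes with $\omega$. For the reverse, $\psi(\Di_n^*)=\Di_n$ is the rotation group of the regular $n$-gon inscribed in the great circle $S^2\cap\langle j,k\rangle$ with $n$-fold axis $\{\pm i\}$; when $n\ge 3$ this configuration of rotation axes is rigid enough that $N_{\SO(3)}(\Di_n)$ is the dihedral group $\Di_{2n}$ obtained by bisecting the angles between the $n$ two-fold axes. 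Pulling back under $\psi$ gives $N(\Di_n^*)=\Di_{2n}^*$.

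The main obstacle is the exceptional case $n=2$, where $\psi(\Di_2^*)$ is the Klein four-group consisting of the three $\pi$-rotations about the mutually orthogonal axes $\{\pm i\},\{\pm j\},\{\pm k\}$, and the rigidity used for $n\ge3$ fails because \emph{any} permutation of these three axes can be realised by an element of $\SO(3)$. Writing the conjugation action of $N_{\SO(3)}(\Di_2)$ on the three axes gives a homomorphism to $S_3$ whose kernel is $\Di_2$ itself (an element of $\SO(3)$ fixing each of three orthogonal axes setwise lies in $\Di_2$), and whose image is all of $S_3$ (realised e.g.\ by the order-$3$ rotation about the diagonal $i+j+k$). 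Hence $|N_{\SO(3)}(\Di_2)|=24$, and this normaliser is exactly the octahedral group $O$ acting on the octahedron with vertices $\{\pm i,\pm j,\pm k\}$; pulling back under $\psi$ yields $N(\Di_2^*)=\psi^{-1}(O)=O^*$, as asserted.
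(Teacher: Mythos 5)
The paper never proves this lemma: it is stated with a pointer to \cite[Proposition~2.6]{Sakuma2}, so there is no internal argument to compare yours against. Your reduction through the double cover $\psi:S^3\to\Isom^+(S^2)$ is the natural self-contained proof, and its skeleton is sound: since $\omega^n=-1$, both $\ZZ_n^*$ and $\Di_n^*$ contain $\Ker\psi=\{\pm 1\}$, so $G=\psi^{-1}(\psi(G))$ and the identity $N_{S^3}(G)=\psi^{-1}\bigl(N_{\SO(3)}(\psi(G))\bigr)$ is legitimate; the generator computations with $jz=\bar z j$ are correct; and the three $\SO(3)$-normalisers you identify (the $O(2)$-copy stabilising $\{\pm i\}$, the group $\Di_{2n}$, and the octahedral group $O$) pull back to exactly $\Di_S$, $\Di_{2n}^*$ and $O^*$ as the lemma asserts.

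Two justifications, however, fail as written and need patching. First, in part (1) your reverse inclusion rests on the claim that $\{\pm i\}$ is the \emph{unique} invariant antipodal pair whenever $n\ge 2$; this is false at $n=2$, where the half-turn about the $i$-axis also leaves $\{\pm j\}$, $\{\pm k\}$, and indeed every antipodal pair on the great circle $S^2\cap\langle j,k\rangle$ invariant. The uniform repair is to argue instead that a normalising element $g$ permutes the nontrivial elements of $\psi(\ZZ_n^*)$ by conjugation, hence permutes their fixed points on $S^2$, whose union is $\{\pm i\}$ for every $n\ge 2$; this forces $N_{\SO(3)}(\psi(\ZZ_n^*))\subseteq\Stab(\{\pm i\})$ without any uniqueness claim. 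Second, in part (2) the phrase \lq\lq rigid enough'' for $n\ge 3$ conceals the actual mechanism, which should be stated: the $n$-fold axis is the unique axis carrying a group element of order $n>2$, hence is preserved by the normaliser, after which the $\pi/n$-spacing of the $n$ two-fold axes pins the normaliser down to $\Di_{2n}$ inside $\Stab(\{\pm i\})$. Relatedly, in the $n=2$ case the order-$3$ rotation about $i+j+k$ only shows that the image of $N_{\SO(3)}(\Di_2)$ in $S_3$ contains the alternating group; to get surjectivity (and hence $|N|=24$) you must also exhibit a transposition, e.g.\ the half-turn about $(i+j)/\sqrt{2}$, which swaps the $i$- and $j$-axes. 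These are one-line repairs, and with them your proof is complete.
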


Let $\phi: S^3\times S^3 \to \Isom^+(S^3)$ be the homomorphism
defined by
\[
\phi(q_1,q_2)(q)=q_1 q q_2^{-1}.
\]
Then $\phi$ is an ephimorphism with $\Ker\phi =\langle (-1,-1)\rangle \cong \ZZ_2$.

We occasionally identify $S^3\subset\quaternion$ with the unit sphere
\[
S^3=\{(z_1,z_2)\in\CC^2 \svert |z_1|^2+|z_2|^2 = 1\}
\]  
in $\CC^2$
by the correspondence $q=z_1+z_2j \leftrightarrow (z_1,z_2)$.
Let $\LM:S^1\times S^1\to \Isom^+(S^3)$ be the injective homomorphism 
defined by 
\[
\LM(\omega_1,\omega_2)(z_1,z_2)=(\omega_1 z_1,\omega_2 z_2).
\]
When $\omega_{\ell}=\exp(2\pi i\frac{k_{\ell}}{n_{\ell}})$ (${\ell}=1,2$),
where $\frac{k_{\ell}}{n_{\ell}}$ is a rational number,
we write  
\begin{align}
\label{LM-F}
\LM(\omega_1,\omega_2)=\LM(\frac{k_1}{n_1},\frac{k_2}{n_2}),
\end{align}
because its restriction to the circles $S^3\cap(\CC\times\{0\})$ and
$S^3\cap(\{0\}\times \CC)$ are 
the \lq $\frac{k_1}{n_1}$-rotation' and \lq $\frac{k_2}{n_2}$-rotation', respectively.
Though the symbol $\LM(\cdot,\cdot)$ is used in two different ways,
we believe this does not cause any confusion,
because its meaning is clearly understood from the context
according to whether $\cdot$ is a unit complex or a rational number.

Observe that
\[
\phi(\eta_1,\eta_2)=\LM(\eta_1\bar\eta_2,\eta_1\eta_2) \quad ((\eta_1,\eta_2)\in S^1\times S^1).
\]
In particular, we have
\begin{align}
\label{LM-I}
\phi(S^1\times S^1)=\LM(S^1\times S^1)<\Isom^+(S^3).
\end{align}

Consider the isometries  $J:=\phi(j,j)$, $J_1:=\phi(1,j)$ and $J_2:=\phi(j,1)$,
which acts on $S^3\subset \CC^2$ as follows.
\[
J(z_1,z_2)=(\bar z_1,\bar z_2), \ J_1(z_1,z_2)=(z_2,-z_1),\ J_2(z_1,z_2)=(-\bar z_2,\bar z_1)
\]
Observe $J=J_1J_2$ and that
\begin{align}
\label{LM-D}
\phi(\Di_S\times \Di_S)
=\langle \LM(S^1\times S^1), J, J_1\rangle,
\quad 
\langle J, J_1\rangle \cong\ZZ_2\times\ZZ_2.
\end{align}
In fact,
$\phi(\Di_S\times \Di_S)$ is the split extension of 
$\LM(S^1\times S^1)$ by 
$\langle J, J_1\rangle \cong\ZZ_2\times\ZZ_2$,
where the action of $\langle J, J_1\rangle$ on 
$\LM(S^1\times S^1)$
by conjugation is given by the following formula.
\begin{align}
\label{LM-D2}
J \LM(\omega_1,\omega_2) J^{-1}
=\LM(\bar \omega_1,\bar \omega_2),
\quad
J_1 \LM(\omega_1,\omega_2) J_1^{-1}
=\LM(\omega_2,\omega_1)
\end{align}

The following proposition gives a classification 
of the orientable spherical $3$-orbifolds with dihedral orbifold fundamental groups.

\begin{proposition}
\label{prop:dihedral-orbifold1}
Let $\OO$ be an oriented spherical $3$-orbifold. 
Then $\pi_1(\OO)$ is isomorphic to
a dihedral group, 
if and only if
$\OO$ is isomorphic to the orbifold,
$\OO(r;d_1,d_2)$, represented by the weighted graph
$(S^3,K(r)\cup\tau_+\cup\tau_-,w)$ in Figure \ref{fig.Dihedral-orbifold}
for some $r\in\QQ$ and coprime positive integers $d_1$ and $d_2$,
where $w$ is given by the following rule.
\[
w(K(r))=2, \quad w(\tau_+)=d_1, \quad w(\tau_-)=d_2.
\]
In fact, $\OO(r;d_1,d_2)$ with $r=q/p$ is isomorphic to $S^3/\Gamma$,
where $\Gamma$ is the subgroup of $\Isom^+(S^3)$ given by
\begin{align}
\label{spherical-dihedral-group}
\Gamma=
\left\langle \LM(\frac{k_1}{pd_2},\frac{k_2}{pd_1}), \ J \right\rangle
\cong
D_{n}
\quad
\mbox{with $n=pd_1d_2$}
\end{align}
for some integers $k_1$ and $k_2$ such that
\begin{align}
\label{gcd-condition}
\gcd(pd_2,k_1)=1, \quad \gcd(pd_1,k_2)=1, \quad k_2\equiv qk_1 \pmod{p}.
\end{align}

Moreover, the spherical structure of 
$\OO(r;d_1,d_2)$ is unique,
i.e., if $\Gamma'$ is a subgroup of $\Isom^+(S^3)$
such that $S^3/\Gamma'$ is isomorphic to $\OO(r;d_1,d_2)$ as oriented orbifolds,
then $\Gamma'$ is conjugate to the subgroup $\Gamma$ defined by (\ref{spherical-dihedral-group}).
\end{proposition}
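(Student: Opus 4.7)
The strategy is to classify dihedral subgroups of $\Isom^+(S^3)$ in a standard form by pulling back along the double cover $\phi\colon S^3\times S^3\to\Isom^+(S^3)$, and then to carry out an explicit quotient computation that identifies $S^3/\Gamma$ with $\OO(r;d_1,d_2)$. The ``only if'' direction gives the normal form (\ref{spherical-dihedral-group}); the ``if'' direction, together with a direct orbifold analysis, identifies the singular set with the weighted graph describing $\OO(r;d_1,d_2)$.

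For the ``only if'' direction, let $\Gamma<\Isom^+(S^3)$ be dihedral with index-$2$ cyclic subgroup $C\cong\ZZ_n$. Its preimage $\tilde C=\phi^{-1}(C)\leq S^3\times S^3$ is finite abelian, so its projections to the two $S^3$-factors are finite cyclic. Since every finite cyclic subgroup of $S^3$ is conjugate to a subgroup of $S^1$, I may conjugate $\Gamma$ so that $\tilde C\leq S^1\times S^1$, whence $C\leq\LM(S^1\times S^1)$ by (\ref{LM-I}). Any element of $\Gamma\setminus C$ normalises $C$, so by Lemma \ref{lem.2-dim-normaliser} (applied in each factor, with small exceptional cases handled separately) its lift lies in $\Di_S\times\Di_S$; by (\ref{LM-D})--(\ref{LM-D2}) and a further conjugation within $\LM(S^1\times S^1)$ it can be brought to $J$. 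Hence $\Gamma=\langle\LM(\omega_1,\omega_2),J\rangle$ with $\omega_\ell=\exp(2\pi ik_\ell/n_\ell)$ in lowest terms; writing $p=\gcd(n_1,n_2)$ and $d_2=n_1/p$, $d_1=n_2/p$ gives coprime $d_1,d_2$ with $n=pd_1d_2$, and the numerators satisfy (\ref{gcd-condition}) by construction.

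For the ``if'' direction, fix $\Gamma$ as in (\ref{spherical-dihedral-group}) and analyse $S^3/\Gamma$ in two stages. The cyclic subgroup $C=\Gamma\cap\LM(S^1\times S^1)$ has order $n=pd_1d_2$ and acts freely on $S^3$ off the two core circles $\eta_1=S^3\cap(\CC\times\{0\})$ and $\eta_2=S^3\cap(\{0\}\times\CC)$; a direct calculation using (\ref{gcd-condition}) shows the stabilisers of generic points of $\eta_1,\eta_2$ in $C$ have orders $d_1,d_2$ respectively, so $S^3/C$ is a lens-space orbifold with two singular axes. The involution $J$ commutes with $C$ up to conjugation and descends to $S^3/C$; its fixed set contains the image of the great circle $\fix(J)\subset S^3$, which meets each $\eta_\ell$ in two points and lies in a $J$-invariant Clifford torus. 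The main technical step will be to identify this image with the $2$-bridge link $K(r)$ of slope $r=q/p$: I would use the Heegaard genus-one decomposition $S^3=V_1\cup V_2$ along the Clifford torus (so that $\eta_1,\eta_2$ are the cores of $V_1,V_2$), verify that both $C$ and $J$ preserve this decomposition, and match the resulting plat presentation of the quotient link with the standard description of $K(r)$ in Section \ref{sec:2-bridge}. The congruence $k_2\equiv qk_1\pmod p$ in (\ref{gcd-condition}) should emerge precisely from this slope identification.

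The main obstacle is this last slope computation, which requires careful bookkeeping of how the $J$-action interacts with the Seifert fibration of $S^3/C$ and of the gluing of the two quotient solid tori. Once it is carried out, uniqueness of the spherical structure follows immediately: any $\Gamma'<\Isom^+(S^3)$ with $S^3/\Gamma'\cong\OO(r;d_1,d_2)$ as oriented orbifolds is necessarily dihedral (since $\pi_1$ is), and by the classification above can be conjugated into the form (\ref{spherical-dihedral-group}); the orbifold invariants (the two singular axes with indices $d_1,d_2$ and the slope $r$ of the $K(r)$-component) then force the same parameters $p,q,k_1,k_2$ up to the ambiguities already allowed, hence conjugacy of $\Gamma'$ to $\Gamma$ in $\Isom^+(S^3)$.
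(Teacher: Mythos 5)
Your normal-form analysis of the ``only if'' direction is essentially the paper's own argument (conjugate the cyclic part into $\LM(S^1\times S^1)$, use Lemma \ref{lem.2-dim-normaliser} to put the reflection in $S^1j\times S^1j$, then conjugate it to $J$ by squaring in $S^1$), and your arithmetic extraction of $p,d_1,d_2,k_1,k_2$ matches Claim \ref{claim:normalise-f}. But there is a genuine gap: the identification of $S^3/\Gamma$ with $\OO(r;d_1,d_2)$ --- precisely the step you yourself flag as ``the main obstacle'' --- is never carried out, and it is the heart of the proposition; the congruence $k_2\equiv qk_1 \pmod p$, the meaning of the parameter $r$, and your uniqueness argument all hang on it. The paper avoids your plat-presentation bookkeeping entirely by a two-stage quotient: identifying $S^3$ with the join $S^1*S^1$, the quotient by $\langle f^p\rangle\cong\ZZ_{d_1}\times\ZZ_{d_2}$ is again $S^1*S^1\cong S^3$ with singular set the Hopf link of indices $d_1,d_2$; the residual $\ZZ_p$ acts as $\LM(\frac{k_1}{p})*\LM(\frac{k_2}{p})$, which generates the same group as $\LM(\frac{1}{p})*\LM(\frac{q}{p})$ with $q\equiv k_1^{-1}k_2 \pmod p$ --- this is where the congruence comes from, as a statement about generators of the induced cyclic action, not from slope bookkeeping --- so $S^3/\langle f\rangle$ is the lens space orbifold $\OO(L(p,q),d_1,d_2)$; finally $J$ descends to the hyper-elliptic involution of $L(p,q)$, and the classical fact that its quotient is the double branched covering of $S^3$ over $K(q/p)$, with the Heegaard cores mapping to the tunnels $\tau_{\pm}$, completes the identification at one stroke.

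Two further gaps remain even granting the slope identification. First, the ``if'' direction requires showing that for \emph{every} $r=q/p$ and coprime $(d_1,d_2)$ there exist integers $k_1,k_2$ satisfying (\ref{gcd-condition}); you fix ``$\Gamma$ as in (\ref{spherical-dihedral-group})'' and so presuppose this, whereas the paper proves it (Claim \ref{claim:congruence}) via surjectivity of the map $(\ZZ_{pd_2})^{\times}\times(\ZZ_{pd_1})^{\times}\to(\ZZ_p)^{\times}$, $(k_1,k_2)\mapsto k_1^{-1}k_2$. Second, in the uniqueness step the assertion that ``the orbifold invariants then force the same parameters up to the ambiguities already allowed'' is the substantive claim, not a consequence: the slope of the singular $K(r)$-component is a priori only defined up to Schubert's equivalences, and one must show that two orbifolds $\OO(q/p;d_1,d_2)$, $\OO(q'/p;d_1',d_2')$ are orientedly isomorphic only when $q\equiv q'\pmod p$ and $(d_1,d_2)=(d_1',d_2')$, or $qq'\equiv 1\pmod p$ and $(d_1,d_2)=(d_2',d_1')$, and then that each of these cases yields conjugate groups. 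The paper does this by passing to the double cover associated with the index-$2$ cyclic subgroup of $D_n$ (a lens space), invoking the classification of lens spaces and the uniqueness of genus-one Heegaard splittings (Bonahon, Bonahon--Otal); note that for $n=pd_1d_2=2$ the cyclic index-$2$ subgroup of $D_2$ is not unique, so this exceptional case must be, and in the paper is, treated separately --- your sketch would silently fail there.
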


\begin{proof}
We first prove the only if part of the first assertion.
Let $\Gamma$ be a subgroup of $\Isom^+(S^3)$ isomorphic to the dihedral group $D_{n}$,
and let $f$ and $h$ be the elements of $\Gamma$ 
such that
\[
\Gamma \cong \langle f,h \svert f^n=1, h^2=1, hfh^{-1}=f^{-1} \rangle.
\]
(Though the symbols $\Gamma$, $f$ and $h$ are used in different meanings in the previous sections,
we believe this does not cause any confusion.)
We show that $S^3/\Gamma$ is isomorphic to some $\OO(q/p;d_1,d_2)$,
such that $n=pd_1d_2$.

\begin{claim}
\label{claim:normalise-f}
After taking conjugation in $\Isom^+(S^3)$,
we may assume $f=\LM(\frac{k_1}{pd_2},\frac{k_2}{pd_1})$,
where $p$, $d_1$, $d_2$, $k_1$, and $k_2$ are positive integers such that
$\gcd(d_1,d_2)=1$, $\gcd(pd_2,k_1)=1$, $\gcd(pd_1,k_2)=1$, and $n=pd_1d_2$.
\end{claim}

\begin{proof}[Proof of Claim \ref{claim:normalise-f}]
Since any element of $S^3$ is conjugate to an element in $S^1$,
we may assume, by taking conjugation, that 
$f \in \phi(S^1\times S^1)=\LM(S^1\times S^1)$ (see (\ref{LM-I})). 
Since $f$ has order $n$, we may assume
$f=\LM(\frac{k_1'}{n},\frac{k_2'}{n})$
for some integers $k_1'$ and $k_2'$ such that
$\gcd(n,k_1',k_2')=1$.
For $\ell=1,2$, set $d_{\ell}=\gcd(n,k_{\ell}')$, $n_{\ell}=\frac{n}{d_{\ell}}$ and $k_{\ell}=\frac{k_{\ell}'}{d_{\ell}}$,
so that $f=\LM(\frac{k_1'}{n},\frac{k_2'}{n})=\LM(\frac{k_1}{n_1},\frac{k_2}{n_2})$,
where $\gcd(k_1,n_1)=\gcd(k_2,n_2)=1$.
Note also that $\gcd(d_1,d_2)=\gcd(n,k_1',k_2')=1$.
Set $p=\gcd(n_1,n_2)$ and  $n_{\ell}'=\frac{n_{\ell}}{p}$ (${\ell}=1,2$).
Then $n=\lcm(n_1,n_2)=pn_1'n_2'$.
Thus $n_1d_1=n=pn_1'n_2'=n_1n_2'$ and so $d_1=n_2'$.
Similarly we have $d_2=n_1'$.
Hence we have $n=pd_1d_2$ and
$f=\LM(\frac{k_1}{n_1},\frac{k_2}{n_2})
=\LM(\frac{k_1}{pd_2},\frac{k_2}{pd_1})$.
\end{proof}

Now consider the subgroup $\langle f^p\rangle\cong \ZZ_{d_1d_2}$ generated by 
$f^p=\LM(\frac{k_1}{d_2},\frac{k_2}{d_1})$.
Since $\gcd(d_1,d_2)=1$, we have 
\[
\langle f^p\rangle \cong \langle f^{pd_2}\rangle \times \langle f^{pd_1}\rangle
\cong \ZZ_{d_1}\times \ZZ_{d_2}.
\]
Note that 
\[
\langle f^{pd_2}\rangle =\langle \LM(0, \frac{k_2d_2}{d_1})\rangle =
\langle \LM(0, \frac{1}{d_1})\rangle,
\quad
\langle f^{pd_1}\rangle =\langle \LM(\frac{k_1d_1}{d_2},0)\rangle =
\langle \LM(\frac{1}{d_2},0)\rangle.
\]
Hence we have
\[
\langle f^p\rangle = \langle \LM(0, \frac{1}{d_1})\rangle \times \langle \LM(\frac{1}{d_2},0)\rangle.
\]
Thus $S^3/\langle f^p\rangle$ is the orbifold 
with underlying space $S^3$
and with singular set the Hopf link,
where one component has index $d_1$ and the other component has index $d_2$.
To give a precise description of this orbifold,
identify $S^3$ with the join $S^1*S^1$,
by the correspondence
$(tz_1, \sqrt{1-t^2} z_2)\leftrightarrow t z_1+(1-t)z_2$.
Thus the first and second factor circles of $S^1*S^1$ correspond to
the circles $S^1\times \{0\}$ and $\{0\} \times S^1$ in $S^3\subset \CC^2$, respectively. 
For $\omega\in S^1$, let $\LM(\omega)$ be the isometry of $S^1$
defined by $\LM(\omega)(z)=\omega z$ ($z\in S^1$).
Then the isometry $\LM(\omega_1,\omega_2)$ is
identified with the self-homeomorphism
$\LM(\omega_1)*\LM(\omega_2)$ of $S^1*S^1$, defined by
\[
(\LM(\omega_1)*\LM(\omega_2))(t z_1+(1-t)z_2)
=t \omega_1z_1 +(1-t)\omega_2 z_2.
\]
Under the above convention, 
the orbifold $S^3/\langle f^p\rangle$ is described as follows.
The underlying space of the orbifold is given by
\[
|S^3/\langle f^p\rangle |
\cong
\left(S^1/\LM(\frac{1}{d_2})\right)*\left(S^1/\LM(\frac{1}{d_1})\right)
\cong
S^1*S^1\cong S^3,
\]
and the singular set is the union of the two circles 
which gives the join structure of $S^3$,
where the first factor circle (which corresponds to $S^1/\LM(\frac{1}{d_2})$)
has index $d_1$ and
the second factor circle (which corresponds to $S^1/\LM(\frac{1}{d_1})$)
has index $d_2$.
Here, $\LM(\frac{1}{d_{\ell}})$ denotes $\LM(e^{\frac{2\pi i}{d_{\ell}}})$
as in (\ref{LM-F}).

The isometry $f$ descends to the periodic isomorphism
of the orbifold $S^3/\langle f^p\rangle\cong S^1*S^1$
given by $\LM(\frac{k_1}{p})*\LM(\frac{k_2}{p})$,
because
the periodic map $\LM(\frac{k_1}{pd_2})$ 
(resp. $\LM(\frac{k_2}{pd_1})$)
on $S^1$
descends to the periodic map $\LM(\frac{k_1}{p})$
(resp. $\LM(\frac{k_2}{p})$)
on the circle $S^1/\LM(\frac{1}{d_2})$
(resp. $S^1/\LM(\frac{1}{d_1})$).
Note that
$\langle \LM(\frac{k_1}{p})*\LM(\frac{k_2}{p})\rangle
=
\langle \LM(\frac{1}{p})*\LM(\frac{q}{p})\rangle$,
with $q\equiv k_1^{-1}k_2 \pmod{p}$,
where $k_1^{-1}$ is the inverse of $k_1$ in 
the multiplicative group $(\ZZ_p)^{\times}$
(cf. Notation \ref{notation}(2)).
Hence we see that the orbifold
$S^3/\langle f\rangle$ is 
isomorphic to the orbifold, $\OO(L(p,q), d_1,d_2)$,
with underlying space the lens space, $L(p,q)$,
and with singular set the union of the core circles 
of the standard genus $1$ Heegaard splitting of $L(p,q)$
with indices $d_1$ and $d_2$, respectively.
(Though the notation $L(p,q)$ looks similar to the notation $\LM(\cdot,\cdot)$
in (\ref{LM-F}), we believe there is no fear of confusion.)

Since $h^2=1$ and $hfh^{-1}=f^{-1}$,
we see by using Lemma \ref{lem.2-dim-normaliser}(1) that
$h=\phi(q_1,q_2)$ for some $(q_1,q_2)\in S^1j\times S^1j$.
Since any element of $S^1j$ is conjugate to $j$ by an element of $S^1$,
we may assume $h=\phi(j,j)=J$, and so
$h(z_1,z_2)=(\bar z_1,\bar z_2)$.
This implies that the involution $h$ of $S^3$
descends to the hyper-elliptic involution of
$|S^3/\langle f\rangle|\cong L(p,q)$.
Recall that (i) the quotient map determined by the hyper-elliptic involution
gives the double branched covering of $S^3$ branched over  
the $2$-bridge link $K(q/p)$
and that (ii) the core circles of the genus $1$ Heegaard splitting
project to the upper and lower tunnels, respectively.
Hence, the quotient $S^3/\Gamma$, with $\Gamma=\langle f, h\rangle\cong D_n$,
is isomorphic to the orbifold
$\OO(q/p;d_1,d_2)$.
This completes the proof of the only if part of the first assertion.
The proof also shows that the group $\Gamma$ is given by the formula
(\ref{spherical-dihedral-group})
for some integers $k_1$ and $k_2$ satisfying the condition 
(\ref{gcd-condition}).

\medskip

The if part
of the first assertion follows from 
the above argument and the following claim.

\begin{claim}
\label{claim:congruence}
For any rational number $r=q/p$ and a pair of coprime integers $(d_1,d_2)$,
there is a pair $(k_1,k_2)$ of integers
which satisfies the condition (\ref{gcd-condition}).
\end{claim}

\begin{proof}[Proof of Claim \ref{claim:congruence}]
Consider the homomorphism 
\[
\Psi: (\ZZ_{pd_2})^{\times}\times (\ZZ_{pd_1})^{\times}
\to (\ZZ_{p})^{\times}\times (\ZZ_{p})^{\times}
\to (\ZZ_{p})^{\times},
\]
where the first homomorphism is the product of the natural projections
and the second homomorphism maps $(k_1,k_2)\in  (\ZZ_{p})^{\times}\times (\ZZ_{p})^{\times}$
to $k_1^{-1}k_2\in (\ZZ_{p})^{\times}$.
Then both of the two homomorphisms are surjective and so is their composition $\Psi$.
Regard the numerator $q$ of the rational number $r=q/p$ 
as an element of $(\ZZ_{p})^{\times}$,
and let $(k_1,k_2)$ be a pair of integers which projects to an element
in the inverse image $\Psi^{-1}(q)$.
Then $(k_1,k_2)$ satisfies the condition (\ref{gcd-condition}).
\end{proof}

Finally we prove the
uniqueness of the spherical structure on the orbifold $\OO(q/p;d_1,d_2)$.
The preceding arguments show that
the triple $(q/p,d_1,d_2)\in\QQ\times\NN\times\NN$
uniquely determines a dihedral subgroup $\Gamma<\Isom^+(S^3)$, up to conjugation,
such that $S^3/\Gamma$ is isomorphic to $\OO(q/p;d_1,d_2)$ as oriented orbifolds.
Thus we have only to show that 
there are no unexpected orientation-preserving topological isomorphism
between two orbifolds,  
$\OO(q/p;d_1,d_2)=S^3/\Gamma$ and $\OO(q'/p';d_1',d_2')=S^3/\Gamma'$.

Assume that $\OO(q/p;d_1,d_2)$ and $\OO(q'/p';d_1',d_2')$
are isomorphic as oriented orbifolds.
Then $pd_1d_2=p'd_1'd_2'$ and $\{d_1,d_2\}=\{d_1',d_2'\}$,
because they have isomorphic orbifold fundamental groups 
and the same index sets of the singular sets. 
In particular we have $p=p'$.

Suppose first that $n:=pd_1d_2\ge 3$.
Then $D_n$ has the unique cyclic subgroup of index $2$,
and so each of $\OO(q/p;d_1,d_2)$ and $\OO(q'/p';d_1',d_2')$
has the unique double orbifold covering with cyclic orbifold fundamental group.
The underlying spaces of the covering orbifolds are
the lens spaces $L(p,q)$ and $L(p',q')$, respectively.
Hence, by the classification of lens spaces,
we have $p=p'$ and either $q\equiv q' \pmod p$
or $qq'\equiv 1 \pmod p$.
Moreover, by using the uniqueness of the genus one Heegaard splittings
(see \cite{Bonahon, Bonahon-Otal}),
we see that $\OO(q/p;d_1,d_2)$ and $\OO(q'/p';d_1',d_2')$
are isomorphic as oriented orbifolds if and only if 
one of the following conditions hold.
\begin{enumerate}
\item
$p=p'$, $q\equiv q' \pmod p$, and $(d_1,d_2)=(d_1',d_2')$.
\item
$p=p'$, $qq'\equiv 1 \pmod p$, and $(d_1,d_2)=(d_2',d_1')$.
\end{enumerate}
In both cases, we can see that
the subgroups $\Gamma$ and $\Gamma'$ 
are conjugate in $\Isom^+(S^3)$.

In the exceptional case when $n:=pd_1d_2=2$,
we have either (i) $p=p'=1$ and $\{d_1,d_2\}=\{d_1',d_2'\}=\{1,2\}$
or (ii) $p=p'=2$ and $d_1=d_2=d_1'=d_2'=1$.
We can easily see that
the subgroups $\Gamma$ and $\Gamma'$
are conjugate in $\Isom^+(S^3)$.

This completes the uniqueness of the spherical structure.
\end{proof}

Next, we calculate the (orientation-preserving) isometry group
of the dihedral spherical $3$-orbifold $\OO(q/p;d_1,d_2)$.
If $(d_1,d_2)=(1,1)$, then $\OO(q/p;d_1,d_2)$ is 
the $\pi$-orbifold, $\OO(q/p)$, associated with 
the $2$-bridge link $K(q/p)$ (cf. \cite {Boileau-Zimmermann})  
and its isometry group is 
calculated by \cite[Theorem 4.1]{Sakuma2} and \cite[Corollary 3.2.11]{Jeevanjee}.
(There are errors in \cite[Theorem 4.1]{Sakuma2} for the special case when $p=1$, $2$.
There are also misprints for the generic case in the statement of Theorem 4.1, though the correct result can be found in the tables in 
\cite[p.184]{Sakuma2}.)

\begin{proposition}
\label{prop:dihedral-orbifold-isometry1}
The orientation-preserving isometry group of the spherical orbifold
$\OO(q/p):=\OO(q/p;1,1)$ is described as follows.
\begin{enumerate}
\item
If $q\not\equiv \pm 1 \pmod p$, then the following holds.
\[
\Isom^+(\OO(q/p))\cong
\begin{cases}
(\ZZ_2)^2 & \text{if $q^2\not\equiv 1\pmod{p}$}\\
D_4 & \text{if $p$ is odd and $q^2 \equiv 1\pmod{p}$}\\
         & \text{or if $p$ is even and $q^2 \equiv p+1\pmod{2p}$}\\
(\ZZ_2)^3 & \text{if $p$ is even and $q^2 \equiv 1\pmod{2p}$}
\end{cases}
\]
\item
If $q\equiv \pm 1 \pmod p$, then then the following holds.
\[
\Isom^+(\OO(q/p))\cong 
\begin{cases}
S^1\rtimes\ZZ_2 & \text{if $p$ is odd and $\ge 3$}\\
S^1\rtimes (\ZZ_2)^2 & \text{if $p$ is even and $\ge 4$}\\
(S^1\times S^1)\rtimes (\ZZ_2)^2 & \text{if $p=2$}\\
(S^1\times S^1)\rtimes \ZZ_2 & \text{if $p=1$}
\end{cases}
\]
\end{enumerate}
\end{proposition}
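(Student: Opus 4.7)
The orbifold isomorphism $\OO(q/p)\cong S^3/\Gamma$ in Proposition~\ref{prop:dihedral-orbifold1} and the uniqueness of the spherical structure give the identification
\[
\Isom^+(\OO(q/p))\cong N_{\Isom^+(S^3)}(\Gamma)/\Gamma,
\]
where (after conjugation and using $d_1=d_2=1$) we may take
$\Gamma=\langle \LM(\tfrac{1}{p},\tfrac{q}{p}),\,J\rangle$.
The plan is to compute this normalizer by lifting through the two-fold covering $\phi:S^3\times S^3\to \Isom^+(S^3)$ with kernel $\langle(-1,-1)\rangle$, determining
$\tilde\Gamma:=\phi^{-1}(\Gamma)\subset \Di_S\times \Di_S$, and then analyzing $N_{S^3\times S^3}(\tilde\Gamma)$.

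\textbf{Step 1 (Lifting).} Writing $\LM(\tfrac{1}{p},\tfrac{q}{p})=\phi(\omega_1,\omega_2)$ with $\omega_1=e^{\pi i(q+1)/p}$ and $\omega_2=e^{\pi i(q-1)/p}$, I would describe $\tilde\Gamma$ explicitly as a subgroup of $\Di_S\times \Di_S$ generated by $(\omega_1,\omega_2)$, $(j,j)$ and $(-1,-1)$. Its two coordinate projections $\pi_\ell(\tilde\Gamma)\subset\Di_S$ are binary dihedral groups $\Di_{n_\ell}^*$ whose order depends on the parity of $p$ and of $q\pm1$.

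\textbf{Step 2 (Normalizer of the projections).} By Lemma~\ref{lem.2-dim-normaliser}, $N_{S^3}(\ZZ_n^*)=\Di_S$ for $n\ge 2$ and $N_{S^3}(\Di_n^*)=\Di_{2n}^*$ for $n\ge 3$, while for $n=2$ the normalizer jumps to the binary octahedral group $O^*$. So generically $N_{S^3\times S^3}(\tilde\Gamma)\subset \Di_S\times\Di_S$, with enlargements only in the small exceptional cases. An element $(r_1,r_2)\in \Di_S\times \Di_S$ then normalizes $\tilde\Gamma$ exactly when conjugation preserves the ``diagonal'' relations that tie the two factors together through $\phi$; by~(\ref{LM-D2}), this imposes explicit congruence conditions on the exponents of $r_1,r_2$ modulo the order of $\pi_\ell(\tilde\Gamma)$.

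\textbf{Step 3 (Case division).} Working out these congruences produces precisely the four-fold case split in~(1): the extra $\ZZ_2$-symmetry interchanging the two components of $\partial\tilde\Gamma$ coming from a permuting element $\phi(1,j)=J_1$ exists iff $q^2\equiv 1\pmod p$ (odd $p$) or iff $q^2\equiv p+1$ or $1\pmod{2p}$ (even $p$); the refinement between $D_4$ and $(\ZZ_2)^3$ when $p$ is even reflects whether an additional translation-type normalizer survives, which is detected by $q^2\pmod{2p}$. These are exactly the symmetries coming from Schubert's classification recalled in Proposition~\ref{prop:classification-2-bridgelinks}, so the abstract-group identification is forced.

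\textbf{Step 4 (Torus-link case).} When $q\equiv\pm 1\pmod p$, one of $\omega_1^2,\omega_2^2$ equals $1$, so one projection $\pi_\ell(\tilde\Gamma)$ collapses to $\langle-1,j\rangle=\Di_1^*$. The normalizer of this factor is all of $S^3$, and the normalizer of $\tilde\Gamma$ therefore contains a full circle (or a full $S^1\times S^1$ when $p\le 2$). Combining these continuous pieces with the finite permuting/conjugating involutions inherited from Step~3 gives the semidirect-product descriptions in~(2), with the $\ZZ_2$ and $(\ZZ_2)^2$ distinctions coming from whether $p$ is odd or even (reflecting whether $K(q/p)$ is a $(2,p)$-torus knot or link, respectively) and the $p=1,2$ special cases yielding an extra $S^1$-factor because both $\omega_\ell^2=1$.

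\textbf{Main obstacle.} The hard part will be the bookkeeping of Step~3: one must carefully track the kernel $\langle(-1,-1)\rangle$ and the two sources of enlargement (the swap $J_1$ and new central elements in $\Di_S\times\Di_S$), and translate membership in the normalizer into the arithmetic conditions $q^2\equiv 1\pmod p$ versus $q^2\equiv 1$ or $p+1\pmod{2p}$. Once this dictionary is established, dividing by $\Gamma$ and identifying the resulting abstract group is a short verification.
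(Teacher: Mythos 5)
First, a point of comparison: the paper does not actually prove this proposition in the text. It is quoted from Sakuma \cite[Theorem 4.1]{Sakuma2} and Jeevanjee \cite[Corollary 3.2.11]{Jeevanjee} (with errata for $p=1,2$ noted), and the normalizer machinery you outline is precisely the method the paper uses to prove the companion Proposition \ref{prop:dihedral-orbifold-isometry2} for $(d_1,d_2)\ne(1,1)$. So your overall strategy, computing $N(\tilde\Gamma)$ inside $S^3\times S^3$ and passing to $N(\Gamma)/\Gamma$, is the intended one, and your Step 1 is correct. But you cannot borrow the paper's intermediate lemmas: Lemmas \ref{lem:d-d}, \ref{lem:nomalizer2} and \ref{lem:nomalizer3} are all proved under the standing hypothesis $(d_1,d_2)\ne(1,1)$ and fail in your setting. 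In particular $m_1=1$ genuinely occurs (this is exactly where the continuous symmetry comes from), and $J_1$ \emph{can} lie in $N(\Gamma)$ -- by (\ref{LM-D2}), $J_1\LM(\frac{1}{p},\frac{q}{p})J_1^{-1}=\LM(\frac{q}{p},\frac{1}{p})$, which lies in $\Gamma$ if and only if $q^2\equiv 1\pmod p$ -- whereas Lemma \ref{lem:nomalizer3} rules $J_1$ out when $(d_1,d_2)\ne(1,1)$. Your Step 3 defers exactly this analysis as ``bookkeeping'', but it is the entire content of case (1), so the proposal as written leaves the main case unproved.

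Two concrete errors. (a) In Step 4 you assert that when a projection collapses to $\Di_1^*=\{\pm 1,\pm j\}=\langle j\rangle\cong\ZZ_4$, ``the normalizer of this factor is all of $S^3$''. This is false: since $(j,j)\in\tilde\Gamma$, each projection always contains $\langle j\rangle$, and $N_{S^3}(\langle j\rangle)$ is the one-dimensional group $\{\cos\theta+j\sin\theta\}\cup i\,\{\cos\theta+j\sin\theta\}$, a conjugate of $\Di_S$ (the centraliser circle of $j$ together with the coset conjugating $j$ to $-j$). Were the normalizer all of $S^3$, the isometry group would be three-dimensional, contradicting the answer $S^1\rtimes\ZZ_2$. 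The circle factor in (2) arises from the centraliser circle of $j$ in the collapsed coordinate (two such circles when $p\le 2$, where both coordinates collapse), so your conclusion is salvageable but the justification is wrong. (b) In Step 3 you appeal to Schubert's classification (Proposition \ref{prop:classification-2-bridgelinks}) to claim the abstract-group identification is ``forced''. It cannot be: Schubert at most tells you \emph{which} symmetries of $(S^3,K(q/p))$ exist, and $D_4$ and $(\ZZ_2)^3$ have the same order $8$. Distinguishing them requires computing the order, modulo $\Gamma$, of the swap-type element of $N(\Gamma)$ -- concretely, whether a lift of the form $J_1\LM(\cdot,\cdot)$ normalizing $\Gamma$ squares into $\Gamma$ or survives as an order-$2$ element of $N(\Gamma)/\Gamma$ -- and this is exactly where the half-rotation lifts as in Lemma \ref{lem:nomalizer4} and the mod-$2p$ dichotomy $q^2\equiv 1$ versus $q^2\equiv p+1\pmod{2p}$ enter. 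In summary: right method, correct identification of the relevant arithmetic invariants, but the two steps the argument leans on are, respectively, deferred without proof and incorrectly justified.
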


In the remainder of this section, we treat the 
remaining case $(d_1,d_2)\ne (1,1)$.
In the very special case, when $p=1$ and $\{d_1,d_2\}=\{1,2\}$,
we call $\OO(0/1;1,2)$ the {\it trivial $\theta$-orbifold},
because its singular set is the trivial $\theta$-curve in $S^3$.
Then we have the following proposition.

\begin{proposition}
\label{prop:dihedral-orbifold-isometry2}
The orientation-preserving isometry group of 
the spherical dihedral orbifold $\OO(q/p;d_1,d_2)$ with 
$(d_1,d_2)\ne (1,1)$
is described as follows.
\begin{enumerate}
\item
$\Isom^+(\OO(q/p;d_1,d_2))\cong (\ZZ_2)^2$,
except when 
$\OO(q/p;d_1,d_2)$ is isomorphic to the trivial $\theta$-orbifold $\OO(0/1;1,2)$,
i.e.
except when $p=1$ and $\{d_1,d_2\}= \{1,2\}$.
\item
For the the trivial $\theta$-orbifold $\OO(0/1;1,2)$,
we have
$\Isom^+(\OO(0/1;1,2))\cong D_3 \times \ZZ_2$.
\end{enumerate}
\end{proposition}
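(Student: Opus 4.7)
My plan is to identify $\Isom^+(\OO(q/p;d_1,d_2))$ with the normaliser quotient $N_{\Isom^+(S^3)}(\Gamma)/\Gamma$, where $\Gamma$ is the subgroup of $\Isom^+(S^3)$ given by formula (\ref{spherical-dihedral-group}); this identification is legitimate by the uniqueness of the spherical structure asserted in Proposition \ref{prop:dihedral-orbifold1}. Setting $n:=pd_1d_2$, the argument splits according to whether $n\ge 3$ (the generic case, giving part (1)) or $n=2$, which is precisely the $\theta$-orbifold $\OO(0/1;1,2)$ (giving part (2)).

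For part (1), since $\Gamma\cong D_n$ with $n\ge 3$ has a unique cyclic subgroup $\langle f\rangle$ of index $2$, every element of $N(\Gamma)$ normalises $\langle f\rangle$; and since $f\ne\pm\mathrm{id}$ its centraliser equals the maximal torus $T:=\LM(S^1\times S^1)$. Hence $N(\Gamma)\subset N(T)=T\rtimes\langle J,J_1\rangle$ by Lemma \ref{lem.2-dim-normaliser}(1) and (\ref{LM-D}). With $f=\LM(\omega_1,\omega_2)$ where $\omega_1,\omega_2$ have orders $pd_2,pd_1$ respectively, formula (\ref{LM-D2}) gives $J_1 f J_1^{-1}=\LM(\omega_2,\omega_1)$; for this to lie in $\langle f\rangle$ one needs $\langle\omega_1\rangle=\langle\omega_2\rangle$ in $S^1$, hence $pd_1=pd_2$ and then, by $\gcd(d_1,d_2)=1$, $d_1=d_2=1$, contrary to our hypothesis. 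Therefore $J_1,JJ_1\notin N(\langle f\rangle)$, so $N(\Gamma)\subset T\rtimes\langle J\rangle$; a direct computation of $N(\Gamma)\cap(T\rtimes\langle J\rangle)$ modulo $\Gamma$ then yields exactly $(\ZZ_2)^2$. Geometrically, these four cosets correspond to the identity together with the vertical, horizontal, and planar involutions of $K(q/p)$ from Section \ref{sec:2-bridge}, each preserving both tunnels $\tau_\pm$; no tunnel-swapping symmetries arise because $d_1\ne d_2$ (consistent with Proposition \ref{prop:classification-2-bridgelinks}).

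For part (2), the $\theta$-orbifold $\OO(0/1;1,2)$ has $\Gamma\cong(\ZZ_2)^2$ of order $4$, so the preceding argument fails: $\langle f\rangle$ is no longer characteristic, and $\operatorname{Aut}(\Gamma)\cong\mathrm{GL}(2,\ZZ_2)\cong S_3$ permutes the three order-$2$ elements of $\Gamma$. Topologically the singular set of $\OO(0/1;1,2)$ is a planar $\theta$-graph in $S^3$ whose three edges all carry index $2$, corresponding to the three order-$2$ subgroups of $\Gamma$; the $3$-fold rotation of $S^3$ about the axis through the two trivalent vertices cyclically permutes these edges, and together with the three $\pi$-rotations each fixing one edge while exchanging the other two, realises the full $S_3\cong N(\Gamma)/C(\Gamma)\subset\operatorname{Aut}(\Gamma)$. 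A direct check shows $C(\Gamma)/\Gamma\cong\ZZ_2$ (generated, e.g., by $\LM(1,-1)$, which centralises $\Gamma$ but lies outside it), and the resulting central extension $1\to\ZZ_2\to N(\Gamma)/\Gamma\to S_3\to 1$ splits, giving $N(\Gamma)/\Gamma\cong S_3\times\ZZ_2\cong D_3\times\ZZ_2$.

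The principal obstacle lies in part (1), namely confirming that $(T\rtimes\langle J\rangle)\cap N(\Gamma)$ modulo $\Gamma$ has \emph{exactly} four elements: this reduces to solving simultaneous congruences in $\ZZ/pd_2\ZZ\times\ZZ/pd_1\ZZ$ arising from the conditions ``$\LM(\omega_1,\omega_2)\cdot J$ conjugates $J$ into $J\langle f\rangle$'' and ``$\LM(\omega_1,\omega_2)$ centralises $f$ modulo $\Gamma$'', using (\ref{gcd-condition}) to rule out sublattice coincidences that could enlarge the count. The geometric identification of these four cosets with the vertical/horizontal/planar involutions provides a useful sanity check.
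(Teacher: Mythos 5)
Your overall frame agrees with the paper's: identify $\Isom^+(\OO(q/p;d_1,d_2))$ with $N(\Gamma)/\Gamma$ inside $\Isom^+(S^3)$ and split into the generic case $n:=pd_1d_2\ge 3$ and the exceptional case $n=2$, which under the standing hypotheses ($\gcd(d_1,d_2)=1$, $(d_1,d_2)\ne(1,1)$) is exactly the trivial $\theta$-orbifold, so your dichotomy is exhaustive. Where you genuinely diverge is in how you confine $N(\Gamma)$. The paper lifts to $S^3\times S^3$, bounds $N(\tilde\Gamma)$ through the projections $\tilde\Gamma_\ell=\Di_{m_\ell}^*$ and Lemma \ref{lem.2-dim-normaliser}, with a delicate case $m_1=2$ handled via the binary octahedral group $O^*$ (Lemmas \ref{lem:d-d} and \ref{lem:nomalizer2}). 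You instead use that for $n\ge3$ the cyclic subgroup $\langle f\rangle$ is the unique cyclic index-$2$ subgroup of $\Gamma\cong D_n$, hence characteristic, and that the centraliser of each of its generators is the maximal torus $T$, giving $N(\Gamma)\subset N(T)=T\rtimes\langle J,J_1\rangle$ directly; this bypasses the $O^*$ analysis and is cleaner. Your exclusion of the cosets $TJ_1$ and $TJJ_1$ via (\ref{LM-D2}) and the coprimality contradiction is then the same as the paper's Lemma \ref{lem:nomalizer3}. In the exceptional case your extension argument ($N/C\hookrightarrow\operatorname{Aut}((\ZZ_2)^2)\cong S_3$ realised geometrically, $C/\Gamma\cong\ZZ_2$) replaces the paper's direct computation $N(\tilde\Gamma)=\{(q,q)\svert q\in O^*\}\rtimes\langle J_1\rangle$; both yield order $12$ and $D_3\times\ZZ_2$.

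Three points need tightening. First, ``since $f\ne\pm\mathrm{id}$ its centraliser equals $T$'' is false as stated: $\LM(-1,1)$ is centralised by $J$, and any $\LM(\omega,\omega^{\pm1})$ is centralised by elements of $TJ_1$ or $TJJ_1$. The correct justification is available from your hypotheses: writing $f^k=\LM(\omega_1^k,\omega_2^k)$ with $\gcd(k,n)=1$, the orders $pd_2$ and $pd_1$ of the two coordinates differ (coprimality plus $(d_1,d_2)\ne(1,1)$ forces $d_1\ne d_2$), which rules out $\omega_1^k=\omega_2^{\pm k}$, and $n\ge 3$ rules out both coordinates lying in $\{\pm1\}$; so every generator of $\langle f\rangle$ is regular and its centraliser is $T$, which is what the passage from $N(\langle f\rangle)$ to $N(T)$ actually requires. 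Second, in your final count the condition ``$\LM(\zeta_1,\zeta_2)$ centralises $f$ modulo $\Gamma$'' is vacuous ($T$ is abelian); the only constraint on $t\in T$ is $tJt^{-1}=t^2J\in\Gamma$, i.e.\ $t^2\in\langle f\rangle$, and the paper's Lemma \ref{lem:nomalizer4} solves it: $N(\Gamma)\cap T=\langle a,b_1,b_2\rangle$ with $a=\LM(\frac{k_1}{2pd_2},\frac{k_2}{2pd_1})$, $b_1=\LM(\frac12,0)$, $b_2=\LM(0,\frac12)$. The ``sublattice coincidence'' you worry about is precisely whether $o(a)=2pd_1d_2$ (in which case $a^{pd_1d_2}\in\{b_1,b_2,b_1b_2\}$) or $o(a)=pd_1d_2$ odd; the paper's two-case analysis shows the quotient by $\langle a^2\rangle=\langle f\rangle$ is $(\ZZ_2)^2$ in both cases, so no further use of (\ref{gcd-condition}) is needed and the count cannot be enlarged. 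Third, a minor slip in part (2): in the paper's normalisation $\tilde\Gamma=\langle(i,i),(j,j)\rangle$ one has $\LM(1,-1)=\phi(i,i)\in\Gamma$, so your proposed generator of $C(\Gamma)/\Gamma$ lies inside $\Gamma$ there; take $\LM(-1,1)$ (or $\LM(-1,-1)$) instead --- which torus involution belongs to $\Gamma$ depends on whether $(d_1,d_2)=(1,2)$ or $(2,1)$. With these repairs your plan goes through and correctly reproduces both conclusions.
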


\begin{figure}
\includegraphics[width=1.0\hsize, bb=0 0 2636 835]{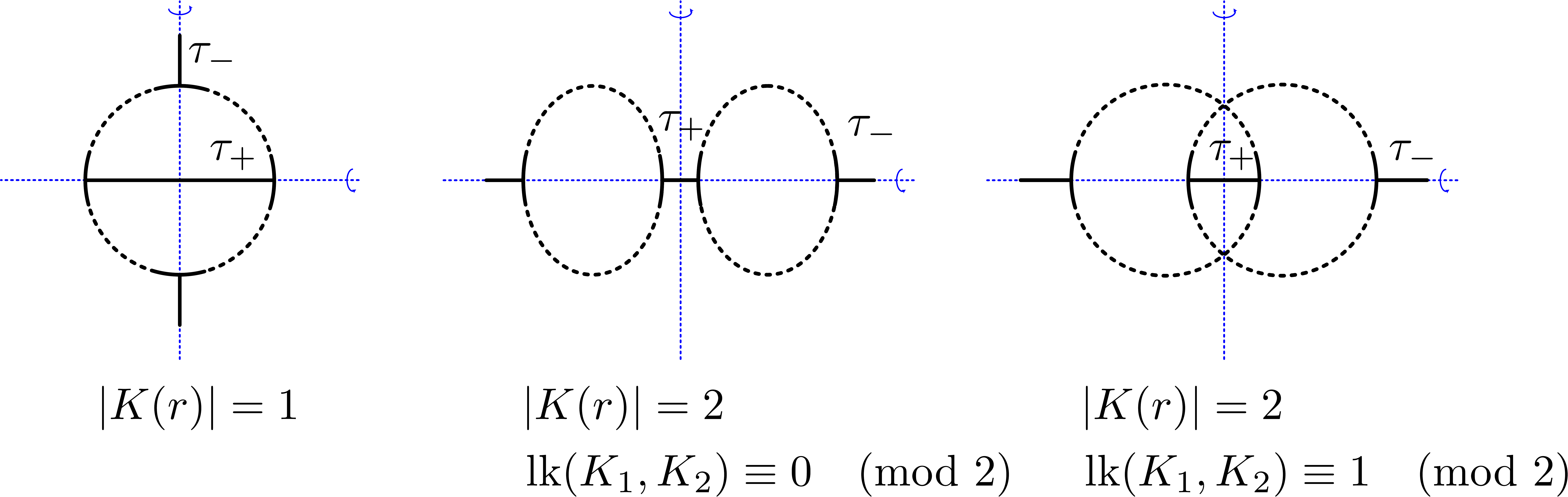}
\caption{
$\Isom^+(\OO(q/p;d_1,d_2))\cong (\ZZ_2)^2$
if $(d_1,d_2)\ne(1,1)$ and $\OO(q/p;d_1,d_2))\not\cong \OO(0/1;1,2)$.
}
\label{symmetry-dihedral-orbifold}
\end{figure}

\begin{figure}
\includegraphics[width=0.3\hsize, bb=0 0 423 594]{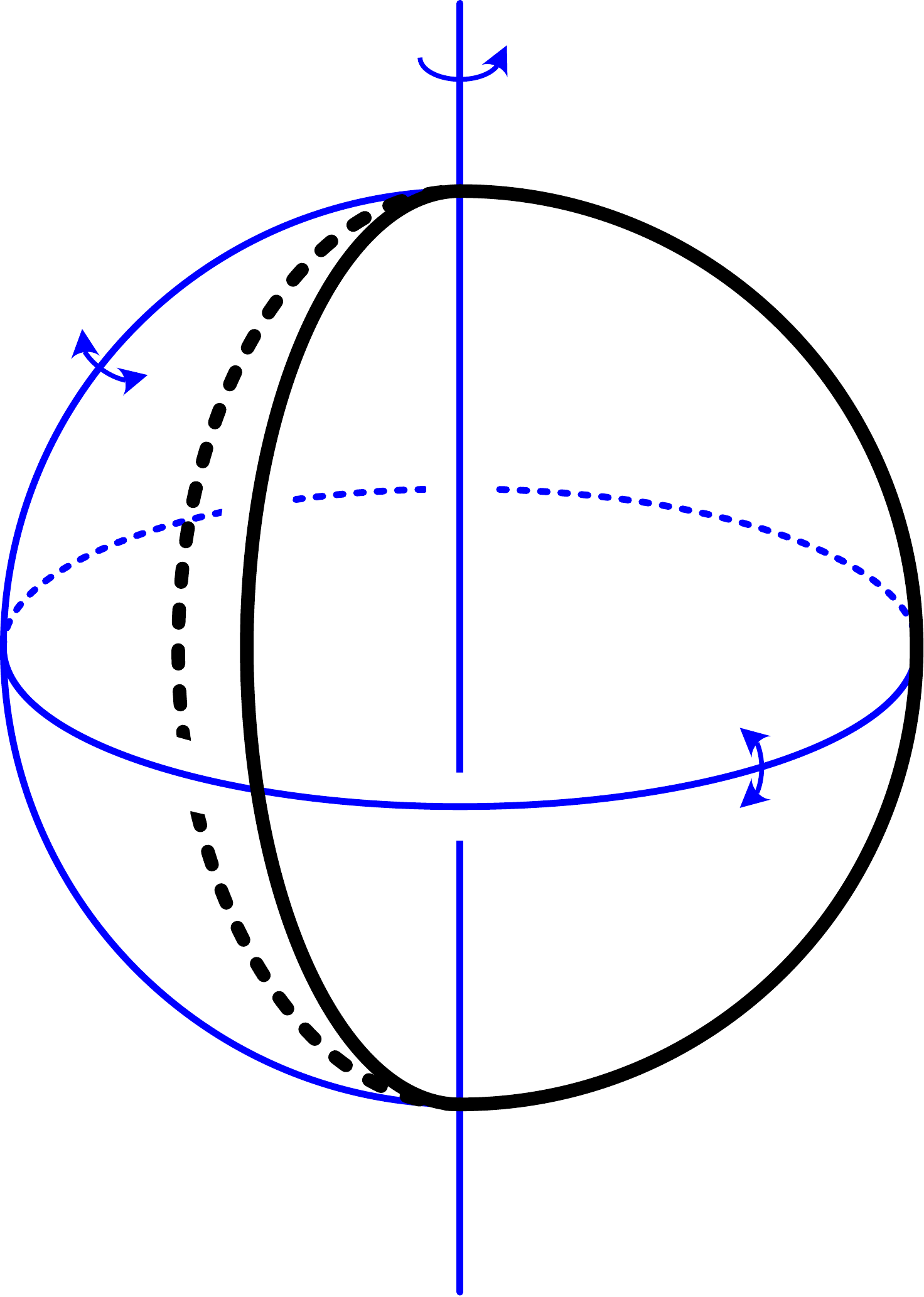}
\caption{$\Isom^+(\OO(0/1;1,2))\cong D_3\times\ZZ_2$.
The singular set of the trivial $\theta$-orbifold $\OO(0/1;1,2)$ 
is the standardly embedded $\theta$-graph in $S^2\subset S^3$,
consisting of three geodesics joining the north and south poles,
which are permuted by the $2\pi/3$-rotation around the earth axis. 
The orientation-preserving isometry group is visualised
as the product
of the dihedral group $D_3$ generated by 
the $\pi$-rotations about the three great circles containing the 
singular edges
and cyclic group $\ZZ_2$ generated by the $\pi$-rotation about the equator.
}
\label{symmetry-dihedral-orbifold-except}
\end{figure}

Before proving the proposition,
we give the following consequence of the proposition
(and the orbifold theorem),
which is used in the proof of the main theorem.

\begin{corollary}
\label{cor:dihedral-orbifold-isometry2}
Consider a spherical orbifold $\OO(q/p;d_1,d_2)$ with $(d_1,d_2)\ne (1,1)$,
and let $g$ be an orientation-preserving involution 
of the orbifold.
Then the following hold. 
\begin{enumerate}
\item
Except when $p=1$ and $\{d_1,d_2\}= \{1,2\}$,
(i.e. except when 
$\OO(q/p;d_1,d_2)$ is isomorphic to the trivial $\theta$-orbifold $\OO(0/1;1,2)$),
$g$ stabilises the edges $\tau_+$ and $\tau_-$ of the singular set
(when it is contained in the singular set).
\item
If $d_1, d_2\ge 2$, then $g$ does not stabilise any edge of the singular set different from $\tau_{\pm}$.
\end{enumerate}
\end{corollary}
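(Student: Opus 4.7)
The plan is to deduce Corollary \ref{cor:dihedral-orbifold-isometry2} directly from Proposition \ref{prop:dihedral-orbifold-isometry2}. By Proposition \ref{prop:dihedral-orbifold1} the spherical structure on $\OO:=\OO(q/p;d_1,d_2)$ is unique up to conjugation in $\Isom^+(S^3)$, so applying the orbifold theorem to the cyclic group $\langle g\rangle\cong\ZZ_2$ I may assume $g$ is realised as an orientation-preserving isometry of $\OO$. Under the hypothesis of either (1) or (2) the excluded case $p=1$, $\{d_1,d_2\}=\{1,2\}$ does not occur, so Proposition \ref{prop:dihedral-orbifold-isometry2}(1) gives $\Isom^+(\OO)\cong(\ZZ_2)^2$; through the two-rational-tangle construction of $\OO(q/p;d_1,d_2)$ reviewed in Section \ref{sec:2-bridge}, this $(\ZZ_2)^2$ is canonically identified with the group $\langle f,h\rangle$ generated by the vertical and horizontal involutions of $(S^3,K(r),\tau_+,\tau_-)$.

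For assertion (1), I would invoke the characterising properties of $f$, $h$, $fh$ recalled in Section \ref{sec:2-bridge}: $\fix(h)\supset\tau_+$, while each of $\fix(f)$ and $\fix(fh)$ meets $\tau_+$ transversely at its midpoint, so both $f$ and $fh$ flip $\tau_+$ about that point. In particular all three non-trivial elements of $\langle f,h\rangle$ stabilise $\tau_+$ set-wise. By the symmetric role played by $(\Ball,t(\infty))$ and $(\Ball,t(r))$ in the construction of $K(r)$, the analogous statement holds for $\tau_-$. Hence $g\in\langle f,h\rangle$ stabilises both $\tau_+$ and $\tau_-$, establishing (1).

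For (2), under the hypothesis $d_1,d_2\ge 2$ it suffices to show that the induced action of $\langle f,h\rangle$ on the set $E$ of four edges of $K(r)\subset\Sigma(\OO)$ (the arcs of $K(r)$ cut off by the vertices $\partial\tau_+\cup\partial\tau_-$) is free; since $|E|=|\langle f,h\rangle|=4$ the action is then simply transitive and no non-trivial involution can stabilise any edge of $E$. I would carry this out by a direct case analysis of the actions of $f$, $h$ and $fh$ on the four vertices, separating the knot case $|K(r)|=1$ from the two-component case $|K(r)|=2$. In the knot case the four vertices appear around $K(r)$ in an alternating cyclic order (as is forced by the connectedness of the $2$-bridge knot obtained from the $4$-strand braid), and each non-trivial element is readily checked to act on $E$ as a fixed-point-free double transposition. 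In the two-component case exactly one of $f$, $h$, $fh$ preserves each component of $K(r)$ (acting as its strong inversion) while the other two interchange the two components; again each non-trivial element acts without fixed edges.

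The main technical obstacle will be the bookkeeping in (2), most delicately in the two-component case where one must keep track of which element of $(\ZZ_2)^2$ preserves the component decomposition and which ones swap the two components, together with the induced permutations of $E$. All the required combinatorial input is, however, already encoded in the description of the natural $(\ZZ_2)^2$-symmetry of $(S^3,K(r))$ given in Section \ref{sec:2-bridge} and visualised in Figure \ref{symmetry-dihedral-orbifold}, so no essentially new idea is needed beyond what has already been set up.
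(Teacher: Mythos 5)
Your proposal is correct and takes essentially the same route as the paper: reduce $g$ to an isometry via the orbifold theorem, invoke Proposition \ref{prop:dihedral-orbifold-isometry2} to get $\Isom^+(\OO(q/p;d_1,d_2))\cong(\ZZ_2)^2$ outside the trivial $\theta$-orbifold case, identify this group with the vertical/horizontal/planar involutions of $(S^3,K(r)\cup\tau_+\cup\tau_-)$, and read off the (setwise) stabilisation of $\tau_\pm$ together with the free action on the four edges of $K(r)$ when $d_1,d_2\ge 2$. The only difference is one of presentation: where the paper simply cites Figure \ref{symmetry-dihedral-orbifold} for the action of this $(\ZZ_2)^2$, you carry out the vertex- and edge-permutation bookkeeping explicitly (knot case versus two-component case), which is precisely the content encoded in that figure.
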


\begin{proof}[Proof of Corollary \ref{cor:dihedral-orbifold-isometry2}]
By the orbifold theorem,
we may assume $g$ is an isometry of the spherical orbifold.
(This is proved by applying the orbifold theorem
to the finite group action on the universal cover $S^3$ of $\OO(q/p;d_1,d_2)$
generated by a lift of $g$ and the covering transformation group.)
On the other hand, the action of $\Isom^+(\OO(q/p;d_1,d_2))$ 
in the generic case
is as illustrated in Figure \ref{symmetry-dihedral-orbifold}.
(See also \cite[Figure 6-8]{ALSS}, and replace the weights $\infty$ with $2$,
then we obtain the desired visualisation, besides
the exceptional case.)
The exceptional case where the orbifold is the trivial $\theta$-orbifold
is illustrated in Figure \ref{symmetry-dihedral-orbifold-except}.
The assertion (1) is now obvious from Figure \ref{symmetry-dihedral-orbifold}.
The assertion (2) also follows from the figure 
by noting that $K(r)$ consists of four edges 
if $d_1, d_2\ge 2$
(otherwise, $K(r)$ consists of two edges).
\end{proof}

The proof of Proposition \ref{prop:dihedral-orbifold-isometry2}
presented below is parallel to that
of \cite[Theorem 4.1]{Sakuma2}.
Consider the dihedral spherical $3$-orbifold $\OO(q/p;d_1,d_2)$
with $(d_1,d_2)\ne (1,1)$.
By Proposition \ref{prop:dihedral-orbifold1},
the orbifold fundamental group $\pi_1(\OO(q/p;d_1,d_2))$
is identified with the subgroup 
\[
\Gamma
= \left\langle \LM(\frac{k_1}{pd_2},\frac{k_2}{pd_1}),\ J \right\rangle
=\langle \LM(\omega_1,\omega_2),\ J\rangle
< \Isom^+(S^3)
\]
where $\omega_1=\exp(2\pi i\frac{k_1}{pd_2})$,
$\omega_2=\exp(2\pi i\frac{k_2}{pd_1})$ 
for some integers $k_1$ and $k_2$ satisfying the condition (\ref{gcd-condition}).
Pick $(\eta_1,\eta_2)\in S^1\times S^1$ such that
$(\omega_1,\omega_2)=(\eta_1\bar\eta_2,\eta_1\eta_2)$.
Then $\Gamma=\langle \phi(\eta_1,\eta_2), \phi(j,j)\rangle$.
Set 
\[
\tilde \Gamma := \phi^{-1}(\Gamma)
=
\langle (\eta_1,\eta_2), (j,j) \rangle < S^3\times S^3.
\]
Then $\Isom^+\OO(q/p;d_1,d_2)\cong N(\tilde\Gamma)/\tilde\Gamma$,
where $N(\tilde\Gamma)$ is the normaliser of $\tilde\Gamma$ in $S^3\times S^3$.

For $\ell=1,2$, set $\tilde\Gamma_{\ell}=\mathrm{pr}_{\ell}(\tilde\Gamma)$, where
$\mathrm{pr}_{\ell}:S^3\times S^3\to S^3$ is the projection to the $\ell$-th factor.
Then $\tilde\Gamma_{\ell}=\langle \eta_{\ell}, j\rangle=\Di_{m_{\ell}}^*$
for some positive integer $m_{\ell}$.
Then the following lemma is obvious from the definition of $\Di_{m_{\ell}}^*$,
where $o(\cdot)$ denotes the order of a group element.

\begin{lemma}
\label{lem:dihedral-order}
(1) $o(\eta_{\ell}^2)=m_{\ell}$.

(2) If $m_{\ell}$ is even, then $o(\eta_{\ell})=2m_{\ell}$.
If $m_{\ell}$ is odd, then $o(\eta_{\ell})$ is either $m_{\ell}$ or $2m_{\ell}$.
\end{lemma}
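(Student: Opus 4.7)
The plan is to reduce both parts to an analysis of the ``rotation part'' $\tilde\Gamma_\ell \cap S^1$ of the binary dihedral group $\tilde\Gamma_\ell = D_{m_\ell}^*$. By the very definition recalled in the text, $D_{m_\ell}^* = \langle \omega\rangle \sqcup \langle \omega\rangle j$ with $\omega = e^{\pi i/m_\ell}$, so $\tilde\Gamma_\ell \cap S^1 = \langle \omega\rangle$ is cyclic of order exactly $2m_\ell$. Since $\eta_\ell \in S^1$, it lies in this cyclic subgroup, and we may write $\eta_\ell = \omega^{a}$ for some integer $a$.

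Next, I would identify $\tilde\Gamma_\ell \cap S^1$ intrinsically from the generating pair $\{\eta_\ell, j\}$. Using $j\eta j^{-1} = \bar\eta$ for $\eta \in S^1$, every word in $\eta_\ell, j$ can be reduced to the normal form $\eta_\ell^{a'}\! j^{b'}$, which lies in $S^1$ iff $b'$ is even; since $j^2 = -1$, this gives $\tilde\Gamma_\ell \cap S^1 = \langle \eta_\ell, -1\rangle$. Equating this with $\langle \omega\rangle$ splits into two cases: if $-1 \in \langle \eta_\ell\rangle$ (i.e. $o(\eta_\ell)$ is even), then $\langle \eta_\ell,-1\rangle = \langle\eta_\ell\rangle$ and we must have $o(\eta_\ell) = 2m_\ell$; if $-1 \notin \langle \eta_\ell\rangle$ (i.e. $o(\eta_\ell)$ is odd), then $\langle \eta_\ell, -1\rangle$ has order $2\,o(\eta_\ell)$, forcing $o(\eta_\ell) = m_\ell$ with $m_\ell$ odd. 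This immediately yields (2): when $m_\ell$ is even only the first case survives, giving $o(\eta_\ell) = 2m_\ell$; when $m_\ell$ is odd both are allowed, giving $o(\eta_\ell) \in \{m_\ell, 2m_\ell\}$.

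For (1), the case analysis above constrains $a$: in the first case $\eta_\ell$ must be a primitive $2m_\ell$-th root of unity, so $\gcd(a,2m_\ell)=1$; in the second case $\eta_\ell$ must be a primitive $m_\ell$-th root of unity with $m_\ell$ odd, so $\gcd(a,m_\ell)=1$ and $a$ is even. In either situation one concludes $\gcd(a,m_\ell)=1$, whence
\[
o(\eta_\ell^2) = o(\omega^{2a}) = \frac{2m_\ell}{\gcd(2a,2m_\ell)} = \frac{m_\ell}{\gcd(a,m_\ell)} = m_\ell,
\]
proving (1). There is no real obstacle here; the only point requiring minor care is the bookkeeping of the two parity cases and verifying that they exhaust the possibilities compatible with $\langle\eta_\ell, -1\rangle = \langle\omega\rangle$.
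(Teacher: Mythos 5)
Your proof is correct and takes essentially the paper's (implicit) approach: the paper states this lemma with no argument, calling it ``obvious from the definition of $\Di_{m_\ell}^*$'', and your computation --- reducing words in $\eta_\ell, j$ to identify $\tilde\Gamma_\ell\cap S^1=\langle\eta_\ell,-1\rangle=\langle\omega\rangle$ and then splitting on whether $-1\in\langle\eta_\ell\rangle$, i.e.\ on the parity of $o(\eta_\ell)$ --- is precisely the verification being omitted. All steps check (including the coprimality bookkeeping in part (1), which could alternatively be deduced in one line from part (2): squaring an element of order $2m_\ell$, or of odd order $m_\ell$, yields an element of order $m_\ell$ in either case).
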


Note that the orientation-reversing isometry $c:S^3\to S^3$, defined by $c(q)=\bar q$,
acts on $\Isom^+(S^3)$ by conjugation, as follows:
\[
c \phi(q_1,q_2) c^{-1}= \phi(q_2,q_1)
\]
Hence, we assume $m_1\leq m_2$ without loss of generality.

\begin{lemma}
\label{lem:d-d} 

(1) $2\le m_1\le m_2$.

(2) If $m_1=2$, then $m_2=2m_2'$ for some odd integer $m_2'$, and 
$\{d_1,d_2\}=\{1,2\}$.
Moreover, $m_1=m_2=2$ if and only if $p=1$.
\end{lemma}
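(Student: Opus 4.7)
My plan is to reduce both parts of Lemma \ref{lem:d-d} to arithmetic constraints on the exponents $k_1,k_2$ modulo $pd_1d_2$, obtained from the explicit description of $\eta_1^2$ and $\eta_2^2$. The starting observation is the identities
\[
\eta_1^2 \;=\; (\eta_1\bar\eta_2)(\eta_1\eta_2) \;=\; \omega_1\omega_2,
\qquad
\eta_2^2 \;=\; (\eta_1\eta_2)/(\eta_1\bar\eta_2) \;=\; \omega_2/\omega_1,
\]
which, together with $\omega_1=\exp(2\pi i k_1/(pd_2))$ and $\omega_2=\exp(2\pi i k_2/(pd_1))$, give
\[
m_1 \;=\; \frac{pd_1d_2}{\gcd(pd_1d_2,\; k_1d_1+k_2d_2)},
\qquad
m_2 \;=\; \frac{pd_1d_2}{\gcd(pd_1d_2,\; k_2d_2-k_1d_1)}
\]
by Lemma \ref{lem:dihedral-order}(1).

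For part (1), I would argue by contradiction: if $m_\ell=1$ then $\eta_\ell^2=1$, so $\omega_1\omega_2=1$ or $\omega_1=\omega_2$. In either case the exponent formula above yields $k_1d_1+k_2d_2\equiv 0$ or $k_1d_1-k_2d_2\equiv 0\pmod{pd_1d_2}$. Reducing modulo $d_2$ and using $\gcd(d_1,d_2)=1$ gives $d_2\mid k_1$, but condition (\ref{gcd-condition}) says $\gcd(pd_2,k_1)=1$, forcing $d_2=1$. Symmetrically $d_1=1$, contradicting the standing hypothesis $(d_1,d_2)\neq(1,1)$. Combined with the assumption $m_1\le m_2$, this proves $2\le m_1\le m_2$.

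For part (2), assume $m_1=2$. Then $\eta_1^2=-1$, which translates to
\[
2(k_1d_1+k_2d_2)\;\equiv\; pd_1d_2 \pmod{2pd_1d_2}.
\]
Reducing modulo $d_2$ gives $2k_1d_1\equiv 0\pmod{d_2}$; since $\gcd(d_1,d_2)=\gcd(k_1,d_2)=1$, this forces $d_2\mid 2$. Symmetrically $d_1\mid 2$. Together with $\gcd(d_1,d_2)=1$ and the hypothesis that $(d_1,d_2)\neq(1,1)$, this yields $\{d_1,d_2\}=\{1,2\}$, as claimed. Substituting $d_1d_2=2$ into the congruence, the left side is $2k_1d_1+2k_2d_2$; since exactly one of $d_1,d_2$ equals $2$, parity considerations using $\gcd(pd_2,k_1)=\gcd(pd_1,k_2)=1$ show that $p$ must be odd. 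Then a direct computation of $m_2=o(\omega_2/\omega_1)$, using $k_2\equiv qk_1\pmod p$ to compute $k_2d_2-k_1d_1$ modulo $p$, shows $\gcd(p,k_2d_2-k_1d_1)=1$, while $k_2d_2-k_1d_1$ is odd; hence $\gcd(2p,k_2d_2-k_1d_1)=1$ and $m_2=2p$. Writing $m_2'=p$ gives $m_2=2m_2'$ with $m_2'$ odd, and $m_1=m_2=2$ holds iff $p=1$.

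The routine work is the modular bookkeeping in Step 2, but the only genuine obstacle is making sure the parity/coprimality constraints from (\ref{gcd-condition}) interact correctly with the divisibility relations to pin down $p$ odd and $m_2=2p$ exactly; the rest is direct substitution into the orders of the torus-rotation generators.
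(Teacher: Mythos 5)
Your congruence-based route is genuinely different from the paper's and is mostly correct. The paper never writes down the exponent congruences: from $m_1=2$ it gets $\eta_1=\pm i$, hence $(\omega_1,\omega_2)=\pm(i\bar\eta_2,i\eta_2)$, and then case-splits on $m_2$ modulo $4$, computing $o(\omega_1)$ and $o(\omega_2)$ in each case and comparing with $\{pd_2,pd_1\}$; your approach instead encodes everything in $m_1=pd_1d_2/\gcd(pd_1d_2,\,k_1d_1+k_2d_2)$ and $m_2=pd_1d_2/\gcd(pd_1d_2,\,k_2d_2-k_1d_1)$, which is arguably more transparent. Your part (1), your derivation of $\{d_1,d_2\}=\{1,2\}$, and the parity argument showing $p$ odd are all correct (and your part (1) even avoids the paper's comparison $o(\omega_1)=o(\omega_2)$ in favour of a direct reduction modulo $d_1$ and $d_2$).

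However, the last step has a genuine gap exactly where you flagged it: $\gcd(p,\,k_2d_2-k_1d_1)=1$ does \emph{not} follow from $k_2\equiv qk_1 \pmod p$ alone. That congruence only gives $k_2d_2-k_1d_1\equiv k_1(qd_2-d_1) \pmod p$, and $qd_2-d_1$ can share a factor with $p$: for instance $p=3$, $q=2$, $(d_1,d_2)=(1,2)$, $(k_1,k_2)=(1,2)$ satisfies every condition in (\ref{gcd-condition}), yet $k_2d_2-k_1d_1=3$. What rescues the step is the hypothesis $m_1=2$ itself, which you derived but did not invoke here: reducing your congruence $2(k_1d_1+k_2d_2)\equiv pd_1d_2 \pmod{2pd_1d_2}$ modulo $p$ gives $k_2d_2\equiv -k_1d_1 \pmod p$, whence $k_2d_2-k_1d_1\equiv -2k_1d_1 \pmod p$, and this is coprime to $p$ because $p$ is odd, $\gcd(k_1,p)=1$ by (\ref{gcd-condition}), and $d_1\in\{1,2\}$. (In the numerical example above one checks $k_1d_1+k_2d_2=5$, so $m_1=6\neq 2$ and the hypothesis fails, which is why the lemma survives.) With this one-line repair — after which the congruence $k_2\equiv qk_1\pmod p$ is not needed at all — your computation yields $\gcd(2p,\,k_2d_2-k_1d_1)=1$, hence $m_2=2p$, so $m_2=2m_2'$ with $m_2'=p$ odd, and $m_1=m_2=2$ if and only if $p=1$, completing part (2).
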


\begin{proof}
(1) Suppose on the contrary that $m_1=1$. 
Then $\eta_1=\pm 1$, and so $(\omega_1,\omega_2)=\pm(\bar\eta_2,\eta_2)$. 
This implies $pd_2=o(\omega_1)=o(\omega_2)=pd_1$ and therefore $d_1=d_2$.
Since $\gcd(d_1,d_2)=1$, we have $d_1=d_2=1$, a contradiction.

(2) Suppose $m_1=2$. 
Then $\eta_1=\pm i$, and so
$(\omega_1,\omega_2)=\pm(i\bar\eta_2,i\eta_2)$.
By using Lemma \ref{lem:dihedral-order},
we can verify the following,
from which the assertion (2) follows.
\begin{enumerate}[(i)]
\item
If $m_2$ is odd, then $(o(\omega_1),o(\omega_2))=(4m_2, 4m_2)$ and so $d_1=d_2=1$ as in (1),
a contradiction.
\item
If $m_2=2m_2'$ for some odd integer $m_2'$, then
$\omega_1^{m_2'}=-\omega_2^{m_2'}=\pm 1$ 
and so $\{pd_2,pd_1\}=\{o(\omega_1),o(\omega_2)\}=\{m_2',2m_2'\}$.
Hence we have $p=m_2'$ and $\{d_1,d_2\}=\{1,2\}$.
\item
If $m_2=4m_2'$ for some integer $m_2'$,
then 
$\omega_1^{2m_2'}=-\omega_2^{2m_2'}=\pm i$
and so 
$o(\omega_1)=o(\omega_2)=8m_2'$.
Hence $d_1=d_2=1$, a contradiction.
\end{enumerate}
\end{proof}

\begin{lemma}
\label{lem:nomalizer2}
Except for the special case
where $p=1$ and $\{d_1,d_2\}=\{1,2\}$,
namely except when $\OO(q/p;d_1,d_2)$ is the trivial $\theta$-orbifold,
we have 
\[
N(\tilde\Gamma)<\Di_{2m_1}^*\times \Di_{2m_2}^*<\Di_S\times\Di_S.
\]
\end{lemma}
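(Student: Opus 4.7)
My plan is to exploit the projection structure of $\tilde\Gamma < S^3 \times S^3$. Let $\mathrm{pr}_\ell : S^3 \times S^3 \to S^3$ be projection to the $\ell$-th factor, so $\mathrm{pr}_\ell(\tilde\Gamma)=\tilde\Gamma_\ell=\Di_{m_\ell}^*$. The first observation is that any $(q_1,q_2)\in N(\tilde\Gamma)$ must satisfy $q_\ell\in N(\tilde\Gamma_\ell)=N(\Di_{m_\ell}^*)$: indeed, for any $g_\ell\in\tilde\Gamma_\ell$ there is $g_{3-\ell}$ with $(g_1,g_2)\in\tilde\Gamma$, whence $q_\ell g_\ell q_\ell^{-1}=\mathrm{pr}_\ell\bigl((q_1,q_2)(g_1,g_2)(q_1,q_2)^{-1}\bigr)\in \tilde\Gamma_\ell$, and by finiteness $q_\ell$ normalises $\tilde\Gamma_\ell$. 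Combining this with Lemma \ref{lem.2-dim-normaliser}(2) handles the generic situation: when both $m_1,m_2\ge 3$, we have $N(\Di_{m_\ell}^*)=\Di_{2m_\ell}^*$ for both $\ell$, and the conclusion $N(\tilde\Gamma)<\Di_{2m_1}^*\times \Di_{2m_2}^*$ follows immediately.

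The remaining case is $m_1=2$ (using Lemma \ref{lem:d-d}, which forces $m_1\le m_2$ and enters $N(\Di_2^*)=O^*$ rather than $\Di_4^*$). By Lemma \ref{lem:d-d}(2), the subcase $m_1=m_2=2$ occurs exactly when $p=1$ and $\{d_1,d_2\}=\{1,2\}$, which is the excluded exceptional case (the trivial $\theta$-orbifold). Thus we may assume $m_1=2$ and $m_2\ge 3$, so $q_2\in \Di_{2m_2}^*<\Di_S$ while a priori $q_1\in O^*$; the main obstacle is to exclude the possibility $q_1\in O^*\setminus \Di_4^*$.

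To do this, I use the compatibility constraint between the two factors. Since $o(\eta_1^2)=m_1=2$ and $\eta_1\in S^1$, we have $\eta_1=\pm i$, and if $q_1\in O^*\setminus \Di_4^*$ then the induced rotation $\psi(q_1)\in O\setminus D_4$ does not preserve the $\pm i$ axis, so $q_1\eta_1 q_1^{-1}\in\{\pm j,\pm k\}$. Now enumerate $\tilde\Gamma$ as the dihedral group $\langle a\rangle\cup\langle a\rangle b$ with $a=(\eta_1,\eta_2)$ and $b=(j,j)$: every element has the form $(\eta_1^n,\eta_2^n)$ or $(\eta_1^n j,\eta_2^n j)$. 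Since $\eta_1=\pm i$, a first coordinate in $\{\pm j,\pm k\}$ can only occur in the second type, so $(q_1,q_2)(\eta_1,\eta_2)(q_1,q_2)^{-1}=(\eta_1^n j,\eta_2^n j)$ for some $n$; in particular $q_2\eta_2 q_2^{-1}=\eta_2^n j\in S^1 j$.

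On the other hand, $q_2\in \Di_S=S^1\sqcup S^1 j$ and $\eta_2\in S^1$, so conjugation by $q_2$ sends $\eta_2$ either to $\eta_2$ or to $\bar\eta_2$, giving $q_2\eta_2 q_2^{-1}\in S^1$. This contradicts $q_2\eta_2 q_2^{-1}\in S^1 j$, which forces $q_1\in \Di_4^*=\Di_{2m_1}^*$. Combined with $q_2\in \Di_{2m_2}^*$ this completes the proof that $N(\tilde\Gamma)<\Di_{2m_1}^*\times\Di_{2m_2}^*<\Di_S\times\Di_S$ outside the exceptional case. The delicate step is the $m_1=2$, $m_2\ge 3$ analysis; the key leverage is the asymmetry $S^1$ vs $S^1 j$ detected by conjugation, which lets the $\Di_S$-side of the second factor veto the $O^*\setminus\Di_4^*$ possibilities on the first factor.
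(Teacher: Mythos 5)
Your proof is correct and follows essentially the same route as the paper: reduce via $\mathrm{pr}_\ell$ and Lemma \ref{lem.2-dim-normaliser} to the case $m_1=2$, $m_2\ge 3$ with $q_1\in O^*$ and $q_2\in \Di_{2m_2}^*<\Di_S$, then use the $S^1$ versus $S^1j$ dichotomy in $\Di_S$ together with the coset decomposition of $\tilde\Gamma$ to force $q_1$ to preserve $\{\pm i\}$, hence $q_1\in\Di_4^*=\Di_{2m_1}^*$. The only difference is cosmetic: the paper argues directly that conjugation preserves $\tilde\Gamma^{(1)}=\tilde\Gamma\cap(S^1\times S^1)$ so that $q_1$ normalises $\langle i\rangle$, whereas you run the same mechanism contrapositively on the single generator $(\eta_1,\eta_2)$.
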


\begin{proof}
If $m_1\ge 3$, then Lemma \ref{lem.2-dim-normaliser} implies
$N(\tilde\Gamma_{\ell})=\Di_{2m_{\ell}}^*$ for each $\ell=1,2$
(because $m_2\ge m_1$ by assumption),
and hence we have
$N(\tilde\Gamma)<
N(\tilde\Gamma_{1})\times N(\tilde\Gamma_{2})
<\Di_{2m_1}^*\times \Di_{2m_2}^*$.

Since $m_2\ge m_1\ge 2$ by Lemma \ref{lem:d-d}(1),
we have only to treat the case where $m_1=2$.
Since we exclude the case where $p=1$ and $\{d_1,d_2\}=\{1,2\}$,
Lemma \ref{lem:d-d}(2) implies $m_2\ge 3$, and so 
$N(\tilde\Gamma_{2})=\Di_{2m_{2}}^*$. 
On the other hand, since $m_1=2$,
we see by Lemma \ref{lem.2-dim-normaliser} that $N(\tilde\Gamma_{1})=O^*$.
Hence $N(\tilde\Gamma)<O^*\times \Di_{2m_2}^*$.

Now observe that the  decomposition $\Di_S=S^1\sqcup S^1j$ induces the decomposition of
$\tilde\Gamma<\Di_S\times \Di_S$ into the following two non-empty subsets.
\[
\tilde\Gamma^{(1)}:=\tilde\Gamma\cap(S^1\times S^1)
\quad\mbox{and}\quad
\tilde\Gamma^{(j)}:=\tilde\Gamma\cap(S^1j\times S^1j).
\]
Note that $\mathrm{pr}_1(\tilde\Gamma^{(1)})=\langle i \rangle =\{\pm 1, \pm i\}$
and $\mathrm{pr}_1(\tilde\Gamma^{(j)})=\langle i \rangle j =\{\pm j, \pm k\}$.
Pick an arbitrary element $(q_1,q_2)\in N(\tilde\Gamma)$.
Then $q_2\in \Di_{2m_2}^*<\Di_S$, and so
the inner-automorphism of $S^3$ determined by $q_2$ preserves the subgroup 
$S^1<\Di_S$.
Thus the inner-automorphism of $S^3\times S^3$ determined by $(q_1,q_2)$ 
preserves the subset $\tilde\Gamma^{(1)}$ of 
$\tilde\Gamma=\tilde\Gamma^{(1)}\sqcup \tilde\Gamma^{(j)}$.
Hence
the inner-automorphism of $S^3$ determined by $q_1$ preserves the subgroup 
$\mathrm{pr}_1(\tilde\Gamma^{(1)})=\langle i \rangle$,
and so it preserves the subset $\{\pm i\}$,
i.e., $q_1 i \bar q_1=\pm i$.
By the description of $O^*$ in Lemma \ref{lem.2-dim-normaliser},
this implies that $q_1\in \Di_2^*<\Di_{2m_1}^*$. 
Hence $(q_1,q_2)\in \Di_{2m_1}^*\times \Di_{2m_2}^*$,
as desired.
\end{proof}

\begin{lemma}
\label{lem:nomalizer3}
The normaliser $N(\Gamma)$ of $\Gamma$ in $\Isom^+ S^3$ is contained in 
$\langle \LM(S^1\times S^1), J\rangle$.
\end{lemma}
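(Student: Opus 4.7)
The plan is to combine Lemma~\ref{lem:nomalizer2}, which locates $N(\tilde\Gamma)$ inside $\Di_S\times\Di_S$, with a direct computation of how $\phi(\Di_S\times\Di_S)$ acts on the two Hopf circles $C_1:=S^3\cap(\CC\times\{0\})$ and $C_2:=S^3\cap(\{0\}\times\CC)$, and then to exclude the circle-swapping isometries via the orbifold structure on $\OO(q/p;d_1,d_2)$. Since the lemma sits inside the proof of Proposition~\ref{prop:dihedral-orbifold-isometry2}, I assume the standing hypothesis $(d_1,d_2)\ne(1,1)$ and work in the generic case where Lemma~\ref{lem:nomalizer2} applies; the trivial $\theta$-orbifold case ($p=1$, $\{d_1,d_2\}=\{1,2\}$) is exceptional and requires a separate direct check.

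The first step is to lift and apply Lemma~\ref{lem:nomalizer2}. Since $\phi\colon S^3\times S^3\to\Isom^+(S^3)$ is surjective and $\tilde\Gamma=\phi^{-1}(\Gamma)$, one has $N(\Gamma)=\phi(N(\tilde\Gamma))$. By Lemma~\ref{lem:nomalizer2}, $N(\tilde\Gamma)\subset\Di_S\times\Di_S$, and hence by (\ref{LM-D}),
\[
N(\Gamma)\subset\phi(\Di_S\times\Di_S)=\langle\LM(S^1\times S^1),\,J,\,J_1\rangle.
\]
It remains to exclude the $J_1$-coset from $N(\Gamma)$.

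The second step is a geometric dichotomy on the Hopf circles. Using the decomposition $\Di_S=S^1\sqcup S^1j$ together with the formula $\phi(q_1,q_2)(z_1+z_2j)=q_1(z_1+z_2j)q_2^{-1}$, a direct computation shows that $\phi(q_1,q_2)$ preserves each of $C_1,C_2$ when $q_1,q_2$ lie in the same component of $\Di_S$, and swaps $C_1$ with $C_2$ otherwise. Consequently, $\langle\LM(S^1\times S^1),J\rangle=\phi((S^1\times S^1)\cup(S^1j\times S^1j))$ is precisely the index-$2$ subgroup of $\phi(\Di_S\times\Di_S)$ which preserves each of $C_1$ and $C_2$ individually.

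The final step is to rule out circle-swapping by descending to the orbifold. From the construction in the proof of Proposition~\ref{prop:dihedral-orbifold1}, the circles $C_1,C_2$ project to the edges $\tau_+,\tau_-$ of the singular graph of $\OO(q/p;d_1,d_2)$, with indices $d_1,d_2$ respectively (with the convention that index $1$ means the edge is not in the singular set). Any $g\in N(\Gamma)$ descends to a self-isometry of $\OO(q/p;d_1,d_2)$, which must preserve the singular set together with its indices. Since $\gcd(d_1,d_2)=1$ and $(d_1,d_2)\ne(1,1)$, we have $d_1\ne d_2$. If $g$ swapped $C_1$ and $C_2$, the induced orbifold isometry would carry an edge of index $d_2$ to one of index $d_1\ne d_2$; this is incompatible with preservation of the singular set with indices, regardless of whether both $d_1,d_2\ge2$ or exactly one of them equals $1$ (in the latter case, a singular edge would be sent to a non-singular curve). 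Hence $g$ preserves each of $C_1,C_2$, so $g\in\langle\LM(S^1\times S^1),J\rangle$.

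The main obstacle lies in the careful treatment of the trivial $\theta$-orbifold case, where Lemma~\ref{lem:nomalizer2} fails to apply; here the enlarged isometry group $D_3\times\ZZ_2$ means the containment must be verified by exhibiting explicit generators inside $\langle\LM(S^1\times S^1),J\rangle$. The second-step Hopf-circle calculation is routine but essential for identifying the correct index-$2$ subgroup.
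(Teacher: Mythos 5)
Your proof is correct in the generic case, but it closes the argument by a genuinely different route from the paper's. After the common reduction $N(\Gamma)=\phi(N(\tilde\Gamma))<\phi(\Di_S\times\Di_S)=\langle\LM(S^1\times S^1),J,J_1\rangle$ via Lemma \ref{lem:nomalizer2} and (\ref{LM-D}), the paper disposes of the $J_1$-part purely algebraically: if $J_1$ normalised $\Gamma$, then by (\ref{LM-D2}) conjugation by $J_1$ would carry $\LM(\frac{k_1}{pd_2},\frac{k_2}{pd_1})$ into $\LM(\frac{k_2}{pd_1},\frac{k_1}{pd_2})$, forcing $d_1=d_2$ and hence $d_1=d_2=1$ by coprimality, contradicting $(d_1,d_2)\ne(1,1)$. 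You instead identify $\langle\LM(S^1\times S^1),J\rangle$ as the index-two subgroup of $\phi(\Di_S\times\Di_S)$ preserving each Hopf circle, and kill the circle-swapping coset by descending to the quotient: a swap would induce an isometry of $\OO(q/p;d_1,d_2)$ exchanging $\tau_+$ and $\tau_-$, which is impossible since $d_1\ne d_2$ (your case split covering $d_i=1$ is needed and correctly handled). Both arguments turn on the same arithmetic fact $d_1\ne d_2$; yours buys a geometric explanation of why the swap fails, at the cost of importing the identification of the singular set from the proof of Proposition \ref{prop:dihedral-orbifold1}, whereas the paper's computation stays self-contained inside $\Isom^+(S^3)$. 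A small point in your favour: the paper's wording \lq\lq we have only to show $J_1\notin N(\Gamma)$'' glosses over elements of the form $\LM(\zeta_1,\zeta_2)J_1$, while your dichotomy (any element of $\phi(\Di_S\times\Di_S)$ either preserves or swaps the circles) handles the whole coset at once; the paper's computation does too, since $\LM(S^1\times S^1)$ is abelian, but this is left implicit.

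One slip in your final remark: for the trivial $\theta$-orbifold the containment is not merely harder to verify --- it is false. There $\tilde\Gamma=\langle(i,i),(j,j)\rangle$ and, by Lemma \ref{lem.2-dim-normaliser}(2), $N(\tilde\Gamma)=\{(q,q)\svert q\in O^*\}\rtimes\langle J_1\rangle$; an element such as $\phi(u,u)$ with $u=\frac{1}{2}(1+i+j+k)\in O^*$ normalises $\Gamma$ and induces the order-three symmetry permuting the three edges of the $\theta$-curve, so it does not preserve $C_1\cup C_2$ and lies outside $\langle\LM(S^1\times S^1),J\rangle$ (consistently with $\Isom^+(\OO(0/1;1,2))\cong D_3\times\ZZ_2$). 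The lemma thus tacitly carries the same exclusion as Lemmas \ref{lem:nomalizer2} and \ref{lem:nomalizer4}, and the paper treats the trivial $\theta$-orbifold separately at the end of the proof of Proposition \ref{prop:dihedral-orbifold-isometry2} rather than through this lemma. Your restriction to the generic case was therefore the right move; only the suggestion that explicit generators could verify the containment in the exceptional case should be dropped.
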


\begin{proof}
By the formula (\ref{LM-D}) and Lemma \ref{lem:nomalizer2}, we have
\[
N(\Gamma)=\phi(N(\tilde\Gamma))< \phi(\Di_S\times \Di_S)=\langle \LM(S^1\times S^1), J, J_1\rangle.
\]
Since $J\in\Gamma$, we have only to show that $J_1\notin N(\Gamma)$.
To this end, recall that
$\Gamma=\langle \LM(\frac{k_1}{pd_2},\frac{k_2}{pd_1}), J \rangle$.
Now suppose on the contrary that $J_1\in N(\Gamma)$.
Then the conjugation by $J_1$ preserves the subgroup
$\langle \LM(\frac{k_1}{pd_2},\frac{k_2}{pd_1})\rangle$ and 
we have 
$J_1 \LM(\frac{k_1}{pd_2},\frac{k_2}{pd_1})J_1^{-1}=
\LM(\frac{k_2}{pd_1},\frac{k_1}{pd_2})$
by (\ref{LM-D2}).
Thus we have $d_1=d_2$ and so $d_1=d_2=1$,
a contradiction.
Hence $J_1\notin N(\Gamma)$ as desired. 
\end{proof}

\begin{lemma}
\label{lem:nomalizer4}
Except when $\OO(q/p;d_1,d_2)$ is the trivial $\theta$-orbifold,
we have the following. 
\begin{align*}
N(\Gamma)
&=
\left\langle\LM(\frac{k_1}{2pd_2},\frac{k_2}{2pd_1}), \ \LM(\frac{1}{2},0), \
\LM(0, \frac{1}{2}),\ J \right\rangle\\
&\cong
\left\langle\LM(\frac{k_1}{2pd_2},\frac{k_2}{2pd_1}), 
\ \LM(\frac{1}{2},0), \
\LM(0, \frac{1}{2})\right\rangle
\rtimes \langle J \rangle
\end{align*}
\end{lemma}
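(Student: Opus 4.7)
The plan is to combine Lemma~\ref{lem:nomalizer3}, which confines $N(\Gamma)$ to $\langle \LM(S^1\times S^1),J\rangle$, with a direct computation of the normalising condition on elements of $\LM(S^1\times S^1)$. Since $J\in\Gamma\subset N(\Gamma)$, every element of $N(\Gamma)$ has the form $g_0$ or $g_0 J$ with $g_0=\LM(\omega_1,\omega_2)$, and it suffices to characterise those $g_0$ which normalise $\Gamma$. Because $\LM(S^1\times S^1)$ is abelian, $g_0$ centralises the generator $f=\LM(\tfrac{k_1}{pd_2},\tfrac{k_2}{pd_1})$ of $\Gamma$, so the only remaining requirement is $g_0 J g_0^{-1}\in\Gamma$. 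Using (\ref{LM-D2}), one computes $g_0 J g_0^{-1}=\LM(\omega_1^2,\omega_2^2)\,J$, and since $\Gamma=\langle f\rangle\sqcup \langle f\rangle J$, this is equivalent to $\LM(\omega_1^2,\omega_2^2)\in\langle f\rangle$.

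Setting $\tilde f:=\LM(\tfrac{k_1}{2pd_2},\tfrac{k_2}{2pd_1})$ so that $\tilde f^{\,2}=f$, the condition $\LM(\omega_1^2,\omega_2^2)=f^{n}$ for some $n\in\ZZ$ forces $\omega_1=\epsilon_1\exp(\pi i\, nk_1/(pd_2))$ and $\omega_2=\epsilon_2\exp(\pi i\, nk_2/(pd_1))$ for some $\epsilon_1,\epsilon_2\in\{\pm 1\}$, hence $g_0=\LM(\epsilon_1,\epsilon_2)\,\tilde f^{\,n}$. Noting that $\LM(\pm1,\pm1)\in\langle \LM(\tfrac12,0),\LM(0,\tfrac12)\rangle$, the subgroup of normalising elements lying in $\LM(S^1\times S^1)$ is contained in $H:=\langle \tilde f,\LM(\tfrac12,0),\LM(0,\tfrac12)\rangle$. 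The reverse inclusion is a short check: $\tilde f\,J\,\tilde f^{-1}=fJ\in\Gamma$, while $\LM(\tfrac12,0)$ and $\LM(0,\tfrac12)$ have order $2$ and satisfy $J\LM(\pm1,\pm1)J^{-1}=\LM(\pm1,\pm1)$, so they commute with $J$ and thereby normalise $\Gamma$.

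For the semidirect product decomposition, observe that $J^{2}=\mathrm{id}$, that $H\subset \LM(S^1\times S^1)$ is abelian, that $H\cap\langle J\rangle=\{1\}$ since $J\notin \LM(S^1\times S^1)$, and that $H$ is stable under conjugation by $J$ because $J\LM(\omega_1,\omega_2)J^{-1}=\LM(\bar\omega_1,\bar\omega_2)=\LM(\omega_1,\omega_2)^{-1}$ inverts every element of $\LM(S^1\times S^1)$. These facts immediately yield $N(\Gamma)=H\rtimes\langle J\rangle$ with $J$ acting on $H$ by inversion. The exclusion of the trivial $\theta$-orbifold enters only through Lemma~\ref{lem:nomalizer2}; once that containment is available the entire argument is formal. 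The one point requiring care is the square-root book-keeping in the central computation: one must verify that every sign pattern $(\epsilon_1,\epsilon_2)\in\{\pm1\}^2$ is indeed realised inside $\langle \LM(\tfrac12,0),\LM(0,\tfrac12)\rangle$, and that the resulting generators really do normalise $\Gamma$ individually, both of which are immediate from the definitions.
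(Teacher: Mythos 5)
Your proof is correct and follows essentially the same route as the paper's: both confine $N(\Gamma)$ to $\langle \LM(S^1\times S^1), J\rangle$ via Lemma~\ref{lem:nomalizer3}, compute $g_0Jg_0^{-1}=\LM(\omega_1^2,\omega_2^2)J$ for $g_0=\LM(\omega_1,\omega_2)$, and solve $\LM(\omega_1^2,\omega_2^2)\in\langle f\rangle$ by extracting square roots, arriving at the same subgroup $\bigl\langle \LM(\frac{k_1}{2pd_2},\frac{k_2}{2pd_1}),\ \LM(\frac12,0),\ \LM(0,\frac12)\bigr\rangle$ together with the converse check. Your explicit verification of the semidirect product structure ($J$ acting by inversion, $H\cap\langle J\rangle=\{1\}$ since $J\notin\LM(S^1\times S^1)$) spells out a step the paper leaves implicit, but it does not change the argument.
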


\begin{proof}
Recall that 
$\Gamma=\langle \LM(\omega_1,\omega_2), J\rangle$
where $\omega_1=\exp(2\pi i\frac{k_1}{pd_2})$ and
$\omega_2=\exp(2\pi i\frac{k_2}{pd_1})$.
Set $\sqrt{\omega_1}=\exp(\pi i\frac{k_1}{pd_2})$ and
$\sqrt{\omega_2}=\exp(\pi i\frac{k_2}{pd_1})$.
Suppose an element $\LM(\zeta_1,\zeta_2)\in \LM(S^1\times S^1)$
belongs to $N(\Gamma)$.
Then $\LM(\zeta_1^2,\zeta_2^2)J=\LM(\zeta_1,\zeta_2) J \LM(\zeta_1,\zeta_2)^{-1}\in \Gamma$,
and hence $(\zeta_1,\zeta_2)$ belongs to the subgroup
$\langle (\sqrt{\omega_1},\sqrt{\omega_2}),\ (-1,1),\ (1,-1)\rangle$.

Conversely, the image by $\LM$ of any element in the above subgroup belongs to $N(\Gamma)$.
Hence
\begin{align*}
N(\Gamma)\cap \LM(S^1\times S^1)
&=
\langle \LM(\sqrt{\omega_1},\sqrt{\omega_2}),\ \LM(-1,1),\ \LM(1,-1)\rangle\\
&=
\left\langle\LM(\frac{k_1}{2pd_2},\frac{k_2}{2pd_1}), 
\ \LM(\frac{1}{2},0), \
\LM(0, \frac{1}{2})\right\rangle.
\end{align*}
(Recall the abuse of notation given by (\ref{LM-F}).)
This together with Lemma \ref{lem:nomalizer3} implies
the desired result.
\end{proof}

\begin{proof}[Proof of Proposition \ref{prop:dihedral-orbifold-isometry2}]
Consider the spherical dihedral orbifold 
$\OO(q/p;d_1,d_2)$ with $(d_1,d_2)\ne (1,1)$.
We first treat the generic case where
$\OO(q/p;d_1,d_2)$ is not the trivial $\theta$-orbifold $\OO(0/1;1,2)$.
Then, by using Lemma \ref{lem:nomalizer4} and the fact that $J\in \Gamma$,
$\Isom^+\OO(q/p;d_1,d_2)\cong N(\Gamma)/\Gamma$ is isomorphic to
the quotient of the group
$N:=\langle\LM(\frac{k_1}{2pd_2},\frac{k_2}{2pd_1}), \LM(\frac{1}{2},0), 
\LM(0, \frac{1}{2}) \rangle$
by its subgroup $G:=\langle\LM(\frac{k_1}{pd_2},\frac{k_2}{pd_1})\rangle$.
Set $a=\LM(\frac{k_1}{2pd_2},\frac{k_2}{2pd_1})$, 
$b_1=\LM(\frac{1}{2},0)$ and $b_2=\LM(0, \frac{1}{2})$.
It should be noted that 
$\langle b_1, b_2\rangle \cong (\ZZ_2)^2$ and that
the subset $\{b_1, b_2, b_1b_2\}$
is equal to the set of all order $2$ elements of $\LM(S^1\times S^1)$.

Note that the order of $L(\frac{k_1}{2pd_2})$ is equal to
$2pd_2$ or $pd_2$ according to whether $k_1$ is odd or even.
Similarly, the order of $L(\frac{k_2}{2pd_1})$ is equal to
$2pd_1$ or $pd_1$ according to whether $k_2$ is odd or even.
Thus the order of $a=\LM(\frac{k_1}{2pd_2},\frac{k_2}{2pd_1})$ is 
$2pd_1d_2$ or $pd_1d_2$,
where the latter happens if and only if both $k_1$ and $k_2$ are even.

Case 1. $o(a)=2pd_1d_2$.
Then the element $a^{pd_1d_2}\in L(S^1\times S^1)$ has order $2$.
Hence it is equal to one of the elements of $\{b_1, b_2, b_1b_2\}$.
Thus $\langle a\rangle \cap \langle b_1, b_2\rangle \cong \ZZ_2$.
This implies that 
$N \cong \langle a \svert a^{2pd_1d_2} \rangle \oplus 
\langle b_{\ell} \svert b_{\ell}^2 \rangle$
for some ${\ell}\in\{1,2\}$.
Since $G$ corresponds to the subgroup $\langle a^2 \rangle$,
we have 
\[
\Isom^+\OO(q/p;d_1,d_2)\cong N/\langle a^2\rangle \cong
\langle a \svert a^{2} \rangle \oplus 
\langle b_{\ell} \svert b_{\ell}^2 \rangle
\cong (\ZZ_2)^2.
\]

Case 2. $o(a)=pd_1d_2$.
Since $o(a^2)=o(\LM(\frac{k_1}{pd_2},\frac{k_2}{pd_1}))=pd_1d_2$,
we have $o(a)=o(a^2)$, and so  $o(a)=pd_1d_2$ is odd.
Thus $\langle a\rangle \cap \langle b_1, b_2\rangle = \{1\}$,
and therefore 
$N \cong 
\langle a \svert a^{pd_1d_2} \rangle \oplus 
\langle b_1 \svert b_1^2 \rangle \oplus
\langle b_2 \svert b_2^2 \rangle$.
Hence, we have
\[
\Isom^+\OO(q/p;d_1,d_2)\cong N/\langle a^2\rangle
\cong N/\langle a\rangle
 \cong
\langle b_1 \svert b_1^2 \rangle \oplus
\langle b_2 \svert b_2^2 \rangle
\cong (\ZZ_2)^2.
\]

This completes the proof of Proposition \ref{prop:dihedral-orbifold-isometry2}
in the generic case.

In the exceptional case, where
$\OO(q/p;d_1,d_2)$ is the trivial $\theta$-orbifold $\OO(0/1;1,2)$,
we may assume
\[
\tilde\Gamma=\langle (i,i), (j,j) \rangle =
\{\pm (1,1), \pm (i,i), \pm(j,j), \pm(k,k)\}.
\]
Then, by using Lemma \ref{lem.2-dim-normaliser},
we can see that
$N(\tilde\Gamma)=\{(q,q) \svert q\in O^*\} \rtimes \langle J_1\rangle$.
Hence we have
\[
\Isom^+\OO(0/1;1,2)
\cong
\left(O^*/(\ZZ_2)\right) \rtimes \ZZ_2
\cong
D_3 \times \ZZ_2.
\]
\end{proof}

\section{Appendix 2: Non-spherical geometric orbifolds with dihedral orbifold fundamental groups}
\label{sec:dihedral-orbifold2}

In this section, we classify the non-spherical geometric orbifolds
with dihedral orbifold fundamental groups
(Propositions \ref{prop:dihedral-orbifold2} and \ref{prop:dihedral-orbifold3}).
These results are used in the proof of Theorem \ref{thm:dihedral-orbifold}.

We first deal with the dihedral orbifolds with $S^2\times \RR$ geometry. 

\begin{proposition}
\label{prop:dihedral-orbifold2}
Let $\OO$ be a compact orientable $S^2\times \RR$ orbifold
with nonempty singular set
which satisfies the following conditions.
\begin{enumerate}
\item[{\rm(i)}]
No component of $\partial\OO$ is spherical.
\item[{\rm(ii)}]
$\pi_1(\OO)$ is a dihedral group.
\end{enumerate}
Then $\OO$ is isomorphic to one of the following orbifolds.
\begin{enumerate}
\item
$\OO(\infty)$, the orbifold  
represented by the weighted graph $(S^3,K(\infty),w)$,
where $w$ takes the value $2$ at each component of
the $2$-bridge link $K(\infty)$ of slope $\infty$, i.e.
the  $2$-component trivial link.
\item
$\OO(\RP^3,O)$, the orbifold with underlying space $\RP^3$
whose singular set is the trivial knot (i.e., the boundary of an embedded disc in $\RP^3$)
with index $2$.
\end{enumerate}
\end{proposition}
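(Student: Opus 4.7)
The plan is to realise $\Gamma:=\pi_1(\OO)$ as a discrete cocompact subgroup of $\Isom^+(S^2\times\RR)$, analyse the induced $S^2$-fibration of $\OO$ over a $1$-orbifold base, and read off the topology of $|\OO|$ together with the singular set directly from the explicit action. Since $\OO$ is compact but $S^2\times\RR$ is not, the hypothesis that $\pi_1(\OO)$ is dihedral forces $\Gamma\cong D_\infty$. After conjugation I may assume $\Gamma\subset\Isom(S^2)\times\Isom(\RR)$, and I would project to the second factor by $\pi\colon\Gamma\to\Isom(\RR)$. Because $S^2$ is compact, $\pi(\Gamma)$ acts cocompactly on $\RR$; the cocompact subgroups of $\Isom(\RR)$ are $\ZZ$ and $D_\infty$, but $D_\infty$ admits no surjection onto $\ZZ$ since its abelianisation is $(\ZZ_2)^2$. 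Hence $\pi(\Gamma)=D_\infty$, and since the only normal subgroup $N\triangleleft D_\infty$ with $D_\infty/N\cong D_\infty$ is the trivial one, $\pi$ is an isomorphism. Writing standard involutive generators $a=(\rho_a,\bar a)$, $b=(\rho_b,\bar b)$ with $\bar a,\bar b$ reflections of $\RR$ at distinct points, orientation preservation on $S^2\times\RR$ forces $\rho_a,\rho_b$ to be orientation-reversing involutions of $S^2$: each is either the antipodal map $-I$ or a reflection $r_C$ in some great circle $C$.

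The next step of the plan is to exploit the induced map $\OO\to\RR/D_\infty\cong[0,1]$ onto a $1$-orbifold with two reflector endpoints. The generic fibre is $S^2$, and a tubular neighbourhood of a reflector endpoint with $S^2$-monodromy $\rho$ is the mapping cylinder of the double cover $S^2\to S^2/\langle\rho\rangle$. When $\rho=-I$ this is the twisted $I$-bundle over $\RP^2$, which is $\RP^3$ with an open ball removed and carries no singular locus; when $\rho=r_C$ it is a $3$-ball whose interior contains the image of $C$ as an unknotted circle of index $2$ (the singular locus). The three resulting cases are then clear. If $\rho_a=\rho_b=-I$, the centrality of $-I$ in $O(3)$ makes every nonidentity element of $\Gamma$ have antipodal $S^2$-part, so the action is free and $\OO$ has empty singular set, contradicting the hypothesis. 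If exactly one of $\rho_a,\rho_b$ is $-I$, gluing a punctured $\RP^3$ to a $3$-ball along the mid-sphere yields $|\OO|\cong\RP^3$ with a single unknotted singular circle of index $2$, giving $\OO\cong\OO(\RP^3,O)$. If both $\rho_a,\rho_b$ are reflections, gluing two $3$-balls along the mid-sphere yields $|\OO|\cong S^3$ containing two unknotted singular circles of index $2$ lying in disjoint balls and hence forming the $2$-component unlink, giving $\OO\cong\OO(\infty)$.

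The main obstacle, in my view, is the identification of the local model at each reflector endpoint and the verification that the singular circles produced there are unknotted in $|\OO|$ (and, in the last case, unlinked). The mapping-cylinder description above handles this directly: in a reflector end with $S^2$-monodromy $r_C$, the fibre $S^2$ collapses onto a disc by identifying $x$ with $r_C(x)$, so the image of $C$ sits in the interior of a $3$-ball as the equator of that disc, visibly unknotted; in the two-reflector case the two such $3$-balls are disjoint, so the two singular circles are unlinked. As a sanity check, killing the meridians of the singular set in $\Gamma=\langle a,b\mid a^2,b^2\rangle$ yields $\pi_1(|\OO|)$ equal to $1$ or $\ZZ_2$ in the two non-trivial cases respectively, matching $S^3$ and $\RP^3$ as expected.
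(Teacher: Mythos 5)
Your proposal follows essentially the same route as the paper's proof: reduce to a $D_\infty$-action inside $\Isom(S^2)\times\Isom(\RR)$, observe that orientation-preservation forces the $S^2$-parts of the two involutive generators to be orientation-reversing involutions (hence a reflection in a great circle or the antipodal map), and case-split on the combination. The paper packages the same decomposition slightly differently — it first passes to $(S^2\times\RR)/\langle f\rangle\cong S^2\times S^1$ and then quotients by $h$, viewing $\OO$ as $S^2\times[0,1]$ with the two ends collapsed by orientation-reversing involutions $\gamma_0,\gamma_1$ — which is exactly your interval-base fibration with mapping cylinders at the mirrored ends, so the case analysis (reflection/reflection, reflection/antipodal, antipodal/antipodal) is identical.

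There is, however, a genuine flaw in your opening reduction: ``Since $\OO$ is compact but $S^2\times\RR$ is not, the hypothesis that $\pi_1(\OO)$ is dihedral forces $\Gamma\cong D_\infty$.'' Compactness of $\OO$ alone does not exclude finite dihedral groups, because $\OO$ is allowed to have boundary: for a finite $D_n<\Isom(S^2)$ acting on the $S^2$-factor alone, the quotient $(S^2\times\RR)/D_n$ is the interior of the compact orientable orbifold $(S^2/D_n)\times[-\infty,\infty]$, which has nonempty singular set and dihedral fundamental group and satisfies every hypothesis except (i), since its boundary components are the spherical orbifolds $S^2(2,2,n)$. The exclusion of finite $n$ therefore must invoke condition (i) — which your argument never uses — and this is precisely how the paper argues: extend the finite action over $S^2\times[-\infty,\infty]$ and derive spherical boundary components, contradicting (i). Note also that your claim that $\pi(\Gamma)$ acts cocompactly on $\RR$ tacitly assumes the $\Gamma$-action on $S^2\times\RR$ is cocompact, i.e.\ $\partial\OO=\emptyset$, which again only follows once (i) has been used. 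A second, more cosmetic slip: in the case $\rho_a=\rho_b=-I$ it is not true that every nonidentity element of $\Gamma$ has antipodal $S^2$-part — the elements $(ab)^n$ have identity $S^2$-part; they act freely because their $\RR$-parts are nontrivial translations, while the reflection-type elements act freely because the antipodal map is fixed-point free. With these repairs your proof is correct and coincides with the paper's.
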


\begin{proof}
By the assumption that $\OO$ has the geometry $S^2\times \RR$,
we have $\pi_1(\OO)<\Isom(S^2\times\RR)\cong\Isom(S^2)\times\Isom(\RR)$
and $\interior \OO \cong (S^2\times\RR)/\pi_1(\OO)$.
By the condition (ii), $\pi_1(\OO)\cong D_n$ for some $n\in\NN\cup\{\infty\}$.

If $n\in\NN$, then the action of the finite dihedral group $\pi_1(\OO)$ on $S^2\times\RR$
extends to an action on the compact $3$-manifold $S^2\times [-\infty, \infty]$,
where $[-\infty, \infty]\cong I$ is a compactification of $\RR$,
and $\OO$ is identified with  $S^2\times [-\infty, \infty]/\pi_1(\OO)$.
Thus $\OO$ has a spherical boundary component,
which contradicts the condition (i).
So $n=\infty$
and $\pi_1(\OO)\cong\langle f, h \ | \ h^2,\ hfh=f^{-1}\rangle$.
Since the action of $\pi_1(\OO)$ on $S^2\times\RR$
is properly discontinuous,
$f\in \Isom(S^2\times\RR)$ 
is the product of a (possibly trivial) rotation of $S^2$
and a nontrivial translation of $\RR$.
Thus the orbifold 
$\OO(f):=(S^2\times\RR)/\langle f \rangle$
is homeomorphic to the manifold $S^2\times S^1$.
The isometry $h$ descends to a fiber-preserving involution of $\OO(f)\cong S^2\times S^1$
which acts on the second factor as a reflection.
Thus $\OO=\OO(f)/h$ is the quotient of $S^2\times [0,1]$
by an equivalence relation
$(x,0)\sim (\gamma_0(x), 0)$ and $(x,1)\sim (\gamma_1(x), 1)$
where $\gamma_0$ and $\gamma_1$ are orientation-reversing involutions of $S^2$.
Thus $\gamma_i$ is conjugate to either the reflection in a great circle or
the antipodal map.
According to the combination
(reflection, reflection), (reflection, antipodal map), or
 (antipodal map, antipodal map),
$\OO$ is isomorphic to $\OO(\infty)$, $\OO(\RP^3,O)$, or $\RP^3\#\RP^3$. 
The last case cannot happen because $\OO$ has the empty singular set.
\end{proof}

The following proposition 
deals with 
the dihedral orbifolds with the remaining $6$ geometries.

\begin{proposition}
\label{prop:dihedral-orbifold3}
Let $\OO$ be a compact orientable $3$-orbifold
with nonempty singular set
which has one of the $6$ geometries different from $S^3$ and $S^2\times\RR$
and satisfies the following conditions.
\begin{enumerate}
\item[{\rm(i)}]
$\pi_1(\OO)$ is a dihedral group.
\item[{\rm(ii)}]
No component of $\partial\OO$ is spherical.
\end{enumerate}
Then $\OO$ is isomorphic to $D^2(2,2)\times I$.
\end{proposition}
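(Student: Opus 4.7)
The plan is to show $\pi_1(\OO)$ must be infinite dihedral, pass to a double cover identifiable as a solid torus, and then classify the deck involution. First I would rule out finite $\pi_1(\OO)\cong D_n$: in that case the orbifold universal cover $\hat\OO\to\OO$ is a compact, simply-connected, oriented $3$-orbifold, which necessarily has empty singular set and so is a $3$-manifold. Since each of the six admissible model spaces is diffeomorphic to $\RR^3$, one has $\hat\OO\cong D^3$ topologically, and then $\partial\OO$ contains the spherical $2$-orbifold $S^2/D_n$, violating hypothesis (ii). Hence $\pi_1(\OO)\cong D_\infty$. Next I would pass to the double cover $\tilde\OO\to\OO$ corresponding to the unique index-two subgroup $\ZZ<D_\infty$: then $\pi_1(\tilde\OO)\cong\ZZ$ is torsion-free, so $\tilde\OO$ has no singular set and is an honest compact oriented $3$-manifold with no spherical boundary. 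If $\tilde\OO$ were closed, it would be $S^2\times S^1$ with $S^2\times\RR$ geometry, which is excluded; otherwise, primality (since $\ZZ$ admits no non-trivial free product decomposition), irreducibility (no reducing spheres), and classical aspherical $3$-manifold theory identify $\tilde\OO$ with the solid torus $S^1\times D^2$.

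The deck involution $\iota$ of $\tilde\OO\to\OO$ is then an orientation-preserving involution of $S^1\times D^2$. Up to conjugacy these fall into three classes: (a) the meridional rotation $(\theta,z)\mapsto(\theta,-z)$, whose fixed set is the core circle; (b) the hyperelliptic-type involution $(\theta,z)\mapsto(-\theta,\bar z)$, whose fixed set is a pair of properly embedded arcs; and (c) the free involution $(\theta,z)\mapsto(\theta+\pi,-z)$. A short orbifold $\pi_1$ calculation gives $\ZZ\times\ZZ_2$ in case (a) and $\ZZ$ in case (c), neither of which is $D_\infty$, so only (b) can occur. In case (b), a direct identification of the quotient---exploiting that $\iota$ is the product of the two orientation-reversing factor involutions $\theta\mapsto-\theta$ on $S^1$ and $z\mapsto\bar z$ on $D^2$---shows the boundary torus descends to the pillowcase $S^2(2,2,2,2)$ under the hyperelliptic involution, and the two fixed arcs descend to two index-$2$ singular edges, yielding $\OO\cong D^2(2,2)\times I$.

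The main obstacle is the middle step, where both the geometric hypothesis (to rule out the closed case $\tilde\OO\cong S^2\times S^1$) and the classical fact that a compact aspherical oriented $3$-manifold with infinite cyclic fundamental group and non-empty, non-spherical boundary is a solid torus are needed. Once $\tilde\OO$ is pinned down, the involution analysis is a routine case check.
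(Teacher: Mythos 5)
Your proposal is correct, but it runs along a genuinely different track from the paper's proof, which works directly with the isometric action of the dihedral group on the model space $X$. For finite $D_n$, the paper produces a global fixed point by a geometry-by-geometry analysis (using the fibrations $X\to E$ for $Nil$, $\HH^2\times\RR$, $\widetilde{SL_2(\RR)}$ and $q\colon Sol\to\RR$) and then applies the exponential map at the fixed point to exhibit a spherical boundary component $S^2(2,2,n)$; you instead compactify the universal cover, note its interior is the model $X\cong\RR^3$, and read off a spherical boundary component topologically. For $D_\infty$, the paper constructs explicit fundamental domains cobounded by two disjoint planes (axes of the two involutions, common perpendiculars, ruled surfaces, etc.), again case by case in each of the six geometries and with a special subcase in $\HH^3$ where the two axes meet at infinity; you pass to the characteristic $\ZZ$-cover, identify it as a solid torus by classical $3$-manifold theory, and classify the deck involution. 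What your route buys is uniformity: all six geometries, including the parabolic subcase in $\HH^3$ (where $gh$ is parabolic and the interior of $\tilde\OO$ is $\HH^3/\langle\mathrm{parabolic}\rangle$, still an open solid torus), are handled by one topological argument. What it costs is an extra classical input the paper avoids: the standardness of smooth orientation-preserving involutions of the solid torus up to conjugacy (a Kim--Tollefson type result). In your setting this input can be softened: the deck transformation of $\tilde\OO\to\OO$ is an isometry of the geometric structure that $\tilde\OO$ inherits, so only isometric involutions of $X/\ZZ$ need classifying.

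Two of your justifications are stated as general orbifold principles that are in fact false for bad orbifolds and need the geometric hypothesis to go through: a simply connected orbifold need not have empty singular set (the teardrop $S^2(n)$ is a counterexample), and torsion-freeness of $\pi_1$ does not by itself kill the singular locus. Both claims do hold here because $\OO$ is geometric, hence good: the universal cover of $\interior\OO$ is the manifold $X$, the local isotropy groups inject into $\pi_1(\OO)$, and the singular locus of an orientable $3$-orbifold is properly embedded, so a manifold interior forces an empty singular set. With those fixes made explicit, your argument is complete.
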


\begin{proof}
Let $X$ be the geometry which $\OO$ possesses. 
Then $X$ is $\HH^3$, $\EE^3$, 
$\widetilde{SL_2(\RR)}$, $Nil$ or $Sol$,
and $\interior \OO$ is isomorphic to $X/\Gamma$
for some discrete subgroup $\Gamma\cong \pi_1(\OO)$ of $\Isom(X)$.
Note that the underlying topological space of $X$
is homeomorphic to $\RR^3$.
The proof is divided into two cases
according to whether $\pi_1(\OO)$ is finite or infinite.

\medskip
Case 1. 
Suppose that $\pi_1(\OO)$ is a finite dihedral group $D_n$.
Then, as will be shown below, 
the action of $D_n$ on $X$ has a global fixed point $x$.
Then the exponential map from $T_xX$,
the tangent space to $X$ at $x$, to $X$
is a $D_n$-equivariant homeomorphism.
This implies that 
$\partial\OO\cong S^2(2,2,n)$,
contradicting the condition (ii).

The existence of a global fixed point can be proved as follows.
For the constant curvature case $X=\HH^3$ or $\EE^3$,
this is well-known. 
We shall first deal with the case where $X$ is 
$Nil$, $\HH^2 \times \RR$, or $\widetilde{\SL_2 \RR}$.
Recall that there is an exact sequence 
\[
1 \to \Isom(\RR) \to \Isom(X) \to \Isom(E) \to 1,
\] 
where $E$ is the Euclidean plane $\EE^2$ when $X$ is $Nil$ and the hyperbolic plane $\HH^2$ when $X$ is $\HH^2 \times \RR$ or $\widetilde{\SL_2 \RR}$.
We also note that the projection 
$\Isom(X)\to\Isom(E)$ 
above is induced by
a fibration $p: X \to E$.
Let $\bar D_n$ be the image of $D_n$ in $\Isom(E)$ and $K$ the kernel in $D_{\infty}$
of the projection to $\bar D_n$.
Then the action of $\bar D_n$ on $E$ has a global fixed point $y$,
and the action of $K$ on the fibre $p^{-1}(y)$ has a global fixed point since both of them are finite.
Thus $D_n$ has a global fixed point on $X$
when $X$ is $Nil$, $\HH^2 \times \RR$, or $\widetilde{\SL_2 \RR}$.

We shall now show the same property when $X=Sol$.
In this case, there is an exact sequence 
\[
1 \to \Isom(\EE^2) \to \Isom(Sol) \to \Isom(\RR) \to 1,
\]
and the projection $\Isom(Sol)\to\Isom(\RR)$
is induced by a fibration $q: Sol \to \RR$.
Let $\bar D_n$ be the projection of $D_n$ in $\Isom(\RR)$.
Then $\bar D_n$ is either trivial or $\ZZ_2$ generated by a reflection on $\RR$.
In either case, it fixes a point $y$ on $\RR$.
In the former case, 
$D_n\cong\langle g, h \ | \ g^2, h^2,\ (gh)^n\rangle$ acts on the fibre $q^{-1}(y)$ by Euclidean isometries in such a way that $g$ and $h$ correspond to reflections, and hence $D_n$ fixes a point on the fibre.
In the latter case, the kernel $K$ of the projection $D_n \to \bar D_n$ is isomorphic to $\ZZ_n$, and fixes a point on the fibre in the same way.
Thus we have shown that $D_n$ has a fixed point also in the case when $X=Sol$.

\medskip
Case 2.
Suppose $\pi_1(\OO)$ is the infinite dihedral group 
$D_\infty \cong\langle g, h \ | \ g^2, h^2\rangle$.
First we shall consider the case when $X$ has constant curvature.
Then $g$ and $h$ are order $2$ elliptic transformations, and hence fix pointwise axes $a_g$ and $a_h$ respectively.
They do not meet each other since otherwise the action fixes their intersection and 
cannot be faithful and discrete.
Let $\ell$ be the common perpendicular to $a_g$ and $a_h$ if it exists.
(This does not exist when $X=\HH^3$ and $a_g$ touches $a_h$ at infinity.
This exceptional case will be considered later.)
Let $\Pi_g$ be the totally geodesic plane containing $a_h$ and perpendicular to $\ell$.
We define $\Pi_h$ in the same way.
Then the region cobounded by $\Pi_g$ and $\Pi_h$ constitutes a fundamental domain of the action of $D_\infty$.
Suppose now that $X=\HH^3$ and 
$a_g$ touches $a_h$ at infinity.
Then there is a totally geodesic plane $H$ containing both $a_g$ and $a_h$ and it is preserved by $D_\infty$.
We then let $\Pi_g$ and $\Pi_h$ be totally geodesic planes containing $a_g$ and $a_h$ respectively, which are perpendicular to $H$.
Then a fundamental domain is cobounded by $\Pi_g$ and $\Pi_h$ again.
Therefore, in either case, we  can see 
$\interior \OO\cong\interior D^2(2,2) \times \RR$
and so $\OO\cong D^2(2,2) \times I$.

Next, we shall consider the case when $X$ is $Nil$ or $\HH^2 \times \RR$ or $\widetilde{\SL_2 \RR}$.
As before, let $\bar D_\infty$ be the projection of $D_\infty$ to $\Isom(E)$, and $K$ the kernel of the projection.
We first observe that the images $\bar g$ and $\bar h$ of $g$ and $h$ in $\bar D_{\infty}$
are nontrivial.
In fact, if say $\bar g$ is trivial, then $g$ acts on $\RR$ as a nontrivial order $2$ 
isometry. 
Thus $g$ acts on $\RR$ by a reflection, and so the image $\bar g$ must be an orientation-reversing isometry on $E$, 
which contradicts our assumption that $\bar g$ is trivial.

Since $g$ and $h$ have order $2$,
their images $\bar g$ and $\bar h$ in $\bar D_{\infty}$ also have order $2$.
We first deal with the case where both of them are orientation-preserving, i.e. $\pi$-rotations.
Let $y_g$ and $y_h$ be the centres of the $\pi$-rotations
$\bar g$ and $\bar h$, respectively.
Then $g$ and $h$ are $\pi$-rotations about the geodesics 
$p^{-1}(y_g)$ and  $p^{-1}(y_h)$, respectively.
Since the action of $D_{\infty}$ is faithful,
we have $y_g\ne y_h$.
Now consider the geodesic
line $\ell$ in $E$ containing $y_g$ and $y_h$,
and let $\ell_g$ and $\ell_h$ be the lines which intersects $\ell$
perpendicularly at $y_g$ and $y_h$, respectively.
Then $\ell_g$ and $\ell_h$ are disjoint, 
and they cobound a region $R$ in $E$.
We see that $p^{-1}(R)$ is a fundamental region of the action of $D_\infty$, and we have $\interior \OO\cong\interior D^2(2,2) \times \RR$.

We next treat the case where both of $\bar g$ and $\bar h$ 
are orientation-reversing, i.e. reflections.
Let $a_g$ and $a_h$ be the axes of the reflections
$\bar g$ and $\bar h$, respectively.
Then $g$ and $h$ are the \lq symmetries' with respect to the
geodesics $\tilde a_g$ and $\tilde a_h$, respectively,
where $\tilde a_g$ and $\tilde a_h$ are lifts of
$a_g$ and $a_h$, respectively.
If $a_g$ and $a_h$ are disjoint, they cobound a region $R$ in $E$,
and $p^{-1}(R)$ is a fundamental region of the action of $D_\infty$, and we have $\interior \OO\cong\interior D^2(2,2) \times \RR$.
If $a_g$ and $a_h$ meet each other at a point $y\in E$.
Then $D_\infty$ acts effectively and discretely on the fiber $p^{-1}(y)$,
and so the axes $\tilde a_g$
and $\tilde a_h$ intersect $p^{-1}(y)$ perpendicularly at distinct points,
$z_g$ and $z_h$, respectively.
Let $P_g$ and $P_h$ be the ruled surfaces in $X$
obtained as the unions of the geodesics 
which intersect $p^{-1}(y)$ 
perpendicularly at 
$z_g$ and $z_h$, respectively.
Then $P_g$ and $P_h$ are disjoint planes in $X$, and
the domain they cobound is a fundamental domain of $D_{\infty}$, and 
we can see $\interior \OO\cong\interior D^2(2,2) \times \RR$.

We now treat the case where one of $\bar g$ and $\bar h$ is orientation-preserving
and the other is orientation-reversing.
We may assume $\bar g$ is orientation-preserving and  
$\bar h$ is orientation-reversing.
Let $y_g$ be the center of the $\pi$-rotation $\bar g$, and let
$a_h$ be the axis of the reflection $\bar h$.
If $y_g$ belongs to $a_h$, then the axes of the $\pi$-rotations of $g$ and $h$
intersect, 
and the action of $D_{\infty}$ cannot be discrete and faithful.
So $y_g$ is not contained in $a_g$.
Let $\ell$ be a geodesic line
in $E$ 
which passes through $y$ and is disjoint from $a_h$.
Let $R$ be the region in $E$ bounded by $a_h$ and $\ell$.
Then $p^{-1}(R)$ is a fundamental region of the action of $D_\infty$, 
and we have $\interior \OO\cong\interior D^2(2,2) \times \RR$.

Finally, suppose that $X=Sol$.
Then the projection $\bar D_\infty$ of $D_\infty$ to $\Isom(\RR)$ is either trivial or $\ZZ_2$ or $D_\infty$ itself.
In the case when $\bar D_\infty$ is trivial, 
the generators $g$ and $h$ act
on each fibre by $\pi$-rotations, and their fixed points must differ.
Thus $\interior\OO\cong X/D_{\infty}$ is a bundle over $\RR$ with fiber
$\EE^2/D_{\infty}\cong \interior D^2(2,2)$.
So we have $\interior\OO\cong  \interior D^2(2,2)\times \RR$.
In the case when $\bar D_\infty$ is $\ZZ_2$, the action of $\bar D_\infty$ is a reflection with respect to a point $x$.
We set $P=q^{-1}(x)$.
Then the $g$ and $h$ act on $P$ by reflections, and by the same argument as in the previous paragraph, we have a homeomorphism $\interior \OO \cong D^2(2,2) \times \RR$.
Finally, suppose that $\bar D_\infty=D_\infty$.
Then $g$ and $h$ fix points $x_g$ and $x_h$ on $\RR$ respectively, and they differ.
We consider fibres $\Pi_g=q^{-1}(x_g)$ and $\Pi_h=q^{-1}(x_h)$.
The elements act on $\Pi_g$ and $\Pi_h$ as reflections with axes $a_g \subset \Pi_g$ and $a_h \subset \Pi_h$.
Then the region cobounded by $\Pi_g$ and $\Pi_h$ constitutes a fundamental region for the action of $D_\infty$, and we see that 
$\interior \OO\cong\interior D^2(2,2) \times \RR$.
\end{proof}

\bibstyle{plain}

\end{document}